\title[Relative assembly maps and the  $K$-theory  of Hecke algebras \ldots]
{Relative assembly maps and the  $K$-theory  of Hecke algebras in prime characteristic}
        \author{L\"uck, W.}
        \address{Mathematicians Institut der Universit\"at Bonn\\
                Endenicher Allee 60\\
                53115 Bonn, Germany}
         \email{wolfgang.lueck@him.uni-bonn.de}
          \urladdr{http://www.him.uni-bonn.de/lueck}
         \date{July, 2024}
         \keywords{Nil-terms,
           relative assembly maps, $K$-theoretic Farrell-Jones Conjecture, Hecke algebras}
\subjclass[2020]{19D35, 19D50, 20C08}
  \DeclareMathAlphabet{\matheurm}{U}{eur}{m}{n}
\DeclareMathAlphabet{\matheurm}{U}{eur}{m}{n}
\newcommand{\addcat}{\matheurm{Add\text{-}Cat}}
\newcommand{\Chcat}{\matheurm{Ch}}
\newcommand{\MODcat}[1]{#1\text{-}\matheurm{Mod}}
\newcommand{\Or}{\matheurm{Or}}
\newcommand{\OrG}[1]{\matheurm{Or}(#1)}
\newcommand{\Spectra}{\matheurm{Spectra}}
\newcommand{\Sub}{\matheurm{Sub}}
\newcommand{\SubGF}[2]{\matheurm{Sub}_{#2}(#1)}
\newcommand{\contc}{\EC}
\DeclareMathOperator{\aut}{aut}
\DeclareMathOperator{\cok}{cok}
\DeclareMathOperator{\colim}{colim}
\DeclareMathOperator*{\colimunder}{colim}
\DeclareMathOperator{\conhom}{conhom}
\DeclareMathOperator{\homendo}{end}
\DeclareMathOperator{\End}{End}
\DeclareMathOperator{\fgp}{fgp}
\DeclareMathOperator*{\hocolimunder}{hocolim}
\DeclareMathOperator{\id}{id}
\DeclareMathOperator{\Idem}{Idem}
\DeclareMathOperator{\Kgroup}{K}
\DeclareMathOperator{\mor}{mor}
\DeclareMathOperator{\Nil}{Nil}
\DeclareMathOperator{\ob}{ob}
\DeclareMathOperator{\pr}{pr}
\DeclareMathOperator{\res}{res}
\DeclareMathOperator{\Sw}{Sw}
\DeclareMathOperator{\Wh}{Wh}
\newcommand{\COM}{{\calc\hspace{-1pt}\mathrm{om}}}
\newcommand{\COP}{{\calc\hspace{-1pt}\mathrm{op}}}
\newcommand{\CVCYC}{{\calc\hspace{-1pt}\mathrm{vcy}}}
\newcommand{\FIN}{{{\mathcal F}\mathrm{in}}}
\newcommand{\VCYC}{{{\mathcal V}\mathrm{cyc}}}
  \newcommand{\IN}{\mathbb{N}}
  \newcommand{\IQ}{\mathbb{Q}}
  \newcommand{\IR}{\mathbb{R}}
  \newcommand{\IZ}{\mathbb{Z}}
  \newcommand{\EA}{\matheurm{A}}  
  \newcommand{\EC}{\matheurm{C}}
  \newcommand{\EP}{\matheurm{P}}
  \newcommand{\cala}{\mathcal{A}}
  \newcommand{\calb}{\mathcal{B}}
  \newcommand{\calc}{\mathcal{C}}
  \newcommand{\calf}{\mathcal{F}}
  \newcommand{\calg}{\mathcal{G}}
  \newcommand{\calh}{\mathcal{H}}
  \newcommand{\calm}{\mathcal{M}}
  \newcommand{\calp}{\mathcal{P}}
  \newcommand{\calt}{\mathcal{T}}
    \newcommand{\calu}{\mathcal{U}}
  \newcommand{\calv}{\mathcal{V}}
  \newcommand{\bfE}{\mathbf{E}}
    \newcommand{\bfF}{\mathbf{F}}
  \newcommand{\bff}{\mathbf{f}}
  \newcommand{\bfj}{\mathbf{j}}
  \newcommand{\bfK}{\mathbf{K}}
  \newcommand{\bfp}{\mathbf{p}}
  \newcommand{\bfT}{\mathbf{T}}
 \newcommand{\bfKNilinfty}{\mathbf{K}_{\Nil}^{\infty}}
\newcommand{\EGF}[2]{E_{#2}(#1)}
\newcommand{\SETS}[1]{#1\text{-}\matheurm{SETS}}
\newcommand{\SETSf}[1]{#1\text{-}\matheurm{SETS}_{\operatorname{f}}}
\newcounter{commentcounter}
\theoremstyle{plain}
\newtheorem{theorem}{Theorem}[section]
\newtheorem{lemma}[theorem]{Lemma}
\newtheorem{corollary}[theorem]{Corollary}
\newtheorem{conjecture}[theorem]{Conjecture}
\newtheorem{assumption}[theorem]{Assumption}
\newtheorem*{theorem*}{Theorem}
\newtheorem*{theoremA*}{Theorem A}
\newtheorem*{theoremB*}{Theorem B}
\theoremstyle{definition}
\newtheorem{definition}[theorem]{Definition}
\newtheorem{remark}[theorem]{Remark}
\newtheorem{notation}[theorem]{Notation}
\newtheorem*{definition*}{Definition}
\theoremstyle{remark}
\let\c@equation=\c@theorem\makeatother
\theoremstyle{definition}
\newcounter{othercommentcounter}
\newcommand{\contcover}{\overline{\EC}}
\newcommand{\allG}{+}
\newcommand{\nowedge}{\sharp}
\newcommand{\version}[1]              
{\begin{center} last edited on #1\\
last compiled on \today\\
name of tex-file: \jobname
\end{center}
}
  \newcommand{\trf}{\operatorname{trf}}
\begin{document}

  \begin{abstract}
    We investigate the relative assembly map from the family of finite subgroups to the
    family of virtually cyclic subgroups for the algebraic $K$-theory of twisted group rings of a group $G$ 
    with coefficients in a regular ring $R$ or, more generally, with coefficients in a regular  additive category.
    They are known to be isomorphisms
    rationally. We show that it suffices to invert only those primes $p$ for which
    $G$  contains a non-trivial finite $p$-group and $p$ is not invertible
    in $R$. The key ingredient is the detection of Nil-terms of a  twisted group ring of a finite group
    $F$  after localizing at $p$ in terms of the $p$-subgroups of $F$ using
    Verschiebungs and Frobenius operators. We construct and exploit
    the structure of a module over the ring of big Witt vectors
    on the Nil-terms.    We analyze  the algebraic $K$-theory of the
    Hecke algebras of subgroups 
    of reductive $p$-adic groups  in prime characteristic.
  \end{abstract}

  \maketitle

  \newlength{\origlabelwidth} \setlength\origlabelwidth\labelwidth

  \typeout{------------------- Introduction -----------------}
  \section{Introduction}\label{sec:introduction}

  We first state and discuss the main results of this paper. In this introduction
  we only consider rings as coefficients for simplicity.
  Many of the results  will extend to  additive categories as coefficients. Moreover,
  in all cases one can allow a twisting by a $G$-action on $R$ or a $G$-action on the additive category.
  Groups are understood to be discrete, unless explicitly stated otherwise.


  \subsection{On the $K$-theoretic relative assembly map from $\FIN$ to $\VCYC$ for regular coefficient rings}%
  \label{subsec:On_the_K_theoretic_relative_assembly_map_from_FIN_VCYC_for_regular_coefficient_rings}

  If $\calp$ is a set of primes and $f \colon A \to B$ is a homomorphisms of abelian groups, we call
  $f$ a \emph{$\calp$-isomorphism} if the map $\id_{\IZ[\calp^{-1}]} \otimes_ {\IZ} f \colon
  \IZ[\calp^{-1}] \otimes_{\IZ} A \to \IZ[\calp^{-1}] \otimes_{\IZ} B$ is bijective, where
  the ring $\IZ[\calp^{-1}]$ satisfies $\IZ \subseteq \IZ[\calp^{-1}] \subseteq \IQ$ and
  is obtained from $\IZ$ by inverting all primes in $\calp$.

  \begin{notation}\label{calp(G,R)}
    For a group $G$ and a ring $R$, let $\calp(G,R)$ be the set of primes,
     which are not invertible in $R$ and for which $G$ contains a non-trivial finite
     $p$-subgroup.
   \end{notation}

   If $G$ is torsionfree or, more generally, the order of any finite subgroup of $G$ is
   invertible in $R$, then $\calp(G,R)$ is empty and $\calp$-isomorphism means just
   isomorphism.  If $R = \IZ$, then $\calp(G,\IZ)$ is the set of primes $p$ for which $G$
   contains an element of order $p$.

\begin{theorem}\label{the:passage_from_Fin_to_Vcyc}
  Let $R$ be a regular ring coming with a
  group homomorphism $\rho \colon G \to \aut(R)$ to the group of ring automorphisms of $R$.
  Then the relative assembly map 
  \[
    H_n^G(\EGF{G}{\FIN};\bfK_R) \to H_n^G(\EGF{G}{\VCYC};\bfK_R)
  \]
  is a $\calp(G,R)$-isomorphism for all $n \in \IZ$.
\end{theorem}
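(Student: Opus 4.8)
\emph{Reduction to virtually cyclic subgroups.} The plan is to reduce the assertion, via the transitivity principle, to a statement about one virtually cyclic subgroup at a time, to identify the resulting relative term with a Nil-term of the twisted group ring of its finite part, and then to feed in the detection result for such Nil-terms announced in the abstract. For the reduction, one applies the transitivity principle to the equivariant homology theory $H^?_*(-;\bfK_R)$ associated with $\bfK_R$ and the twisting $\rho$; as $\FIN$ and $\VCYC$ are closed under passage to subgroups and localization is exact, the transitivity principle is valid after replacing isomorphisms by $\calp(G,R)$-isomorphisms, so it suffices to show that for every virtually cyclic subgroup $V\le G$, equipped with the restricted twisting $\rho|_V$, the relative assembly map
\[
  \asmb_V\colon H_n^V(\EGF{V}{\FIN};\bfK_R)\longrightarrow H_n^V(\EGF{V}{\VCYC};\bfK_R)=H_n^V(\{\pt\};\bfK_R)
\]
is a $\calp(G,R)$-isomorphism for all $n\in\IZ$; here the target is $K_n$ of the twisted group ring of $V$ over $R$.

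\emph{Identification of the relative term.} Put $N_n(V):=\cok(\asmb_V)$. There is nothing to prove when $V$ is finite, so let $V$ be infinite virtually cyclic, with unique maximal finite normal subgroup $F=F_V\le G$; then $V\cong F\rtimes_\alpha\IZ$ (type I) or $V\cong F_0\ast_F F_1$ with $[F_i:F]=2$ (type II). In type I, the twisted Bass--Heller--Swan decomposition for the twisted Laurent extension presents $K_n$ of the twisted group ring of $V$ as a Bass--Heller--Swan summand plus two twisted Bass Nil-groups of $R_\rho[F]$, the twisting automorphism being the one on $R_\rho[F]$ induced by a generator of $\IZ$ (combining $\rho$ with conjugation). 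In type II, Waldhausen's Mayer--Vietoris sequence presents it as a Mayer--Vietoris summand plus a Waldhausen Nil-group of $R_\rho[F]$ for the two bimodules determined by $F_0$ and $F_1$. Since $\IR$ is a model for $\EGF{V}{\FIN}$ in type I and the Bass--Serre tree is one in type II, the standard Wang, resp.\ Mayer--Vietoris, computation of $H^V_*(\EGF{V}{\FIN};\bfK_R)$ in terms of $H^F_*(\{\pt\};\bfK_R)=K_*(R_\rho[F])$ identifies $H^V_*(\EGF{V}{\FIN};\bfK_R)$, compatibly with $\asmb_V$, with the non-Nil summand; hence $\asmb_V$ is split injective and $N_n(V)$ is the complementary Nil summand of $R_\rho[F]$.

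\emph{Applying the detection of Nil-terms, and conclusion.} Now invoke the main result of the paper: the twisted Bass, resp.\ Waldhausen, Nil-terms of the twisted group ring of a finite group $F$ over a regular ring $R$ are detected, after localizing at a prime $p$, by restriction to the $p$-subgroups of $F$ via the Frobenius and Verschiebung operators of the big Witt vector module structure; in particular, such a Nil-term is zero $p$-locally whenever $p$ is invertible in $R$ or $F$ has no nontrivial $p$-subgroup, and it is $|F|$-power torsion and hence vanishes rationally. Applied to $F=F_V$, this yields that $N_n(V)$ becomes trivial after inverting the primes in $\calp(F_V,R)$ (when $F_V$ is trivial, $\calp(F_V,R)=\emptyset$ and $N_n(V)=0$, recovering the vanishing of the twisted $NK$ of a regular ring). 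Because $F_V\le V\le G$, every nontrivial finite $p$-subgroup of $F_V$ is a nontrivial finite $p$-subgroup of $G$, so $\calp(F_V,R)\subseteq\calp(G,R)$, and therefore $\IZ[\calp(G,R)^{-1}]\otimes_\IZ N_n(V)=0$ for all $n$. Together with the split injectivity of $\asmb_V$, this makes $\asmb_V$ a $\calp(G,R)$-isomorphism for every virtually cyclic $V\le G$, and the transitivity principle then yields the theorem.

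\emph{Main obstacle.} The genuine difficulty is the detection theorem used in the third step; it is the technical core of the paper and rests on constructing the big Witt vector module structure on the Nil-terms and dissecting it with Frobenius and Verschiebung. The rest is bookkeeping, whose only delicate points are tracking the twisting data (the automorphism $\alpha$, resp.\ the two bimodules) upon restricting from $V$ to $F_V$, and checking that $\asmb_V$ carries the non-Nil summand of the Bass--Heller--Swan, resp.\ Waldhausen, decomposition isomorphically onto $H^V_*(\EGF{V}{\FIN};\bfK_R)$ --- the elementary case of the Farrell--Jones picture with twisted coefficients for virtually cyclic groups.
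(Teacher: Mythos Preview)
Your overall strategy---transitivity principle, reduction to a single virtually cyclic $V$, identification of the cokernel with Nil-terms via Bass--Heller--Swan, and then the detection theorem for Nil of $R_\rho[F_V]$ in terms of $p$-subgroups---matches the paper's proof (given there for additive categories as Theorem~\ref{the:passage_from_Fin_to_Vcyc_additive_categories}). Two points deserve correction.

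\emph{Type II.} You treat type~II virtually cyclic groups directly via Waldhausen Nil and assert that the paper's detection theorem applies to these as well. It does not: Theorem~\ref{the:compuation_of_Nil-Terms_for_autos_of_finite_groups} and Corollary~\ref{cor:_vanishing_of_Nil-Terms_for_autos_of_finite_groups} are stated and proved only for twisted Bass Nil-terms $N\!K_n(R_\rho[F],\Psi)$. The paper avoids the issue entirely by first invoking the Davis--Quinn--Reich result that the relative assembly map $H_n^G(\EGF{G}{\CVCYC};\bfK_\cala)\to H_n^G(\EGF{G}{\VCYC};\bfK_\cala)$ is already an isomorphism, so only covirtually cyclic (type~I) groups remain. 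Your route could be repaired by the same Davis--Quinn--Reich identification of the Waldhausen Nil with a Bass Nil of an index-two overgroup, but as written the type~II step cites a result the paper does not supply.

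\emph{The detection mechanism.} You attribute the $p$-local detection of Nil-terms to ``the big Witt vector module structure''. In the paper the Witt vector module structure (Section~\ref{sec:Nil-terms_as_modules_over_the_ring_of_big_Witt_vectors}) is used for the characteristic-$N$ vanishing results feeding into Theorem~\ref{the:passage_from_COP_to_CVcyc_in_characteristic_N}, not for Theorem~\ref{the:passage_from_Fin_to_Vcyc}. The detection argument behind Corollary~\ref{cor:_vanishing_of_Nil-Terms_for_autos_of_finite_groups} combines the Frobenius and Verschiebung operators with the $\Sw_F$-Green-module structure on $K_*$ and Dress' hyperelementary induction (Sections~\ref{sec:Mackey_and_Green_functors_for_finite_groups}--\ref{sec:On_Nil-terms_for_infinite_covirtually_cyclic_subgroups}); the final step is that $\cala|_P[P]$ is regular when $P$ is a $p$-group with $p\notin\calp(G,R)$, whence its Nil-terms vanish outright.
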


Theorem~\ref{the:passage_from_Fin_to_Vcyc}
improves~\cite[Theorem~0.3]{Lueck-Steimle(2016splitasmb)}, where $\calp(G,R)$ is required
to be the set of all primes. See also~\cite[Theorem~5.11]{Grunewald(2008Nil)} for
$R = \IZ$. If  the order of any finite subgroup of
$G$ is invertible in $R$ and the action $\rho$ is trivial, then Theorem~\ref{the:passage_from_Fin_to_Vcyc}
has already been proved in~\cite[Proposition~2.6 on page~686]{Lueck-Reich(2005)}.
Theorem~\ref{the:passage_from_Fin_to_Vcyc} is a special case of
Theorem~\ref{the:passage_from_Fin_to_Vcyc_additive_categories}.  Note that the relative
assembly map appearing in Theorem~\ref{the:passage_from_Fin_to_Vcyc} is always split
injective,
see~\cite[Theorem~1.3]{Bartels(2003b)},~\cite[Theorem~0.1]{Lueck-Steimle(2016splitasmb)}.

For more information about the  relative assembly map
appearing in Theorem~\ref{the:passage_from_Fin_to_Vcyc} and the $K$-theoretic Farrell-Jones Conjecture,
we refer to Remark~\ref{rem:FJ-assembly} and~\cite{Lueck(2022book)}.

The \emph{subgroup category of $G$ for the family $\FIN$ of finite subgroups} $\SubGF{G}{\FIN}$
  has as objects finite  subgroups $H$ of
$G$. For finite subgroups $H$ and $K$ of $G$, denote by $\conhom_G(H,K)$ the set of group
homomorphisms $f\colon H \to K$, for which there exists an element $g \in G$ with
$gHg^{-1} \subset K$ such that $f$ is given by conjugation with $g$, i.e.
$f = c(g): H \to K, \hspace{3mm} h \mapsto ghg^{-1}$.  Note that $c(g) = c(g')$ holds for
two elements $g,g' \in G$ with $gHg^{-1} \subset K$ and $g'Hg'^{-1} \subset K$ if and
only if $g^{-1}g'$ lies in the centralizer
$C_GH = \{g \in G \mid gh=hg \mbox{ for all } h \in H\}$ of $H$ in $G$. The group of inner
automorphisms $\operatorname{Inn}(K)$ of $K$ acts on $\conhom_G(H,K)$ from the left by
composition. Define the set of morphisms
\[
  \mor_{\SubGF{G}{\FIN}}(H,K) := \operatorname{Inn}(K) \backslash \conhom_G(H,K).
\]
Equivalently, $\mor_{\Sub_\COP}(H,K)$ is the double coset
$K \backslash \{ g \in G \mid gHg^{-1} \subseteq K\} / C_G(H)$ where the left $K$-action and
the right $C_G(H)$-action come from the multiplication in $G$.

\begin{remark}[The Full Farrell-Jones Conjecture]\label{rem:staus_of_FJC}
  The Full Farrell-Jones Conjecture is stated in~\cite[Conjecture~13.27]{Lueck(2022book)}.  Here we only need  to know
  that it implies that the assembly map
  \begin{equation}
    H_n^G(\EGF{G}{\VCYC};\bfK_R) \to K_n(R_{\rho}[G])
    \label{FJS-assembly_for_rings}
  \end{equation}
  is bijective for all $n \in \IZ$ and any ring $R$ coming with a group homomorphism
  $\rho \colon G \to \aut(R)$, see~\cite[Theorem~13.61~(i)]{Lueck(2022book)}.
 
  Note that the Full Farrell-Jones Conjecture is known to be true for a large class of
  groups including hyperbolic groups, CAT(0)-groups, lattices in locally compact second
  countable Hausdorff groups, and fundamental groups of manifolds of dimension $\le 3$ and
  has useful inheritance properties, e.g., passing to subgroups and overgroups of finite
  index, see for instance~\cite[Chapter~15]{Lueck(2022book)}.
\end{remark}

\begin{theorem}\label{the:FJ_and_regular}
  Suppose $G$ satisfies the Full Farrell-Jones Conjecture, Let $R$ be a regular ring
  coming with a group homomorphism $\rho \colon G \to \aut(R)$ such that the order of any
  finite subgroup of $G$ is invertible in $R$.
  
  Then  the canonical map
     \[
       \colimunder_{H \in \SubGF{G}{\FIN}} K_0(R_{\rho|_H}[H]) \to K_0(R_{\rho}[G])
      \]
     is an isomorphism and 
  \[
     K_n(R_{\rho}[G])  = 0 \quad \text{for} \;n \le -1.
   \]
   where $R_{\rho}[G]$ denotes the $\rho$-twisted group ring.
 \end{theorem}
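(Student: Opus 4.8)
The plan is to deduce the statement formally from Theorem~\ref{the:passage_from_Fin_to_Vcyc}, the Full Farrell--Jones Conjecture, and the regularity of the twisted group rings $R_{\rho|_H}[H]$ attached to the finite subgroups $H \subseteq G$. First I would reduce everything to a computation of $H_n^G(\EGF{G}{\FIN};\bfK_R)$. Since the order of every finite subgroup of $G$ is invertible in $R$, the set $\calp(G,R)$ of Notation~\ref{calp(G,R)} is empty: a prime $p \in \calp(G,R)$ would force $G$ to contain a non-trivial finite $p$-subgroup, whose order is a power of $p$ and hence not invertible in $R$. Thus a $\calp(G,R)$-isomorphism is simply an isomorphism, so Theorem~\ref{the:passage_from_Fin_to_Vcyc} makes the relative assembly map $H_n^G(\EGF{G}{\FIN};\bfK_R) \to H_n^G(\EGF{G}{\VCYC};\bfK_R)$ bijective for all $n \in \IZ$. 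By Remark~\ref{rem:staus_of_FJC}, the Full Farrell--Jones Conjecture makes~\eqref{FJS-assembly_for_rings} an isomorphism $H_n^G(\EGF{G}{\VCYC};\bfK_R) \to K_n(R_\rho[G])$. Since the inclusion of families yields a factorisation $\EGF{G}{\FIN} \to \EGF{G}{\VCYC} \to \pt$, the $\FIN$-assembly map $H_n^G(\EGF{G}{\FIN};\bfK_R) \to K_n(R_\rho[G])$ is the composite of these two and is hence an isomorphism for all $n \in \IZ$.

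Next I would check that $R_{\rho|_H}[H]$ is a regular ring for every finite subgroup $H \subseteq G$. It is free of rank $|H|$ as an $R$-module, hence Noetherian; and because $|H|$ is invertible in $R$, the element $|H|^{-1}\sum_{h \in H} \overline{h} \otimes_R \overline{h^{-1}}$ is a separability idempotent for $R_{\rho|_H}[H]$ over $R$. As $R_{\rho|_H}[H]$ is also projective (indeed free) over $R$, every $R_{\rho|_H}[H]$-module is a direct summand of a module induced up from $R$, so its projective dimension over $R_{\rho|_H}[H]$ is bounded by its projective dimension over the regular ring $R$; thus $R_{\rho|_H}[H]$ has finite global dimension and is regular. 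In particular $K_n(R_{\rho|_H}[H]) = 0$ for all $n \le -1$.

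I would then feed this vanishing into the equivariant Atiyah--Hirzebruch spectral sequence
\[
  E^2_{p,q} = H_p^G\bigl(\EGF{G}{\FIN};\, G/H \mapsto K_q(R_{\rho|_H}[H])\bigr) \;\Longrightarrow\; H_{p+q}^G(\EGF{G}{\FIN};\bfK_R).
\]
As $\EGF{G}{\FIN}$ is a $G$-CW-complex, $E^2_{p,q} = 0$ for $p < 0$, and by the previous step $E^2_{p,q} = 0$ for $q < 0$; so the spectral sequence lives in the first quadrant. Hence $H_n^G(\EGF{G}{\FIN};\bfK_R) = 0$ for $n \le -1$, and together with the first step this gives $K_n(R_\rho[G]) = 0$ for $n \le -1$. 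In total degree $0$ only $E^\infty_{0,0}$ can be non-zero; the differentials with target $E^r_{0,0}$ have source in the region $q < 0$ and those with source $E^r_{0,0}$ have target in the region $p < 0$, so $E^\infty_{0,0} = E^2_{0,0}$. Therefore
\[
  H_0^G(\EGF{G}{\FIN};\bfK_R) \;\cong\; H_0^G\bigl(\EGF{G}{\FIN};\, G/H \mapsto K_0(R_{\rho|_H}[H])\bigr) \;\cong\; \colimunder_{H \in \SubGF{G}{\FIN}} K_0(R_{\rho|_H}[H]),
\]
using the standard identification of zeroth Bredon homology with a colimit over the orbit category together with the fact that $K_0$ is insensitive to inner automorphisms, so $H \mapsto K_0(R_{\rho|_H}[H])$ factors through $\SubGF{G}{\FIN}$ and the colimit over $\OrGF{G}{\FIN}$ coincides with the one over $\SubGF{G}{\FIN}$ (see~\cite{Lueck(2022book)}). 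Under these identifications the $\FIN$-assembly map is induced by the projections $G/H \to G/G$, hence by the inclusions $R_{\rho|_H}[H] \hookrightarrow R_\rho[G]$, so it is exactly the canonical map of the statement; by the first step it is an isomorphism.

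Once Theorem~\ref{the:passage_from_Fin_to_Vcyc} is available the argument is essentially formal: the regularity of $R_{\rho|_H}[H]$ is routine via the separability idempotent, and the genuine bookkeeping is confined to the last step -- the collapse of the Atiyah--Hirzebruch spectral sequence in this range (which uses only the vanishing of negative $K$-groups), the passage from $\OrGF{G}{\FIN}$ to $\SubGF{G}{\FIN}$, and the identification of the resulting isomorphism with the canonical map in the statement. This is the step I would expect to require the most care, though no new ideas.
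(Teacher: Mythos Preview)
Your proposal is correct and follows essentially the same route as the paper: the result is proved there as (the ring case of) Corollary~\ref{cor:FJ_and_regular_additive_category}, which deduces it from Theorem~\ref{the:passage_from_Fin_to_Vcyc_additive_categories} (the additive-category version of Theorem~\ref{the:passage_from_Fin_to_Vcyc}) together with the equivariant Atiyah--Hirzebruch spectral sequence and the regularity of $\cala|_H[H]$ for finite $H$, exactly as you outline. Your write-up supplies more detail than the paper does---the separability-idempotent argument for regularity, the first-quadrant collapse, and the passage from $\OrGF{G}{\FIN}$ to $\SubGF{G}{\FIN}$---but the strategy is identical.
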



  \subsection{On the $K$-theoretic relative assembly map from $\FIN$ to $\VCYC$ for Artinian  coefficient rings}%
  \label{subsec:On_the_K_theoretic_relative_assembly_map_from_FIN_VCYC_for_Artinian_coefficient_rings}

  In the case that $R$ is an Artinian ring,
  we get even an integral result in degree $n \le 0$ without the assumption that
  the order of any finite subgroup of $G$ is invertible in $R$.

  \begin{theorem}\label{the:passage_from_Fin_to_Vcyc_Artinian}
    Let $G$ be a discrete group which satisfies the Full Farrell-Jones Conjecture.  Let $R$
    be an Artinian ring coming with a group homomorphism $\rho \colon G \to \aut(R)$.

    Then  the canoncial map
    \[
      \colimunder_{H \in \SubGF{G}{\FIN}} K_0(R_{\rho|_H}[H]) \to K_0(R_{\rho}[G])
    \]
    is an isomorphism and
    \[
      K_n(R_{\rho}[G]) = 0 \quad \text{for} \;n \le -1.
    \]
  \end{theorem}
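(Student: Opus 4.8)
The plan is to feed the hypotheses into the Farrell--Jones machinery and reduce the whole statement to the vanishing of the Nil-terms that occur in the passage from $\FIN$ to $\VCYC$, using that over an Artinian coefficient ring these Nil-terms vanish in non-positive degrees. By Remark~\ref{rem:staus_of_FJC}, the Full Farrell--Jones Conjecture for $G$ provides a natural isomorphism $H_n^G(\EGF{G}{\VCYC};\bfK_R) \xrightarrow{\cong} K_n(R_\rho[G])$ for every $n \in \IZ$. Moreover the relative assembly map $H_n^G(\EGF{G}{\FIN};\bfK_R) \to H_n^G(\EGF{G}{\VCYC};\bfK_R)$ is split injective by~\cite{Bartels(2003b)},~\cite{Lueck-Steimle(2016splitasmb)}, and by the analysis of its cokernel in~\cite{Lueck-Steimle(2016splitasmb)} (and the references therein) the complementary summand is a direct sum, indexed by the $G$-conjugacy classes of maximal infinite virtually cyclic subgroups $V \le G$, of Nil-terms: the twisted Bass Nil-groups $NK_n(R_{\rho|_F}[F];\phi^{\pm 1})$ when $V \cong F \rtimes_\phi \IZ$ is of type I, and a Waldhausen Nil-group formed from the twisted group rings $R_{\rho|_F}[F] \subseteq R_{\rho|_{F_0}}[F_0],\, R_{\rho|_{F_1}}[F_1]$ when $V = F_0 \ast_F F_1$ surjects onto $D_\infty$ (type II). In all cases $F, F_0, F_1$ are finite.

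The crucial observation is that a twisted group ring $R_{\rho|_F}[F]$ of a finite group over the Artinian ring $R$ is again Artinian, being finitely generated free as an $R$-module. For an Artinian ring $S$ with ring automorphism $\alpha$ I will use: (a) $K_n(S) = 0$ for $n \le -1$; and (b) $NK_n(S) = NK_n(S;\alpha) = 0$ for all $n \le 0$, and likewise the Waldhausen Nil-groups of Artinian rings vanish in degrees $\le 0$. Both statements reduce to the semisimple case: the Jacobson radical $J = \operatorname{rad}(S)$ is nilpotent and preserved by $\alpha$, so $J \triangleleft S$ and $J_\alpha[t] \triangleleft S_\alpha[t]$ (with quotient $(S/J)_\alpha[t]$) are nilpotent two-sided ideals; since $K_0$ is insensitive to nilpotent ideals and Bass expresses the negative $K$-groups as iterated cokernels of $K_0$-groups of polynomial and Laurent polynomial rings, we obtain $K_n(S) \cong K_n(S/J)$ and $K_n(S_\alpha[t]) \cong K_n\big((S/J)_\alpha[t]\big)$ for all $n \le 0$. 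Now $S/J$ is semisimple, hence regular Noetherian, so $(S/J)_\alpha[t]$ is regular Noetherian; therefore $K_n\big((S/J)_\alpha[t]\big) = 0$ for $n \le -1$ and $K_0(S/J) \xrightarrow{\cong} K_0\big((S/J)_\alpha[t]\big)$, which gives (a) and the Bass part of (b), the Waldhausen part being the analogous reduction. This Artinian-ness is exactly what upgrades the $\calp(G,R)$-isomorphism of Theorem~\ref{the:passage_from_Fin_to_Vcyc} to an integral statement in degrees $\le 0$: for a general regular $R$ the Nil-terms above need not vanish, but over an Artinian $R$ they do in non-positive degrees.

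Granting (b), the complement of $H_n^G(\EGF{G}{\FIN};\bfK_R)$ inside $H_n^G(\EGF{G}{\VCYC};\bfK_R)$ is zero for $n \le 0$, so the relative assembly map is an isomorphism in those degrees. It remains to compute $H_n^G(\EGF{G}{\FIN};\bfK_R)$ for $n \le 0$, which I do with the equivariant Atiyah--Hirzebruch (Bredon homology) spectral sequence $E^2_{p,q} = H_p^{\OrGF{G}{\FIN}}\big(\EGF{G}{\FIN}; \underline{K_q}\big) \Longrightarrow H_{p+q}^G(\EGF{G}{\FIN};\bfK_R)$, where $\underline{K_q}$ is the coefficient system $G/H \mapsto K_q(R_{\rho|_H}[H])$. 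Since $\EGF{G}{\FIN}$ has isotropy in $\FIN$, only the groups $K_q(R_{\rho|_H}[H])$ with $H$ finite enter, and these vanish for $q \le -1$ by (a); hence $E^2_{p,q} = 0$ for $q \le -1$. Consequently $H_n^G(\EGF{G}{\FIN};\bfK_R) = 0$ for $n \le -1$ and $H_0^G(\EGF{G}{\FIN};\bfK_R) = E^2_{0,0} = H_0^{\OrGF{G}{\FIN}}\big(\EGF{G}{\FIN};\underline{K_0}\big) \cong \colimunder_{H \in \SubGF{G}{\FIN}} K_0(R_{\rho|_H}[H])$, the last being the standard identification of the Bredon zeroth homology of $\EGF{G}{\FIN}$ with the colimit over the subgroup category (which uses that inner automorphisms act trivially on $K$-theory), cf.~\cite{Lueck(2022book)}. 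Combining the pieces: $K_n(R_\rho[G]) \cong H_n^G(\EGF{G}{\VCYC};\bfK_R) \cong H_n^G(\EGF{G}{\FIN};\bfK_R) = 0$ for $n \le -1$; and in degree $0$ the canonical map $\colimunder_{H \in \SubGF{G}{\FIN}} K_0(R_{\rho|_H}[H]) \to K_0(R_\rho[G])$ agrees, by naturality of the assembly maps, with the composite of the identification above, the relative assembly isomorphism in degree $0$, and the Farrell--Jones isomorphism, hence is an isomorphism.

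The main obstacle is fact (b) in the generality the argument requires: the vanishing in degrees $\le 0$ of all Nil-terms occurring in the splitting of the relative assembly map -- both the twisted Bass Nil-groups and the Waldhausen Nil-groups attached to the type-II virtually cyclic subgroups, and ultimately their analogues with coefficients in a regular (or semisimple) additive category. The twisted Bass case is the elementary reduction modulo the nilpotent radical sketched above. The Waldhausen case is the delicate part: one must carry out the reduction modulo the nilpotent ideals compatibly in the three rings $R_{\rho|_F}[F] \subseteq R_{\rho|_{F_0}}[F_0],\, R_{\rho|_{F_1}}[F_1]$ and match it with the precise form of the Waldhausen Nil-summand in the splitting; alternatively, one routes the passage $\FIN \to \VCYC$ through the intermediate family of virtually cyclic subgroups of type I, so that only (twisted) Bass-type Nil-terms arise at each step. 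Once (b) is available at this level of generality, the rest of the proof is the formal assembly-map bookkeeping described above.
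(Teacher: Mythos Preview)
Your proposal is correct, and in fact more detailed than the paper's own proof, which consists of a single citation to~\cite[Theorem~13.61~(v)]{Lueck(2022book)} for the untwisted case together with the remark that the twisted case is analogous. The strategy you outline --- Full Farrell--Jones to reduce to $\VCYC$, kill the Nil-terms to get down to $\FIN$, then run the equivariant Atiyah--Hirzebruch spectral sequence using $K_n(R_{\rho|_H}[H]) = 0$ for $n \le -1$ and finite $H$ --- is exactly the argument behind that citation and is also the pattern the paper itself uses in Corollary~\ref{cor:FJ_and_regular_additive_category}.

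Two small remarks. First, your ``alternative'' of routing the passage $\FIN \to \VCYC$ through the intermediate family $\CVCYC$ of type~I virtually cyclic subgroups is not a workaround but the standard first move: the relative assembly map $H_n^G(\EGF{G}{\CVCYC};\bfK_R) \to H_n^G(\EGF{G}{\VCYC};\bfK_R)$ is an isomorphism for all $n$ by~\cite{Davis-Quinn-Reich(2011)} (this is precisely how the paper opens the proof of Theorem~\ref{the:passage_from_Fin_to_Vcyc_additive_categories}), so Waldhausen Nil never enters and you only need your twisted Bass-Nil vanishing~(b). Second, your reduction modulo the Jacobson radical is clean and correct: since $\alpha$ is a ring automorphism it preserves $J$, the ideal $J_\alpha[t]$ is nilpotent, hence $K_n(S_\alpha[t]) \cong K_n((S/J)_{\bar\alpha}[t])$ for $n \le 0$, and the twisted Nil of the semisimple quotient vanishes in all degrees by the regular case (the additive-category version is~\cite[Theorem~8.1]{Bartels-Lueck(2020additive)}, invoked in the paper's proof of Theorem~\ref{the:passage_from_Fin_to_Vcyc_additive_categories}).
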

  \begin{proof} This is proved in~\cite[Theorem~13.61~(v)]{Lueck(2022book)}
    for trivial $\rho$.  It directly extends to the case where $\rho$ is non-trivial.
  \end{proof}

  Note that the prototype of results such as Theorem~\ref{the:passage_from_Fin_to_Vcyc_Artinian} is due
  to Moody~\cite{Moody(1987)}, who proved the bijectivity of the canonical map
  $\colimunder_{H \in \SubGF{G}{\FIN}} K_0(RH) \to K_0(RG)$ for $R$ a field of characteristic
  zero and $G$ a virtually polycyclic group.


  \subsection{On the $K$-theoretic relative assembly map from $\FIN$ to $\VCYC$ for fields as coefficients}%
  \label{subsec:On_the_K_theoretic_relative_assembly_map_from_FIN_VCYC_for_fields_as_coefficients}

  For the reader's convenience we summarize what happens in the special case, where $R$ is
  a skew-field $F$.  Note that a skew-field of characteristic zero is regular and
  satisfies $\calp(G,F) = \emptyset$ and any skew-field is an Artinian ring. Hence we
  conclude from Theorem~\ref{the:passage_from_Fin_to_Vcyc},
  Theorem~\ref{the:FJ_and_regular}, and
  Theorem~\ref{the:passage_from_Fin_to_Vcyc_Artinian}

  \begin{theorem}\label{the:field_coefficients}
    Let $G$ be a group and $F$ be a skew-field coming
    with a group homomorphism $\rho \colon G \to \aut(F)$
    into the group of field automorphism of $F$. Then:

    \begin{enumerate}

    \item\label{the:field_coefficients:relative_assembly_integral}
        The relative assembly map
        \[
        H_n^G(\EGF{G}{\FIN};\bfK_F) \to H_n^G(\EGF{G}{\VCYC};\bfK_F)
      \]
      is bijective for every $n \in \IZ$  if one of the following condition holds:

      \begin{itemize}
      \item $F$ has characteristic zero;
      \item There exists a prime $p$ such that $F$ has characteristic $p$
        and $G$ contains no non-trivial finite $p$-group;
      \end{itemize}

    \item\label{the:field_coefficients:relative_assembly_characteristic_p}
        If $p$ is a prime and $F$ is a skew-field of characteristic $p$, then the map
        \[
        H_n^G(\EGF{G}{\FIN};\bfK_F)[1/p]\to H_n^G(\EGF{G}{\VCYC};\bfK_F)[1/p]
      \]
      is bijective for every $n \in \IZ$;

      \item\label{the:field_coefficients:K_0}
      The canonical map
      \[
       \colimunder_{H \in \SubGF{G}{\FIN}} K_0(F_{\rho|_H}[H]) \to K_0(F_{\rho}[G])
      \]
      is bijective  if $G$ satisfies the Full Farrell-Jones Conjecture;
        
    \item\label{the:field_coefficients:negative}
      We have $K_n(F_{\rho}[G])$ for $n \le -1$,  if $G$ satisfies the Full Farrell-Jones Conjecture.
  \end{enumerate}
\end{theorem}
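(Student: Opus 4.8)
The plan is to deduce all four parts by specializing the three theorems already proved for regular, respectively Artinian, coefficients. The two elementary facts that make this work are: (a) any skew-field $F$ is a regular ring, since it is a division ring and hence has global dimension $0$ (every module is free); and (b) any skew-field is an Artinian ring, since it has length $1$ as a module over itself. Thus Theorem~\ref{the:passage_from_Fin_to_Vcyc}, Theorem~\ref{the:FJ_and_regular}, and Theorem~\ref{the:passage_from_Fin_to_Vcyc_Artinian} are all available with $R = F$ and the given $\rho \colon G \to \aut(F)$.

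The crux of assertions~\eqref{the:field_coefficients:relative_assembly_integral} and~\eqref{the:field_coefficients:relative_assembly_characteristic_p} is to compute the set of primes $\calp(G,F)$ of Notation~\ref{calp(G,R)}. First I would observe that a prime $q$ is invertible in $F$ precisely when $q \cdot 1_F \neq 0$: if $F$ has characteristic zero this holds for every prime, so $\calp(G,F) = \emptyset$; if $F$ has characteristic $p$ this holds for every prime $q \neq p$, so the only prime that can lie in $\calp(G,F)$ is $p$ itself, and it does lie in $\calp(G,F)$ if and only if $G$ contains a non-trivial finite $p$-subgroup. In particular $\calp(G,F) \subseteq \{p\}$ in characteristic $p$, with $\calp(G,F) = \emptyset$ exactly under the two bulleted hypotheses of~\eqref{the:field_coefficients:relative_assembly_integral}. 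Now Theorem~\ref{the:passage_from_Fin_to_Vcyc}, applied to the regular ring $F$, says the relative assembly map $H_n^G(\EGF{G}{\FIN};\bfK_F) \to H_n^G(\EGF{G}{\VCYC};\bfK_F)$ is a $\calp(G,F)$-isomorphism for all $n$. When $\calp(G,F) = \emptyset$ this is a genuine isomorphism, proving~\eqref{the:field_coefficients:relative_assembly_integral}. When $F$ has characteristic $p$ we have $\calp(G,F) \subseteq \{p\}$, so tensoring with $\IZ[1/p]$ already inverts every prime in $\calp(G,F)$; hence the map becomes bijective after applying $-[1/p]$, which is~\eqref{the:field_coefficients:relative_assembly_characteristic_p}.

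For~\eqref{the:field_coefficients:K_0} and~\eqref{the:field_coefficients:negative} I would simply invoke Theorem~\ref{the:passage_from_Fin_to_Vcyc_Artinian} with $R = F$: since $F$ is Artinian and $G$ satisfies the Full Farrell-Jones Conjecture, the canonical map $\colimunder_{H \in \SubGF{G}{\FIN}} K_0(F_{\rho|_H}[H]) \to K_0(F_{\rho}[G])$ is an isomorphism and $K_n(F_{\rho}[G]) = 0$ for $n \le -1$. (Here one may also cross-check via Theorem~\ref{the:FJ_and_regular} in the characteristic-zero case, where $F$ is in addition regular and the order of every finite subgroup is invertible, but the Artinian statement already covers everything.) There is essentially no serious obstacle; the only point requiring a line of argument, rather than a citation, is the determination of which primes are invertible in a skew-field of a given characteristic, and this is immediate. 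The most one has to be careful about is bookkeeping with twisting: all three input theorems are stated (or, in the Artinian case, noted) to hold for non-trivial $\rho$, so no additional work is needed there.
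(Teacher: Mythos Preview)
Your proposal is correct and follows essentially the same approach as the paper: the paper simply remarks that a skew-field is regular and Artinian, so that Theorems~\ref{the:passage_from_Fin_to_Vcyc}, \ref{the:FJ_and_regular}, and~\ref{the:passage_from_Fin_to_Vcyc_Artinian} apply, and your computation of $\calp(G,F)$ in each characteristic is exactly the bookkeeping this reduction requires.
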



  \subsection{On the $K$-theoretic relative assembly map from $\FIN$ to $\VCYC$ for $\IZ$ as coefficients}%
  \label{subsec:On_the_K_theoretic_relative_assembly_map_from_FIN_VCYC_for_Z_as_coefficients}

  For the reader's convenience we summarize what is known in the special case
  where $R$ is  the ring $\IZ$ of integers.

  \begin{theorem}\label{the:coefficients_in_Z}
    Let $G$ be a group. Then

    \begin{enumerate}
       \item\label{the:coefficients_in_Z_relative_assembly_n_arbitrary}
      The relative assembly map
      \[
        H_n^G(\EGF{G}{\FIN};\bfK_\IZ) \to H_n^G(\EGF{G}{\VCYC};\bfK_\IZ)
      \]
      is a $\calp(G;\IZ)$-isomorphism for every $n \in \IZ$, where $\calp(G,\IZ)$ is the set of primes $p$,
      for which $G$ contains a non-trivial finite $p$-group;
      
    \item\label{the:coefficients_in_Z_relative_assembly_n_le_-1}
      The canonical map
      \[
       \colimunder_{H \in \SubGF{G}{\FIN}} K_{-1}(\IZ_{\rho|_H}[H]) \to K_{-1}(\IZ_{\rho}[G])
      \]
      is bijective and $K_n(\IZ[G]) = 0$ for $n \le -2$ if $G$ satisfies the Full Farrell-Jones Conjecture;

    \item\label{the:coefficients_in_Z:torsionfree}
        Suppose  that $G$ is torsionfree and  satisfies the Full Farrell-Jones Conjecture.
       Then $K_n(\IZ G)$ for $n \le -1$, the reduced projective class group $\widetilde{K}_0(\IZ G)$,
       and the Whitehead group $\Wh(G)$ are trivial.
     \end{enumerate}
   \end{theorem}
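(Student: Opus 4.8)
The plan is to reduce all three assertions to results already established in this section, together with one classical input on the negative $K$-theory of integral group rings of finite groups. Assertion~\ref{the:coefficients_in_Z_relative_assembly_n_arbitrary} requires nothing new: it is the special case $R=\IZ$ (with the necessarily trivial homomorphism $\rho$) of Theorem~\ref{the:passage_from_Fin_to_Vcyc}, and since no prime is invertible in $\IZ$ the set $\calp(G,\IZ)$ of Notation~\ref{calp(G,R)} is exactly the set of primes $p$ for which $G$ contains a non-trivial finite $p$-subgroup.

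For assertion~\ref{the:coefficients_in_Z_relative_assembly_n_le_-1} I would argue in three steps. First, the Full Farrell-Jones Conjecture for $G$ supplies, via~\eqref{FJS-assembly_for_rings}, an isomorphism $H_n^G(\EGF{G}{\VCYC};\bfK_\IZ)\cong K_n(\IZ[G])$ for all $n\in\IZ$. Second, I claim the relative assembly map $H_n^G(\EGF{G}{\FIN};\bfK_\IZ)\to H_n^G(\EGF{G}{\VCYC};\bfK_\IZ)$ is an \emph{integral} isomorphism for $n\le -1$, not merely a $\calp(G,\IZ)$-isomorphism as in assertion~\ref{the:coefficients_in_Z_relative_assembly_n_arbitrary}. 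It is split injective by~\cite[Theorem~1.3]{Bartels(2003b)}, so it suffices to show that its cofiber has vanishing homotopy groups in degrees $\le -1$; by the standard description of the relative term as a sum indexed by the conjugacy classes of maximal infinite virtually cyclic subgroups $V\le G$ (see~\cite{Lueck(2022book)}), that cofiber is built from the (twisted) Bass-Nil groups $NK_*(\IZ[F])$ of the finite subgroups $F\le V$ when $V\cong F\rtimes_\alpha\IZ$, and from the corresponding Waldhausen Nil groups when $V$ surjects onto the infinite dihedral group. These Nil groups vanish in degrees $\le -1$, a classical fact about integral group rings of finite groups (Carter, Farrell-Jones; it rests on $\IZ[F]$ being a Noetherian ring of Krull dimension one and its negative $K$-theory being well behaved under polynomial extensions). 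Third, in the equivariant Atiyah-Hirzebruch spectral sequence for $H_*^G(\EGF{G}{\FIN};\bfK_\IZ)$ the $E^2$-term in total degree $n$ only involves the groups $K_q(\IZ[H])$ for finite subgroups $H\le G$ and $q\le n$; since $K_q(\IZ[H])=0$ for $q\le -2$ by Carter's theorem, this forces $H_n^G(\EGF{G}{\FIN};\bfK_\IZ)=0$ for $n\le -2$ and identifies $H_{-1}^G(\EGF{G}{\FIN};\bfK_\IZ)$ with $\colimunder_{H\in\SubGF{G}{\FIN}}K_{-1}(\IZ[H])$, the relevant $0$-th Bredon homology group being by the usual comparison the colimit over the subgroup category. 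Concatenating the three steps yields assertion~\ref{the:coefficients_in_Z_relative_assembly_n_le_-1}; this is essentially a reformulation of~\cite[Theorem~13.61]{Lueck(2022book)}, which one may also cite directly.

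For assertion~\ref{the:coefficients_in_Z:torsionfree}, assume $G$ torsionfree and satisfying the Full Farrell-Jones Conjecture; then $\calp(G,\IZ)=\emptyset$, the category $\SubGF{G}{\FIN}$ has a single object carrying only the identity morphism, and $\EGF{G}{\FIN}=EG$. The statements about $K_n(\IZ[G])$ for $n\le -1$ and about $\widetilde{K}_0(\IZ[G])$ are immediate from Theorem~\ref{the:FJ_and_regular} applied to the regular ring $\IZ$ (in which the order $1$ of the only finite subgroup of $G$ is invertible): it gives $K_0(\IZ[G])\cong\colimunder_{H\in\SubGF{G}{\FIN}}K_0(\IZ[H])=K_0(\IZ)=\IZ$, hence $\widetilde{K}_0(\IZ[G])=0$, and $K_n(\IZ[G])=0$ for $n\le -1$. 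For $\Wh(G)$ I would combine assertion~\ref{the:coefficients_in_Z_relative_assembly_n_arbitrary} with the Full Farrell-Jones Conjecture to obtain $K_1(\IZ[G])\cong H_1^G(EG;\bfK_\IZ)$, and then read off from the Atiyah-Hirzebruch spectral sequence with $E^2_{p,q}=H_p(BG;K_q(\IZ))$, using $K_q(\IZ)=0$ for $q<0$, that $K_1(\IZ[G])$ is an extension of $H_1(BG;\IZ)=G/[G,G]$ by a quotient of $K_1(\IZ)=\{\pm 1\}$; since the subgroup $\langle \pm g \mid g\in G\rangle\le K_1(\IZ[G])$ surjects onto the quotient $G/[G,G]$ (the class of $g$ maps to $g$) and contains the bottom layer (generated by the image of $-1\in K_1(\IZ)$), it is all of $K_1(\IZ[G])$, so $\Wh(G)=K_1(\IZ[G])/\langle \pm g\mid g\in G\rangle=0$.

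The one point that is not a formal consequence of the Full Farrell-Jones Conjecture plus spectral-sequence bookkeeping is the vanishing in degrees $\le -1$ of the (twisted) Bass-Nil and Waldhausen Nil groups attached to integral group rings of finite groups; I expect this to be the crux of assertion~\ref{the:coefficients_in_Z_relative_assembly_n_le_-1}, while assertions~\ref{the:coefficients_in_Z_relative_assembly_n_arbitrary} and~\ref{the:coefficients_in_Z:torsionfree} are then immediate and routine respectively.
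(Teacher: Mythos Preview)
Your proof is correct and matches the paper's approach. For~\ref{the:coefficients_in_Z_relative_assembly_n_arbitrary} both you and the paper specialize Theorem~\ref{the:passage_from_Fin_to_Vcyc} to $R=\IZ$; for~\ref{the:coefficients_in_Z_relative_assembly_n_le_-1} and~\ref{the:coefficients_in_Z:torsionfree} the paper simply cites~\cite[Theorem~13.61~(vi), (iii), (iv)]{Lueck(2022book)}, whereas you spell out the spectral-sequence and Nil-vanishing content behind those citations (and route part of~\ref{the:coefficients_in_Z:torsionfree} through Theorem~\ref{the:FJ_and_regular}).
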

   \begin{proof}~\ref{the:coefficients_in_Z_relative_assembly_n_arbitrary}
     This follows directly from Theorem~\ref{the:passage_from_Fin_to_Vcyc}.
     \\[1mm]~\ref{the:coefficients_in_Z_relative_assembly_n_le_-1}
     See~\cite[Theorem~13.61~(vi)]{Lueck(2022book)}.
     \\[1mm]~\ref{the:coefficients_in_Z:torsionfree}
     See~\cite[Theorem~13.61~(iii) and~(iv)]{Lueck(2022book)}.
   \end{proof}
   

   \subsection{Totally disconnected groups}\label{subsec:totally_disconnected_groups}

   So far $G$ has been a discrete group. Now we want to deal with td-groups, i.e.,
   locally compact second countable totally disconnected topological Hausdorff groups, and
   the algebraic $K$-theory of  their Hecke algebras.  In some special cases or in a weaker form,
   we  extend the main results
   of~\cite{Bartels-Lueck(2023K-theory_red_p-adic_groups)}
   from characteristic zero to prime characteristic.
   
   Let $R$ be a (not necessarily commutative) ring.  We will need the following assumption
   to make sense of the notion of a Hecke algebra. It is taken from~\cite[page~9]{Blondel(2011)}.

    \begin{assumption}\label{ass:existence_of_widetilde(U)_G-Z-categories}
   There exists a compact open subgroup $\widetilde{U}$ of $G$ such that for any compact
   open subgroup $\widetilde{U}'\subseteq \widetilde{U}$ of $\widetilde{U}$ the index
   $[\widetilde{U}: \widetilde{U}']$ is invertible in $R$.
 \end{assumption}
 This assumption is automatically satisfied if $G$ is discrete, since then we can take
 $\widetilde{U} = \{1\}$, or if $\IQ \subseteq R$.  If $p$ is a prime number which is invertible in $R$,
 then Assumption~\ref{ass:existence_of_widetilde(U)_G-Z-categories}
 is satisfied for any subgroup of a reductive $p$-adic group $G$
 by~\cite[Lemma~1.1]{Meyer-Solleveld(2010)} and Lemma~\ref{lem:cal_and_subgroups}.

   Suppose that
   Assumption~\ref{ass:existence_of_widetilde(U)_G-Z-categories} is satisfied.
   Then the Hecke algebra $\calh(G;R)$ is defined as the algebra of locally constant functions $G \to R$
   with compact support and  multiplication given by  convolution, see for
   instance~\cite[Section~11]{Bartels-Lueck(2023forward)}.

One may consider the $K$-groups $K_n(\calh(G;R))$ of $\calh(G;R)$ for $n \in \IZ$.  There
is an assembly map
\begin{equation}\label{eq:COP-assembly-homolgy-theory-R}
  H_n^G(\EGF{G}{\COP};\bfK_R) \to H_n^G(G/G;\bfK_R) = K_n(\calh(G;R)).
\end{equation}
Here $H_*^G$ is a $G$-homology theory digesting smooth $G$-$CW$-complexes, which satisfies
$\calh^G_n(G/U;\bfK_R) \cong K_n(\calh(U;R))$ for every open subgroup $U$ of $G$, the
$G$-$CW$-complex $\EGF{G}{\COP}$ is any model for the classifying space for proper smooth
$G$-actions, and the map~\eqref{eq:COP-assembly-homolgy-theory-R} is induced by the
projection $\EGF{G}{\COP}  \to G/G$.  We say that  $R$ is \emph{$l$-uniformly 
    regular} for the natural number $l$ if $R$ is noetherian and
     every $R$-module admits a projective resolution
     of length at most $l$. We call $R$ \emph{uniformly regular}
     if $R$ is $l$-uniformly regular for  some natural number $l$.
     If $G $ is modulo a compact subgroup isomorphic to a
  closed subgroup of a reductive $p$-adic group and $R$ is uniformly regular and satisfies $\IQ \subseteq R$, e.g., $R$
  is a field of characteristic zero, then we conclude
  from~\cite[Corollary~1.18]{Bartels-Lueck(2023K-theory_red_p-adic_groups)}  that the
map~\eqref{eq:COP-assembly-homolgy-theory-R} is bijective, 
$K_n(\calh(G;R))$ vanishes for $n \le -1$, and the canonical map
\begin{equation}
  \colimunder_{U \in \Sub_\COP(G)} \Kgroup_0 (\calh(U;R)) \to \Kgroup_0 (\calh(G;R))
  \label{colim_K_0}
\end{equation}
is bijective. Here $\Sub_\COP(G)$ is analogously defined as
  $\SubGF{G}{\FIN}$ but now for $\COP$ the family of compact open subgroups of $G$.

  Next we want to explain what we can say in the case, where the condition that $R$ is
  uniformly regular and satisfies $\IQ \subseteq R$ is weakened to the condition that $R$ is
  uniformly regular   and only certain primes have to be invertible in $R$ or to the condition that
  $N \cdot 1_R = 0$ holds in $R$ for some natural number $N$.

\begin{theorem}\label{the:passage_from_COP_to_CVcyc_in_characteristic_N}
  Let $p$ be a prime. Assume that $G $ is modulo a compact subgroup isomorphic to a
  closed subgroup of a reductive $p$-adic group. Let $N$ be a natural number and let $R$ a
  be ring with unit $1_R$.

  \begin{enumerate}
  \item\label{the:passage_from_COP_to_CVcyc_in_characteristic_N:general_char_p} Suppose
    that $N \cdot 1_R = 0$ and that
   Assumption~\ref{ass:existence_of_widetilde(U)_G-Z-categories} is satisfied.
    Then the assembly map~\eqref{eq:COP-assembly-homolgy-theory-R}
    induces an isomorphism
    \[H_n^G(\EGF{G}{\COP};\bfK_R)[1/N] \to K_n \big(\calh(G;R)\big)[1/N]
    \]
    for every $n \in \IZ$;

  \item\label{the:passage_from_COP_to_CVcyc_in_characteristic_N:Artinian} Suppose that
    $N \cdot 1_R = 0$, and that $R$ is Artinian, e.g.,
    $R$ is a field of  prime characteristic $q$ for $p \not = q$ and we take $N = q$. Suppose that
   Assumption~\ref{ass:existence_of_widetilde(U)_G-Z-categories} is satisfied.
    Then
    \[ K_n (\calh(G;R))[1/N] = 0 \quad \text{for}\; n \le -1
    \]
    and the map induced by~\eqref{colim_K_0}
    \[
      \colimunder_{U \in \Sub_\COP(G)} \Kgroup_0 (\calh(U;R))[1/N] \to \Kgroup_0
      (\calh(G;R))[1/N]
    \]
    is bijective;

  \item\label{the:passage_from_COP_to_CVcyc_in_characteristic_N:general} Suppose that $R$
    is uniformly regular and for any two compact open subgroups $U_0$ and $U_1$ of $G$
    with $U_0 \subseteq U_1$ the index $[U_1 : U_0]$ is invertible in $R$. Then:

    \begin{enumerate}
    \item    Assumption~\ref{ass:existence_of_widetilde(U)_G-Z-categories} is satisfied;

    \item The  map~\eqref{eq:COP-assembly-homolgy-theory-R} is bijective for all $n \in \IZ$;

    \item We have     $K_n (\calh(G;R))= 0$ for $n \le 1$;

    \item The canoncial map
    \[
      \colimunder_{U \in \Sub_\COP(G)} \Kgroup_0 (\calh(U;R))\to \Kgroup_0 (\calh(G;R))
    \]
    is bijective.
  \end{enumerate}
\end{enumerate}
\end{theorem}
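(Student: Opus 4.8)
The plan is to reduce the statement, via a Farrell--Jones identification and the Transitivity Principle, to the vanishing (integrally, or after inverting $N$) of Nil-terms of twisted group rings of finite groups, and then to feed in the Nil-term analysis that is the technical core of the paper. First I would use the identification
\[
  H_n^G(\EGF{G}{\CVCYCOP};\bfK_R) \xrightarrow{\;\cong\;} H_n^G(G/G;\bfK_R)=K_n(\calh(G;R)),
\]
the Farrell--Jones isomorphism for the Hecke algebra of a closed subgroup of a reductive $p$-adic group modulo a compact subgroup, which holds for every coefficient ring satisfying Assumption~\ref{ass:existence_of_widetilde(U)_G-Z-categories} by the methods of~\cite{Bartels-Lueck(2023K-theory_red_p-adic_groups)}. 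Composing with it, assertion~(i) and the assembly-map parts of~(ii) and~(iii) become statements about the relative assembly map $H_n^G(\EGF{G}{\COP};\bfK_R)\to H_n^G(\EGF{G}{\CVCYCOP};\bfK_R)$. By the Transitivity Principle for the $G$-homology theory $H^G_*(-;\bfK_R)$ on smooth $G$-$CW$-complexes (applied, in cases~(i) and~(ii), to the localized theory $H^G_*(-;\bfK_R)[1/N]$, localization being exact), it then suffices to prove that for every $H$ in the family $\CVCYCOP$ of compact-open-by-virtually-cyclic subgroups the assembly map $H_n^H(\EGF{H}{\COP\cap H};\bfK_R)\to K_n(\calh(H;R))$ is an isomorphism, resp.\ an isomorphism after inverting $N$.

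For $H$ compact this map is the identity, so I may assume $H$ has a unique, open, normal maximal compact subgroup $C$ with $H/C$ infinite virtually cyclic, hence infinite cyclic or infinite dihedral. Correspondingly $\calh(H;R)$ is a twisted Laurent algebra $\calh(C;R)_\alpha[t,t^{-1}]$, or an amalgamated product $\calh(H_0;R)\ast_{\calh(C;R)}\calh(H_1;R)$ of Hecke algebras of compact groups. Taking the line $\IR$ with the translation, resp.\ infinite-dihedral, $H$-action as a one-dimensional model for $\EGF{H}{\COP\cap H}$ and combining the associated Mayer--Vietoris sequence with the twisted Bass--Heller--Swan decomposition, resp.\ Waldhausen's description of the $K$-theory of an amalgam, one identifies --- just as in the discrete case --- the assembly map above with a split injection whose cokernel is $NK_n(\calh(C;R);\alpha)\oplus NK_n(\calh(C;R);\alpha^{-1})$, resp.\ the Waldhausen Nil-groups of the amalgam. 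Since $C$ is profinite and Assumption~\ref{ass:existence_of_widetilde(U)_G-Z-categories} propagates to $C$ by Lemma~\ref{lem:cal_and_subgroups}, one has $\calh(C;R)\cong\colimunder_{C'}R_\rho[C/C']$ as a filtered colimit, over the open normal subgroups $C'$ of $C$, of $\rho$-twisted group rings of the finite quotients, with $\alpha$ the colimit of compatible automorphisms of a cofinal $\alpha$-invariant subsystem. As algebraic $K$-theory and the Bass and Waldhausen Nil-functors commute with filtered colimits of rings, the cokernel in question is the filtered colimit of the corresponding Nil-groups of the finite twisted group rings $R_\rho[C/C']$.

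It then remains to see that these finite Nil-groups vanish in the relevant sense. In cases~(i) and~(ii) each $R_\rho[C/C']$ is an algebra over $\IZ/N\IZ$, and here I would invoke the big Witt vector module structure on Nil-terms developed in the paper, together with its continuity: on a $\IZ/N\IZ$-algebra a power of each prime dividing $N$ acts, up to a unit, as a locally nilpotent Verschiebung operator, so these Nil-groups are $N$-primary torsion and die after inverting $N$; hence the relative assembly map is an isomorphism after inverting $N$, which gives~(i). For~(ii) each $R_\rho[C/C']$ is moreover Artinian, hence has vanishing negative $K$-groups ($K_n(A)\cong K_n(A/\operatorname{rad}A)$ for $n\le 0$ since $\operatorname{rad}A$ is nilpotent, and the semisimple ring $A/\operatorname{rad}A$ is regular), so $K_n(\calh(U;R))=0$ for $n\le -1$ and every compact open $U\le G$; plugging this into the equivariant Atiyah--Hirzebruch spectral sequence for $H^G_*(\EGF{G}{\COP};\bfK_R)$, whose $E^2$-terms are concentrated in non-negative filtration degrees, gives $H^G_n(\EGF{G}{\COP};\bfK_R)=0$ for $n\le -1$, and combining this with~(i) yields $K_n(\calh(G;R))[1/N]=0$ for $n\le -1$; the $K_0$-statement follows because the edge homomorphism identifies $H^G_0(\EGF{G}{\COP};\bfK_R)$ with $\colimunder_{U\in\Sub_\COP(G)}K_0(\calh(U;R))$ once the isotropy algebras have vanishing negative $K$-theory, and one then inverts $N$. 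In case~(iii) the hypothesis is precisely that every index of compact open subgroups of $G$ is invertible in $R$, which gives Assumption~\ref{ass:existence_of_widetilde(U)_G-Z-categories} and hence~(a); and since $|C/C'|$ (and $|H_i/C'|$) is such an index, $R\to R_\rho[C/C']$ is a separable ring extension, so $R_\rho[C/C']$ is again noetherian of finite global dimension, i.e.\ uniformly regular. The twisted Laurent and amalgam constructions over regular noetherian rings have vanishing Bass and Waldhausen Nil-groups, so the cokernel above vanishes and the relative assembly map is an isomorphism, which is~(b); the vanishing of negative $K$-groups of regular noetherian rings then yields $K_n(\calh(G;R))=0$ for $n\le-1$ exactly as in~(ii), and the $K_0$-colimit statement~(d) follows from the same edge-homomorphism argument, now integrally.

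The hardest parts will be the two ingredients fed into this scheme. The first is the totally disconnected version of the Bass--Heller--Swan/Waldhausen identification of the cokernel of the $H$-assembly map with honest Nil-groups of $\calh(C;R)$, which is only a filtered colimit of finite twisted group rings and carries a twisting automorphism; this rests on the smooth-$G$-$CW$ Davis--L\"uck machinery of~\cite{Bartels-Lueck(2023K-theory_red_p-adic_groups)}. The second, where the novelty for~(i) and~(ii) really lies, is the assertion that the Nil-terms of a twisted group ring $R_\rho[F]$ of a finite group over a $\IZ/N\IZ$-algebra are annihilated by inverting $N$ --- this is exactly what the big Witt vector module structure and the Verschiebung/Frobenius operators are built to deliver. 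Granting these, parts~(i)--(iii) are assembled by the Transitivity Principle together with routine colimit and spectral-sequence bookkeeping.
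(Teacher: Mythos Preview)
Your overall scheme --- reduce to Nil-terms and kill them with the Witt-vector module structure (for~\ref{the:passage_from_COP_to_CVcyc_in_characteristic_N:general_char_p} and~\ref{the:passage_from_COP_to_CVcyc_in_characteristic_N:Artinian}) or with regularity (for~\ref{the:passage_from_COP_to_CVcyc_in_characteristic_N:general}) --- is the right one, and your treatment of the Atiyah--Hirzebruch part of~\ref{the:passage_from_COP_to_CVcyc_in_characteristic_N:Artinian} matches the paper's. But the route you take to reach the Nil-terms is genuinely different from the paper's and rests on infrastructure for td-groups that is not available in the form you assume.

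The paper does \emph{not} reduce via a Transitivity Principle to individual covirtually cyclic open subgroups $H$, nor does it write $\calh(C;R)$ as a filtered colimit of finite twisted group rings. It stays entirely inside the $\EP\CVCYC(G)$-framework of~\cite{Bartels-Lueck(2023K-theory_red_p-adic_groups)}: Theorem~\ref{the:VCYC_ass} gives the $\CVCYC$-assembly map (in the $\EP$ form of Definition~\ref{def:assembly-CvCYC-calb}) as a weak equivalence for any Hecke category with $G$-support; Theorem~\ref{thm:contC_G(P)-for-CVCYC-via-COMP_prime} and Lemma~\ref{lem:reduction_to_(10)} then reduce the $\COP$-to-$\CVCYC$ comparison, for each tuple $P=(G/V_1,\ldots,G/V_n)\in\EP\CVCYC(G)$, to showing that
\[
  \hocolimunder_{\underline{\Gamma}}\bfK\bigl(\contc^{\allG,0,\nowedge}_G(M)\bigr)\;\to\;\bfK\bigl(\contc^{\allG,0,\nowedge}_G(M)[\Gamma]\bigr)
\]
is a weak ($\calp_N$-)equivalence, with $\Gamma=\prod_i V_i/K_i$ free abelian. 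Iterated Bass--Heller--Swan plus Theorem~\ref{the:Consequences_of_the_W(Lambda)-module_structure_on_the_Nil-terms}, applied directly to the additive $\IZ/N$-category $\contc^{\allG,0,\nowedge}_G(M)$, finishes~\ref{the:passage_from_COP_to_CVcyc_in_characteristic_N:general_char_p}; the regularity check in Lemma~\ref{lem:desired_calp(cala,G)-equivalence_calp(calb;G)_is_empty} finishes~\ref{the:passage_from_COP_to_CVcyc_in_characteristic_N:general}. No passage to finite group rings occurs.

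Your route would need (a) a Transitivity Principle for the smooth $G$-homology theory on td-groups (not established in the cited references, and not how~\cite{Bartels-Lueck(2023K-theory_red_p-adic_groups)} is organized), (b) an identification of the $\EP$-framework $\CVCYC$-assembly with a classifying-space map $H^G_n(\EGF{G}{\CVCYCOP};\bfK_R)\to K_n(\calh(G;R))$, and (c) a twisted Bass--Heller--Swan decomposition for $\calh(H;R)$ together with compatibility of Nil with the colimit $\calh(C;R)\cong\colim e_{C'}\calh(C;R)e_{C'}$. Each is plausible, but they are real gaps; the paper bypasses all three by leveraging the $\EP$-reductions already carried out in~\cite{Bartels-Lueck(2023K-theory_red_p-adic_groups)}. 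Your infinite-dihedral case is also unnecessary here: in this framework the groups $V\in\CVCYC$ have $V/K_V$ infinite cyclic or trivial, so only the twisted-Laurent case arises.
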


One can define more general Hecke algebras $\calh(G,R,\rho,\omega)$
allowing a $G$-action $\rho$ on $R$ and central
character $\omega$  and everything carries over to this more general setting,
see Remark~\ref{rem:calp(underline(R))}. 

The proof of Theorem~\ref{the:passage_from_COP_to_CVcyc_in_characteristic_N}
will be given in Subsection~\ref{subsec:Proof_of_Theorem_ref(the:passage_from_COP_to_CVcyc_in_characteristic_N)}
and is based on a version of the Farrell-Jones Conjecture
for totally disconnected groups with categories with $G$-support as coefficients.


  \subsection{On the twisted Nil-terms of finite groups}%
  \label{subsec:On_the_twisted_Nil-terms_of_finite_groups}

  The proof of some of the results above relies on the following theorem.

  Let $F$ be  a finite group and $\alpha \colon  F \to F$ be a group automorphism.
  Let $F \rtimes_{\alpha} \IZ$ be the semidirect product associated to $\alpha$, where for the standard
   generator $t \in \IZ$ we have $tft^{-1} = \alpha(f)$ for $f \in F$.
  Let $R$ be a  ring coming with a group homomorphism $\mu \colon F \rtimes_{\alpha} \IZ \to \aut(R)$.
  Let $\rho \colon F \to \aut(R)$ be the restriction of $\mu$ to $F$.
  Our goal is to get information about the structure of the Nil-groups
  \[
    N\!K_n(R_{\rho}[F],\Psi) = \overline{K}_{n-1}(\Nil(R_{\rho}[F],\Psi))
  \]
  with  respect  to   the ring automorphism
  $\psi  \colon  R_{\rho}[F]  \xrightarrow{\cong} R_{\rho}[F]$ sending $r\cdot f$ for $r \in R$ and $f \in F$ to
  $\mu(t)(r) \cdot \alpha(f)$, where $\overline{K}_{n-1}(\Nil(R_{\rho}[F],\Psi))$
  is defined in Notation~\ref{not:overline(K)(Nil))} taking $\cala = \underline{R_{\rho}[F]}$
  for the $\IZ$-category $ \underline{R_{\rho}[F]}$, which has precisely one object 
and whose $\IZ$-module of endomorphisms is $R_{\rho}[F]$. 
  These  Nil-groups  appear in  the  twisted  Bass-Heller-Swan decomposition  for
  $R_{\mu}[F \rtimes_{\alpha}\IZ]   =    (R_{\rho}[F])_{\psi}[\IZ]$,
  see~\eqref{twisted_BHS_on_homotopy_groups}.

  Fix a prime number $p$.  Let $T_p$ be the set of triples $(P,k,y)$ consisting of a
  $p$-subgroup $P$ of $F$, an integer $k$ with $k \ge 1$, and an element $y \in F$ such
  that $c_y \circ \alpha^k(P) = P$ holds for the automorphism $c_y \colon F \to F$ sending
  $z$ to $yzy^{-1}$. Let
  $\psi_{(P,k,y)} \colon R_{\rho|_P}[P] \xrightarrow{\cong} R_{\rho|_P}[P]$ be the ring
  automorphism sending $r \cdot p$ for $r \in R$ and $p \in P$ to
  $\mu(yt^k)(r) \cdot c_y \circ \alpha^k(p)$.

  Given a triple $(P,k,y) \in T_p$, define a functor of Nil-categories
  \[
    \gamma(P,k,y) \colon \Nil(R_{\rho|_P}[P],\psi_{(P,k,y)}) \to \Nil(R_{\rho}[F],\psi)
  \]
  by sending an object in $\Nil(R[P],\psi|_P)$ given by a nilpotent $R[P]$-endomorphism
  $\varphi \colon (c_y \circ \alpha^k)_* Q = RP \otimes_{c_y \circ \alpha^k} Q \to Q$ for a
  finitely generated projective $R[P]$-module $Q$ to the object in $\Nil(R[F],\alpha^k)$
  given by the nilpotent $R[F]$-endomorphism
  $(\alpha^k)_* \bigl(R[F] \otimes_{R[P]} Q\bigr) = RF \otimes_{\alpha^k} \bigl(R[F]
  \otimes_{R[P]} Q\bigr) \to R[F] \otimes_{R[P]} Q$ sending $f_0 \otimes (f_1 \otimes q)$
  to $(\alpha^{-k}(f_0 ) f_1t^{-k}y^{-1}) \otimes \varphi(1 \otimes q)$. It induces for every
  $n \in \IZ$ a homomorphism
  \[ \gamma(P,k,y)_m \colon N\!K_n(R[P],c_y \circ \alpha^k) \to N\!K_n(R[F],\alpha^k).
  \]
  Let
  \[(V_k)_n \colon N\!K_n(R[F],\alpha^k) \to N\!K_n(R[F],\alpha)
  \]
  be the homomorphism induced by the Verschiebungs operator $V_k$, see~\eqref{Verschiebung_V_k}.
  
  \begin{theorem}\label{the:compuation_of_Nil-Terms_for_autos_of_finite_groups}
    The homomorphism
    \[\bigoplus_{(P,k,y) \in T_p}\bigl((V_K)_n \circ \gamma(P,k,y)_n\bigr)_{(p)} \colon
      \bigoplus_{(P,k,y) \in T_p} N\!K_n(R[P],c_y \circ \alpha^k)_{(p)} \to
      N\!K_n(R[F],\alpha)_{(p)}
    \]
    is surjective for every $n \in \IZ$, where the subscript $(p)$ stands for localization at the prime $p$.
  \end{theorem}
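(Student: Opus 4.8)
The plan is to reduce the surjectivity statement to a transfer argument involving the big Witt vector module structure on the localized Nil-groups, combined with an Artin-type induction result expressing idempotents of the $p$-local Burnside-type ring in terms of $p$-subgroups. First I would recall the $p$-local structure on $N\!K_n(R_\rho[F];\alpha)_{(p)}$ as a module over the ring of big Witt vectors $W(\IZ)$ (or its $p$-typical quotient), constructed via the Verschiebung operators $V_k$ and the Frobenius operators $F_k$; the key relations are $F_k V_k = k$ and the fact that the $V_k\bigl(N\!K_n(R_\rho[F];\alpha^k)\bigr)$ exhaust the group after inverting the primes dividing $k$. The point of localizing at $p$ is that for $k$ prime to $p$ the operator $V_k$ becomes invertible, so the only obstruction to surjectivity lies in the images coming from powers $\alpha^{p^j}$, which are precisely captured by the triples $(P,k,y)$ with $P$ a $p$-subgroup.

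The second ingredient I would establish is a Mackey/double-coset decomposition for the induction functors $\gamma(P,k,y)$. For a subgroup $H \le F$ stable under $c_y\circ\alpha^k$, the composite of induction $R_{\rho|_H}[H] \to R_\rho[F]$ followed by restriction decomposes, by the usual double-coset formula for twisted group rings, as a sum of induction-restriction terms indexed by $(c_y\circ\alpha^k)$-twisted double cosets. Iterating this and using the Mackey-functor structure of $N\!K_n(-)_{(p)}$ in the group variable, I would show that the total image of the $\gamma(P,k,y)$ over all $p$-subgroups $P$ equals the image of the idempotent $e_p \in A(F)_{(p)}$ in the $p$-localized Burnside ring that projects onto the "$p$-subgroup part"; here one uses that $A(F)_{(p)}$ is generated, after inverting the primes other than $p$, by transitive $F$-sets $F/P$ with $P$ a $p$-group (this is the standard fact that the complement of the $p$-subgroups corresponds to a unit multiple of a nilpotent, by Dress/Conner). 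Feeding in the Verschiebung operators $(V_k)_n$ then promotes the statement from the fixed automorphism $\alpha^k$ back to $\alpha$ itself, which is exactly the source of the extra twisting in the triples.

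Concretely the steps are: (1) set up the $W(\IZ)$-module structure and the compatibility of $V_k, F_k$ with induction from subgroups; (2) prove that after localizing at $p$ the natural map $\bigoplus_{k\ge 1}\bigoplus_{(\alpha^k)\text{-stable }H}V_k\circ\ind_H^F \to N\!K_n(R_\rho[F];\alpha)_{(p)}$ is surjective, reducing to $k$ a power of $p$ by invertibility of $V_k$ for $(k,p)=1$; (3) use the Burnside-ring / Mackey-functor induction theorem to replace arbitrary $\alpha^{p^j}$-stable subgroups $H$ by $p$-subgroups $P$, at the cost of only $p$-local coefficients; (4) track the twisting elements $y$ through the double-coset formula to see that the $(c_y\circ\alpha^k)$-stable $p$-subgroups that arise are exactly those parametrized by $T_p$, and that the induced functor agrees with $\gamma(P,k,y)$. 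The main obstacle I expect is step (3): making the Mackey-functor induction theorem work in the \emph{twisted} setting, i.e.\ for the functor $H \mapsto N\!K_n(R_{\rho|_H}[H]; c_y\circ\alpha^k|_H)_{(p)}$, where the automorphism varies with $H$ and the double cosets are $\alpha$-twisted; one has to verify that this still forms a Mackey functor (or at least satisfies enough of the axioms for the relevant induction theorem of Dress-type), and that the idempotent $e_p$ acts as the identity on it. Everything else is either the formal Witt-vector yoga, already prepared earlier in the paper, or the routine double-coset bookkeeping that identifies the explicit formula for $\gamma(P,k,y)$.
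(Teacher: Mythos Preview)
Your proposal has the right ingredients (Mackey structure, Verschiebung/Frobenius, Dress-type induction) but contains a genuine gap in step~(3), and it misses the mechanism the paper actually uses.

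The claim in step~(3) that ``$A(F)_{(p)}$ is generated, after inverting the primes other than $p$, by transitive $F$-sets $F/P$ with $P$ a $p$-group'' is false. Take $F$ cyclic of order prime to $p$: the only $p$-subgroup is trivial, and $[F/1]$ alone does not generate $A(F)_{(p)}$. Dress induction over $F$ at the prime $p$ gets you down to $p$-\emph{hyperelementary} subgroups, not to $p$-subgroups, and for an arbitrary Mackey functor there is no further reduction available. So your plan, which does all the Mackey work over the fixed finite group $F$, cannot reach $p$-subgroups of $F$ without an additional idea. (Also, the identity $F_kV_k=k$ fails in the twisted setting; Lemma~\ref{lem:F_k_circ_V_k} shows $F_kV_k$ is $\bigoplus_{i=0}^{k-1}\Phi^i(-)$, which is not multiplication by $k$ on $K$-theory in general. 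This undercuts your step~(2) as well.)

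The paper supplies the missing idea by \emph{not} doing induction over $F$. Instead one views $V=F\rtimes_\alpha\IZ$, passes to a large finite quotient $\overline{F}=V/\langle t^{Mm}\rangle\cong F\rtimes\IZ/Mm$ with $M$ chosen enormous, and does Swan-ring Dress induction over $\overline{F}$ (Section~\ref{subsec:The_Green_and_Mackey_structure_of_a_finite_quotient_group}, Lemma~\ref{lem:general_induction_for_SW}). This yields an expression for any $z$ in terms of $p$-hyperelementary subgroups $H\le\overline{F}$. Now the Frobenius enters, but in a different role than you envisioned: by Lemma~\ref{lem:NK_n(F_k)(z)_vanishes_for_large_k}, restriction to any $H$ with $[\overline{F}:H]\ge D$ kills an element of nilpotence degree $\le D$. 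Since $M$ was chosen large, Lemma~\ref{lem:special_estimate_for_log_q([IZ/Mm:pr(H)])} forces every surviving $p$-hyperelementary $H$ (those with $[\overline{F}:H]<D$) to satisfy that $H\cap F$ is a $p$-group. Pulling back along $\nu\colon V\to\overline{F}$, these $H$ correspond exactly to the subgroups $W\in\calv_p(V)$ of Notation~\ref{not:calv_p(V)}, and the basic diagram~\eqref{com_diagram_Gamma_Gamma_to_V} identifies the resulting induction maps with the $\Xi_{n-1}$, i.e.\ with $V_k\circ\gamma(P,k,y)$. So the Verschiebung is not used to invert anything; it appears only because induction $V[d]\hookrightarrow V$ factors through $V_d$ on Nil-terms (Theorem~\ref{F_k_and_V_k_versus_I_k_ast_and_i_k_upper_ast}). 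The interplay of the large cyclic factor in $\overline{F}$ with the nilpotence-degree bound is the heart of the argument, and your outline does not contain it.
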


  The untwisted version of Theorem~\ref{the:compuation_of_Nil-Terms_for_autos_of_finite_groups}, i.e.,
  $\alpha = \id_F$ and trivial $\mu$,
  appears already in~\cite[Theorem~A]{Hambleton-Lueck(2012)}. 

  Note that this does not mean that the Nil-groups are computable after localizing at $p$
  by $p$-subgroups groups, since the maps $\gamma(P,k,y)_n$ are not given just by
  induction with the inclusion $P \to F$.  One can check by inspecting
  Lemma~\ref{p-hyper_elementary_implies_p_elementary} and
  Lemma~\ref{lem:Computabilty_in_terms_of_p-hyperlelementary_subgroups_final}
  that the Nil-groups are computable by $p$-elementary groups.

  \begin{corollary}\label{cor:_vanishing_of_Nil-Terms_for_autos_of_finite_groups}
    Let $R$ be a regular ring. Then
    $\IZ[\calp(F,R)^{-1}] \otimes_{\IZ} N\!K_n(R[F],\alpha)$ vanishes for $\calp(F,R)$
    defined in Notation~\ref{calp(G,R)}.
  \end{corollary}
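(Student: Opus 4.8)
The plan is to deduce the vanishing from Theorem~\ref{the:compuation_of_Nil-Terms_for_autos_of_finite_groups} by checking, prime by prime, that the target group $N\!K_n(R[F];\alpha)$ has trivial $p$-localization whenever $p \notin \calp(F,R)$. Fix such a prime $p$; by definition of $\calp(F,R)$ this means that either $p$ is invertible in $R$, or $F$ contains no non-trivial finite $p$-subgroup. First I would reduce to the localized statement: since $\IZ[\calp(F,R)^{-1}]$ is a flat $\IZ$-module and $N\!K_n$ is a finitely generated(-ish) abelian group — more to the point, since a $\IZ$-module $A$ satisfies $\IZ[\calp^{-1}] \otimes_{\IZ} A = 0$ if and only if $A_{(p)} = 0$ for every prime $p \notin \calp$ — it suffices to show $N\!K_n(R[F];\alpha)_{(p)} = 0$ for all $p \notin \calp(F,R)$.

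Now I invoke Theorem~\ref{the:compuation_of_Nil-Terms_for_autos_of_finite_groups}, which gives a surjection
\[
  \bigoplus_{(P,k,y) \in T_p} N\!K_n(R[P],c_y \circ \alpha^k)_{(p)} \twoheadrightarrow N\!K_n(R[F];\alpha)_{(p)},
\]
where $T_p$ consists of triples $(P,k,y)$ with $P$ a $p$-subgroup of $F$. So it is enough to show that each summand $N\!K_n(R[P],c_y \circ \alpha^k)_{(p)}$ vanishes. I split into the two cases defining $p \notin \calp(F,R)$. If $F$ has no non-trivial finite $p$-subgroup, then every $p$-subgroup $P$ appearing in $T_p$ is trivial, $P = \{1\}$, so the summand is $N\!K_n(R;c_y \circ \alpha^k)_{(p)}$; but $R$ is regular, and the twisted Nil-terms $N\!K_n$ of a regular ring with respect to any automorphism vanish (this is the twisted Bass–Heller–Swan/Farrell–Hsiang vanishing for regular rings — the untwisted case is classical, and the twisted case follows because $R_\sigma[t]$ for $\sigma$ an automorphism of a regular ring is again regular, or by the regularity results feeding into Theorem~\ref{the:FJ_and_regular}), hence the localization is $0$ as well. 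If instead $p$ is invertible in $R$, then $|P|$ is a power of $p$ and hence invertible in $R$ for every $p$-subgroup $P$ of $F$, so the twisted group ring $R_{\rho|_P}[P]$ is a crossed product of a regular ring by a finite group of order invertible in $R$; such a crossed product is again regular (it is even Morita-equivalent-friendly: Maschke-type arguments show $R[P]$ is separable over $R$, hence regular when $R$ is), and so once more its twisted Nil-terms vanish, and the localized summand is $0$.

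Putting the two cases together, every summand in the source of the surjection vanishes, hence $N\!K_n(R[F];\alpha)_{(p)} = 0$ for all $p \notin \calp(F,R)$ and all $n \in \IZ$, which gives the claim. The only genuinely non-formal input is the vanishing of twisted $N\!K_*$ for twisted group rings $R_{\rho|_P}[P]$ that are regular — i.e. one must know that "regular ring" is closed under (a) twisting by a single automorphism and (b) forming the twisted group ring of a finite group whose order is invertible; I expect this to be the main thing to pin down precisely, but it is standard and is exactly the ring-theoretic fact already used implicitly in the regularity hypotheses of Theorem~\ref{the:FJ_and_regular}, so it can be quoted rather than reproved.
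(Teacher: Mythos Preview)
Your proposal is correct and follows exactly the intended route: the paper states Corollary~\ref{cor:_vanishing_of_Nil-Terms_for_autos_of_finite_groups} without a separate proof because it is meant to follow immediately from Theorem~\ref{the:compuation_of_Nil-Terms_for_autos_of_finite_groups} by precisely the prime-by-prime reduction and regularity argument you give (compare the analogous passage in the proof of Theorem~\ref{the:passage_from_Fin_to_Vcyc_additive_categories}, where the same two cases---trivial $p$-subgroup versus $p$ invertible in $R$---are used together with~\cite[Lemma~7.7 and Theorem~8.1]{Bartels-Lueck(2020additive)} for the vanishing of twisted Nil over a regular additive category). One small remark: your biconditional ``$\IZ[\calp^{-1}]\otimes A=0$ iff $A_{(p)}=0$ for all $p\notin\calp$'' needs $\calp$ not to be the set of all primes, but here $\calp(F,R)$ is automatically finite since $F$ is finite, so this is harmless.
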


  We mention that the  second Nil-group of $F_2[\IZ/2]$ is non-trivial,
  see~\cite{Kallen(1971)}.  So one needs to invert certain primes in
  Corollary~\ref{cor:_vanishing_of_Nil-Terms_for_autos_of_finite_groups}.
  
  On the other hand, given a prime $p$, we get $N\!K_n(\IZ[\IZ/p])=0$ for $n \le 1$,
  see~\cite[Theorem~10.6  on page~695]{Bass(1968)},~\cite{Bass-Murthy(1967)},~\cite[Theorem~6.21]{Lueck(2022book)}.
  So Theorem~\ref{the:compuation_of_Nil-Terms_for_autos_of_finite_groups} implies that for a
  finite group $G$, for which $p^2$ does not divide the order of $G$, we have
  $N\!K_n(\IZ G)_{(p)} = 0$ for $n \le 1$.  As an application we get a new proof of the
  result of Harmon~\cite{Harmon(1987)} that $N\!K _n(\IZ G) = 0$ for $n \le 1$ if the order of
  $G$ is square-free.

  \begin{remark}\label{rem:all_groups_of_order_p}
    We mention without giving the details that the proof appearing
    in~\cite[Theorem~6.21]{Lueck(2022book)} can be generalized to the twisted setting showing
    that Harmon's result extends to twisted group rings and that in
    Theorem~\ref{the:compuation_of_Nil-Terms_for_autos_of_finite_groups} the terms
    $N\!K_n(R[P],c_y \circ \alpha^k)_{(p)}$ vanish if $|P| \le p$, $R= \IZ$, and
    $n \le 1$ hold.  This implies that for a group $G$ for which the order of any finite
    subgroup is squarefree the relative assembly map
    \[
      H_n^G(\EGF{G}{\FIN};\bfK_\IZ) \to H_n^G(\EGF{G}{\VCYC};\bfK_\IZ)
    \]
    is an isomorphism for every $n \in \IZ$ with $n \le 1$. If $G$ satisfies the Full
    Farrell-Jones Conjecture and the order of any finite subgroup is squarefree, then the
    assembly map
    \[
      H_n^G(\EGF{G}{\FIN};\bfK_\IZ) \to H_n^G(G/G;\bfK_\IZ) = K_n(\IZ[G])
    \]
    is an isomorphism for every $n \in \IZ$ with $n \le 1$. Examples for such $G$ are
    given by extensions $1 \to \IZ^n \to G \to \IZ/m \to 1$ for a squarefree
    natural number $m$. 
    
  \end{remark}

  \begin{remark}[K-theory of stable $\infty$-categories]\label{rem:infty-categories}
    One may ask whether the results of this paper can be extended from the $K$-theory of  additive categories
    to the $K$-theory of stable $\infty$-categories.  For the extension of the statement and proofs in some cases
    of the Full Farrell-Jones Conjecture,
    we refer to
    Bunke-Kasprowski-Winges~\cite{Bunke-Kasprowski-Winges(2021)}. Dominik Kirstein and
    Christian Kremer are working on a twisted Bass-Heller-Swan decomposition in this
    setting generalizing~\cite{Lueck-Steimle(2016BHS)} and~\cite{Saunier(2023)}.
    Efimov has announced that the
    Nil-terms are modules over TR on the spectrum level, which would yield a module
    structure of Nil-groups over the ring of big Witt vectors also for stable 
    $\infty$-categories.  However, algebraic $K$-theory for additive categories can be
    viewed as a Mackey functor over the Green functor given by the Swan ring, see
    Subsection~\ref{subsec:Sw_F}. This is very unlikely to be the case for the algebraic
    $K$-theory of stable $\infty$-categories, where an $A$-theoretic version of the Swan group
    is needed, see~\cite{Ullmann-Winges(2019)}.  Therefore the induction theorems, which we
    use  here for instance to prove
    Theorem~\ref{the:compuation_of_Nil-Terms_for_autos_of_finite_groups}, are  not
    available in the setting of stable $\infty$-categories. It is completely unclear
    in the setting of stable $\infty$-categories
    whether Theorem~\ref{the:passage_from_Fin_to_Vcyc} or
    Theorem~\ref{the:compuation_of_Nil-Terms_for_autos_of_finite_groups} are still  true
    and  how one can formulate the statement of
    Theorem~\ref{the:passage_from_COP_to_CVcyc_in_characteristic_N}.
  \end{remark}


\subsection{Acknowledgments}\label{subsec:Acknowledgements}

The paper is funded by the ERC Advanced Grant \linebreak ``KL2MG-interactions'' (no.
662400)  granted by the European Research Council and  by the Deutsche
Forschungsgemeinschaft (DFG, German Research Foundation) under Germany's Excellence
Strategy \--- GZ 2047/1, Projekt-ID 390685813, Hausdorff Center for Mathematics at Bonn.
The author thanks  Ian Hambleton, Dominik Kirstein, and Christian Kremer  for helpful conversations
and the (unknown) referee for his detailed and very helpful report.

On behalf of all authors, the corresponding author states that there is no conflict of interest.

My manuscript has no associated data.

The paper is organized as follows:
\tableofcontents


\typeout{---------- Section 2: Basics about (additive) $\Lambda$-categories
  ---------------}

\section{Basics about (additive) $\Lambda$-categories}%
\label{sec:Basics_about_(additive)_Lambda-categories}

Consider a commutative ring $\Lambda$ and a group $G$. A \emph{$\Lambda $-category} is a
small category $\cala$ enriched over the category of $\Lambda$-modules, i.e., for every
two objects $A$ and $A'$ in $\cala$ the set of morphisms $\mor_{\cala}(A,A')$ has the
structure of a $\Lambda$-module such that composition is a $\Lambda$-bilinear map.  A
\emph{$G$-$\Lambda$-category} is a $\Lambda$-category, which comes with a $G$-action by
automorphisms of $\Lambda$-categories.

If a $\Lambda$-category comes with an appropriate notion of a finite direct sums, it is
called an \emph{additive $\Lambda$-category}.  An \emph{additive $G$-$\Lambda$-category}
is an additive $\Lambda$-category, which comes with a $G$-action by automorphisms of
additive $\Lambda$-categories. If $\Lambda$ is $\IZ$, we often omit $\Lambda$ and talk
just about an additive category or additive $G$-category.

One can associate to a $\Lambda$-category category $\cala$ an additive $\Lambda$-category
$\cala_{\oplus}$ as follows.  Objects in $\cala_{\oplus}$ are pairs $(S,\EA)$ consisting
of a finite set $S$ and a map $\EA \colon S \to \ob(A)$ and a morphism
$\psi \colon (S,\EA) \to (S',\EA')$ is given by a collection
$\psi_{s,s'} \colon \EA(s) \to \EA(s')$ of morphisms in $\cala$ for $s \in S$ and
$s' \in S'$.  The direct sum $(S,\EA) \oplus (S',\EA')$ is given by
$(S \amalg S', \EA \amalg \EA')$.

Note that a $\Lambda$-category and an additive $\Lambda$-category respectively is in
particular a $\IZ$-category and an additive category respectively thanks to the canonical
ring homomorphism $\IZ \to \Lambda$.

One can assign to an additive category $\cala$ its non-connective $K$-theory spectrum
$\bfK(\cala)$. We denote $K_n(\cala) = \pi_n(\bfK(\cala))$ for $n \in \IZ$.

Given a ring $R$, define $\underline{R}$ to be the $\IZ$-category, which has precisely one
object $\ast_R$ and whose $\IZ$-module of endomorphisms is $R$. Composition is given by
the multiplication in $R$.

For a ring $R$ let $K_n(R)$ for $n \in \IZ$ be its $n$-algebraic $K$-group, which can be
defined for instance as the (non-connective) $K$-theory of the exact category of finitely
generated projective $R$-module.  It can be identified with $K_n(\underline{R}_{\oplus})$.
Note that we do not have to pass to the idempotent completion of $\underline{R}_{\oplus}$,
as we are working with non-connective $K$-theory.

All these classical notions are summarized with references to the relevant papers
in~\cite[Section~2 and~3]{Bartels-Lueck(2020additive)}.

We fix some conventions concerning matrices of morphisms. For an object $A$ in $\cala$
we denote by $A^m$ the direct sum $\bigoplus_{i=1}^m A$.  For  two finite direct
         sums $\bigoplus_{i= 1}^m A_i$ and $\bigoplus_{j= 1}^n B_j$ a morphism
         $U \colon \bigoplus_{i= 1}^m A_i \to \bigoplus_{j= 1}^n B_j$ is the same as a
         $(m,n)$-matrix
         \[
           U =
           \begin{pmatrix}
             u_{1,1} & u_{2,1} & u_{3,1} & \cdots & u_{m-2,1} & u_{m-1,1} & u_{m,1}
             \\
             u_{1,2} & u_{2,2} & u_{3,2} & \cdots & u_{m-2,2} & u_{m-1,2} & u_{m,1}
             \\
             u_{1,3} & u_{2,3} & u_{3,3} & \cdots & u_{m-2,3} & u_{m-1,3} & u_{m,3}
             \\
             \vdots &  \vdots &  \vdots  & \ddots  &\vdots &  \vdots &  \vdots
             \\
             u_{1,n-1} & u_{2,n-1} & u_{3,n-1} & \cdots & u_{m-2,n-1} & u_{m-1,n-1} & u_{m,n-1}
             \\
             u_{1,n} & u_{2,n} & u_{3,n} & \cdots & u_{m-2,n} & u_{m-1,n} & u_{m,n}
           \end{pmatrix}
         \]
         of morphisms $u_{i,j} \colon A_i \to B_j$. One may think of an element in
         $\bigoplus_{i= 1}^m A_i$ as a $(1,m)$-matrix
         \[
           x := \begin{pmatrix}
             x_1 \\ x_2 \\ \vdots \\ x_m
           \end{pmatrix}
         \]
         and thinks of the image of this element  under $U$ as the $(1,n)$-matrix
           \[U(x)  =
           \begin{pmatrix}
             \sum_{l = 1}^m  u_{l,1}(x_l)
               \\
              \sum_{l = 1}^m  u_{l,2}(x_l) 
                \\
                \vdots
                \\
                \sum_{l = 1}^m  u_{l,n}(x_l)
                \end{pmatrix}.
              \]
              Note  that $m$ is the number of columns and $n$ is the number of rows with these conventions. 
         If   $V \colon \bigoplus_{j= 1}^n B_j \to \bigoplus_{k= 1}^o C_k$ is another
     morphism given by the $(n,o)$-matrix $V$, then the composite
     $V \circ U \colon \bigoplus_{i= 1}^m A_i \to \bigoplus_{k= 1}^o C_k$ is given by
     the $(m,o)$ matrix $W$ whose $(i,k)$-entry is
     \[
       w_{i,k} = \sum_{j = 1}^n v_{j,k} \circ u_{i,j}.
      \]
      So  one could think of $W$ as a kind of   product of matrices $V \cdot U$.


\typeout{------------- Section 3: Frobenius and Verschiebungs operators
  ----------------------}

\section{Frobenius and Verschiebungs operators}%
\label{sec:Frobenius_and_Verschiebungs_operators}

Let $\cala$ be an additive category and $\Phi$ be an automorphism of $\cala$.

\begin{definition}[Nilpotent morphisms and Nil-categories]%
  \label{def:Nilpotent_morphisms_and_Nil-categories}\
  
  \begin{enumerate}
  \item A morphism $\varphi\colon \Phi(A)\to A$ of $\cala$ is called \emph{$\Phi$-nilpotent},
    if for some $n \ge 1 $ the $n$-fold composite
    \[
      \varphi^{(n)}:=\varphi \circ \Phi(\varphi) \circ \cdots \circ \Phi^{n-1}(\varphi) \colon
      \Phi^n(A)\to A.
    \]
    is trivial;
 
  \item The category $\Nil(\cala, \Phi)$ has as objects pairs $(A, \varphi)$, where
    $\varphi\colon \Phi(A)\to A$ is a $\Phi$-nilpotent morphism in $\cala$. A morphism from
    $(A, \varphi)$ to $(A', \varphi')$ is a morphism $u\colon A\to A'$ in $\cala$ such that the diagram 
    \[
      \xymatrix{{\Phi(A)} \ar[r]^-{\varphi} \ar[d]_{\Phi(u)} & A \ar[d]^u\\
        {\Phi(A')} \ar[r]_-{\varphi'} & A'}
    \]
    is commutative.
  \end{enumerate}
\end{definition}

The category $\Nil(\cala, \Phi)$ inherits the structure of an exact category from $\cala$,
a sequence in $\Nil(\cala,\Phi)$ is declared to be exact if the underlying sequence in
$\cala$ is (split) exact.

Next we define for $k \in \{1,2, \ldots\}$ the \emph{Verschiebung operator} $V_k$ and the
\emph{Frobenius operator } $F_k$
\begin{eqnarray}
  V_k \colon \Nil(\cala, \Phi^k) & \to & \Nil(\cala, \Phi);
                                         \label{Verschiebung_V_k}
  \\
  F_k \colon \Nil(\cala, \Phi) & \to & \Nil(\cala, \Phi^k).
                                       \label{Frobenius_F_k}
\end{eqnarray}

Given an object $(A, \varphi)$ in $\Nil(\cala, \Phi^k)$, define $V_k(A, \varphi)$ to be the
object in $\Nil(\cala, \Phi)$ that is given by the object
$\bigoplus_{i =0}^{k-1} \Phi^i(A)$ in $\cala$ together with the $\Phi$-nilpotent morphism
\begin{multline*}
  \begin{pmatrix}
    0 & 0 & 0 & 0 & 0 & \cdots & 0 & 0 & \varphi
    \\
    \id_{\Phi(A)} & 0 & 0 & 0 & 0 & \cdots & 0 & 0 & 0
    \\
    0 & \id_{\Phi^2(A)} & 0 & 0 & 0 &\cdots & 0 & 0 & 0
    \\
    0 & 0 &\id_{\Phi^3(A)} & 0 & 0 & \cdots & 0 & 0 & 0
    \\
    \vdots & \vdots & \vdots &\vdots & \vdots & \vdots & \vdots & \vdots & \vdots
    \\
    0 & 0 & 0 & 0 & 0 & \cdots & \id_{\Phi^{k-2}(A)} & 0 & 0
    \\
    0 & 0 & 0 & 0 & 0 & \cdots & 0 & \id_{\Phi^{k-1}(A)} & 0
  \end{pmatrix}
  \colon
  \\
  \Phi\left(\bigoplus_{i =0}^{k-1} \Phi^i(A)\right) =\bigoplus_{i =1}^k \Phi^i(A) \to
  \bigoplus_{i =0}^{k-1} \Phi^i(A).
\end{multline*}
A morphisms $u \colon (A, \varphi) \to (A', \varphi')$ in $\Nil(\cala, \Phi^k)$ given by a
morphism $u \colon A \to A'$ in $\cala$ is sent to the morphism
$V_k(A,\varphi) \to V_k(A',\varphi')$ in $\Nil(\cala, \Phi)$ given by the morphism
$\bigoplus_{i =0}^{k-1} \Phi^i(u) \colon \bigoplus_{i =0}^{k-1} \Phi^i(A) \to \bigoplus_{i
  =0}^{k-1} \Phi^i(A')$.

Given an object $(A, \varphi)$ in $\Nil(\cala, \Phi)$, define $F_k(A, \varphi)$ to be
$(A, \varphi^{(k)})$, see Definition~\ref{def:Nilpotent_morphisms_and_Nil-categories}. A
morphism $u \colon (A,\varphi) \to (A',\varphi')$ in $\Nil(\cala, \Phi)$ given by a morphism
$u \colon A \to A'$ in $\cala$ is sent to the morphism
$u \colon (A,\varphi^{(k)}) \to (A',\varphi'^{(k)})$ in $\Nil(\cala, \Phi^k)$ given by $u$
again.

The elementary proof of the next lemma is left to the reader.

\begin{lemma}\label{lem:F_k_circ_V_k} The composite
  \[F_k \circ V_k \colon \Nil(\cala, \Phi^k) \to \Nil(\cala, \Phi^k)
  \]
  sends an object $(A,\varphi)$ to the object given by
  \[
    \bigoplus_{i = 0}^{k-1}\Phi^i(\varphi)\colon \Phi^k\left(\bigoplus_{i= 0}^{k-1}
      \Phi^i(A)\right) = \bigoplus_{i= 0}^{k-1} \Phi^{i+k}(A) \to \bigoplus_{i= 0}^{k-1}
    \Phi^i(A).
  \]
  It sends a morphism $u \colon (A,\varphi) \to (A',\varphi')$ in $\Nil(\cala, \Phi^k)$ given by
  a morphism $u \colon A \to A'$ in $\cala$ to the morphism
  $F_k \circ V_k(A,\varphi) \to F_k \circ V_k(A',\varphi')$ in $\Nil(\cala, \Phi^k)$ given by the
  morphism
  $\bigoplus_{i= 0}^{k-1} \Phi^i(u) \colon \bigoplus_{i= 0}^{k-1} \Phi^i(A) \to
  \bigoplus_{i= 0}^{k-1} \Phi^i(A')$ in $\cala$.
\end{lemma}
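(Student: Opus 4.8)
The plan is to prove Lemma~\ref{lem:F_k_circ_V_k} by unwinding the explicit descriptions of $V_k$ and $F_k$ and performing the resulting matrix computation. Write $B := \bigoplus_{i=0}^{k-1}\Phi^i(A)$ for the underlying object of $V_k(A,\varphi)$, and let $N\colon \Phi(B)\to B$ denote the $\Phi$-nilpotent companion-type morphism exhibited in the definition of $V_k$. Using that $\Phi$ is an additive automorphism we may identify $\Phi^m(B) = \bigoplus_{i=m}^{m+k-1}\Phi^i(A)$ for every $m\ge 0$, and under this identification the morphism $\Phi^m(N)\colon \Phi^{m+1}(B)\to\Phi^m(B)$ is the morphism whose only non-zero components are $\id_{\Phi^i(A)}$ from the summand $\Phi^i(A)$ of the source to the summand $\Phi^i(A)$ of the target for $m+1\le i\le m+k-1$, together with $\Phi^m(\varphi)\colon \Phi^{m+k}(A)\to\Phi^m(A)$ from the top summand of the source to the bottom summand of the target. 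By the definition of $F_k$ one has $F_k\circ V_k(A,\varphi) = \bigl(B,\,N^{(k)}\bigr)$ with $N^{(k)} = N\circ\Phi(N)\circ\cdots\circ\Phi^{k-1}(N)\colon \Phi^k(B)\to B$, so everything comes down to identifying $N^{(k)}$.

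To compute $N^{(k)}$ I would fix $j\in\{0,1,\dots,k-1\}$ and follow the summand $\Phi^{k+j}(A)$ of the source $\Phi^k(B)$ through the composite, applied in the order $\Phi^{k-1}(N)$, then $\Phi^{k-2}(N)$, and so on down to $N$. The observation is that $\Phi^m(N)$ acts on $\Phi^{m+1}(B)$ as a downward shift on all but the top summand, which is ``wrapped around'' to the bottom after applying $\Phi^m(\varphi)$. Hence the summand $\Phi^{k+j}(A)$ is transported identically by $\Phi^{k-1}(N),\dots,\Phi^{j+1}(N)$ (at each of these stages it is a non-top summand), then $\Phi^{j}(N)$ --- for which it is precisely the top summand $\Phi^{j+k}(A)$ --- carries it via $\Phi^{j}(\varphi)$ to the bottom summand $\Phi^{j}(A)$ of $\Phi^{j}(B)$, and finally $\Phi^{j-1}(N),\dots,N$ again transport $\Phi^{j}(A)$ identically down to the summand $\Phi^{j}(A)$ of $B$. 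Therefore $N^{(k)}$ is the diagonal morphism with components $\Phi^{j}(\varphi)\colon \Phi^{j+k}(A)\to\Phi^{j}(A)$ for $j=0,\dots,k-1$, which is exactly $\bigoplus_{i=0}^{k-1}\Phi^i(\varphi)\colon \Phi^k\bigl(\bigoplus_{i=0}^{k-1}\Phi^i(A)\bigr)\to\bigoplus_{i=0}^{k-1}\Phi^i(A)$, as asserted. One could equally well run an induction on $k$, peeling off the bottom summand $A$ of $B$, but the direct bookkeeping above seems shortest; checking $k=1,2$ by hand makes the pattern transparent.

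The claim for morphisms is immediate from the definitions: $V_k$ sends $u\colon (A,\varphi)\to(A',\varphi')$ in $\Nil(\cala,\Phi^k)$ to the morphism of $\Nil(\cala,\Phi)$ underlain by $\bigoplus_{i=0}^{k-1}\Phi^i(u)$, and $F_k$ leaves the underlying morphism of $\cala$ untouched; so $F_k\circ V_k(u)$ is underlain by $\bigoplus_{i=0}^{k-1}\Phi^i(u)$, and one only needs to record that its source and target are the objects just computed. The sole point requiring care is the index bookkeeping in the middle paragraph, which I expect to be the main (entirely routine) obstacle; the rest is just chasing definitions.
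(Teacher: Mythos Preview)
Your proposal is correct: unwinding the definitions and tracking each summand $\Phi^{k+j}(A)$ through the composite $N\circ\Phi(N)\circ\cdots\circ\Phi^{k-1}(N)$ is precisely the elementary computation the paper has in mind, and your bookkeeping is accurate. The paper itself does not give a proof --- it explicitly leaves this lemma to the reader --- so there is nothing to compare against beyond noting that you have supplied exactly the routine verification that was intended.
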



\typeout{------------- Section 4: Frobenius and Verschiebungs operators and induction and
  restriction ----------------------}

\section{Frobenius and Verschiebungs operators and induction and restriction}%
\label{sec:Frobenius_and_Verschiebungs_operators_and_induction_and_restriction}

Let $\cala$ be an additive category with an action $\rho \colon G \to \aut(\cala)$ of the
(discrete) group $G$ by automorphisms of additive categories. Then we obtain a new
additive category
\begin{equation}
  \cala_{\rho}[G]
  \label{cala_rho(G)}
\end{equation}
as follows. The set of objects of $\cala_{\rho}[G]$ is the set of objects of $\cala$. A
morphism $f \colon A \to A'$ in $\cala_{\rho}[G]$ is a finite formal
$\sum_{g \in G} (f_g \colon gA \to A') \cdot g$, where $f_g \colon gA \to A'$ is a morphism
in $\cala$ from $gA $ to $A'$ and finite means that for only finitely many elements $g$ in
$G$ the morphism $f_g$ is different from the zero-homomorphism. If $f' \colon A' \to A''$
is a morphism in $\cala_{\rho}[G]$ given by the finite formal sum
$\sum_{g' \in G} (f'_{g'} \colon g'A' \to A'') \cdot g'$, then define their composite
$f' \circ f \colon A \to A''$ by the finite formal sum
\[
  f' \circ f = \sum_{g'' \in G} \; \sum_{\substack{g,g' \in G,\\g'' = g'g}} \; (f_{g'}
  \circ g'f_{g} \colon g''A = g'gA \to A'') \cdot g''.
\]
  
If $R$ is a unital ring coming with a $G$-action $\rho_R \colon G \to \aut(R)$ and we take
$\cala$ to be the category of finitely generated free $R$-modules with the obvious
$G$-action $\rho \colon G \to \aut(\cala)$ coming from $\rho_R$ by induction, then
$\cala_{\rho}[G]$ is equivalent to the additive category of finitely generated free modules
over the twisted group ring $R_{\rho}[G]$.
  
If $\Phi \colon \cala \xrightarrow{\cong} \cala$ is an automorphism of an additive
category $\cala$, then we define
\begin{equation}
  \cala_{\Phi}[\IZ] = \cala_{\rho_{\Phi}}[\IZ]
  \label{cal_Phi(Z)}
\end{equation}
for the $\IZ$-action $\rho_{\Phi} \colon \IZ \to \aut(\cala), \; n \mapsto \Phi^n$.

Let $i_k \colon \IZ \to \IZ$ be the group homomorphism given by multiplication with
$k$. Next we define functors of additive categories
\begin{eqnarray}
  (i_k)_* \colon \cala_{\Phi^k}[\IZ]
  & \to &
          \cala_{\Phi}[\IZ];
          \label{(i_k)_ast}
  \\
  i_k^* \colon \cala_{\Phi}[\IZ]
  & \to &
          \cala_{\Phi^k}[\IZ].
          \label{(i_k)_upper_ast}
\end{eqnarray}
The functor $(i_k)_*$ sends an object $A$ in $\cala_{\Phi^k}[\IZ]$, which is given by an
object $A$ in $\cala$, to the object in $\cala_{\Phi}[\IZ]$ given by $A$ again. Consider a
morphism
\[
  f = \sum_{l \in \IZ} \bigl(f_l \colon \Phi^{kl}(A) \to A'\bigr) \cdot t^l \colon A \to
  A'
\]
in $\cala_{\Phi^k}[\IZ]$.  It is sent by $( i_k)_*$ to the morphism
\[
  (i_k)_*(f) := \sum_{l \in \IZ} \bigl(f_l \colon \Phi^{kl}(A) \to A'\bigr) \cdot t^{lk}
  \colon A \to A'
\]
in $\cala_{\Phi}[\IZ]$.

The functor $i_k^*$ sends a object $A$ in $\cala_{\Phi}[\IZ]$, which is given by an object
$A$ in $\cala$, to the object $i_k^*(A)$ in $\cala_{\Phi^k}[\IZ]$ given by the object
$\bigoplus_{i = 0}^{k-1} \Phi^i(A)$ in $\cala$.  Consider a morphism
$ f = \sum_{l \in \IZ} \bigl(f_l \colon \Phi^{l}(A) \to A'\bigr) \cdot t^l \colon A \to
A'$ in $\cala_{\Phi}[\IZ]$.  It is sent by $i_k^*$ to the morphism
\[
  i_k^*(f) \colon \bigoplus_{i = 0}^{k-1} \Phi^i(A) \to \bigoplus_{j = 0}^{k-1} \Phi^j(A')
\]
in $\cala_{\Phi}[\IZ]$ defined as follows. By additivity we have only to specify
$i_k^*(f_l \cdot t^l)$. For this purpose we have to define for every
$i,j \in \{0,1,\ldots , (k-1)\}$ a morphisms
$i_k^*(f_l \cdot t^l)_{i,j} \colon \Phi^i(A) \to \Phi^j(A')$ in $\cala_{\Phi^k}[\IZ]$. It
is given by $\bigl(\Phi^j(f_l) \colon \Phi^{i+mk}(A) \to \Phi^j(A')\bigr) \cdot t^m$ if
there exists an integer $m$ with $i+mk +l =j$, and by zero otherwise.

If $\cala$ is given by $\underline{R}$ for a ring $R$ coming with a ring automorphism
$\Phi \colon R \xrightarrow{\cong} R$, $(i_k)_*$ and $i_k^*$ corresponds to induction and
restriction with respect to the change of ring homomorphism of twisted group rings
$R_{\Phi^k}[\IZ] \to R_{\Phi}[\IZ]$ associated to $i_k$.

In the sequel we use the notation of~\cite{Lueck-Steimle(2016BHS)}. We get by taking
homotopy groups from~\cite[Theorem~0.1]{Lueck-Steimle(2016BHS)} for $n \in \IZ$ an
isomorphism
\begin{equation}
  a_n \oplus c_n^+ \oplus c_n^- \colon \pi_n(\bfT_{\bfK(\Phi^{-1})}) 
  \oplus \overline{K}_{n-1}(\Nil(\cala,\Phi)) \oplus  \overline{K}_{n-1}(\Nil(\cala,\Phi))
  \xrightarrow{\cong} K_n(\cala_{\Phi}[\IZ]).
  \label{twisted_BHS_on_homotopy_groups}
\end{equation}
Here $\bfT_{\bfK(\Phi^{-1})}$ is the mapping torus of the map induced on
non-connective $K$-theory spectra
$\bfK(\Phi^{-1}) \colon \bfK(\cala) \to \bfK(\cala)$. There is a long exact Wang
sequence
\begin{multline*}
  \cdots \xrightarrow{\partial_{n+1}} K_n(\cala) \xrightarrow{K_n(\Phi) - \id} K_n(\cala)
  \xrightarrow{\pi_n(\bfj)} \pi_n(\bfT_{\bfK(\Phi^{-1})})
  \\
  \xrightarrow{\partial_n} K_{n-1}(\cala) \xrightarrow{K_{n-1}(\Phi) - \id} K_{n-1}(\cala)
  \xrightarrow{\pi_{n-1}(\bfj)} \pi_n(\bfT_{\bfK(\Phi^{-1})}) \xrightarrow{\partial_{n-1}}
  \cdots
\end{multline*}
where $\bfj \colon \bfK(\cala) \to \bfT_{\bfK(\Phi^{-1})} $ is the inclusion. If
$\Phi = \id_{\cala}$, this boils down to an isomorphism
\[
  \pi_n(\bfT_{\bfK(\Phi^{-1})}) \cong K_n(\cala) \oplus K_{n-1}(\cala).
\]
We define   $K_{n}(\Nil(A,\Phi^k)) = \pi_n(\bfKNilinfty(\cala,\Phi^k))$ for $n \in \IZ$,
where the non-connective $K$-theory spectrum $ \bfKNilinfty(\cala,\Phi)$ is 
constructed in~\cite[Remark~6.3 and Lemma~6.5]{Lueck-Steimle(2014delooping)}.
It is likely but we have no detailed proof that
the group $K_n(\Nil(\cala,\Phi))$ can be
identified for $n \le -1$ with the $K$-groups associated to the exact category
$\Nil(\cala,\Phi)$ in the sense of Schlichting~\cite{Schlichting(2006)}
see~\cite[Remark~6.11]{Lueck-Steimle(2014delooping)}.  Fortunately, 
we do not need this identification for our purposes.
  There is the inclusion functor $I \colon \cala \to \Nil(\cala,\Phi)$
that sends an object $A$ to the object $(A,0)$ and the projection functor
$P \colon \Nil(\cala,\Phi) \to \cala$ that sends an object $(A,\varphi)$ to $A$. Obviously
$P \circ I = \id_{\cala}$.

\begin{notation}\label{not:overline(K)(Nil))}
We define $\overline{K}_n(\Nil(A,\Phi))$ to be the cokernel of
the split injective homomorphism $K_n(I) \colon K_n(\cala) \to K_n(\Nil(A,\Phi))$ for
$n \in \IZ$.
\end{notation}

Let $T_{\bfK(\Phi^{-1}),k}$ be the $k$-fold mapping torus of
$\bfK(\Phi) \colon \bfK(\cala) \to \bfK(\cala)$, which is a $\IZ/k$-spectrum. There
is a $k$-fold covering
$\bfp_k \colon T_{\bfK(\Phi^{-1}),k} \to T_{\bfK(\Phi^{-1})}$ and a homotopy
equivalence $\bff \colon T_{\bfK(\Phi^{-1}),k} \to \bfT_{\bfK(\Phi^{-k})}$. These
correspond to the following construction on the level of spaces for a map
$\Phi \colon X \to X$.  Namely, $p_k \colon T_{\Phi,k} \to T_{\Phi}$ is the $k$-sheeted
covering obtained by the pull back of the $k$-sheeted covering $S^1 \to S^1$ sending $z$
to $z^k$ with the canonical map $T_{\Phi} \to S^1$.  Explicitly $T_{\Phi,k}$ is obtained
from $\coprod_{i = 1}^{k} X \times [i-1,i]$ by identifying $(x,i) \in X \times [i-1,i]$
with $(\Phi(x),i)$ in $X \times [i,i+1]$ for $i = 1,2 \ldots k-1$ and
$(x,k) \in X \times [k-1,k]$ with $(\Phi(x),0)$ in $X \times [0,1]$. Obviously
$T_{\Phi,1} = T_{\Phi}$.  The map $p_k \colon T_{\Phi,k} \to T_{\Phi}$ sends the class of
$(x, j) \in X \times [i-1,i]$ to the class of $(x,j-i-1) \in X \times [0,1]$ for
$i = 1,2, \ldots, k$. The homotopy equivalence $f \colon T_{\Phi,k} \to T_{\Phi^k}$ sends
the class of $(x, j) \in X \times [i-1,i]$ to the class of
$(\Phi^{k-i}(x),\frac{j}{k}) \in X \times [0,1]$ for $i = 1,2, \ldots, k$. On homotopy
groups we obtain an isomorphism
\begin{equation}
  \pi_n(\bff) \colon \pi_n(\bfT_{\Phi^{-1},k}) \xrightarrow{\cong} \pi_n(T_{\Phi^{-k}})
  \label{pi_n(bbf)}
\end{equation}
and a homomorphism induced by $\bfp_k$
\begin{equation}
  \pi_n(\bfp_k) \colon \pi_n(\bfT_{\Phi^{-1},k}) \xrightarrow{\cong} \pi_n(T_{\Phi^{-1}}).
  \label{pi_n(bfp_k)}
\end{equation}
Since $\bfp_k$ is a $k$-sheeted covering, there is a transfer homomorphism
\begin{equation}
  \trf_n(\bfp_k) \colon \pi_n(T_{\Phi^{-1}}) \to \pi_n(\bfT_{\Phi^{-1},k}).
  \label{trf_n(bfp_k)}
\end{equation}
Note that the Frobenius and the Verschiebungs operator are functors of exact categories
and hence induces homomorphism
\begin{eqnarray}
  K_n(V_k) \colon K_n(\Nil(\cala,\Phi^k) )& \to & K_n(\Nil(\cala,\Phi));
                                                  \label{K_n(V_k)}
  \\
  K_n(F_k) \colon K_n(\Nil(\cala,\Phi)) & \to & K_n(\Nil(\cala,\Phi^k)).
                                                \label{K_n(F_k)}
\end{eqnarray}
Since $K_n(F_k) \circ K_n(I) = K_n(I)$ and $K_n(V_k) \circ K_n(I) = K_n(I)$ holds, they
induce homomorphisms
\begin{eqnarray}
  \overline{K}_n(V_k) \colon \overline{K}_n(\Nil(\cala,\Phi^k) )& \to & \overline{K}_n(\Nil(\cala,\Phi));
                                                                          \label{widetilde(K)_n(V_k)}
  \\
  \overline{K}_n(F_k) \colon \overline{K}_n(\Nil(\cala,\Phi)) & \to & \overline{K}_n(\Nil(\cala,\Phi^k)).
                                                                        \label{widetilde(K)_n(F_k)}
\end{eqnarray}
  
The functors $(i_k)_*$ and $i_k^*$ are functors of additive categories and induce
homomorphisms
\begin{eqnarray}
  K_n((i_k)_*) \colon  K_n(\cala_{\Phi^k}[\IZ]) & \to & K_n(\cala_{\Phi}[\IZ]);
                                                        \label{K_n((i_k)_ast)}
  \\
  K_n(i_k^*) \colon  K_n(\cala_{\Phi}[\IZ]) & \to & K_n(\cala_{\Phi^k}[\IZ]).
                                                    \label{K_n(i_k_upper_ast)}
\end{eqnarray}
  
The main result of this section is
\begin{theorem}\label{F_k_and_V_k_versus_I_k_ast_and_i_k_upper_ast}
  Let $k \ge 1$ be a natural number and $\Phi \colon \cala \xrightarrow{\cong} \cala$ be
  an automorphism of an additive category $\cala$. Then:
  \begin{enumerate}
  \item\label{F_k_and_V_k_versus_I_k_ast_and_i_k_upper_ast:induction} The following
    diagram commutes for $n \in \IZ$
    \[
      \xymatrix@!C=16em{\pi_n(\bfT_{\bfK(\Phi^{-k})})\oplus
        \overline{K}_{n-1}(\Nil(\cala,\Phi^k)) \oplus \overline{K}_{n-1}
        (\Nil(\cala,\Phi^k)) \ar[r]^-{a_n\oplus c_n^+ \oplus c_n^-}_-{\cong}
        \ar[d]_{(\pi_n(\bfp_k) \circ \pi_n(\bff)^{-1}) \oplus \overline{K}_{n-1}(V_k)
          \oplus \overline{K}_{n-1}(V_k)}
        &
        K_n(\cala_{\Phi^k}[\IZ])  \ar[d]^{K_n((i_k)_*)}
        \\
        \pi_n(\bfT_{\bfK(\Phi^{-1})}) \oplus \overline{K}_{n-1}(\Nil(\cala,\Phi))
        \oplus \overline{K}_{n-1}(\Nil(\cala,\Phi)) \ar[r]_-{a_n\oplus c_n^+ \oplus c_n^-
        }^-{\cong}
        &
        K_n(\cala_{\Phi}[\IZ]) }
    \]
    where the upper horizontal isomorphisms is the one defined
    in~\eqref{twisted_BHS_on_homotopy_groups} for $\Phi^k$, the lower horizontal
    isomorphisms is the one defined in~\eqref{twisted_BHS_on_homotopy_groups} for $\Phi$,
    and the vertical arrows have been defined
    in~\eqref{pi_n(bbf)},~\eqref{pi_n(bfp_k)},~\eqref{widetilde(K)_n(V_k)},
    and~\eqref{K_n((i_k)_ast)};
  \item\label{F_k_and_V_k_versus_I_k_ast_and_i_k_upper_ast:restriction} The following
    diagram commutes for $n \in \IZ$
    \[
      \xymatrix@!C=16em{\pi_n(\bfT_{\bfK(\Phi^{-1})})\oplus
        \overline{K}_{n-1}(\Nil(\cala,\Phi)) \oplus \overline{K}_{n-1}(\Nil(\cala,\Phi))
        \ar[r]^-{a_n\oplus c_n^+ \oplus c_n^-}_-{\cong} \ar[d]_{(\pi_n(\bff) \circ
          \trf_n(\bfp_k)) \oplus \overline{K}_{n-1}(F_k) \oplus \overline{K}_{n-1}(F_k)}
        & K_n(\cala_{\Phi}[\IZ]) \ar[d]^{K_n(i_k^*)}
        \\
        \pi_n(\bfT_{\bfK(\Phi^{-k})}) \oplus \overline{K}_{n-1}(\Nil(\cala,\Phi^k))
        \oplus \overline{K}_{n-1}(\Nil(\cala,\Phi^k)) \ar[r]_-{a_n\oplus c_n^+ \oplus
          c_n^- }^-{\cong} & K_n(\cala_{\Phi^k}[\IZ]) }
    \]
    where the upper isomorphisms is the one defined
    in~\eqref{twisted_BHS_on_homotopy_groups} for $\Phi$, the lower isomorphisms is the
    one defined in~\eqref{twisted_BHS_on_homotopy_groups} for $\Phi^k$, and the horizontal
    arrows have been defined
    in~\eqref{pi_n(bbf)},~\eqref{trf_n(bfp_k)},~\eqref{widetilde(K)_n(F_k)},
    and~\eqref{K_n(i_k_upper_ast)}.
  \end{enumerate}
\end{theorem}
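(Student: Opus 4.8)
The plan is to verify the two diagrams one direct summand at a time. Since the horizontal isomorphisms of~\eqref{twisted_BHS_on_homotopy_groups} respect, by their construction in~\cite{Lueck-Steimle(2016BHS)}, the three-fold direct sum decomposition on the source, and since in each diagram the left-hand vertical map is block diagonal with respect to that decomposition, commutativity of the whole square is equivalent to commutativity on the mapping-torus summand $\pi_n(\bfT_{\bfK(\Phi^{-k})})$ and on each of the two Nil summands $\overline{K}_{n-1}(\Nil(\cala,\Phi^k))$ separately. Before doing this I would recall the functor-level models underlying $a_n$ and $c_n^{\pm}$: via the standard degree-shifting identification, $c_n^{\pm}$ is induced by an exact functor that realises a $\Phi$-nilpotent morphism $\varphi\colon \Phi(A)\to A$ as an object of (finitely generated free modules over) $\cala_{\Phi}[\IZ]$ on which the generator $t^{\pm1}$ acts through $\varphi$, while $a_n$ comes from the Wang exact sequence following~\eqref{twisted_BHS_on_homotopy_groups}, i.e.\ from the inclusion $\iota\colon\cala\to\cala_{\Phi}[\IZ]$ together with the self-equivalence $\Phi$, so that at the level of mapping tori it is modelled by the space-level maps described around~\eqref{pi_n(bfp_k)}.

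On the Nil summands everything in sight is induced by honest functors of exact categories, so it suffices to produce natural isomorphisms. For part~\ref{F_k_and_V_k_versus_I_k_ast_and_i_k_upper_ast:induction} I would compare, starting from $(A,\varphi)\in\Nil(\cala,\Phi^k)$, the object obtained by first applying the $c^{+}$-model over $\cala_{\Phi^k}[\IZ]$ and then $(i_k)_*$ --- a copy of $A$ on which $t^{k}$ acts via $\varphi$, re-indexed by $t^{l}\mapsto t^{kl}$ --- with the object obtained by first applying $V_k$, producing $\bigoplus_{i=0}^{k-1}\Phi^{i}(A)$ together with the cyclic shift matrix of~\eqref{Verschiebung_V_k}, and then the $c^{+}$-model over $\cala_{\Phi}[\IZ]$, on which $t$ acts through that shift. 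The standard equivalence between a $\cala_{\Phi^k}[\IZ]$-object induced up along $i_k$ and ``$k$ cyclically $t$-permuted copies'' is precisely the required natural isomorphism. Dually, for part~\ref{F_k_and_V_k_versus_I_k_ast_and_i_k_upper_ast:restriction}, $i_k^{*}$ sends $A$ to $\bigoplus_{i=0}^{k-1}\Phi^{i}(A)$ and, by its defining formula on morphisms, makes $t$ cycle through the summands while introducing a factor $t^{k}$ --- equivalently $\varphi^{(k)}$ on the Nil side --- on wrapping around, which is exactly $F_k(A,\varphi)=(A,\varphi^{(k)})$; Lemma~\ref{lem:F_k_circ_V_k} is the bookkeeping identity confirming that the two normalisations are mutually consistent. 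The $c^{-}$ summand is handled identically, with $t^{-1}$ in place of $t$ throughout.

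On the mapping-torus summand I would use the explicit pictures of $T_{\Phi,k}$, $p_k$, and $f$ recorded in the excerpt. The functor $(i_k)_*$ restricted to $\iota(\cala)$ is the identity on $\bfK(\cala)$, and on the ``loop coordinate'' it is the $k$-fold wrapping $t^{l}\mapsto t^{kl}$, which is the description of the $k$-sheeted covering $\bfp_k\colon\bfT_{\bfK(\Phi^{-1}),k}\to\bfT_{\bfK(\Phi^{-1})}$ once one identifies $\bfT_{\bfK(\Phi^{-1}),k}\simeq\bfT_{\bfK(\Phi^{-k})}$ through $\bff$; symmetrically, $i_k^{*}$ splits an object as $\bigoplus_{i=0}^{k-1}\Phi^{i}(A)$ with $t$ cycling through the factors, which is the transfer $\trf_n(\bfp_k)$ of that covering. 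Turning this into a proof is a matter of unwinding the Wang-sequence definition of $a_n$ from~\cite{Lueck-Steimle(2016BHS)} together with the compatibility of mapping tori with pullback along $z\mapsto z^{k}\colon S^1\to S^1$.

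I expect the main obstacle to be the opening step: fixing functor-level models for $c_n^{\pm}$ and $a_n$ that are literally the ones entering~\cite{Lueck-Steimle(2016BHS)} --- and, in negative degrees, the deloopings of $\bfKNilinfty$ from~\cite{Lueck-Steimle(2014delooping)} --- since the Bass--Heller--Swan splitting is not canonical and the statement genuinely depends on these choices. Once they are pinned down, the remainder is exactly the index bookkeeping that the definitions of $V_k$, $F_k$, $(i_k)_*$, and $i_k^{*}$ were designed to make transparent. A secondary point requiring care is checking that all functors involved stay additive and exact, so that the diagrams make sense on non-connective $K$-theory and not merely on $K_0$, and that the comparison of $c^{+}$ with $c^{-}$ is compatible with replacing $\Phi$ by $\Phi^{k}$ in both the $t$ and the $t^{-1}$ conventions.
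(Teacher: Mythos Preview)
Your overall plan---check commutativity summand by summand, pin down functor-level models for $a_n$ and $c_n^{\pm}$, and then compare---is exactly the paper's strategy, and you correctly flag that fixing the models from~\cite{Lueck-Steimle(2016BHS)} is where the work lies. You are also right that the mapping-torus summand and the reduction to connective degrees via the delooping trick of~\cite{Lueck-Steimle(2014delooping)} are routine once the Nil summands are handled.

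The gap is in your treatment of the Nil summands. You assert that the two routes around the square are related by a ``natural isomorphism'' (the ``standard equivalence between an induced object and $k$ cyclically permuted copies''). With the models actually used in~\cite{Lueck-Steimle(2016BHS)}, no such natural isomorphism exists. The map $c_n^{\pm}$ is built from the functor $\chi_{\Phi}\colon \Nil(\cala,\Phi)\to\Chcat(\cala_{\Phi}[t^{-1}])^w$ sending $(A,\varphi)$ to the two-term complex with differential $\id_A\cdot t^{-1}-\varphi\cdot t^0$. Going around the induction square one way yields the $1\times 1$ complex $\id_A\cdot t^{-k}-\varphi\cdot t^0$; going the other way yields a $k\times k$ complex whose differential is the matrix $\id\cdot t^{-1}-V_k(\varphi)\cdot t^0$. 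These two chain complexes are \emph{not} isomorphic in $\Chcat(\cala_{\Phi}[t^{-1}])^w$: one has underlying object $A$, the other $\bigoplus_{i=0}^{k-1}\Phi^i(A)$. The paper instead constructs an explicit upper-triangular elementary matrix $w$ (with $\id\cdot t^0$ on the diagonal) such that $(\id\cdot t^{-1}-V_k(\varphi))\circ w$ is block-triangular with the single block $\id_A\cdot t^{-k}-\varphi\cdot t^0$ in one corner and identities elsewhere; the equality in $K$-theory then follows from the Additivity Theorem for Waldhausen categories, not from any functor-level isomorphism. The restriction/Frobenius side is parallel: one writes $i_k^*(\id\cdot t^{-1}-\varphi)=u\circ v$ for an explicit elementary $v$ and a block-triangular $u$ whose single nontrivial block is $\id\cdot t^{-1}-\varphi^{(k)}$.

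So the missing idea is precisely this matrix factorization together with Additivity; your ``natural isomorphism'' claim would, if taken literally, give a false statement about objects. Once you replace it by the elementary-matrix argument, your outline coincides with the paper's proof.
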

\begin{proof}~\ref{F_k_and_V_k_versus_I_k_ast_and_i_k_upper_ast:induction} We give only an
  outline of the proof and leave some details to the reader.  In the sequel we use the
  notation of~\cite{Lueck-Steimle(2016BHS)}.
  
  We obtain from~\cite[Theorem~0.1~(i)]{Lueck-Steimle(2016BHS)} for $n \in \IZ$ an
  isomorphism
  \begin{equation}
    a_n \oplus b_n^+ \oplus b_n^- \colon
    \pi_n(\bfT_{\bfK(\Phi^{-1})}) \oplus N\!K_n(\cala_{\Phi}[t])  \oplus  N\!K_n(\cala_{\Phi}[t^{-1}])
    \xrightarrow{\cong} K_n(\cala_{\Phi}[\IZ]),
    \label{twisted_BHS_on_homotopy_groups_for_NK}
  \end{equation}
  where $N\!K_n(\cala_{\Phi}[t^{\pm}])$ is the kernel of the map
  $K_n(\cala_{\Phi}[t^{\pm}]) \to K_n(\cala)$ coming from the functor
  $\cala_{\Phi}[t^{\pm}] \to \cala$ given taking the coefficient of $t^0$. One easily
  checks that the functors $(i_k)_*$ and $i_k^*$ of~\eqref{(i_k)_ast}
  and~\eqref{(i_k)_upper_ast} induce homomorphisms
  \begin{eqnarray}
    N\!K_n((i_k)_*)_{\pm} \colon N\!K_n(\cala_{\Phi^k}[t^{\pm 1}]) & \to & N\!K_n(\cala_{\Phi}[t^{\pm 1}]);
                                                                           \label{NK_n((i_k)_ast}
    \\
    N\!K_n(i_k^*)_{\pm} \colon N\!K_n(\cala_{\Phi}[t^{\pm 1}]) & \to & N\!K_n(\cala_{\Phi^k}[t^{\pm 1}]).
                                                                       \label{NK_n(i_k)_upper_ast}
  \end{eqnarray}
  Then following diagram commutes for $n \in \IZ$
  \[
    \xymatrix@!C=15em{\pi_n(\bfT_{\bfK(\Phi^{-k})})\oplus N\!K_n(\cala_{\Phi^k}[t])
      \oplus N\!K_n(\cala_{\Phi^k}[t^{-1}]) \ar[r]^-{a_n\oplus b_n^+ \oplus
        b_n^-}_-{\cong} \ar[d]_{(\pi_n(\bfp_k) \circ \pi_n(\bff)^{-1}) \oplus
        N\!K_n((i_k)_*)_{+} \oplus N\!K_n((i_k)_*)_{-}} & K_n(\cala_{\Phi^k}[\IZ])
      \ar[d]^{K_n((i_k)_*)}
      \\
      \pi_n(\bfT_{\bfK(\Phi^{-1})}) \oplus N\!K_n(\cala_{\Phi}[t]) \oplus
      N\!K_n(\cala_{\Phi}[t^{-1}]) \ar[r]_-{a_n\oplus b_n^+ \oplus b_n^- }^-{\cong} &
      K_n(\cala_{\Phi}[\IZ]) }
  \]
  where the upper horizontal arrow is the
  isomorphism~\eqref{twisted_BHS_on_homotopy_groups_for_NK} for $\Phi^k$, the lower
  horizontal arrow is the isomorphism~\eqref{twisted_BHS_on_homotopy_groups_for_NK} for
  $\Phi$ and the vertical homomorphism have been defined
  in~\eqref{pi_n(bbf)},~\eqref{pi_n(bfp_k)},~\eqref{K_n((i_k)_ast)}
  and~\eqref{NK_n((i_k)_ast}. The proof of commutativity for the terms
  $N\!K_n(\cala_{\Phi^k}[t])$ and $N\!K_n(\cala_{\Phi^k}[t^{-1}])$ is obvious, the one for
  the term $\pi_n(\bfT_{\bfK(\Phi^{-k})})$ is left to the reader.

  We obtain from~\cite[Theorem~0.1~(ii)]{Lueck-Steimle(2016BHS)} for $n \in \IZ$ an
  isomorphism
  \begin{eqnarray}
    \alpha(\Phi,\pm)_n \colon  K_{n-1}(\cala) \oplus N\!K_n(\cala_{\Phi}[t^{\pm}])
    & \xrightarrow{\cong} &
     K_{n-1}(\Nil(\cala,\Phi)).
     \label{iso_alpha(Phi)_pm}
  \end{eqnarray}
  and hence an isomorphism
  \begin{eqnarray}
    \widetilde{\alpha}(\Phi,\pm)_n \colon  N\!K_n(\cala_{\Phi}[t^{\pm}])
    & \xrightarrow{\cong} &
   \overline{K}_{n-1}(\Nil(\cala,\Phi)).
     \label{iso_widetilde(alpha)(Phi)_pm}
  \end{eqnarray}
  Therefore it remains to show that the diagram
  \begin{equation}
    \xymatrix@!C=10em{N\!K_n(\cala_{\Phi^k}[t^{\pm}]) \ar[r]_-{\cong}^-{\widetilde{\alpha}(\Phi^k,\pm)_n}
      \ar[d]_{N\!K_n((i_k)_*)_{\pm}}
      &
      \overline{K}_{n-1}(\Nil(\cala,\Phi^k)) \ar[d]^{\overline{K}_{n-1}(V_k)}
      \\
      N\!K_n(\cala_{\Phi^k}[t^{\pm}]) \ar[r]^-{\cong}_-{\widetilde{\alpha}(\Phi,\pm)_n}
      &
      \overline{K}_{n-1}(\Nil(\cala,\Phi))
    }
    \label{diagram_comparing_Nil_and_NK_for_Induction}
  \end{equation}
  commutes for $n \in \IZ$.  Fix a natural number $m$. Then the
  diagram~\eqref{diagram_comparing_Nil_and_NK_for_Induction}
  is a retract of the corresponding diagram
  \[
    \xymatrix@!C=17em{N\!K_{n+m}(\cala[\IZ^m]_{\Phi[\IZ^m]^k}[t^{\pm}])
      \ar[r]_-{\cong}^-{\widetilde{\alpha}(\Phi[\IZ^m]^k,\pm)_{n+m}}
      \ar[d]_{N\!K_{n+m}((i_k)_*)_{\pm}}
      &
      \overline{K}_{n+m-1}(\Nil(\cala[\IZ^m],\Phi[\IZ^m]^k)) \ar[d]^{\overline{K}_{n+ m -1}(V_k)}
      \\
      N\!K_{n+m}(\cala[\IZ^m]_{\Phi[\IZ^m]^k}[t^{\pm}])
      \ar[r]^-{\cong}_-{\widetilde{\alpha}(\Phi[\IZ^m],\pm)_{n+m}}
      &
      \overline{K}_{n+m-1}(\Nil(\cala[\IZ^m],\Phi)[\IZ^m])
    }
    \]
    where we have replaced $\cala$ by $\cala[\IZ^m]$ and $\Phi$ by $\Phi[\IZ^m]$,
    see~\cite[Lemma~2.2, Theorem~6.2, Remark~6.3]{Lueck-Steimle(2014delooping)}.  Hence we
    can assume without loss of generality that $n \ge 1 $ when proving the commutativity
    of~\eqref{diagram_comparing_Nil_and_NK_for_Induction}, since our result shall be true
    for any additive category $\cala$.  So we can use the connective $K$-theory spectrum
    when dealing with the commutativity
    of~\eqref{diagram_comparing_Nil_and_NK_for_Induction}.  By
    inspecting~\cite{Lueck-Steimle(2016BHS)} one sees that this boils down to show that
    the following diagram commutes for $n \ge 1$,
  \begin{equation}
    \xymatrix@!C=10em{K_{n}(\Nil(A,\Phi^k)) \ar[r]^-{K_n(\chi_{\Phi^k})}  \ar[d]_{K_n(V_k)}
      &
      K_n(\Chcat(\cala_{\Phi^k}[t^{-1}])^w) \ar[d]^{\bfK(\Chcat((i_k)_*)^w)}
      \\
      K_{n}(\Nil(A,\Phi)) \ar[r]_-{K_n(\chi_{\Phi})}
      &
      K_n(\Chcat(\cala_{\Phi}[t^{-1}])^w) 
    }
    \label{diagram_comparing_Nil_and_Ch_for_induction}
  \end{equation}
  where the upper and the lower homomorphism are induced by the functors
  \begin{eqnarray}
    \chi_{\Phi^k} \colon \Nil(A,\Phi^k) & \to & \Chcat(\cala_{\Phi^k}[t^{-1}])^w;
                                                \label{chi_(Phi_upper_k)}
    \\
    \chi_{\Phi} \colon \Nil(A,\Phi) & \to & \Chcat(\cala_{\Phi}[t^{-1}])^w,
                                            \label{chi_(Phi))}
  \end{eqnarray}
  introduced in~\cite[Section~8]{Lueck-Steimle(2016BHS)}, the left horizontal arrow has
  been introduced in~\eqref{K_n(V_k)} and the right vertical arrow is induced by the
  functor $(i_k)_*$ of~\eqref{(i_k)_ast}.  Recall that $\Chcat(\cala_\Phi[t^{-1}])^w$ is
  the category of bounded chain complexes over
 $\cala_\Phi[t^{-1}]$, which are contractible as chain complexes over
 $\cala_\Phi[t,t^{-1}]$, and that the lower horizontal arrow sends an object
  $(A,\varphi)$ in $\Nil(\cala,\Phi)$ to the $\cala_{\Phi}[t^{-1}]$-chain complex
  concentrated in dimensions $0$ and $1$, whose first differential is
  $\id_A \cdot t^{-1} - \varphi \cdot t^0 \colon \Phi(A) \to A$. 

  Next we explain the key ingredients in the proof of the commutativity
  of~\eqref{diagram_comparing_Nil_and_Ch_for_induction} and leave it to the reader to
  figure out the routine to fill in the details based on standard fact about connective $K$-theory
  such as the Additivity Theorem for Waldhausen categories.

  Consider an object $\varphi \colon \Phi^k(A) \to A$ in $\Nil(\cala,\Phi^k)$ given by a
  nilpotent endomorphism $\varphi \colon \Phi^k(A) \to A$ in $\cala$.  We have defined its
  Verschiebung $V_k(\varphi)$ as an object in $\Nil(A,\Phi)$ given by a specific nilpotent
  endomorphism
  \[
    V_k(\varphi) \colon \Phi\bigl(\bigoplus_{j = 0}^{k-1} \Phi^j(A)\bigr) =\bigoplus_{i =
      1}^k \Phi^i(A) \to \bigoplus_{j = 0}^{k-1} \Phi^j(A)
  \]
  in $\cala$, see~\eqref{Verschiebung_V_k}. To it we can assign the morphism
  \[
    \id_{\bigoplus_{j = 0}^{k-1} \Phi^j(A)} \cdot t^{-1} - V_k(\varphi) \cdot t^0 \colon
    \Phi\bigl(\bigoplus_{j = 0}^{k-1} \Phi^j(A)\bigr) =\bigoplus_{i = 1}^k \Phi^i(A) \to
    \bigoplus_{j = 0}^{k-1} \Phi^j(A)
  \]
  in $\cala_{\Phi}[\IZ]$. It is given by the following $(k,k)$-matrix
  \[
    \begin{pmatrix}
      \id_{A} \cdot t^{-1} & 0 & 0 & \cdots & 0 & 0 & -\varphi \cdot t^0
      \\
      -\id_{\Phi(A)} \cdot t^0 & \id_{\Phi(A)} \cdot t^{-1} & 0 & \cdots & 0 & 0 & 0
      \\
      0 & -\id_{\Phi^2(A)} \cdot t^0 & \id_{\Phi^2(A)} \cdot t^{-1} & \cdots & 0 & 0 & 0
      \\
      \vdots & \vdots & \vdots & \vdots & \vdots & \vdots & \vdots
      \\
      0 & 0 & 0 & \cdots & -\id_{\Phi^{k-3}(A)} \cdot t^{-1} & 0 & 0
      \\
      0 & 0 & 0 & \cdots & -\id_{\Phi^{k-2}(A)} \cdot t^0 & -\id_{\Phi^{k-2}(A)} \cdot
      t^{-1} & 0
      \\
      0 & 0 & 0 & \cdots & 0 & -\id_{\Phi^{k-1}(A)} \cdot t^0 & -\id_{\Phi^{k-1}(A)} \cdot
      t^{-1}
    \end{pmatrix}
  \]
  
  of morphisms in $\cala_{\Phi}[\IZ]$.

  On the other hand we can assign to $\varphi \colon \Phi^k(A) \to A$ in $\cala$ the
  morphism in $\cala_{\Phi^k}[\IZ]$ given
  \[
    \id_{A} \cdot t^{-1} - \varphi \cdot t^0 \colon \Phi^k(A) \to A.
  \]
  Its image under the induction functor $(i_k)_*$ is the morphism in $\cala_{\Phi}[\IZ]$
  given by
  \[
    \id_A \cdot t^{-k} - \varphi \cdot t^0 \colon \Phi^k(A) \to A.
  \]
  Consider the morphism
  $w \colon \bigoplus_{i = 1}^k \Phi^i(A) \xrightarrow{\cong} \bigoplus_{i = 1}^k
  \Phi^i(A)$ in $\cala_{\Phi}[\IZ]$ given by the $(k,k)$-matrix
  \[
    \begin{pmatrix}
      \id_{\Phi(A)} \cdot t^0 & \id_{\Phi(A)} \cdot t^{-1} & \id_{\Phi(A)} \cdot t^{-2} &
      \cdots & \id_{\Phi(A)} \cdot t^{-k+3} & \id_{\Phi(A)} \cdot t^{-k+2} & \id_{\Phi(A)}
      \cdot t^{-k+1}
      \\
      0 & \id_{\Phi^2(A)} \cdot t^0 & \id_{\Phi^2(A)} \cdot t^{-1} & \cdots &
      \id_{\Phi^2(A)} \cdot t^{-k+4} & \id_{\Phi^2(A)} \cdot t^{-k+3} & \id_{\Phi^2(A)}
      \cdot t^{-k+2}
      \\
      0 & 0 & \id_{\Phi^3(A)} \cdot t^0 & \cdots & \id_{\Phi^3(A)} \cdot t^{-k+5} &
      \id_{\Phi^3(A)} \cdot t^{-k+4} & \id_{\Phi^3(A)} \cdot t^{-k+3}
      \\
      \vdots & \vdots & \vdots & \vdots & \vdots & \vdots & \vdots
      \\
      0 & 0 & 0 & \cdots & \id_{\Phi^{k-2}(A)} \cdot t^0 & \id_{\Phi^{k-2}(A)} \cdot
      t^{-1} & \id_{\Phi^{k-2}(A)} \cdot t^{-2}
      \\
      0 & 0 & 0 & \cdots & 0 & \id_{\Phi^{k-1}(A)} \cdot t^0 & \id_{\Phi^{k-1}(A)} \cdot
      t^{-1}
      \\
      0 & 0 & 0 & \cdots & 0 & 0 & \id_{\phi^{k}(A)} \cdot t^0
    \end{pmatrix}
  \]
  of morphisms in $\cala_{\Phi}[\IZ]$.  Note that $w$ is the same as the identity
  $(k,k)$-matrix from the $K$-theoretic point of view by its block structure. Then the
  composite
  \[\bigl(\id_{\bigoplus_{j = 0}^{k-1} \Phi^j(A)} \cdot t^{-1} - V_k(\varphi) \cdot
    t^0\bigr) \circ w \colon \Phi\bigl(\bigoplus_{j = 0}^{k-1} \Phi^j(A)\bigr)
    =\bigoplus_{i = 1}^k \Phi^i(A) \to \bigoplus_{j = 0}^{k-1} \Phi^j(A)
  \]
  is given by the $(k,k)$-matrix
  \[
    \begin{pmatrix}
      \id_A \cdot t^{-1} & \id_A \cdot t^{-2} & \id_A \cdot t^{-3} & \cdots & \id_A \cdot
      t^{-k +2} & \id_A \cdot t^{-k +1} & \id_A \cdot t^{-k} - \varphi \cdot t^0
      \\
      -\id_{\Phi(A)} \cdot t^0 & 0 & 0 & \cdots & 0 & 0 & 0
      \\
      0 & -\id_{\Phi^2(A)} \cdot t^0 & 0 & \cdots & 0 & 0 & 0
      \\
      \vdots & \vdots & \vdots & \vdots & \vdots & \vdots & \vdots
      \\
      0 & 0 & 0 & \cdots & 0 & 0 & 0
      \\
      0 & 0 & 0 & \cdots & -\id_{\Phi^{k-2}(A)} \cdot t^0 & 0 & 0
      \\
      0 & 0 & 0 & \cdots & 0 & -\id_{\Phi^{k-1}(A)} \cdot t^0 & 0
    \end{pmatrix}
  \]
  of morphisms in $\cala_{\Phi}[\IZ]$.  Note that this composite looks like
  $\id_A \cdot t^{-k} - \varphi \cdot t^0$ from the $K$-theoretic point of view by its
  block structure.  This finishes the proof of
  assertion~\ref{F_k_and_V_k_versus_I_k_ast_and_i_k_upper_ast:induction}.
  \\[1mm]~\ref{F_k_and_V_k_versus_I_k_ast_and_i_k_upper_ast:restriction} The following
  diagram commutes for $n \in \IZ$
  \[
    \xymatrix@!C=15em{\pi_n(\bfT_{\bfK(\Phi^{-1})})\oplus N\!K_n(\cala_{\Phi^1}[t])
      \oplus N\!K_n(\cala_{\Phi^1}[t^{-1}]) \ar[r]^-{a_n\oplus b_n^+ \oplus
        b_n^-}_-{\cong} \ar[d]_{(\pi_n(\bff) \circ \trf_n(\bfp_k)) \oplus
        N\!K_n(i_k^*)_{+} \oplus N\!K_n(i_k^*)_{-}}
      &
      K_n(\cala_{\Phi}[\IZ])
      \ar[d]^{K_n(i_k^*)}
      \\
      \pi_n(\bfT_{\bfK(\Phi^{-k})}) \oplus N\!K_n(\cala_{\Phi^k}[t]) \oplus
      N\!K_n(\cala_{\Phi^k}[t^{-1}]) \ar[r]_-{a_n\oplus b_n^+ \oplus b_n^- }^-{\cong}
      &
      K_n(\cala_{\Phi}^k[\IZ]) }
  \]
  where the upper horizontal arrow is the
  isomorphism~\eqref{twisted_BHS_on_homotopy_groups_for_NK} for $\Phi$, the lower
  horizontal arrow is the isomorphism~\eqref{twisted_BHS_on_homotopy_groups_for_NK} for
  $\Phi^k$ and the vertical homomorphism have been defined
  in~\eqref{pi_n(bbf)},~\eqref{trf_n(bfp_k)},~\eqref{K_n(i_k_upper_ast)}
  and~\eqref{NK_n(i_k)_upper_ast}. The proof of commutativity for the terms
  $N\!K_n(\cala_{\Phi^k}[t])$ and $N\!K_n(\cala_{\Phi^k}[t^{-1}])$ is obvious, the one for
  the term $\pi_n(\bfT_{\bfK(\Phi^{-k})})$ is left to the reader.

  Therefore it remains to show that the diagram
  \begin{equation}
    \xymatrix@!C=10em{N\!K_n(\cala_{\Phi}[t^{\pm}]) \ar[r]_-{\cong}^-{\widetilde{\alpha}(\Phi,\pm)_n}
      \ar[d]_{N\!K_n(i_k^*)_{\pm}}
      &
      \overline{K}_{n-1}(\Nil(\cala,\Phi)) \ar[d]^{\overline{K}_{n-1}(F_k)}
      \\
      N\!K_n(\cala_{\Phi^k}[t^{\pm}]) \ar[r]^-{\cong}_-{\widetilde{\alpha}(\Phi^k,\pm)_n}
      &
      \overline{K}_{n-1}(\Nil(\cala,\Phi^k))
    }
    \label{diagram_comparing_Nil_and_NK_for_restriction}
  \end{equation}
  commutes for $n \in \IZ$. By the same argument as it appears in the commutativity of the
  diagram~\eqref{diagram_comparing_Nil_and_NK_for_Induction}, one can show that it suffices
  to prove the commutativity of~\eqref{diagram_comparing_Nil_and_NK_for_restriction} for
  $n \ge 1$, or, in other words for connective $K$-theory. By
  inspecting~\cite{Lueck-Steimle(2016BHS)} one sees that this boils down to show that the
  following diagram
  \begin{equation}
    \xymatrix@!C=10em{K_{n}(\Nil(A,\Phi)) \ar[r]^-{K_n(\chi_{\Phi})}  \ar[d]_{K_n(F_k)}
      &
      K_n(\Chcat(\cala_{\Phi}[t^{-1}])^w) \ar[d]^{\bfK(\Chcat(i_k^*)^w)}
      \\
      K_{n}(\Nil(A,\Phi^k)) \ar[r]_-{K_n(\chi_{\Phi^k})}
      &
      K_n(\Chcat(\cala_{\Phi^k}[t^{-1}])^w) 
    }
    \label{diagram_comparing_Nil_and_Ch_for_restriction}
  \end{equation}
  commutes for $n \ge 1$.

  Next we explain the key ingredients in the proof of the commutativity
  of~\eqref{diagram_comparing_Nil_and_Ch_for_restriction} for connective $K$-theory
  and leave it to the reader to
  figure out the routine to fill in the details based on standard fact about $K$-theory
  such as the Additivity Theorem for Waldhausen categories.

  Let $\Phi \colon \cala \xrightarrow{\cong} \cala$ be an automorphism of an additive
  category.  Consider a nilpotent endomorphism $\varphi \colon \Phi(A) \to A$ representing
  an element in $\Nil(\cala,\Phi)$. Consider the morphism
  $\id_A \cdot t^{-1} - \varphi \cdot t^0 \colon \Phi(A) \to A$ in $\cala_{\Phi}[\IZ]$.
  The functor $i_k^* \colon \cala_{\Phi}[\IZ] \to \cala_{\Phi^k}[\IZ]$ sends it to the
  morphism $\bigoplus_{i = 1}^k \Phi^i(A) \to \bigoplus_{j = 0}^{k-1} \Phi^j(A)$ in
  $\cala_{\Phi^k}[\IZ]$ that is given by the $(k,k)$-matrix
  \[
    \begin{pmatrix}
      - \varphi \cdot t^0 & 0 & 0 & \cdots & 0& 0 & \id_{\Phi^k(A)} \cdot t^{-1}
      \\
      \id_{\Phi(A)} \cdot t^0 & - \Phi(\varphi) \cdot t^0 & 0 & \cdots & 0 & 0 & 0
      \\
      0 & \id_{\Phi^2(A)} \cdot t^0 & - \Phi^2(\varphi) \cdot t^0 & \cdots & 0 & 0 & 0
      \\
      \vdots & \vdots & \vdots & \vdots & \vdots & \vdots & \vdots
      \\
      0 & 0 & 0 & \cdots & \id_{\Phi^{k-2}(A)} \cdot t^0 & - \Phi^{k-2}(\varphi) \cdot t^0
      & 0
      \\
      0 & 0 & 0 & \cdots & 0 & \id_{\Phi^{k-1}(A)} \cdot t^0 & - \Phi^{k-1}(\varphi) \cdot
      t^0
    \end{pmatrix}
  \]
  of morphisms in $\cala_{\Phi}[\IZ]$.

  If we apply the Frobenius $F_k$ operator to the object $\varphi \colon \Phi(A) \to A$ of
  $\Nil(\cala,\Phi)$, we obtain the object
  $\varphi^{(k)} \colon \Phi^k(A) \xrightarrow{\Phi^{k-1}(\varphi)} \Phi^{k-1}(A)
  \xrightarrow{\Phi^{k-2}(\varphi)} \cdots \xrightarrow{\varphi} A$ of
  $\Nil(\cala,\Phi^k)$. To it we can assign the morphism
  $\id_A \cdot t^{-1} - \varphi^{(k)} \cdot t^0 \colon \Phi^k(A) \to A$ in
  $\cala_{\Phi^k}[\IZ]$.

  Consider the morphism
  \[
    u \colon \bigoplus_{i = 1}^k \Phi^i(A) \to \bigoplus_{j= 0}^{k-1} \Phi^j(A)
  \]
  in $\cala_{\Phi^k}[\IZ]$ that is given by the $(k,k)$-matrix
  \[
    \begin{pmatrix}
      - \varphi \cdot t^0 & - \varphi^{(2)} \cdot t^0 & - \varphi^{(3)} \cdot t^0 & \cdots
      & - \varphi^{(k-2)} \cdot t^0 & - \varphi^{(k-1)} \cdot t^0 & \id_A \cdot t^{-1} -
      \varphi^{(k)} \cdot t^0
      \\
      \id_{\Phi(A)} \cdot t^0 & 0 & 0 & \cdots & 0 & 0 & 0
      \\
      0 & \id_{\Phi^2(A)} \cdot t^0 & 0 & \cdots & 0 & 0 & 0
      \\
      0 & 0 & \id_{\Phi^3(A)} \cdot t^0 & \cdots & 0 & 0 & 0
      \\
      \vdots & \vdots & \vdots & \vdots & \vdots & \vdots & \vdots
      \\
      0 & 0 & 0 & \cdots & \id_{\Phi^{k-2}(A)} \cdot t^0 & 0 & 0
      \\
      0 & 0 & 0 & \cdots & 0 & \id_{\Phi^{k-1}(A)} \cdot t^0 & 0
    \end{pmatrix}
  \]
  of morphisms in $\cala_{\Phi^k}[\IZ]$. Note that from a $K$-theoretic point of view this
  morphism should give the same element in $K$-theory as the morphisms
  $\id_A \cdot t^{-1} - \varphi^{(k)} \cdot t^0 \colon \Phi^k(A) \to A$ in
  $\cala_{\Phi^k}[\IZ]$, just view its special block structure.

  We also have the automorphism
  \[
    v \colon \bigoplus_{i = 1}^{k} \Phi^j(A) \xrightarrow{\cong} \bigoplus_{j = 1}^{k}
    \Phi^j(A)
  \]
  in $\cala_{\Phi^k}[\IZ]$ given by the $(k,k)$-matrix
  \[
    \begin{pmatrix}
      \id_{\Phi(A)} \cdot t^0 & - \Phi(\varphi) \cdot t^0 & 0 & \cdots & 0 & 0 & 0
      \\
      0 & \id_{\Phi^2(A)} \cdot t^0 & - \Phi^2(\varphi) \cdot t^0 & \cdots & 0 & 0 & 0
      \\
      0 & 0 & - \id_{\Phi^3(A)} \cdot t^0 & \cdots & 0 & 0 & 0
      \\
      \vdots & \vdots & \vdots & \vdots & \vdots & \vdots & \vdots
      \\
      0 & 0 & 0 & \cdots & \id_{\Phi^{k-2}(A)} \cdot t^0 & - \Phi^{k-2}(\varphi) \cdot t^0
      & 0
      \\
      0 & 0 & 0 & \cdots & 0 & \id_{\Phi^{k-1}(A)} \cdot t^0 & \Phi^{k-1}(\varphi) \cdot
      t^0
      \\
      0 & 0 & 0 & \cdots & 0 & 0 & \id_{\Phi^{k}(A)} \cdot t^0
    \end{pmatrix}
  \]
  of morphisms in $\cala_{\Phi^k}[\IZ]$. Note that $v$ is the same as the identity
  $(k,k)$-matrix from the $K$-theoretic point of view by its block structure.

  Now one easily check the equality of morphisms
  $\bigoplus_{i = 1}^k \Phi^i(A) \to \bigoplus_{i = 0}^{k} \Phi^j(A)$ in
  $\cala_{\Phi^k}[\IZ]$
  \[
    u \circ v = i_k^*\bigl(\id_A \cdot t^{-1} - \varphi \cdot t^0\bigr).
  \]
  Now Theorem~\ref{F_k_and_V_k_versus_I_k_ast_and_i_k_upper_ast} follows.
\end{proof}

Let
\begin{eqnarray}
\overline{s}_n \colon \overline{K}_n(\Nil(\cala,\Phi)) & \to & K_{n-1}(\cala_{\Phi}[\IZ])
\label{injection_overline(s)_n}
\end{eqnarray}
be the homomorphism  coming from the Bass-Heller-Swan homomorphism,
see~\eqref{twisted_BHS_on_homotopy_groups},
and the inclusion  of the first Nil-term. Let
\begin{eqnarray}
\overline{r}_n \colon K_{n-1}(\cala_{\Phi}[\IZ]) & \to & \overline{K}_n(\Nil(\cala,\Phi)) 
\label{projection_overline(r)_n}
\end{eqnarray}
be the homomorphisms  coming from the Bass-Heller Swan homomorphism,
see~\eqref{twisted_BHS_on_homotopy_groups},
and the projection onto the first Nil-term. Obviously we have $\overline{r}_n \circ \overline{s}_n = \id$.

Theorem~\ref{F_k_and_V_k_versus_I_k_ast_and_i_k_upper_ast} implies

\begin{corollary}\label{cor:ind_res_V_and_F}\
  \begin{enumerate}
  \item\label{cor:ind_res_V_and_F:ind} We have for every $n \in \IZ$ the commutative diagram
    \[\xymatrix@!C=12em{ K_n(\cala_{\Phi^k}[\IZ]) \ar[r]^-{K_n((i_k)_*)} \ar[d]_{\overline{s}_n(\cala,\Phi^k)}
        & K_n(\cala_{\Phi}[\IZ]) \ar[d]^{\overline{s}_n(\cala,\Phi)}
        \\
        \overline{K}_{n-1}(\Nil(\cala,\Phi^k)) \ar[r]_-{\overline{K}_{n-1}(V_k)} &
        \overline{K}_{n-1}(\Nil(\cala,\Phi)) }
    \]
    where the vertical arrows have been defined n~\eqref{injection_overline(s)_n}
    and the horizontal arrows
    have been defined in~\eqref{widetilde(K)_n(V_k)} and~\eqref{K_n((i_k)_ast)};

  \item\label{cor:ind_res_V_and_F:res} We have for every $n \in \IZ$ the commutative diagram
    \[
      \xymatrix@!C=12em{K_n(\cala_{\Phi}[\IZ]) \ar[r]^-{K_n(i_k^*)} \ar[d]_{\overline{r}_n(\cala,\Phi)} &
        K_n(\cala_{\Phi^k}[\IZ]) \ar[d]^{\overline{r}_n(\cala,\Phi^k)}
        \\
        \overline{K}_{n-1}(\Nil(\cala,\Phi)) \ar[r]_-{\overline{K}_{n-1}(F_k)} &
        \overline{K}_{n-1}(\Nil(\cala,\Phi^k))}
    \]
    where the vertical arrows have been defined in~\eqref{projection_overline(r)_n}
    and the horizontal arrows have been defined in~\eqref{widetilde(K)_n(F_k)}
    and~\eqref{K_n(i_k_upper_ast)}.
  \end{enumerate}
\end{corollary}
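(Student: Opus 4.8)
The plan is to derive both squares as purely formal consequences of Theorem~\ref{F_k_and_V_k_versus_I_k_ast_and_i_k_upper_ast} by passing to a direct summand in the two large commutative diagrams displayed there. First I would recall the relevant bookkeeping. The map $\overline{s}_n$ of~\eqref{injection_overline(s)_n} is, by construction, the composite of the inclusion of the first $\overline{K}_{n-1}(\Nil(\cala,\Phi))$-summand into the source of the isomorphism $a_n\oplus c_n^+ \oplus c_n^-$ of~\eqref{twisted_BHS_on_homotopy_groups} with that isomorphism; in other words $\overline{s}_n = c_n^+$. Dually, $\overline{r}_n$ of~\eqref{projection_overline(r)_n} is the composite of $(a_n\oplus c_n^+ \oplus c_n^-)^{-1}$ with the projection onto that same summand. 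The structural point I would emphasize is that in both parts of Theorem~\ref{F_k_and_V_k_versus_I_k_ast_and_i_k_upper_ast} the vertical map is \emph{block diagonal} with respect to the threefold direct sum decompositions, its Nil-blocks being $\overline{K}_{n-1}(V_k)$ in part~\ref{F_k_and_V_k_versus_I_k_ast_and_i_k_upper_ast:induction} and $\overline{K}_{n-1}(F_k)$ in part~\ref{F_k_and_V_k_versus_I_k_ast_and_i_k_upper_ast:restriction}; this is immediate from~\eqref{widetilde(K)_n(V_k)} and~\eqref{widetilde(K)_n(F_k)}, since $V_k$ and $F_k$ act on each of the two $\Nil$-copies separately.

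For assertion~\ref{cor:ind_res_V_and_F:ind} I would restrict the commutative square of Theorem~\ref{F_k_and_V_k_versus_I_k_ast_and_i_k_upper_ast}~\ref{F_k_and_V_k_versus_I_k_ast_and_i_k_upper_ast:induction} to the direct summand $\overline{K}_{n-1}(\Nil(\cala,\Phi^k))$ sitting inside the top-left corner (as the $c_n^+$-copy). Travelling along the top horizontal isomorphism and then the right vertical map $K_n((i_k)_*)$ produces exactly $K_n((i_k)_*)\circ \overline{s}_n(\cala,\Phi^k)$, because the top horizontal isomorphism restricted to that summand is $c_n^+ = \overline{s}_n(\cala,\Phi^k)$. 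Travelling along the left vertical map and then the bottom horizontal isomorphism, block diagonality carries the summand into the $\overline{K}_{n-1}(\Nil(\cala,\Phi))$-copy via $\overline{K}_{n-1}(V_k)$, after which the bottom horizontal isomorphism restricts to $\overline{s}_n(\cala,\Phi)$; the result is $\overline{s}_n(\cala,\Phi)\circ \overline{K}_{n-1}(V_k)$. Commutativity of the big square gives the claimed identity.

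For assertion~\ref{cor:ind_res_V_and_F:res} I would instead postcompose with the projection onto the $\overline{K}_{n-1}(\Nil(\cala,\Phi^k))$-copy in the bottom-left corner of the square of Theorem~\ref{F_k_and_V_k_versus_I_k_ast_and_i_k_upper_ast}~\ref{F_k_and_V_k_versus_I_k_ast_and_i_k_upper_ast:restriction} and invert the two horizontal isomorphisms. Writing $\Theta,\Theta'$ for the BHS isomorphisms for $\Phi$ and $\Phi^k$ and $g$ for the (block-diagonal) left vertical map, commutativity gives $K_n(i_k^*) = \Theta'\circ g\circ \Theta^{-1}$, and one computes $\overline{r}_n(\cala,\Phi^k)\circ K_n(i_k^*) = \pr'\circ\Theta'^{-1}\circ\Theta'\circ g\circ\Theta^{-1} = \pr'\circ g\circ\Theta^{-1} = \overline{K}_{n-1}(F_k)\circ \pr\circ\Theta^{-1} = \overline{K}_{n-1}(F_k)\circ\overline{r}_n(\cala,\Phi)$, where $\pr,\pr'$ denote the projections onto the first $\Nil$-summands and the middle equality is block diagonality of $g$.

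The only point requiring care — and the closest thing to an obstacle — is the bookkeeping of signs and of which of the two $\Nil$-copies is the ``first Nil-term'' singled out by~\eqref{injection_overline(s)_n} and~\eqref{projection_overline(r)_n}: one must confirm it is exactly the $c_n^+$-copy appearing in Theorem~\ref{F_k_and_V_k_versus_I_k_ast_and_i_k_upper_ast}, and that $\overline{K}_{n-1}(V_k)$, $\overline{K}_{n-1}(F_k)$ genuinely respect the $\pm$-decomposition. There is no further mathematical content; the corollary is a formal extraction of a sub-diagram (resp. quotient-diagram) from Theorem~\ref{F_k_and_V_k_versus_I_k_ast_and_i_k_upper_ast}.
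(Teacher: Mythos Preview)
Your proposal is correct and matches the paper's approach exactly: the paper states only that Theorem~\ref{F_k_and_V_k_versus_I_k_ast_and_i_k_upper_ast} implies the corollary, and your argument spells out precisely the formal summand/quotient extraction that justifies this. The only caveat you raise---identifying the ``first Nil-term'' with the $c_n^+$-copy and the block-diagonality of the vertical maps---is indeed the only bookkeeping involved, and it is immediate from the definitions~\eqref{injection_overline(s)_n},~\eqref{projection_overline(r)_n} and the shape of the vertical maps in Theorem~\ref{F_k_and_V_k_versus_I_k_ast_and_i_k_upper_ast}.
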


Corollary~\ref{cor:ind_res_V_and_F}
has already been proved for commutative rings  as coefficients in the untwisted
case by Stienstra~\cite[Theorem~4.7]{Stienstra(1982)}.

For a natural number  $D$,  let $\Nil(\cala,\Phi)_D$ be the full subcategory of $\Nil(\cala,\Phi)$
  consisting of objects $(P,\varphi)$ satisfying $\varphi^{(D)} = 0$.  Obviously
  $\Nil(\cala,\Phi) = \bigcup_{d \ge 0 } \Nil(\cala,\Phi)_d$. Hence the canonical maps
  \begin{eqnarray}
    \colim_{D \to \infty} K_n(\Nil(\cala,\Phi)_D)
    & \xrightarrow{\cong} &
   K_n(\Nil(\cala,\Phi));
   \label{K_n(NIL)_as_colimit_over_K_n(NIL_D)}
    \\
    \colim_{D \to \infty} \overline{K}_n(\Nil(\cala,\Phi)_D)
    & \xrightarrow{\cong} &
     \overline{K}_n(\Nil(\cala,\Phi)),
       \label{overline(K)_n(NIL)_as_colimit_over_overline(K)_n(NIL_D)}
  \end{eqnarray}                                                                                 
  are bijective.  Given an element $z \in \overline{K}_n(\Nil(\cala,\Phi))$ and a natural
  number $D$, we say that $z$ is of \emph{nilpotence degree} $\le D$ if $z$ lies in the
  image of $\overline{K_n}(\Nil(\cala,\Phi)_D)\to \overline{K}_n(\Nil(\cala,\Phi))$.

  The next result is known  for rings  as coefficient  in the untwisted case,
  see Farrell~\cite[Lemma~3]{Farrell(1977)} for $n = 1$,
  Grunewald~\cite[Prop.~4.6]{Grunewald(2008Nil)}, Stienstra~\cite[p.~90]{Stienstra(1982)},
  and   Weibel~\cite[p.~479]{Weibel(1981)}.

 \begin{lemma}\label{lem:NK_n(F_k)(z)_vanishes_for_large_k} Fix an integer $n \in \IZ$
   Consider an element $z \in \overline{K}_n(\Nil(\cala,\Phi))$ of nilpotence degree
   $\le D$.
  
   Then the composite
   \[\overline{K}_{n-1}(\Nil(\cala,\Phi)) \xrightarrow{\overline{s}_n} K_n(\cala_{\Phi}[\IZ])
     \xrightarrow{K_n(i_k^*)} K_n(\cala_{\Phi^k}[\IZ])
   \]
   sends $z$ for every $k \ge D$ to zero, where $\overline{s}_n$ and $K_n(i_k^*)$ have been
   defined in~\eqref{injection_overline(s)_n} and~\eqref{K_n(i_k_upper_ast)}.
 \end{lemma}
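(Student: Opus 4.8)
The plan is to reduce the assertion, via the Bass--Heller--Swan compatibility of Theorem~\ref{F_k_and_V_k_versus_I_k_ast_and_i_k_upper_ast}, to a statement purely about the Frobenius operator on $\Nil$-groups, and then to settle that statement by inspecting the Frobenius functor $F_k$ on the subcategory $\Nil(\cala,\Phi)_D$.

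\emph{Step 1 (reduction to the Frobenius operator).} First I would invoke Theorem~\ref{F_k_and_V_k_versus_I_k_ast_and_i_k_upper_ast}~\ref{F_k_and_V_k_versus_I_k_ast_and_i_k_upper_ast:restriction} (equivalently Corollary~\ref{cor:ind_res_V_and_F}~\ref{cor:ind_res_V_and_F:res}): under the Bass--Heller--Swan isomorphisms~\eqref{twisted_BHS_on_homotopy_groups} for $\Phi$ and for $\Phi^k$, the restriction map $K_n(i_k^*)$ carries the first ($+$) $\Nil$-summand of $K_n(\cala_\Phi[\IZ])$ into the first ($+$) $\Nil$-summand of $K_n(\cala_{\Phi^k}[\IZ])$ by the Frobenius operator of~\eqref{widetilde(K)_n(F_k)}, and this is compatible with the canonical splitting inclusions. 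Hence $K_n(i_k^*) \circ \overline{s}_n = \overline{s}_n(\cala,\Phi^k) \circ \overline{K}_n(F_k)$, where $\overline{s}_n$ is the map of~\eqref{injection_overline(s)_n} for $\Phi$ and $\overline{s}_n(\cala,\Phi^k)$ its analogue for $\Phi^k$. Since $\overline{s}_n(\cala,\Phi^k)$ is split injective (with retraction $\overline{r}_n(\cala,\Phi^k)$, cf. the identity recorded after~\eqref{projection_overline(r)_n}), it suffices to prove that $\overline{K}_n(F_k)(z) = 0$ for every $k \ge D$.

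\emph{Step 2 (the Frobenius functor on $\Nil(\cala,\Phi)_D$).} Because $z$ has nilpotence degree $\le D$, it lies in the image of $\overline{K}_n(\Nil(\cala,\Phi)_D) \to \overline{K}_n(\Nil(\cala,\Phi))$; since $\overline{K}_n(\Nil(\cala,\Phi)_D)$ is, analogously to Notation~\ref{not:overline(K)(Nil))}, the cokernel of $K_n(I)$ for the inclusion $I \colon \cala \to \Nil(\cala,\Phi)_D$, $A \mapsto (A,0)$, I can pick a lift $\widetilde z \in K_n(\Nil(\cala,\Phi)_D)$ whose image under $K_n(j)$, for the inclusion $j \colon \Nil(\cala,\Phi)_D \hookrightarrow \Nil(\cala,\Phi)$, represents $z$. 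Now fix $k \ge D$. For any object $(A,\varphi)$ of $\Nil(\cala,\Phi)_D$ the iterate $\varphi^{(k)} = \varphi \circ \Phi(\varphi) \circ \cdots \circ \Phi^{k-1}(\varphi)$ has $\varphi^{(D)} = 0$ as a left factor (its first $D$ factors compose to $\varphi^{(D)}$), so $\varphi^{(k)} = 0$. Consequently the Frobenius functor $F_k$ of~\eqref{Frobenius_F_k}, restricted to $\Nil(\cala,\Phi)_D$, sends $(A,\varphi)$ to $(A,0)$ and a morphism $u$ to $u$; that is, $F_k|_{\Nil(\cala,\Phi)_D}$ equals the composite
\[
  \Nil(\cala,\Phi)_D \xrightarrow{\;P\;} \cala \xrightarrow{\;I\;} \Nil(\cala,\Phi^k),
\]
where $P(A,\varphi) = A$ and $I(A) = (A,0)$. (There is nothing to check on morphisms: once the nilpotent endomorphisms on source and target are zero, every $\cala$-morphism between the underlying objects is automatically a $\Nil$-morphism, so $F_k$ imposes no constraint.)

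\emph{Step 3 (conclusion).} It follows that $K_n(F_k)\bigl(K_n(j)(\widetilde z)\bigr) = K_n(I)\bigl(K_n(P)(\widetilde z)\bigr)$ lies in the image of $K_n(I) \colon K_n(\cala) \to K_n(\Nil(\cala,\Phi^k))$. Since $\overline{K}_n(\Nil(\cala,\Phi^k))$ is by Notation~\ref{not:overline(K)(Nil))} the cokernel of $K_n(I)$ and $\overline{K}_n(F_k)$ is the induced map on cokernels (using $K_n(F_k) \circ K_n(I) = K_n(I)$), the class of $K_n(F_k)\bigl(K_n(j)(\widetilde z)\bigr)$ in $\overline{K}_n(\Nil(\cala,\Phi^k))$, which is exactly $\overline{K}_n(F_k)(z)$, is zero. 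Combining this with Step 1 gives $K_n(i_k^*)\bigl(\overline{s}_n(z)\bigr) = 0$ for all $k \ge D$, as claimed. The only point requiring real care is Step 1, namely identifying the first $\Nil$-summand of the Bass--Heller--Swan decomposition for $\Phi$ with that for $\Phi^k$ through the restriction functor; but this is precisely the content of Theorem~\ref{F_k_and_V_k_versus_I_k_ast_and_i_k_upper_ast}~\ref{F_k_and_V_k_versus_I_k_ast_and_i_k_upper_ast:restriction}, so I do not anticipate a genuine obstacle — the rest is the elementary factorization $F_k|_{\Nil(\cala,\Phi)_D} = I \circ P$ together with cokernel bookkeeping.
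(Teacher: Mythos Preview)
Your proposal is correct and follows essentially the same approach as the paper: the paper's proof observes that $F_k$ restricted to $\Nil(\cala,\Phi)_D$ sends $(P,\varphi)$ to $(P,0)$ for $k \ge D$, hence $\overline{K}_n(F_k)(z)=0$, and then invokes Corollary~\ref{cor:ind_res_V_and_F} (the compatibility of $i_k^*$ with $F_k$ under the Bass--Heller--Swan decomposition, i.e., the content of Theorem~\ref{F_k_and_V_k_versus_I_k_ast_and_i_k_upper_ast}~\ref{F_k_and_V_k_versus_I_k_ast_and_i_k_upper_ast:restriction}) to conclude. Your Steps~1--3 spell out the same argument in more detail, including the explicit factorization $F_k|_{\Nil(\cala,\Phi)_D} = I \circ P$ that the paper leaves implicit.
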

 \begin{proof}
   Since the composite
   $\Nil(\cala,\Phi)_D\to \Nil(\cala,\Phi) \xrightarrow{F_k} \Nil(\cala,\Phi^k)$ sends
   an object $(P,\phi)$ to $(P,0)$ for $k \ge D$, the composite
   $\overline{K}_n(\Nil(\cala,\Phi)) \xrightarrow{\overline{K}_n(F_k)}
   \overline{K}_n(\Nil(\cala,\Phi^k))$ sends $z$ to zero for $k \ge D$.  Now
   Lemma~\ref{lem:NK_n(F_k)(z)_vanishes_for_large_k} follows from
   Corollary~\ref{cor:ind_res_V_and_F}~\ref{cor:ind_res_V_and_F:ind}.
 \end{proof}


\typeout{------------- Section 5: Mackey and Green functors for finite groups     ------------------------}

\section{Mackey and Green functors for finite  groups}%
\label{sec:Mackey_and_Green_functors_for_finite_groups}

For the reader's convenience we recall some basics about Mackey and Green functors and Dress induction
following~\cite[Section~2 and~3]{Bartels-Lueck(2007ind)}.  Throughout this section $F$ is a finite group.


\subsection{Mackey functors}\label{subsec:Mackey_functors}

 Let $\SETSf{F}$ be the category, whose objects are finite $F$-sets and 
whose morphisms are $F$-maps.

Let $\Lambda$ be an associative  commutative ring with unit.
Denote by $\MODcat{\Lambda}$  the abelian category of $\Lambda$-modules.
A \emph{bi-functor} $M = (M_*,M^*)$  from $\SETSf{F}$ to $\MODcat{\Lambda}$
consists of a covariant  functor
\[M_* \colon \SETSf{F} \to \MODcat{\Lambda}
\]
and a contravariant functor
\[
  M^* \colon \SETSf{F} \to \MODcat{\Lambda}
\]
that  agree on objects. We often write $M(X) = M_*(X) = M^*(X)$ for an object $X$ in $\SETSf{F}$
and $f_* = M_*(f)$ and $f^* = M^*(f)$ for a morphism $f \colon X \to Y$ in  $\SETSf{F}$

\begin{definition}[Mackey functor]\label{def:Mackey_functor}
A \emph{Mackey functor $M$ for $F$ with values in $\Lambda$-modules} is a bifunctor
from $\SETS{F}$ to $\MODcat{\Lambda}$ such that
\begin{itemize}
\item Double Coset formula\\[1mm]
  For any cartesian square in $\SETSf{F}$
  \[
  \xymatrix{X \ar[r]^{\overline{v}} \ar[d]_{\overline{u}}
    &
    X_1 \ar[d]^{u}
    \\
    X_2 \ar[r]_{v}
    &
    X_0
  }
\]
the following diagram of functors of abelian groups commutes
\[
  \xymatrix{M(X) \ar[r]^{\overline{v}_*} 
    &
    M(X_1) 
    \\
    M(X_2) \ar[r]_{v_*} \ar[u]^{\overline{u}^*}
    &
    M(X_0); \ar[u]_{u^*}
  }
\]

\item Additivity\\[1mm]
Consider two  objects  $X$ and $Y$ in $\SETSf{F}$. Let 
$i \colon X \to X \amalg Y$  and $j\colon Y \to X \amalg Y$ be the inclusions. Then the map
\[
i^* \times j^* \colon M(X \amalg Y) \to  M(X) \times M(Y)
\]
is bijective;

\end{itemize}
\end{definition}

\begin{remark}\label{rem:reformulation_of_Additivity}
One easily checks that the condition Additivity implies $M(\emptyset) = 0$
and is equivalent to the
requirement that $M_*(i) \oplus M_*(j) \colon M(X) \oplus M(Y) \to M(X \amalg Y)$
is bijective,  since the double coset formula implies
that $\id \times \id \colon M(\emptyset) \to M(\emptyset) \times  M(\emptyset)$ is an isomorphism
and  that $(M^*(i) \times  M^*(j )) \circ (M_*(i) \oplus  M_*(j))$ is
the identity.
\end{remark}

Let $M$, $N$ and $L$ be bi-functors from $\SETSf{F}$
values in $\Lambda$-modules. A pairing
\begin{equation}
  M \times N \to L
  \label{pairing_of_bifunctors}
\end{equation}
is a family of $\Lambda$-bilinear maps
\[
  \mu(X) \colon M(X) \times N(X) \to L(X), \quad (m,n) \mapsto \mu(m,n) = m \cdot n
\]
indexed by the objects $X$ of  $\SETSf{F}$ such that for every morphism $f \colon X \to Y$ in $\SETSf{F}$
we have
\begin{eqnarray}
&\begin{array}{lllll}
L^*(f)(x \cdot y) & = & M^*(f)(x) \cdot N^*(f)(y), & & x \in M(Y), y \in N(Y);
\\
x \cdot N_*(f)(y) & = & L_*(f)(M^*(f)(x) \cdot y),  & & x \in M(Y), y \in N(X);
\\
M_*(f)(x) \cdot y & = & L_*(f)(x \cdot N^*(f)(y)),  & & x \in M(X), y \in N(Y).
\end{array} &%
\label{conditions_for_a_pairing}
\end{eqnarray}


\subsection{Green functors}\label{subsec:Green_functors}

\begin{definition}[Green functor] A \emph{Green functor} for the finite group $F$ with
  values in $\Lambda$-modules is a Mackey functor $U$ together with a pairing
\[
  \mu \colon U \times U \to U
\]
and a choice of elements $1_X \in U(X)$ for each object $X$ in $\SETSf{F}$
such that for every  object $X$ in $\SETSf{F}$ the pairing 
$\mu(X) \colon U(X) \times U(X) \to U(X)$ and the element $1_X$ determine the structure of
an associative $\Lambda$-algebra with unit on $U(X)$.
Moreover, it is required that $U^*(f)(1_Y) = 1_X$ for every morphism $f \colon X \to Y$ in
$\SETSf{F}$.

A (left) $U$-module $M$ is a Mackey functor for the  group $F$ with values in
$\Lambda$-modules together with a pairing
\[
  \nu \colon U \times M \to M
\]
such that for every object $X$ in $\SETSf{F}$  the pairing $\nu(X) \colon U(X) \times M(X) \to
M(X)$ defines the structure of a $U(X)$-module on $M(X)$,
where $1_X$ acts as $\id_{M(X)}$.
\end{definition}


\subsection{Dress induction}\label{subsec:Dress_induction}

Let $F$ be a finite group. Let $\calf$ be a family of
subgroups of $F$ that is closed under taking subgroups and conjugation. An example for
$\calf$ is the family $\calh_p$ of $p$-hyperelementary subgroups of $F$ for any prime
$p$. A Green functor $\calu$ over $F$ is called \emph{$\calf$-computable} if the canonical
$\Lambda$-map
\[
\bigoplus_{H \in \calf}  \calu_*(\pr_H) \colon \bigoplus_{H \in \calf} \calu(F/H) \to \calu(F/F)
\]
is surjective, where $\pr_H \colon F/H \to F/F$ is the projection.
The next result is a mild generalization of the fundamental
work of Dress on induction, see~\cite{Dress(1973), Dress(1975)}.

  \begin{lemma}\label{lem:general_induction}
    Let $\calu$ be a Green functor over $F$ that is $\calf$-computable. Consider any $\calu$-module
    $\calm$. Then for every element $z \in \calm(F/F)$ we can find elements
    $u_H \in \calu(F/H)$ for $H \in \calf$ satisfying
    \[z = \sum_{H \in \calf}  \calm_*(\pr_H)\bigl( u_H\cdot \calm^*(\pr_H)(z)\bigr),
    \]
    where $\pr_H \colon F/H \to F/F$ is the projection.
  \end{lemma}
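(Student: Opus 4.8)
The plan is to reduce the statement to the Frobenius-reciprocity identity built into the axioms of a pairing of Mackey functors, applied to the module pairing $\nu\colon\calu\times\calm\to\calm$.

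First I would invoke the hypothesis that $\calu$ is $\calf$-computable. By definition this means that the $\Lambda$-map $\bigoplus_{H\in\calf}\calu_*(\pr_H)\colon\bigoplus_{H\in\calf}\calu(F/H)\to\calu(F/F)$ is surjective. Since $F$ is finite, $\calf$ is a finite set, and the unit $1_{F/F}\in\calu(F/F)$ lies in the image of this map: there exist elements $u_H\in\calu(F/H)$ for $H\in\calf$ with $1_{F/F}=\sum_{H\in\calf}\calu_*(\pr_H)(u_H)$. These are the $u_H$ that will appear in the conclusion.

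Next, using that $\calm(F/F)$ is a module over the $\Lambda$-algebra $\calu(F/F)$ in which $1_{F/F}$ acts as the identity, I would write $z=1_{F/F}\cdot z=\bigl(\sum_{H\in\calf}\calu_*(\pr_H)(u_H)\bigr)\cdot z$. Because each pairing map $\nu(X)$ is $\Lambda$-bilinear and each transfer $\calu_*(\pr_H)$ is $\Lambda$-linear, this expands to $z=\sum_{H\in\calf}\bigl(\calu_*(\pr_H)(u_H)\cdot z\bigr)$. Now I apply the third of the compatibility conditions~\eqref{conditions_for_a_pairing} for the pairing $\nu$, taking there $M=\calu$, $N=\calm$, $L=\calm$, the morphism $f=\pr_H\colon F/H\to F/F$, the element $x=u_H\in\calu(F/H)$, and $y=z\in\calm(F/F)$. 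That condition reads $M_*(f)(x)\cdot y=L_*(f)\bigl(x\cdot N^*(f)(y)\bigr)$, which in our case is exactly $\calu_*(\pr_H)(u_H)\cdot z=\calm_*(\pr_H)\bigl(u_H\cdot\calm^*(\pr_H)(z)\bigr)$. Substituting this into the sum gives the desired identity $z=\sum_{H\in\calf}\calm_*(\pr_H)\bigl(u_H\cdot\calm^*(\pr_H)(z)\bigr)$.

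There is no substantial obstacle here; the argument is entirely formal. The only points requiring care are the variance bookkeeping in~\eqref{conditions_for_a_pairing} — choosing the correct one of the three Frobenius-type identities and placing $\calu$ and $\calm$ in the right slots — and recalling that a pairing of bifunctors is by definition $\Lambda$-bilinear, so that it commutes with the finite sum and with the transfer maps $\calu_*(\pr_H)$.
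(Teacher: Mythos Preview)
Your proof is correct and follows essentially the same route as the paper: write $1_{F/F}$ as a sum of transfers $\calu_*(\pr_H)(u_H)$ using $\calf$-computability, multiply against $z$, and apply the Frobenius-reciprocity identity from~\eqref{conditions_for_a_pairing}. Your account is in fact slightly more explicit than the paper's about which of the three pairing axioms is invoked.
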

  \begin{proof}
     By assumption we can write the unit $1_{F/F} \in \calu(F/F)$ as the sum
    \[
    1_{F/F} = \sum_{H \in \calf} \calu_*(\pr _H)(u_H).
  \]
  for appropriate elements $u_H \in M(F/H)$.
  Now we compute using the various axioms for the structure of a Green functor and a
  module over a Green functor
  \begin{eqnarray*}
    z
    & = &
    1_{F/F} \cdot z
    \\
    & = &
    \biggl(\sum_{H \in \calf} \calu_*(\pr_H )(u_H)\biggr)  \cdot z
    \\
    & = &
   \sum_{H \in \calf} \calu_*(\pr_H)(u_H) \cdot z
    \\
    & = &
   \sum_{H \in \calf}  \calm_*(\pr_H)\bigl( u_H\cdot \calm^*(\pr_H)(z)\bigr).
  \end{eqnarray*}
\end{proof}


\subsection{The Green functor $\Sw_F$}\label{subsec:Sw_F}

Let $G$ be a group. Next we define the \emph{Swan ring} $\Sw(G)$ associated to $G$.
As an abelian group $\Sw(G)$ is defined in terms of generators and
relations as follows.  Generators $[M]$ are given by $\IZ G$-isomorphism classes of
$\IZ G$-modules $M$, which are finitely generated free as $\IZ$-modules.  If we have an
exact sequence $0 \to M_0 \to M_1 \to M_2 \to 0$ of such modules, then we require the
relation $[M_1] = [M_0] + [M_2]$. The multiplication in the Swan ring is given by the
tensor product over $\IZ$ with the diagonal $G$-action. Note that $\Sw(G)$ is a
commutative ring, whose unit is given by $\IZ$ equipped with the trivial $G$-action.

In~\cite[Section~3]{Bartels-Lueck(2007ind)} the definition of $\Sw(G)$ is extended
from groups to small groupoids.   For a small groupoid
$\calg$ an element in $\Sw(\calg)$ is given by a covariant functor from $\calg$ to
the category of finitely generated free abelian groups.  Actually, $\Sw(\calg)$ is
the Grothendieck group of such functors under the obvious objectwise notion of a short
exact sequence of such functors. The structure of a commutative ring comes from the tensor
product of abelian groups applied objectwise. The unit is represented by the constant
functor, which sends every  object in $\calg$ to $\IZ$ and every morphism in $\calg$ to
$\id_{\IZ}$.

Given a $G$-set $X$, denote by $\calt^G(X)$ its \emph{transport
  groupoid}. Its set of objects is $X$, its set of morphism from
$x_0 \to x_1$ is $G_{x_0,x_1} = \{g \in G \mid x_1 = gx_0\}$, and
composition comes from the multiplication in $G$.  Now we can consider
$\Sw(\calt^G(X))$ for a $G$-set $X$.  For every subgroup $H$ of $G$
there is an obvious isomorphism
\begin{equation}
  \Sw(H) \xrightarrow{\cong} \Sw(\calt^G(G/H))
  \label{iso_Sw_upper_f(H;Z)_cong_Sw_upper_f_Ff(calt_upper_F(F/H);Z)}
\end{equation}
coming from the facts that $\calt^G(G/H)$ is a connected groupoid and
the automorphism group of $eH$ is $H$.

The elementary proof that we get for a finite group $F$ a Green functor
$\Sw^F$, which is given for a finite $F$-set $X$ by
$\Sw^F(X) = \Sw(\calt^F(X))$, can be found
in~\cite[Section~3]{Bartels-Lueck(2007ind)}.

\begin{lemma}\label{lem:Swan_is_computable}
Let $F$ be a finite group. Then the  Green functor with values in
$\IZ_{(p)}$-modules $\IZ_{(p)} \otimes_{\IZ} \Sw_F$ is
computable for $\calh_p$. 
\end{lemma}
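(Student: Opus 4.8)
The plan is to deduce the assertion from the general induction principle in Lemma~\ref{lem:general_induction}, by exhibiting $\IZ_{(p)} \otimes_{\IZ} \Sw_F$ as a module over a Green functor that is already known to be $\calh_p$-computable, namely the $p$-localized Burnside ring.

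First I would record that the Burnside ring defines a Green functor $A_F$ over $F$, where $A_F(X)$ is the Grothendieck group of finite $F$-sets over $X$ (so that $A_F(F/H)$ is the classical Burnside ring of $H$), and that $\Sw_F$ is a module over $A_F$. The most concrete way to see the module structure is via the morphism of Green functors $A_F \to \Sw_F$ sending a finite $F$-set over $X$ to the class of the associated permutation lattice, $[S] \mapsto [\IZ[S]]$, and restricting scalars along it; alternatively one may invoke the general fact that $A_F$ is the initial Green functor over $F$, so that every Mackey functor over $F$, and in particular $\Sw_F$, carries a canonical $A_F$-module structure. Applying $\IZ_{(p)} \otimes_{\IZ}-$ throughout (which also turns $\Sw_F$ into a Green functor with values in $\IZ_{(p)}$-modules, since $\IZ_{(p)}$ is flat over $\IZ$), I obtain that $\IZ_{(p)} \otimes_{\IZ} \Sw_F$ is a module over the Green functor $\IZ_{(p)} \otimes_{\IZ} A_F$.

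Next I would invoke Dress's induction theorem, see~\cite{Dress(1973), Dress(1975)} and~\cite[Section~2]{Bartels-Lueck(2007ind)}: the Green functor $\IZ_{(p)} \otimes_{\IZ} A_F$ is $\calh_p$-computable, that is, the unit $1_{F/F} \in \IZ_{(p)} \otimes_{\IZ} A(F)$ lies in the image of induction from the $p$-hyperelementary subgroups of $F$. This is the genuine external input of the argument; it is underpinned by the description of the primitive idempotents of $\IZ_{(p)} \otimes_{\IZ} A(F)$ together with Artin- and Brauer-type induction, and I would not reprove it here.

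Finally I would feed $\calu = \IZ_{(p)} \otimes_{\IZ} A_F$, the family $\calh_p$, and the $\calu$-module $\calm = \IZ_{(p)} \otimes_{\IZ} \Sw_F$ into Lemma~\ref{lem:general_induction}. For an arbitrary $z \in \calm(F/F) = \IZ_{(p)} \otimes_{\IZ} \Sw(F)$ this produces elements $u_H \in \calu(F/H)$ for $H \in \calh_p$ with $z = \sum_{H \in \calh_p} \calm_*(\pr_H)\bigl(u_H \cdot \calm^*(\pr_H)(z)\bigr)$, exhibiting $z$ in the image of $\bigoplus_{H \in \calh_p} \calm_*(\pr_H)$. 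Since $z$ was arbitrary, this induction map is surjective, which is precisely the statement that $\IZ_{(p)} \otimes_{\IZ} \Sw_F$ is computable for $\calh_p$. The only real obstacle is the quoted Dress theorem; the reduction above is formal, the one point to double-check being that the permutation-lattice assignment is compatible both with restriction maps and with induction (transfer) maps, which is routine.
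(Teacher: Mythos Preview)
Your argument is correct. The paper's proof is only a citation, pointing to~\cite[Proof of Lemma~4.1~(c)]{Bartels-Lueck(2007ind)} together with Swan's original induction theorems~\cite{Swan(1960a),Swan(1963)} and tom Dieck~\cite[6.3.3]{Dieck(1979)}; the Swan references establish $\calh_p$-computability of $\IZ_{(p)}\otimes_{\IZ}\Sw_F$ \emph{directly}, i.e., they show that the unit of $\Sw(F)_{(p)}$ lies in the image of induction from $p$-hyperelementary subgroups without passing through the Burnside ring.

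Your route is genuinely different in that you factor through the Burnside ring: you invoke Dress's $\calh_p$-computability result for $\IZ_{(p)}\otimes_{\IZ}A_F$, use the canonical $A_F$-module structure on $\Sw_F$ (via permutation lattices or initiality of $A_F$), and then deduce surjectivity of induction for $\Sw_F$ from Lemma~\ref{lem:general_induction}. This is entirely valid; the only nontrivial input is Dress's theorem, and the compatibility of $[S]\mapsto[\IZ[S]]$ with both induction and restriction is indeed routine. The advantage of your approach is that it cleanly separates the hard induction-theoretic input (for $A_F$) from the formal module-theoretic deduction, and it makes transparent that the same argument works for any Green functor receiving a map from $A_F$. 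The paper's cited approach is marginally more direct, since Swan's theorem is precisely the statement needed, but historically Dress's framework was designed exactly to unify such arguments, so the two are very close in spirit.
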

\begin{proof}
  See for instance~\cite[Proof of Lemma~4.1~(c)]{Bartels-Lueck(2007ind)} which is based
  on~\cite[6.3.3]{Dieck(1979)},~\cite[Lemma 4.1]{Swan(1960a)} and~\cite[Section 12]{Swan(1963)}.
\end{proof}

Let $p$ be a prime
and denote  by $\calh_p$ be the family of $p$-hyperelementary subgroups of the finite group $F$.
We get from Lemma~\ref{lem:general_induction} and Lemma~\ref{lem:Swan_is_computable}

 \begin{lemma}\label{lem:general_induction_for_SW} 
    Let $F$ be a finite group.  Consider any $\Sw_F$-module $\calm$. Then for every
   element $z \in \calm(F/F)$ we can find elements $a_H \in \IZ_{(p)}$ and
   $u_H \in \Sw(H)$ for every $H \in \calf$ such that
   \[
     z = \sum_{H \in \calf} a_H \cdot \calm_*(\pr_H)\bigl(u_H \cdot
     \calm^*(\pr_H)(z)\bigr)
   \]
   holds in $\calm(F/F)_{(p)} = \IZ_{(p)} \otimes_{\IZ} \calm(F/F)$, where
   $\pr_H \colon F/H \to F/F$ is the projection.
 \end{lemma}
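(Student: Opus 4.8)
The plan is to deduce this from Lemma~\ref{lem:general_induction} by extending scalars from $\IZ$ to $\IZ_{(p)}$. First I would note that the exact functor $-\otimes_{\IZ}\IZ_{(p)}$ carries the Green functor $\Sw_F$ to a Green functor $\IZ_{(p)}\otimes_{\IZ}\Sw_F$ with values in $\IZ_{(p)}$-modules, and carries the $\Sw_F$-module $\calm$ to a module $\IZ_{(p)}\otimes_{\IZ}\calm$ over it; all structure maps (transfers, restrictions, multiplication, module pairing) are the $\IZ_{(p)}$-linear extensions of the original ones, and the Mackey-functor axioms are preserved because localization is exact. By Lemma~\ref{lem:Swan_is_computable}, the Green functor $\IZ_{(p)}\otimes_{\IZ}\Sw_F$ is $\calh_p$-computable. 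Recall also that $\calh_p$ is a finite family of subgroups of $F$ that is closed under subgroups and conjugation.

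Next I would invoke Lemma~\ref{lem:general_induction} with $\calu=\IZ_{(p)}\otimes_{\IZ}\Sw_F$, the $\calu$-module $\IZ_{(p)}\otimes_{\IZ}\calm$, the family $\calf=\calh_p$, and the element $z\in\calm(F/F)_{(p)}=(\IZ_{(p)}\otimes_{\IZ}\calm)(F/F)$. This produces, for each $H\in\calh_p$, an element $\widetilde u_H\in(\IZ_{(p)}\otimes_{\IZ}\Sw_F)(F/H)$, which via the isomorphism~\eqref{iso_Sw_upper_f(H;Z)_cong_Sw_upper_f_Ff(calt_upper_F(F/H);Z)} we read as an element of $\IZ_{(p)}\otimes_{\IZ}\Sw(H)$, such that
\[
z=\sum_{H\in\calh_p}\bigl(\id_{\IZ_{(p)}}\otimes\calm_*(\pr_H)\bigr)\Bigl(\widetilde u_H\cdot\bigl(\id_{\IZ_{(p)}}\otimes\calm^*(\pr_H)\bigr)(z)\Bigr).
\]
Because $\IZ_{(p)}=\colim_{s}\tfrac1s\IZ$ over integers $s$ coprime to $p$, we have $\IZ_{(p)}\otimes_{\IZ}\Sw(H)=\colim_{s}\Sw(H)$, so — using that $\calh_p$ is finite — I can choose a single $s$ coprime to $p$ and write $\widetilde u_H=\tfrac1s\otimes u_H$ with $u_H\in\Sw(H)$; then I set $a_H=\tfrac1s\in\IZ_{(p)}$.

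Finally I would pull the scalars out. Since the maps $\id_{\IZ_{(p)}}\otimes\calm_*(\pr_H)$ and $\id_{\IZ_{(p)}}\otimes\calm^*(\pr_H)$ are $\IZ_{(p)}$-linear and the module pairing of $\IZ_{(p)}\otimes_{\IZ}\calm$ over $\IZ_{(p)}\otimes_{\IZ}\Sw_F$ is the $\IZ_{(p)}$-bilinear extension of the pairing of $\calm$ over $\Sw_F$, substituting $\widetilde u_H=a_H\otimes u_H$ into the displayed identity and moving $a_H$ to the front turns it into
\[
z=\sum_{H\in\calh_p}a_H\cdot\calm_*(\pr_H)\bigl(u_H\cdot\calm^*(\pr_H)(z)\bigr)
\]
in $\calm(F/F)_{(p)}$, which is exactly the assertion. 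The argument is essentially formal: the only genuine input is the computability statement of Lemma~\ref{lem:Swan_is_computable}, and the only mild obstacle is checking that extension of scalars to $\IZ_{(p)}$ interacts correctly with the Green-functor/module formalism of Lemma~\ref{lem:general_induction} and that elements of $\IZ_{(p)}\otimes_{\IZ}\Sw(H)$ can indeed be brought into the single-simple-tensor form required by the statement.
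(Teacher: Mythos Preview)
Your proof is correct and takes essentially the same approach as the paper: the paper simply states that the lemma follows from Lemma~\ref{lem:general_induction} and Lemma~\ref{lem:Swan_is_computable}, and you have carefully filled in the details of how the localization $\IZ_{(p)}\otimes_{\IZ}(-)$ interacts with the Green-functor/module formalism and how to extract the $a_H$ and $u_H$ from the elements $\widetilde u_H\in\IZ_{(p)}\otimes_{\IZ}\Sw(H)$.
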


 \begin{remark}\label{rem:two_version_of_Sw_agree}
   There is also a version of $\Sw(H)$ for groups $H$  and $\Sw(\calg)$ for groupoids $\calg$, where one
   replaces finitely generated free abelian group by finitely generated abelian group.
   These two versions are isomorphic, see~\cite[page 890]{Pedersen-Taylor(1978)}
   and~\cite[Section~3]{Bartels-Lueck(2007ind)} for groupoids.

   It is more convenient to work with the version for  finitely generated free abelian
   groups, since for finitely generated free abelian groups $M$ the functor
   $M \otimes_{\IZ} -$ is exact.
 \end{remark}


\typeout{------------- Section 6: $K$-theory functors associated to $G$-$\IZ$-categories
  ----------------------}

\section{$K$-theoretic functors associated to $G$-$\IZ$-categories}%
\label{sec:K-theoretic_functors_associated_to_G-IZ-categories}

Let $G$ be a group and let $\Lambda$ be a commutative ring.  


\subsection{The $K$-theoretic covariant $\OrG{G}$-spectrum associated to a $G$-$\IZ$-cate\-gory}%
\label{subsec:The_K-theoretic_covariant_Or(G)-spectrum_associated_to_an_G-Z-category}

Let $\cala$ be a $G$-$\Lambda$-category.
For a $G$-set $X$ we define a $\Lambda$-category $\cala(X)$ as
follows. Objects are pairs $(A,x)$ with $A \in \ob(\cala)$ and $x \in X$.  A morphism
$\varphi \colon (A,x) \to (A',x')$ is a formal finite sum
$\varphi = \sum_{g \in G_{x,x'}} \varphi_g \cdot g$, where
$G_{x,x'} = \{g \in G \mid x' = gx\}$ and $\varphi_g$ is a morphism in $\cala$ from $gA$
to $A'$.  For a morphism $\varphi' \colon (A',x) \to (A'',x'')$ given by the formal finite
sum $\varphi' = \sum_{g' \in G_{x',x''}} \varphi'_g \cdot g'$, we define the composite
$\varphi' \circ \varphi \colon (A,x) \to (A'',x'')$ by the finite formal sum
$\varphi' \circ \varphi = \sum_{g'' \in G_{x,x''}} (\varphi' \circ \varphi)_{g''} \cdot
g''$, where for $g'' \in G_{x,x''}$ we put
\[
  (\varphi' \circ \varphi)_{g''}
  = \sum_{\substack{g \in G_{x,x'}, g' \in G_{x',x''} \\ g'' = g'g}} \varphi'_{g'} \circ g'\varphi_g.
\]
Given a $G$-map $f \colon X \to Y$, define a functor of $\Lambda$-categories
$\cala(f) \colon \cala(X) \to \cala(Y)$ by sending an object $(A,x)$ to the object
$(A,f(x))$ and a morphism $\varphi \colon (A,x) \to (A',x')$ given by the formal
sum $\varphi = \sum_{g \in G_{x,x'}} \varphi_g \cdot g$ to the morphism given the same
finite formal sum. This makes sense because of $G_{x,x'} \subseteq G_{f(x),f(x')}$.
Hence we get a covariant functor
\begin{equation}
 Z_{\cala} \colon \SETS{G} \to \addcat, \quad X \mapsto \cala(X)_{\oplus}.
 \label{Z_cala_on_G-sets}
\end{equation}
It induces the covariant functor 
\begin{equation}
 \bfK_{\cala} \colon \SETS{G} \to \Spectra, \quad X \mapsto \bfK(\cala(X)_{\oplus}).
 \label{bfK_cala_on_G-sets}
\end{equation}
In particular we get a covariant functor called
\emph{the $K$-theoretic covariant $\OrG{G}$-spectrum associated to $\cala$}
\begin{equation}
  \bfK_{\cala} \colon \OrG{G} \to \Spectra, \quad G/H \mapsto \bfK(\cala(G/H)_{\oplus}).
  \label{bfK_cala_orbit-category}
\end{equation}

\begin{remark}\label{rem:FJ-assembly}
  Let $H_*^G(-;\bfK_{\cala})$ be the $G$-homology theory associated to $\bfK_{\cala}$.  It
  has the property that $H_n^G(G/H;\bfK_{\cala}) = \pi_n(\bfK_{\cala}(G/H))$ holds for any
  subgroup $H$ of $G$.  Denote by $\EGF{G}{\FIN}$ and $\EGF{G}{\VCYC}$ respectively the
  classifying space for the family $\FIN$ of finite subgroups and of the family $\VCYC$
  of virtually cyclic subgroups of $G$, see for instance~\cite{Lueck(2005s)}.
  The projection $\EGF{G}{\VCYC} \to G/G$ induces
the assembly map for $n \in \IZ$
  \[
    H_n^G(\EGF{G}{\VCYC};\bfK_{\cala}) \to H_n^G(G/G;\bfK_{\cala}) =
    \pi_n(\bfK_{\cala}(G/G)).
  \]
  The Farrell-Jones Conjecture predicts that it is bijective for all $n \in \IZ$.
  If $R$  is a ring and we take for $\cala$ the additive category $\underline{R}_{\oplus}$, then
  this assembly map can be identified with the assembly map~\ref{FJS-assembly_for_rings}.
  
  The canoncial map  $\EGF{G}{\FIN} \to  \EGF{G}{\VCYC}$  induces the relative assembly map
  \[
    H_n^G(\EGF{G}{\FIN};\bfK_{\cala}) \to H_n^G(\EGF{G}{\VCYC};\bfK_{\cala}).
  \]
  If $R$  is a ring and we take for $\cala$ the additive category $\underline{R}_{\oplus}$, then
  this assembly map can be identified with the relative assembly map appearing in
  Theorem~\ref{the:passage_from_Fin_to_Vcyc}.

  For more information about assembly maps and the Farrell-Jones Conjecture we refer for instance
  to~\cite{Lueck(2019handbook),Lueck(2022book),Lueck-Reich(2005)}.
\end{remark}


\subsection{Restriction}%
\label{subsec:Restriction}

For a $G$-map $f \colon X \to Y$, we think of
$K_n(\cala(f)_{\oplus}) \colon K_n(\cala(X)_{\oplus}) \to K_n(\cala(Y)_{\oplus})$
as induction. We can define a kind of restriction,
if we assume  that $f^{-1}(y)$ is finite for every $y\in Y$, as follows.
Next we define a functor of $\Lambda$-categories
\[
\res(f) \colon \cala(Y) \to \cala(X)_{\oplus}.
\]
It sends an object $(A,y)$ to $\bigoplus_{x \in f^{-1}(y)} (A,x)$.
Consider a morphism $\varphi \colon (A,y) \to (A',y')$ given by a formal sum
$\sum_{g \in G_{y,y'}} \varphi_g \cdot g$. Then $\res(f)(\varphi) \colon
\bigoplus_{x \in f^{-1}(y)} (A,x) \to \bigoplus_{x' \in f^{-1}(y')} (A',x')$
is given by the collection  of morphisms $\res(f)(\varphi)_{x,x'} \colon  (A,x)  \to  (A',x')$
for $x \in f^{-1}(y)$ and $x' \in f^{-1}(y')$ if we put
$\res(f)(\varphi)_{x,x'} = \sum_{g \in G_{x,x'}} \varphi_g \cdot g$.
This makes sense because of $G_{x,x'} \subseteq G_{y,y'}$.
The functor $\res(f)$ induces a functor of additive $\Lambda$-categories
$\res(f)_{\oplus}  \colon \cala(Y)_{\oplus}  \to (\cala(X)_{\oplus})_{\oplus}$.
  Composing it with the obvious equivalence of additive $\Lambda$-categories
  $(\cala(X)_{\oplus})_{\oplus} \xrightarrow{\simeq} \cala(X)_{\oplus}$ yields a functor of
  additive $\Lambda$-categories, denoted in the same way
  \begin{equation}
\res(f)_{\oplus} \colon \cala(Y)_{\oplus} \to \cala(X)_{\oplus}.
\label{res(f)_oplus}
\end{equation}


\subsection{The pairing with the Swan group}%
\label{subsec:The_pairing_with_the_Swan_group}

In this section we construct a  bilinear pairing for a $G$-set $X$ and $n \in \IZ$
\begin{equation}
P^G_X \colon \Sw(\calt^G(X)) \times K_n(\cala(X)_{\oplus}) \to K_n(\cala(X)_{\oplus}).
\label{pairing_SW_with_K_n}
\end{equation}

We have to construct for any covariant functor $M$ from $\calt^G(X)$
to the category of finitely generated free abelian groups a functor of
$\IZ$-categories $F(M) \colon \cala(X) \to \cala(X)_{\oplus}$, since
then we can define for the element $[M] \in \Sw(\calt^G(X))$
represented by $M$ the homomorphism
$P([M],-) \colon K_n(\cala(X)_{\oplus}) \to K_n(\cala(X)_{\oplus})$ to
be $K_n(F(M)_{\oplus})$.  For $x \in X$ let $r(x)$ be the rank of the
finitely generated free abelian group $M(x)$ and choose an ordered basis
$\{b_1, \ldots, b_{r(x)}\}$ of $M(x)$.  Consider $x,x' \in X$ with
$G_{x,x'} \not= 0$.  Then $r(x) = r(x')$ and for $g \in G_{x,x'}$ we
get an invertible $(r(x),r(x))$-matrix with entry $\rho(g)_{i,i'} \in \IZ$ for $i \in \{1, \ldots, r(x)\}$ and
$i' \in \{1, \ldots, r(x')\}$, which describes the homomorphism of
finitely generated free abelian groups $M(g) \colon M(x) \to M(x')$
for the morphism $g \colon x \to x'$ in $\tau^G(X)$ with respect to
the chosen ordered bases. Now $F(M)$ sends an object $(A,x)$ to
$\bigoplus_{i =1 }^{r(x)} A(x)$.  A morphism
$\sum_{g \in G} f_g \cdot g \colon (A,x) \to (A',x')$ is sent to the
morphism
$\bigoplus_{i =1 }^{r(x)} A(x) \to \bigoplus_{i' =1 }^{r(x')} A'(x')$,
whose component for $i \in \{1, \ldots, r(x)\}$ and
$i' \in \{1, \ldots, r(x')\}$ is given by
$\sum_{g \in G} \rho(g)_{i,i'} \cdot f_g \cdot g \colon (A,x) \to
(A',x')$.  Note that the choice of basis does not matter, since it does
not change the equivalence class of $F(M)$. We leave it to the reader
to check that we get a well-defined bilinear
pairing~\eqref{pairing_SW_with_K_n}.


\subsection{The special case of a ring as coefficients}%
\label{subsec:The_special_case_of_a_ring_as_coefficients}

The following example is illuminating and the most important one.

Let $\Lambda$ be a commutative ring and let $R$ be a $\Lambda$-algebra,
where we tacitly always require that $\lambda r = r \lambda$ holds for every
$\lambda \in \Lambda$ and $r \in R$. Consider a group homomorphism
$\rho \colon G \to \aut_{\Lambda}(R)$ to the group of automorphisms of
the $\Lambda$-algebra $R$. The \emph{twisted group ring with
  $R$-coefficients} $R_{\rho}[G]$ is defined as follows. Elements are
finite formal sums $\sum_{g \in G} r_g \cdot g$ for $r_g \in R$. The
multiplication in $R_{\rho}[G]$ is given by
\[
  \left(\sum_{g \in G} r_g \cdot g\right) \cdot \left(\sum_{g' \in G}
    r'_{g'} \cdot g'\right) = \sum_{g'' \in G}
  \left(\sum_{\substack{g,g' \in G\\g'' = g'g}} r_g \cdot
    \rho(g)(r'_{g'})\right) \cdot g''.
\]
Note that $R_{\rho}[G]$ inherits the structure of  $\Lambda$-algebra from $R$.

Define $\underline{R}$ to be the $\Lambda$-category, which has precisely one object $\ast_R$
and whose $\Lambda$-module of endomorphisms is $R$. Composition is given by the
multiplication in $R$ and the $\Lambda$-structure on the endomorphisms comes from the
structure of a  $\Lambda$-algebra.  From $\rho$ we obtain the structure of
$G$-$\Lambda$-category on $\underline{R}$ and we can consider the $\Lambda$-category
$\underline{R}(X)$ for any $G$-set $X$.

Consider a subgroup $H \subseteq G$.
There is an equivalence of $\Lambda$-categories
\begin{equation}
T(H) \colon \underline{R_{\rho|_H}[H]} \xrightarrow{\simeq} \underline{R}(G/H),
\label{underline(R_rho|_H[H]_simeq_underline(R)(G/H)}
\end{equation}
that sends the object $\ast_{R_{\rho|_H}[H]}$ to the object
$(\ast_R, eH)$ and a morphism in $\underline{R_{\rho|_H}[H]}$, which
is given by the element $\sum_{h \in H} r_h \cdot h$ in
$R_{\rho|_H}[H]$, to the morphism $(\ast_R, eH) \to (\ast_R, eH)$ in
$\underline{R}(G/H)$ given by $\sum_{h \in H} r_h \cdot h$
again. Obviously $T(H)$ is fully faithful.
Every object in $\underline{R}(G/H)$ is isomorphic to an object
in the image of $T(H)$, namely, for any object $(\ast_R, gH)$ we get
an isomorphism $(\ast_R, eH) \xrightarrow{\cong} (\ast_R, gH)$ by
$1_R \cdot g$. Hence $T_H$ is an equivalence of $\Lambda$-categories.
From~\eqref{underline(R_rho|_H[H]_simeq_underline(R)(G/H)} we obtain
an equivalence of additive $\Lambda$-categories
\begin{equation}
T(H)_{\oplus} \colon \underline{R_{\rho|_H}[H]}_{\oplus} 
  \xrightarrow{\simeq} \underline{R}(G/H)_{\oplus}.
\label{underline(R_rho|_H[H]_oplus_simeq_underline(R)(G/H)_oplus}
\end{equation}
and hence for every $n \in \IZ$ an isomorphism
\begin{equation}
  K_n(T(H)_{\oplus}) \colon K_n(R_{\rho|_H}[H]) \xrightarrow{\cong} K_n(\underline{R}(G/H)_{\oplus}).
 \label{K_n(R_(rho|_H)[H])_cong_K_n(underline(R)(G/H)_oplus)}
\end{equation}
Consider subgroups $H \subseteq K \subseteq G$ with $[K:H] < \infty$. Let
$\pr \colon G/H \to G/K$ be the projection and $i \colon H \to K$ be the inclusion. Then
$K_n(\underline{R}(\pr)) \colon K_n(\underline{R}(G/H)) \to K_n(\underline{R}(G/K))$
corresponds under the
isomorphism~\eqref{K_n(R_(rho|_H)[H])_cong_K_n(underline(R)(G/H)_oplus)} to the induction
homomorphism $i_* \colon K_n(R_{\rho|_H}[H]) \to K_n(R_{\rho|_K}[K])$, whereas
$K_n(\res(\pr)_{\oplus}) \colon K_n(\underline{R}(G/K)_{\oplus}) \to
K_n(\underline{R}(G/H)_{\oplus})$ corresponds under the
isomorphism~\eqref{K_n(R_(rho|_H)[H])_cong_K_n(underline(R)(G/H)_oplus)} to the
restriction homomorphism $i^* \colon K_n(R_{\rho|_K}[K]) \to K_n(R_{\rho|_H}[H])$.  The
pairing $P$ of~\eqref{pairing_SW_with_K_n} corresponds under the
isomorphisms~\eqref{iso_Sw_upper_f(H;Z)_cong_Sw_upper_f_Ff(calt_upper_F(F/H);Z)}
and~\eqref{K_n(R_(rho|_H)[H])_cong_K_n(underline(R)(G/H)_oplus)} to the pairing
$ \Sw(H) \times K_n(R_{\rho|_H}[H]) \to K_n(R_{\rho|_H}[H])$  coming from the
fact that, for a $\IZ H$-module $M$, whose underlying abelian group is finitely generated
free, and a finitely generated projective $R_{\rho|_H}[H]$-module $P$, we can equip
$M \otimes_{\IZ} P$ with the $R_{\rho|_H}[H]$-module structure given by
\[
  \left(\sum_{g \in G} r_h \cdot h\right) \cdot (m \otimes p) = \sum_{h \in H} \bigl(hm \otimes (r_h \cdot h \cdot p)\bigr)
\]
for $\sum_{h \in H} r_h \cdot h \in R_{\rho|_H}[H]$, $m \in M$, and $p \in P$
and thus obtain a finitely generated projective $R_{\rho|_H}[H]$-module.


\subsection{The Green and Mackey structure of a finite quotient group}%
\label{subsec:The_Green_and_Mackey_structure_of_a_finite_quotient_group}

Fix a (not necessarily finite) group $G$ and a surjective group homomorphism
$\nu \colon G \to F$ onto a finite group $F$.  Let $\cala$ be a $G$-$\IZ$-category.  Fix
$n \in \IZ$. Then we can define a module over the Green functor $\Sw_F$ for $F$ as
follows.  The underlying Mackey functor $M(Z,n) = (M(Z,n)_*,M(Z,n)^*)$ is on an object $X$
which is a finite $F$-set $X$, given by
\[
  M(Z,n)_*(X) = M(Z,n)^*(X) = K_n(\cala(\nu^*X)_{\oplus}),
\]
where $\nu^*X$ is the $G$-set obtained from the $F$-set $X$ by restriction with $\nu$.
The covariant functor $M(Z,n)_*$ sends an $F$-map $f \colon X \to Y$ to  the homomorphism
$K_n(\cala(\nu^*f)_{\oplus}) \colon K_n(\cala(\nu^*X)_{\oplus}) \to
K_n(\cala[\nu^*Y]_{\oplus})$ defined in~\eqref{bfK_cala_on_G-sets}. The contravariant
functor $M(Z,n)_*$ sends an $F$-map $f \colon X \to Y$ to 
$K_n(\res(\nu^*f)_{\oplus}) \colon K_n(\cala[\nu^*Y]_{\oplus}) \to
K_n(\cala(\nu^*X)_{\oplus})$ for the functor of additive categories
$\res(\nu^*f)_{\oplus}$ defined in~\eqref{res(f)_oplus}. For a finite $F$-set $X$ we get a
map of abelian groups

\begin{equation}
\nu^* \colon \Sw_F(X) \to \Sw_G(\nu^*X)
\label{nu_upper_ast_on_Sw}
\end{equation}
by precomposing a covariant functor $M(Z,n)$ from $\tau^F(X)$ to the category of finitely
generated free abelian groups with the obvious functor $\calt^G(\nu^*X) \to \calt^F(X)$
induced by $\nu$.  Now we obtain from~\eqref{pairing_SW_with_K_n}
and~\eqref{nu_upper_ast_on_Sw} for all finite $F$-sets $X$ a pairing
\[
\Sw_F(X) \otimes K_n(\cala(\nu^*X)_{\oplus}) \to  K_n(\cala(\nu^*X)_{\oplus}).
\]
We leave the lengthy but straightforward proof to the reader that these data define a
Mackey functor $\calm$ for $F$ and the structure of a Green-module over $\Sw_F$ on it. This fact
is not surprising in view of
Subsection~\ref{subsec:The_special_case_of_a_ring_as_coefficients}, since the proof in this
special case is well-known.


\typeout{---------------- Section 7: Setting up quotients of $V$ -----------------------}

\section{Infinite covirtually cyclic groups and their finite quotients}%
\label{sec:Infinite_covirtually:_cyclic_groups_and_their_finite_quotients}


\subsection{Basics about infinite covirtually cyclic subgroups}%
\label{subsec:Basics_about_non-compact_virtually_cyclic_subgroups}

A group $V$ is called \emph{covirtually cyclic},
if it contains a normal finite  subgroup $K \subseteq V$ such that $V/K$ is  cyclic.
Equivalently, $V$ is either a finite group or there is an extension
$1 \to K \to V \xrightarrow{\pi_V}  C\to 1$ of groups such that $K$ is finite and $C$ is infinite cyclic.
An infinite covirtually cyclic group is the same as a infinite virtually cyclic group of type I.

Let $V$ be a covirtually cyclic subgroup of $G$ which is infinite. Then it contains a
finite subgroup $K = K_V$ with the property such that any finite subgroup of $G$ is
 contained in $K$.  Note that $K$ is uniquely determined by this property, is a
 characteristic and in particular normal subgroup of $V$, and the quotient
 $C_V := V/K_V$ is infinite cyclic.  

 \begin{definition}[Polarization]\label{def:polarization}
  A \emph{polarization of $V$} is a choice of an element $t \in V$ whose image under the
  projection $\pi = \pi_V \colon V \to C_V$ is a generator.
\end{definition}


 \subsection{Basic definitions and notation}\label{subsec:Basic_definitions_and_notation}

         Fix an integer $M \ge 1$. Let $m$ be the order of the automorphism  $c_t \colon K \xrightarrow{\cong} K$
         given by conjugation with $t$.  Then $t^mkt^{-m} = k$ holds for all
         $k \in K_V$.  Hence the infinite cyclic subgroups $\langle t^{Mm}\rangle$ and $\langle t^{m}\rangle$ of $V$ 
         are  normal and their  intersection with
         $K_V$ is trivial. Define finite groups
         \begin{eqnarray*}
           F
           & = &
           V/\langle t^{Mm} \rangle;
           \\ 
           \widehat{F}
           & = &
           V/\langle t^{m} \rangle.
         \end{eqnarray*}     
         Denote by
         $\nu \colon V \to F$  and
         $\beta \colon F \to \widehat{F}$ the canonical
         projections. We have the exact sequence
         $1 \to \ker(\beta) \xrightarrow{j} F\xrightarrow{\beta} \widehat{F} \to 1$.
        
         Note that the following square  commutes
         \begin{equation}
           \xymatrix@!C= 10em{K_V \rtimes_{c_t} \IZ \ar[r]^-{\cong} \ar[r]^-{\cong} \ar[d]_{\id_F \rtimes p_{Mm}}
             &
             V \ar[d]^{\nu}
             \\
             K_V \rtimes_{c_t} \IZ/Mm\ar[r]^-{\cong}  \ar[d]_{\id_F \rtimes \overline{p}_M}
             &
             F \ar[d]^{\beta}
             \\
             K_V \rtimes_{c_t} \IZ/m \ar[r]^-{\cong} 
             &
             \widehat{F}
           }
           \label{diagram_identifying_semi-direct_products}
         \end{equation}
         where the horizontal maps are the obvious isomorphism given by the elements
         $t \in V$ and $\nu(t) \in F$,
         and $\beta \circ \nu(t) \in \widehat{F}$, and
         $p_{Mm} \colon \IZ \to \IZ/Mm$ and $\overline{p}_M \colon \IZ/Mm \to \IZ/m$ are
         the obvious projections.


 \subsection{Some properties of subgroups}\label{subsec:Some_properties_of_subgroups}

         Consider a subgroup $H$ of $F$ and a prime $q$.  We compute
         \begin{multline*}
           [F : H]
           = \frac{|F|}{|H|}
           = \frac{M\cdot m\cdot |K_V|}{|H|}
           = \frac{M\cdot m\cdot |K_V|}{|\beta(H)| \cdot |j^{-1}(H)|}
           = \frac{M\cdot m\cdot |K_V|\cdot [\widehat{F} : \beta(H)]}{|\widehat{F}| \cdot |j^{-1}(H)|}
           \\
           = \frac{M\cdot m\cdot |K_V|\cdot [\widehat{F} : \beta(H)]}{m\cdot |K_V|\cdot |j^{-1}(H)|}
           =  \frac{M\cdot  [\widehat{F} : \beta(H)]}{|j^{-1}(H)|}.
         \end{multline*}
         This implies
         \begin{eqnarray}
           \label{general_estimate_for_log_q([IZ/Mm:pr(H)])}
          \log_q([F : H])
           & = &
           \log_q\left(\frac{M\cdot  [\widehat{F} : \beta(H)]}{|j^{-1}(H)|}\right)
           \\
           & = &
           \log_q(M) +  \log_q([\widehat{F} : \beta(H)])  - \log_q(|j^{-1}(H)|)
           \nonumber
           \\
           & \ge &
                   \log_q(M)  - \log_q(|j^{-1}(H)|).
                   \nonumber
         \end{eqnarray}

         Given a prime $p$, a finite group $H$ is called \emph{$p$-hyperelementary},
         if it can be written as an extension $1 \to C \to H \to P \to 1$
         for a  cyclic group $C$ and a $p$-group $P$, and 
         and is  called \emph{$p$-elementary}
         if it is isomorphic to $C \times P$ 
         or a  cyclic group $C$ and a $p$-group $P$.
         One can always arrange  that the order of $C$ is prime to $p$.

         \begin{lemma}\label{lem:special_estimate_for_log_q([IZ/Mm:pr(H)])}
           Let $p$ be a prime with $p \not= q$. Let $i \colon K \to F$ be the injective
           group homomorphism given by restricting $\nu \colon V \to F$ to $K$.  Suppose
           that $q$ divides $|i^{-1}(H)|$ and that $H$ is a $p$-hyperelementary group.  Then
           \[
             \log_q([F: H]) \ge \log_q(M).
           \]
           \
         \end{lemma}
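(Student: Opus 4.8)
The plan is to reduce everything to one divisibility statement: \emph{$q$ does not divide $|j^{-1}(H)|$}. Granting this, $\log_q(|j^{-1}(H)|) = 0$, and the asserted inequality is immediate from~\eqref{general_estimate_for_log_q([IZ/Mm:pr(H)])}, whose last line gives $\log_q([F:H]) \ge \log_q(M) - \log_q(|j^{-1}(H)|)$.

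To set up, I would first record two structural facts. Using the identification of $F$ with $K_V \rtimes_{c_t} \IZ/Mm\IZ$ from~\eqref{diagram_identifying_semi-direct_products}, the subgroup $i(K_V)$ is $K_V \rtimes \{0\}$ while $j(\ker(\beta))$ lies in $\{1\} \rtimes \IZ/Mm\IZ$, so
\[
  i(K_V) \cap j(\ker(\beta)) = \{1\}\quad\text{in }F
\]
(alternatively one argues directly in $V$, using that $t$ has infinite order, as already noted before~\eqref{diagram_identifying_semi-direct_products}). Second, since $H$ is $p$-hyperelementary with $p \ne q$, its Sylow $q$-subgroup $Q$ is cyclic and contains every $q$-subgroup of $H$: writing $1 \to C \to H \to P \to 1$ with $C$ cyclic and $P$ a $p$-group, every element of $H$ of $q$-power order lies in $C$ (because $P$ has none, as $p \ne q$), so $Q = \mathrm{Syl}_q(C)$ is cyclic.

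The heart of the argument is a short comparison inside $Q$. Let $A$ and $B$ denote the Sylow $q$-subgroups of $H \cap i(K_V)$ and of $H \cap j(\ker(\beta))$. Both are $q$-subgroups of $H$, hence lie in the cyclic group $Q$, so one of $A$, $B$ contains the other; whichever is the smaller is then contained in $i(K_V) \cap j(\ker(\beta)) = \{1\}$ and is therefore trivial. By hypothesis $q$ divides $|i^{-1}(H)| = |H \cap i(K_V)|$, so $A$ is nontrivial; hence $B$ is trivial, i.e.\ $q \nmid |j^{-1}(H)|$, which is what was needed. I do not expect a genuine obstacle: both structural inputs are elementary, and the only point that needs a little care is extracting the triviality of $i(K_V) \cap j(\ker(\beta))$ cleanly from~\eqref{diagram_identifying_semi-direct_products}.
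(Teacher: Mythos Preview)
Your proof is correct and follows essentially the same strategy as the paper: reduce to $q \nmid |j^{-1}(H)|$ via~\eqref{general_estimate_for_log_q([IZ/Mm:pr(H)])}, then exploit that the Sylow $q$-subgroup of the $p$-hyperelementary group $H$ (with $p\neq q$) is cyclic. The paper packages this by constructing an injective homomorphism $i^{-1}(H)\times j^{-1}(H)\hookrightarrow H$ (using that $i(K_V)$ and $j(\ker\beta)$ commute elementwise) and noting that a cyclic $q$-group cannot contain a product of two nontrivial $q$-groups, whereas you work directly inside the cyclic Sylow $q$-subgroup $Q\subseteq H$ and use comparability of its subgroups together with $i(K_V)\cap j(\ker\beta)=\{1\}$; your route is marginally more economical since it avoids checking the commutativity needed for the product map to be a homomorphism.
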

\begin{proof}
  Note that for $k \in K_V$ and $y \in \ker(\beta)$ we have $\nu(k)\cdot j(y) = j(y) \cdot \nu(k)$.
  This implies that we get a well-defined group homomorphism
  \[
    i^{-1}(H)  \times j^{-1}(H) \to H, \quad (k,y) \mapsto i(k)  \cdot j(y).
  \]
  It is injective by the following calculation for $(k,y) \in(\nu^{-1}(H) \cap K)  \times j^{-1}(H)$
  using the fact that $\beta \circ i$ is injective
  \begin{multline*}
    e_H = i(k) \cdot j(y)
    \\
    \implies e_{\widehat{F}} = \beta (e_H) =  \beta(i(k) \cdot j(y))
    =  \beta(i(k)) \cdot \beta(j(y))
    = \beta(i(k))  \cdot e_{\widehat{F}} =  \beta(i(k)) 
    \\
    \implies \beta \circ i(k) = e_{\widehat{F}} \implies k = e_K
    \implies k = e_K  \;\text{and} \; j(y) = e_H
    \\
    \implies k = e_K \;\text{and} \;
    y = e_{\ker(\beta)}.
  \end{multline*}
  Since $H$ is $p$-hyperelementary and $p \not= q$, the $q$-Sylow subgroup of $H$ is
  cyclic.  Hence the $q$-Sylow subgroup of $i^{-1}(H) \times j^{-1}(H)$ is cyclic.  Since
  $q$ divides $|i^{-1}(H)|$ by assumption, $q$ does not divide $|j^{-1}(H)|$ and we get
  $\log_q(|j^{-1}(H)|) = 0$. Now apply~\eqref{general_estimate_for_log_q([IZ/Mm:pr(H)])}.
\end{proof}

\begin{lemma}\label{p-hyper_elementary_implies_p_elementary}
  Suppose that $H$ is $p$-hyperelementary and that $i^{-1}(H)$ is a $p$-group.  Then $H$
  is $p$-elementary.
\end{lemma}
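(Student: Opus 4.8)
The plan is to exploit the explicit description of $F$ as a semidirect product. By the left-hand column of the commutative square~\eqref{diagram_identifying_semi-direct_products} we may identify $F$ with $K_V \rtimes_{c_t} \IZ/Mm$ in such a way that $i \colon K \to F$ becomes the canonical inclusion of $K_V$. In particular $i(K)$ is a normal subgroup of $F$, and the projection $\pi \colon F \to F/i(K) \cong \IZ/Mm$ has abelian image. Since $H$ is $p$-hyperelementary, I would begin by fixing, as in the definition, a cyclic normal subgroup $C \trianglelefteq H$ whose order is prime to $p$ and with $H/C$ a $p$-group; the goal is then to show that in fact $H \cong C \times P'$ for a Sylow $p$-subgroup $P'$ of $H$, which is exactly the assertion that $H$ is $p$-elementary.

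The crucial step is to observe that $\pi$ restricts to an injection on $C$. Indeed, $i^{-1}(C)$ is contained in $i^{-1}(H)$, hence is a $p$-group by hypothesis; on the other hand $i$ embeds $i^{-1}(C)$ into $C$, which has order prime to $p$. Therefore $i^{-1}(C)$, equivalently $C \cap i(K) = \ker(\pi) \cap C$, is trivial, so $\pi|_C \colon C \to \IZ/Mm$ is injective.

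Next I would show that $C$ is central in $H$. For $h \in H$ and $c \in C$ we have $h c h^{-1} \in C$ because $C$ is normal in $H$, while $\pi(h c h^{-1}) = \pi(h)\pi(c)\pi(h)^{-1} = \pi(c)$ because $\pi(F)$ is abelian. Since $\pi|_C$ is injective, this forces $h c h^{-1} = c$, and hence $C \subseteq Z(H)$.

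Finally I would finish with the usual coprime-order argument: pick a Sylow $p$-subgroup $P' \subseteq H$; then $C \cap P' = \{1\}$ since the orders are coprime, and $|C| \cdot |P'| = |H|$ because $|P'|$ is the $p$-part of $|H|$, which equals $[H:C]$. As $C$ is central, $P'$ is normal in $H$ and $H = C \times P'$, exhibiting $H$ as a product of the cyclic group $C$ of order prime to $p$ and the $p$-group $P'$; thus $H$ is $p$-elementary. I do not anticipate a real obstacle here; the only points needing care are the identification of $i$ with the inclusion into the semidirect product (recorded by~\eqref{diagram_identifying_semi-direct_products}) and the fact that the hypothesis on $i^{-1}(H)$ is used solely to force $C \cap i(K) = \{1\}$, after which everything is elementary finite group theory and no Schur--Zassenhaus input is needed.
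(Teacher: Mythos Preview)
Your proof is correct and takes a slightly different, more self-contained route than the paper's. Both arguments rest on the same key observation: the projection $\pi \colon F \to \IZ/Mm$ with kernel $i(K)$ is injective on the prime-to-$p$ part of $H$, because $i^{-1}(H)$ is a $p$-group. The paper then invokes an external criterion, \cite[Lemma~3.1]{Hambleton-Lueck(2012)}, which reduces $p$-elementariness to the existence, for each prime $q \neq p$ dividing $|H|$, of an epimorphism from $H$ onto a nontrivial cyclic $q$-group, and verifies that $\pi|_H$ supplies these. You instead argue directly: the normal cyclic prime-to-$p$ subgroup $C \trianglelefteq H$ injects under $\pi$ into the abelian quotient and is therefore central, after which $H = C \times P'$ follows by the elementary observation that $CP' = H$ forces any Sylow $p$-subgroup $P'$ to be normal. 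Your route avoids the external reference and is arguably cleaner; the paper's route trades that for not having to check the (easy) normality of $P'$.
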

\begin{proof}
  By~\cite[Lemma~3.1]{Hambleton-Lueck(2012)} it suffices to show that for every prime $q$
  with $q\mid |H|$ and $q \not = p$ there exists an epimorphism from $H$ onto a
  non-trivial finite cyclic group of $q$-power order. Let $\alpha \colon F \to C_V/(Mm\cdot C_V)$ be
  the canonical projection, whose kernel is $K$. Since $\alpha$ induces an epimorphism
  $H \to \alpha(H)$ onto a finite cyclic group $\alpha(H)$, it suffices to show that the
  $q$-Sylow subgroup of $\alpha(H)$ is non-trivial for any prime $q$ with $q\mid |H|$ and
  $q \not = p$. Choose an element $h \in H$ and an integer $a \ge 1$ with $h \not = e_H$ and
  $h^{q^a} = e_H$.  It suffices to show that $\alpha(h)$ is not the the unit element.
  Suppose the  contrary. Then we can find $x \in i^{-1}(H)$ with $i(x) = h$.  Since $i^{-1}(H)$ is a
  finite $p$-group, we can choose an integer $b \ge 1$ satisfying $x^{p^b} = e_K$. As
  $p \not= q$, we can find integers $\lambda, \mu$ with $\lambda q^a + \mu \cdot p^b =  1$.
  We compute
  \[i(x) = i(x^{\lambda q^a + \mu p^b}) = (i(x)^{q^a})^{\lambda} \cdot i((x^{p^b})^{\mu})
    = (h^{q^a})^{\lambda} \cdot i(e^{\mu}) = e_F \cdot e_F = e_F.
  \]
  Hence $x = e_K$ which implies $h = e_H$, a contradiction.
\end{proof}


 \typeout{---- Section 8: On Nil-terms for non-compact virtually cyclic subgroups -----------}

\section{On Nil-terms for infinite covirtually cyclic subgroups}%
\label{sec:On_Nil-terms_for_infinite_covirtually_cyclic_subgroups}

For the remainder of this section we fix an infinite covirtually cyclic group $V$ with a polarization $t \in V$
and a $V$-$\IZ$-category $\cala$. Let  the covariant functor
\begin{equation}
  Z = Z_{\cala} \colon \SETS{V} \to \addcat
  \label{Z_cala_for_V}
\end{equation}
be the functor coming from the functor  in~\eqref{Z_cala_on_G-sets}
for $G = V$ and $\nu \colon V \to F$ taken from
Subsection~\ref{subsec:Basic_definitions_and_notation}.


\subsection{The basic diagram~\ref{com_diagram_Gamma_Gamma_to_V}}\label{subsec:The_basic_diagram}

In the sequel we denote for two subgroups $H_0$ and $H_1$ of $V$ with $H_0 \subseteq H_1$
by $\pr \colon V/H_0 \to V/H_1$ the $V$-map given by the canonical projection. Given
subgroups $H_0$ and $H_1$ of $V$ and $v \in V$ in with $v^{-1}H_0v \subseteq H_1$, we
denote by $R_{v} \colon V/H_0 \to V/H_1$ the $V$-map sending $v'H_0$ to $v'vH_1$. Note
that for $v_0,v_1 \in V$ with $v_i^{-1}H_0v_i \subseteq H_1$ for $i = 0,1$ we have
$R_{v_0} = R_{v_1}$ if and only if $v_1^{-1}v_0 \in H_1$ holds.

Given $d \in \IZ^{\ge 1} = \{ n \in \IZ \mid n \ge 1\}$, let
$C[d] = d \cdot C \subseteq C$ be the subgroup of index $d$ in $C$ and $V[d]$ be the
preimage of $C[d]$ under the projection $\pi \colon V\to C$. In particular we get
$V[1] = V$.

Let $W\subseteq V$ be a subgroup of $V$ of finite index. Then $W$ itself is an infinite 
covirtually cyclic group. Its  maximal finite  subgroup is
$K_W = K_V \cap W$. Let $d = d(W) \in \IZ^{\ge 1}$ be the natural
number given by the index $d$ of $\pi_V(W) $ in the infinite cyclic group $C$. Then
$W \subseteq V[d]$ and we have $W = V[d]$ if and only if $K_V \subseteq W$, or,
equivalently, $K_W = K_V$ holds.

Fix a polarization $t_W \in W$. Since  the following diagram of $V$-spaces commutes
\[
  \xymatrix{V/K_W \ar[rr]^{R_{t_W}} \ar[dr]_{\pr} & & V/K_W \ar[ld]^{\pr}
    \\
    & V/W & }
\]
we get from $Z(\pr) \colon Z(V/K_W) \to Z(V/W)$ a functor of additive categories
\begin{equation}
  U(W,t_W) \colon Z(V/K_W)_{Z(R_{t_W})}[\IZ] \to Z(V/W).
  \label{U(W,t_W)}
\end{equation}

\begin{lemma}\label{U(W,t_w)_is_an_equivalence}
  The functor $U(W,t_W)$ is for any choice of polarization $t_W \in W$ an equivalence of
  additive categories and induces in particular for every $n \in \IZ$ an isomorphism 
  \[
    K_n(U(W,t_W))\colon K_n(Z(V/K_W)_{Z(R_{t_W})}[\IZ]) \to K_n(Z(V/W)).
  \]
\end{lemma}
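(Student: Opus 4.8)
The plan is to prove that the functor $U := U(W,t_W)$ of~\eqref{U(W,t_W)} is an equivalence of additive categories; the claim about $K$-theory then follows at once, since an equivalence of additive categories induces an equivalence of non-connective $K$-theory spectra. The argument is parallel to the ring-coefficient version recorded in Subsection~\ref{subsec:The_special_case_of_a_ring_as_coefficients}.

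First I would unpack the construction of $U$. The subgroup $K_W = K_V \cap W$ is the maximal finite subgroup of $W$, hence characteristic and in particular normal in $W$; since $t_W \in W$, the $V$-map $R_{t_W} \colon V/K_W \to V/K_W$ is therefore an isomorphism with inverse $R_{t_W^{-1}}$, and $\Phi := Z(R_{t_W})$ is an automorphism of the additive category $Z(V/K_W)$. Commutativity of the triangle of $V$-spaces displayed just before the lemma gives $\pr \circ R_{t_W} = \pr$, hence $Z(\pr) \circ \Phi = Z(\pr)$, and $U \colon Z(V/K_W)_{\Phi}[\IZ] \to Z(V/W)$, see~\eqref{cal_Phi(Z)}, is the functor obtained from $Z(\pr)$: on objects it agrees with $Z(\pr)$ (the objects of $Z(V/K_W)_{\Phi}[\IZ]$ being by definition those of $Z(V/K_W)$), and a morphism $\sum_{n \in \IZ} f_n \cdot t^n \colon A \to A'$ with $f_n \colon \Phi^n(A) \to A'$ is sent to $\sum_{n \in \IZ} Z(\pr)(f_n) \colon Z(\pr)(A) \to Z(\pr)(A')$, which is well defined because $Z(\pr) \circ \Phi^n = Z(\pr)$ for every $n$.

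Second, essential surjectivity is clear: $\pr \colon V/K_W \to V/W$ is surjective, so $Z(\pr)$, and hence $U$, is surjective on objects. For full faithfulness it suffices, by additivity, to compare morphism groups between basic objects $A = (A_0, vK_W)$ and $A' = (A_0', v'K_W)$. Working through the definitions of $Z(V/K_W)_{\Phi}[\IZ]$ and of $Z(V/W)$, one finds that $\mor_{Z(V/K_W)_{\Phi}[\IZ]}(A,A')$ is the direct sum over all $n \in \IZ$ and all $g \in V$ with $g v t_W^n K_W = v' K_W$ of the morphism groups $\mor_{\cala}(gA_0, A_0')$ of the ambient $V$-$\IZ$-category $\cala$, while $\mor_{Z(V/W)}(U(A), U(A'))$ is the direct sum over all $h \in V$ with $h v W = v' W$ of the same groups $\mor_{\cala}(h A_0, A_0')$; under the inclusion $\{g : g v t_W^n K_W = v' K_W\} \subseteq \{h : h v W = v' W\}$ (valid because $t_W \in W$) the functor $U$ is the evident identification summand by summand. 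So everything reduces to the set-theoretic identity
\[
  \bigsqcup_{n \in \IZ} \{\, g \in V \mid g v t_W^n K_W = v' K_W \,\} \;=\; \{\, h \in V \mid h v W = v' W \,\},
\]
equivalently $\bigsqcup_{n \in \IZ} v' K_W t_W^{-n} v^{-1} = v' W v^{-1}$, which follows at once from the disjoint coset decomposition $W = \bigsqcup_{n \in \IZ} K_W t_W^{-n}$. This decomposition is exactly the statement that $t_W$ is a polarization of $W$, i.e. that the image of $t_W$ generates the infinite cyclic quotient $C_W = W/K_W$. Granting the identity, $U$ is bijective on morphism groups between basic objects, hence fully faithful, hence an equivalence.

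The only delicate point is the bookkeeping in the previous paragraph: matching a morphism $\sum_n f_n \cdot t^n$ of $Z(V/K_W)_{\Phi}[\IZ]$ with a formal sum $\sum_h \varphi_h \cdot h$ of $\cala(V/W)$, keeping track both of the exponent $n$ of $t_W$ and of the group elements through the two $\oplus$-completions, and observing that the exponent $n$ is precisely the datum that becomes invisible when one replaces $V/K_W$ by $V/W$. This is routine once the conventions of Sections~\ref{sec:Basics_about_(additive)_Lambda-categories}, \ref{sec:Frobenius_and_Verschiebungs_operators_and_induction_and_restriction} and~\ref{sec:K-theoretic_functors_associated_to_G-IZ-categories} are in place, and I do not expect any genuine obstacle.
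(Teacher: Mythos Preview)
Your proof is correct and follows essentially the same idea as the paper. Both arguments rest on the single group-theoretic fact that the canonical map $K_W \rtimes_{c_{t_W}} \IZ \to W$ is an isomorphism, i.e.\ the coset decomposition $W = \bigsqcup_{n \in \IZ} K_W t_W^{-n}$; the paper simply cites this together with the analogous ring-coefficient equivalence established in Subsection~\ref{subsec:The_special_case_of_a_ring_as_coefficients}, whereas you unwind that analogy by hand and verify full faithfulness directly on morphism sets. Your version has the minor advantage of being self-contained for a general $V$-$\IZ$-category $\cala$ rather than appealing to a statement phrased for rings.
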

\begin{proof} This follows from the
  isomorphism~\eqref{K_n(R_(rho|_H)[H])_cong_K_n(underline(R)(G/H)_oplus)} and the obvious
  fact that the canonical map $K_W \rtimes_{c_{t_W}}\IZ \to W$ is an isomorphism.
\end{proof}

We get for $n \in \IZ$ from the
isomorphism appearing in Lemma~\ref{U(W,t_w)_is_an_equivalence},
the twisted Bass-Heller Swan decomposition applied to
$\cala = Z(V/K_W)$ and $\Phi = Z(R_{t_W})$, see
Theorem~\ref{F_k_and_V_k_versus_I_k_ast_and_i_k_upper_ast}, and the projection onto and
the inclusion of the first Nil-term maps
\begin{eqnarray*}
    s(W,t_W)_n \colon \overline{K}_{n-1}(\Nil(Z(V/K_W),Z(R_{t_W})))
    & \to &
    K_n(Z(V/W));
    \\
    r(W,t_W)_n \colon K_n(Z(V/W))
    & \to &
    \overline{K}_{n-1}(\Nil(Z(V/K_W),Z(R_{t_W}))),
\end{eqnarray*}
satisfying $r(W,t_W)_n \circ s(W,t_W)_n = \id_{\overline{K}_{n-1}(\Nil(Z(V/K_W),Z(R_{t_W})))}$.

Fix a polarization $t \in V$ of $V$. The following diagram commutes by
Theorem~\ref{F_k_and_V_k_versus_I_k_ast_and_i_k_upper_ast} 
\begin{equation}
  \xymatrix@!C=15em{\overline{K}_{n-1}(\Nil(Z(V/K),Z(R_{t^d})))
    \ar[r]^-{\overline{K}_{n-1}(V_d)} \ar[d]^{s(V[d],t^d)_n}
    &
    \overline{K}_{n-1}(\Nil(Z(V/K),Z(R_t)))
    \ar[d]^{s(V,t)_n}
    \\
    K_n(Z(V/V[d])) \ar[r]^-{K_n(Z(\pr))} \ar[d]^{r(V[d],t^d)_n}
    &
    K_n(Z(V/V))  \ar[d]^{r(V,t)_n}
    \\
    \overline{K}_{n-1}(\Nil(Z(V/K),Z(R_{t^d})))
    \ar[r]^-{\overline{K}_{n-1}(V_d)}
    &
    \overline{K}_{n-1}(\Nil(Z(V/K),Z(R_t)))
  }
  \label{com_diagram_V_to_V[d]}
\end{equation}
where $V_d$ is the Verschiebungs operator, see~\eqref{Verschiebung_V_k}.

Let $W\subseteq V$ be a subgroup of $V$ of finite index.
Fix a polarization $t$ of $V$. Then there is $y \in K = K_V$ with $yt^{d} \in W$.
           Fix $y = y(W)\in K$ with $yt^{d} \in W$. Then  the element $yt^{d}$ is a polarization of $W$.
           As the composites $V/K_W\xrightarrow{\pr} V/K \xrightarrow{R_{t^{d}}} V/K$
           and $V/K_W \xrightarrow{R_{yt^{d}}} V/K_W \xrightarrow{\pr} V/K$ agree,
           $Z(\pr) \colon Z(V/K_W) \to Z(V/K)$ induces a map
           \[
           \overline{K}_{n-1}(Z(\pr)) \colon \overline{K}_{n-1}(\Nil(Z(V/K_W),Z(R_{yt^{d}})))
           \to \overline{K}_{n-1}(\Nil(Z(V/K),Z(R_{t^{d}})))
         \]
         for every $n \in \IZ$. One
           easily easily checks that the following diagram commutes
           \begin{equation}
  \xymatrix@!C=18em{\overline{K}_{n-1}(\Nil(Z(V/K_W),Z(R_{yt^{d}})))
    \ar[r]^-{\overline{K}_{n-1}(Z(\pr))} \ar[d]^{s(W,yt^{d})_n}
    &
    \overline{K}_{n-1}(\Nil(Z(V/K),Z(R_{t^{d}})))
    \ar[d]^{s(t,d)_n}
    \\
    K_n(Z(V/W)) \ar[r]^-{K_n(Z(\pr))} \ar[d]^{r(W,yt^{d})_n}
    &
    K_n(Z(V/V[d]))  \ar[d]^{r(t,d)_n}
    \\
    \overline{K}_{n-1}(\Nil(Z(V/K_W),Z(R_{yt^{d}})))
    \ar[r]^-{\overline{K}_{n-1}(Z(\pr))} 
    &
    \overline{K}_{n-1}(\Nil(Z(V/K),Z(R_{t^{d}}))).
  }
 \label{com_diagram_W_to_V[d]}
\end{equation}

Define
\begin{multline}
  \Xi_{n-1}(W,y(W)t^{d(W)})   \colon \overline{K}_{n-1}(\Nil(Z(V/K_W),Z(R_{y(W)t^{d(W)}})))
  \\
  \to
  \overline{K}_{n-1}(\Nil(Z(V/K),Z(R_t)))
 \label{XI_(n-1)(Gamma,yt_upper_d)}
\end{multline}
to be the composite
\begin{multline*}
 \overline{K}_{n-1}(\Nil(Z(V/K_W),Z(R_{y(W)t^{d(W)}})))
 \xrightarrow{{\overline{K}_n(Z(\pr))}}
  \overline{K}_{n-1}(\Nil(Z/K),Z(R_{t^{d(W)}}))
  \\
  \xrightarrow{\overline{K}_{n-1}(V_d)}
  \overline{K}_{n-1}(\Nil(Z(V/K),Z(R_t))).
  \end{multline*}
  Then we obtain by concatenating~\eqref{com_diagram_V_to_V[d]}
  and~\eqref{com_diagram_W_to_V[d]} the commutative diagram
  \begin{equation}
  \xymatrix@!C=20em{\overline{K}_{n-1}(\Nil(Z(V/K_W)),Z(R_{y(W)t^{d(W)}}))
    \ar[r]^-{\Xi_{n-1}(W,y(W)t^{d(W)}) } \ar[d]^{s(W,y(W)t^{d(W)})_n}
    &
    \overline{K}_{n-1}(\Nil(Z(V/K),Z(R_t)))
    \ar[d]^{s(V,t)_n}
    \\
    K_n(Z(V/W)) \ar[r]^-{K_n(Z(\pr))} \ar[d]^{r(W,y(W)t^{d(W)})_n}
    &
    K_n(Z(V/V))  \ar[d]^{r(V,t)_n}
    \\
    \overline{K}_{n-1}(\Nil(Z(V/K_W),Z(R_{y(W)t^{d(W)}})))
    \ar[r]^-{\Xi_{n-1}(W,y(W)t^{d(W)}) } 
    &
    \overline{K}_{n-1}(\Nil(Z(V/K),Z(R_t))).
  }
  \label{com_diagram_Gamma_Gamma_to_V}
\end{equation}


\subsection{Improving the induction results}%
\label{subsec:Improving_the_induction_results}

For the remainder of this section consider the setup of
Subsection~\ref{subsec:Basic_definitions_and_notation}. In particular we have fixed a
natural number $M$ and a surjective group homomorphism $\nu \colon V \to F$ onto a finite
group $F$. Moreover, we fix $n \in \IZ$ and let $M(Z,n)$ be the module over the Green functor
$\Sw_F$ associated to $Z$ in
Subsection~\ref{subsec:The_Green_and_Mackey_structure_of_a_finite_quotient_group} with
respect to the epimorphism $\nu \colon V \to F$.

       \begin{lemma}\label{the:Computabilty_in_terms_of_p-hyperlelementary_subgroups}         
         Consider any element $z$ in $K_n(Z(V/V)) = \calm(Z,n)(F/F)$.
            Let $p$ be a prime number. Let $\calh_{p} $ be the
           family of $p$-hyperelementary subgroups of $F$.

           Then there are elements $a_H \in \IZ_{(p)}$ and $u_H \in \Sw(H)$  for each $H \in \calh_{p}$
           such that
           \[
             z = \sum_{H \in \calh_{p}} a_H \cdot \calm(Z,n)_*(\pr_H)\bigl(u_H \cdot \calm(Z,n)^*(\pr_H)(z)\bigr)
           \]
           holds in $K_n(Z(V/V))_{(p)}$, where $\pr_H \colon F/H \to F/F$ is the projection.
         \end{lemma}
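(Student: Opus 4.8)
The plan is to recognize Lemma~\ref{the:Computabilty_in_terms_of_p-hyperlelementary_subgroups} as a direct instance of the Dress-induction package assembled in Sections~\ref{sec:Mackey_and_Green_functors_for_finite_groups} and~\ref{sec:K-theoretic_functors_associated_to_G-IZ-categories}, applied to the specific Green-module built out of $Z$.

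First I would unwind the setup of Subsection~\ref{subsec:The_Green_and_Mackey_structure_of_a_finite_quotient_group}: for the surjection $\nu \colon V \to F$ fixed in Subsection~\ref{subsec:Basic_definitions_and_notation} and the chosen $n \in \IZ$, the data $\calm(Z,n)$ form a Mackey functor for $F$ carrying the structure of a module over the Green functor $\Sw_F$, with $\calm(Z,n)(X) = K_n(\cala(\nu^*X)_{\oplus})$ on objects. Since $F/F$ is a one-point $F$-set we have $\nu^*(F/F) = V/V$, hence $\calm(Z,n)(F/F) = K_n(\cala(V/V)_{\oplus}) = K_n(Z(V/V))$; this is exactly the identification already used to phrase the lemma, so the given $z \in K_n(Z(V/V))$ is literally an element of $\calm(Z,n)(F/F)$.

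Next I would invoke computability. By Lemma~\ref{lem:Swan_is_computable}, the Green functor $\IZ_{(p)} \otimes_{\IZ} \Sw_F$ is computable for the family $\calh_p$ of $p$-hyperelementary subgroups of $F$. Feeding this together with the $\Sw_F$-module $\calm(Z,n)$ and the element $z$ into Lemma~\ref{lem:general_induction_for_SW} (itself the combination of Lemma~\ref{lem:general_induction} with Lemma~\ref{lem:Swan_is_computable}) produces elements $a_H \in \IZ_{(p)}$ and $u_H \in \Sw(\calt^F(F/H)) \cong \Sw(H)$ for $H \in \calh_p$ with
\[
z = \sum_{H \in \calh_p} a_H \cdot \calm(Z,n)_*(\pr_H)\bigl(u_H \cdot \calm(Z,n)^*(\pr_H)(z)\bigr)
\]
holding in $\calm(Z,n)(F/F)_{(p)} = K_n(Z(V/V))_{(p)}$, which is precisely the claimed formula.

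The only point requiring attention — and it is bookkeeping rather than a genuine obstacle, since the real work was done when the Green-module structure was established in Subsection~\ref{subsec:The_Green_and_Mackey_structure_of_a_finite_quotient_group} — is to match the abstract structure maps of $\calm(Z,n)$ with the concrete geometric ones appearing in the statement: that $\calm(Z,n)_*(\pr_H)$ and $\calm(Z,n)^*(\pr_H)$ are the induction map $K_n(\cala(\nu^*\pr_H)_{\oplus})$ and the restriction map $K_n(\res(\nu^*\pr_H)_{\oplus})$ respectively, and that $u_H \cdot (-)$ is the Swan-group pairing of~\eqref{pairing_SW_with_K_n}. Since these identifications are exactly how $\calm(Z,n)$ was defined, no verification beyond unwinding definitions is needed, and the proof amounts to citing Lemma~\ref{lem:general_induction_for_SW}.
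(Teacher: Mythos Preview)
The proposal is correct and takes essentially the same approach as the paper, which simply cites Lemma~\ref{lem:general_induction_for_SW}. Your version just unpacks the setup more explicitly, which is fine.
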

     \begin{proof}
       This follows from Lemma~\ref{lem:general_induction_for_SW}.
     \end{proof}

\begin{lemma}\label{lem:res_is_zero_for_large_k}
  Consider an element $z \in \overline{K}_{n-1}(\Nil(Z(V/K_V),Z(R_t)))$
  of nilpotence degree $\le D$.

  Then for every $H \subseteq F$ satisfying 
  $[F:H] \ge D$ the composite
  \[
   \overline{K}_{n-1}(\Nil(Z(V/K_V),Z(R_t)))
    \xrightarrow{s_n(V,t)} K_n(Z(V/V))  \xrightarrow{\calm(Z,n)^*(\pr)}
   K_n(Z(V/\nu^{-1}(H))) 
\]
sends $z$ to zero.
\end{lemma}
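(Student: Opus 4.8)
The plan is to reduce the restriction map to the Frobenius operators of Section~\ref{sec:Frobenius_and_Verschiebungs_operators} and then to invoke Lemma~\ref{lem:NK_n(F_k)(z)_vanishes_for_large_k}. Write $W := \nu^{-1}(H)$. Since $\nu$ is surjective with $\ker(\nu)\subseteq W$, the subgroup $W$ is again an infinite covirtually cyclic group with maximal finite normal subgroup $K_W = K_V\cap W$, and $[V:W] = [F:H]$. Writing $d = d(W)$ for the index of $\pi_V(W)$ in $C_V$, the elementary count carried out in Subsection~\ref{subsec:Some_properties_of_subgroups} (applied to $H\subseteq F$) gives $[F:H] = d\cdot[K_V:K_W]$. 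By definition $\calm(Z,n)^*(\pr_H)\colon K_n(Z(V/V))\to K_n(Z(V/W))$ is the map $K_n(\res(\nu^*\pr_H)_{\oplus})$ for the functor $\res(-)_{\oplus}$ of~\eqref{res(f)_oplus}, i.e.\ the restriction along the finite-index inclusion $W\subseteq V$. Since $W\subseteq V[d]\subseteq V$, it factors as
\[
  \calm(Z,n)^*(\pr_H) = \bigl(K_n(Z(V/V[d]))\to K_n(Z(V/W))\bigr)\circ\bigl(K_n(Z(V/V))\to K_n(Z(V/V[d]))\bigr),
\]
both arrows being restrictions.

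The second arrow is, under the equivalences of Lemma~\ref{U(W,t_w)_is_an_equivalence} for $W=V$ with polarization $t$ and for $W=V[d]$ with polarization $t^d$, precisely the functor $K_n(i_d^*)$ of~\eqref{K_n(i_k_upper_ast)} for $\cala = Z(V/K_V)$ and $\Phi = Z(R_t)$, because $V[d] = K_V\rtimes_{c_{t^d}}\IZ$ and $R_{t^d}$ is the $d$-th power of $R_t$ as a self-map of $V/K_V$. Hence, by Theorem~\ref{F_k_and_V_k_versus_I_k_ast_and_i_k_upper_ast}~\ref{F_k_and_V_k_versus_I_k_ast_and_i_k_upper_ast:restriction} (equivalently Corollary~\ref{cor:ind_res_V_and_F}~\ref{cor:ind_res_V_and_F:res}), the composite $K_n(i_d^*)\circ s_n(V,t)$ agrees with $s_n(V[d],t^d)\circ\overline{K}_{n-1}(F_d)$; so $K_n(i_d^*)(s_n(V,t)(z))$ again lies in the first $\Nil$-summand of $K_n(Z(V/V[d]))$ and is the image under $s_n(V[d],t^d)$ of
\[
  z' := \overline{K}_{n-1}(F_d)(z)\in\overline{K}_{n-1}\bigl(\Nil(Z(V/K_V),Z(R_{t^d}))\bigr).
\]
Because $F_d$ sends $(P,\varphi)$ to $(P,\varphi^{(d)})$ and $(\varphi^{(d)})^{(j)} = \varphi^{(dj)}$, which is trivial once $dj\geq D$, the element $z'$ has nilpotence degree $\leq\lceil D/d\rceil$; moreover $[K_V:K_W] = [F:H]/d\geq D/d$, so $[K_V:K_W]\geq\lceil D/d\rceil$.

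It remains to analyse the first arrow, the restriction $K_n(Z(V/V[d]))\to K_n(Z(V/W))$. Here $\pi_V(W) = \pi_V(V[d])$ while $K_W\subseteq K_V$ has index $[K_V:K_W]$, so, transporting along the equivalences of Lemma~\ref{U(W,t_w)_is_an_equivalence} (using a polarization $y(W)t^d$ of $W$ as in Subsection~\ref{subsec:The_basic_diagram}), this map is induced by the restriction functor $Z(V/K_V)\to Z(V/K_W)$ of~\eqref{res(f)_oplus} attached to $\pr'\colon V/K_W\to V/K_V$, base-changed to the $\IZ$-construction; in particular it respects the Bass--Heller--Swan decomposition~\eqref{twisted_BHS_on_homotopy_groups} and carries first $\Nil$-summand to first $\Nil$-summand. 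Thus $\calm(Z,n)^*(\pr_H)(s_n(V,t)(z)) = s_n(W,y(W)t^d)(z'')$, where $z''$ is the image of $z'$ under the map $\overline{K}_{n-1}(\Nil(Z(V/K_V),Z(R_{t^d})))\to\overline{K}_{n-1}(\Nil(Z(V/K_W),Z(R_{y(W)t^d})))$ induced by that coefficient restriction, and we are done once $z'' = 0$. The crux is that coefficient restriction along the index-$e$ inclusion $K_W\subseteq K_V$, with $e := [K_V:K_W]\geq\lceil D/d\rceil$, annihilates $z'$: this is the restriction counterpart for finite groups of Lemma~\ref{lem:NK_n(F_k)(z)_vanishes_for_large_k}, and I would prove it by writing $K_W\subseteq K_V$ as a chain of subgroups, relating each one-step restriction to the Verschiebungs and Frobenius operators of~\eqref{Verschiebung_V_k}--\eqref{Frobenius_F_k} through the computability of the Swan Green functor (Lemma~\ref{lem:Swan_is_computable}) and the Green-module structure of Section~\ref{sec:Mackey_and_Green_functors_for_finite_groups}, so that the nilpotence bound of the image drops by the full factor $e$ and $z''$ becomes nilpotent of degree $\leq\lceil D/[F:H]\rceil = 1$, hence lies in the image of the inclusion functor $I$ and is therefore zero in $\overline{K}_{n-1}$ by Notation~\ref{not:overline(K)(Nil))}.

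The hard part is exactly this last step. The passage $V\leadsto V[d]$ only extracts the Frobenius $F_d$, which lowers the nilpotence degree to $\lceil D/d\rceil$ but not all the way to $1$ unless $d\geq D$; since $d$ may be much smaller than $D$ (only $d\geq[F:H]/|K_V|$ is forced), one must genuinely use the finite-part restriction $K_W\subseteq K_V$ to absorb the remaining index factor, and making the interaction of this restriction with the nilpotence filtration precise — rather than merely observing that it is faithful and preserves nilpotence degree — is the delicate point, and it is there that the hypothesis $[F:H]\geq D$, and not just $d(W)\geq D$, is really used.
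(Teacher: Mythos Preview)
Your ``hard part'' is a genuine gap, and the mechanism you sketch cannot work. Restriction along the finite-index inclusion $K_W \subseteq K_V$ sends an object $(P,\varphi)$ of the Nil-category to the \emph{same} nilpotent morphism $\varphi$, now acting on $P$ regarded over the smaller coefficient category; the nilpotence degree is unchanged, not reduced by the factor $e = [K_V:K_W]$. The operators $V_k$, $F_k$ of Section~\ref{sec:Frobenius_and_Verschiebungs_operators} and the Swan module structure of Section~\ref{sec:Mackey_and_Green_functors_for_finite_groups} act in the infinite-cyclic direction and give no handle on this finite coefficient change, so your sketch does not produce the drop from nilpotence degree $\lceil D/d(W)\rceil$ to $1$. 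As you yourself observe, the hypothesis $[F:H]\ge D$ only forces $d(W)\ge [F:H]/|K_V|$, which may be far below $D$; with your factorization through $V[d(W)]$ the argument therefore stalls exactly where you say it does.

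The paper's proof does not split off the finite part at all. It sets $d = [F:H]$ --- the full index, not your $d(W)$ --- factors the restriction as
\[
  K_n(Z(V/V)) \xrightarrow{\calm(Z,n)^*(\pr)} K_n(Z(V/V[d])) \xrightarrow{\calm(Z,n)^*(\pr)} K_n(Z(V/\nu^{-1}(H))),
\]
identifies the first arrow with $K_n(i_d^*)$ via the equivalences $U(V,t)$ and $U(V[d],t^d)$ of Lemma~\ref{U(W,t_w)_is_an_equivalence}, and then applies Lemma~\ref{lem:NK_n(F_k)(z)_vanishes_for_large_k} directly with $k = d \ge D$ to obtain zero already after the first arrow; the second arrow then carries $0$ to $0$. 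No separate argument for the finite coefficient part is required. Note that this factorization rests on the containment $\nu^{-1}(H) \subseteq V[d]$; since both subgroups have the same index $d$ in $V$, this amounts to $\nu^{-1}(H) = V[d]$, i.e.\ to $K_V \subseteq \nu^{-1}(H)$ --- precisely the case $e=1$ in your decomposition $[F:H] = d(W)\cdot e$.
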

\begin{proof} Put $d = [F : H] = [V : \nu^{-1}(H)]$. Then we have
  $\nu^{-1}(H) \subseteq V[d]$ and the homomorphism
  $\calm(Z,n)^*(\pr) \colon K_n(Z(V/V)) \to K_n(Z(V/\nu^{-1}(H)))$ is the composite
  \[
  K_n(Z(V/V)) \xrightarrow{\calm(Z,n)^*(\pr)} K_n(Z(V/V[d]))
  \xrightarrow{\calm(Z,n)^*(\pr)} K_n(Z(V/\nu^{-1}(H))).
  \]
  The following diagram commutes
  \[
    \xymatrix@!C=14em{K_n(Z(V/K_V)_{Z(R_t)}[\IZ]) \ar[r]^-{K_n(U(V,t))}_-{\cong}  \ar[d]_{K_n(i_d^*)}
    &
      K_n(Z(V/V)) \ar[d]^{\calm(Z,n)^*(\pr)}
      \\
      K_n(Z(V/K_V)_{Z(R_t)^d}[\IZ])
      \ar[r]_-{K_n(U(V,t^d))}^-{\cong}
      &
       K_n(Z(V/V[d]))
    }
  \]
  where the bijective horizontal arrows have been defined in~\eqref{U(W,t_W)}.
  Now the claim follows from Lemma~\ref{lem:NK_n(F_k)(z)_vanishes_for_large_k},
  since the map $s_n(V,t)$ is by definition the composite
  \[\overline{K}_{n-1}(\Nil(Z(V/K),Z(R_t))) \xrightarrow{\overline{s}_n}
    K_n(Z(V/K)_{Z(R_t)}[\IZ])  \xrightarrow{K_n(U(V,t))} K_n(Z(V/V)).
  \]
\end{proof}

Lemma~\ref{the:Computabilty_in_terms_of_p-hyperlelementary_subgroups} and
Lemma~\ref{lem:res_is_zero_for_large_k} imply

\begin{lemma}\label{the:Computabilty_in_terms_of_p-hyperlelementary_subgroups_improved}
  Consider an element $z$ in
  $z \in \overline{K}_{n-1}(\Nil(Z(V/K_V),Z(R_t)))$ of nilpotence
  degree $ \le D$.  Let $p$ be a prime number. Let $\calh_{p} $ be the
  family of $p$-hyperelementary subgroups of $F$.

  Then there are elements $a_H \in \IZ_{(p)}$ and $u_H$ for each
  $H \in \calh_{p}$ such that
  \[
    s_n(V,t)(z) = \sum_{\substack{H \in \calh_{p}\\{[F: H]}< D}} a_H
    \cdot \calm(Z,n)_*(\pr_H)\bigl(u_H \cdot \calm(Z,n)^*(\pr_H) \circ
    s_n(V,t)(z)\bigr)
  \]
  holds in $K_n(Z(V/V))_{(p)}$.
\end{lemma}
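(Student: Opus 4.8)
The plan is to combine the two preceding lemmas with essentially no extra work. First I would set $w := s_n(V,t)(z)$, which by definition of $s_n(V,t)$ lies in $K_n(Z(V/V)) = \calm(Z,n)(F/F)$, and apply Lemma~\ref{the:Computabilty_in_terms_of_p-hyperlelementary_subgroups} to the element $w$. This yields elements $a_H \in \IZ_{(p)}$ and $u_H$ for each $H \in \calh_{p}$ with
\[
  w = \sum_{H \in \calh_{p}} a_H \cdot \calm(Z,n)_*(\pr_H)\bigl(u_H \cdot \calm(Z,n)^*(\pr_H)(w)\bigr)
\]
in $K_n(Z(V/V))_{(p)}$.

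Next I would appeal to Lemma~\ref{lem:res_is_zero_for_large_k}: since $z$ has nilpotence degree $\le D$, for every subgroup $H \subseteq F$ with $[F:H] \ge D$ the composite $\calm(Z,n)^*(\pr_H) \circ s_n(V,t)$ sends $z$ to $0$, that is, $\calm(Z,n)^*(\pr_H)(w) = 0$. Here one uses the identification from Subsection~\ref{subsec:The_Green_and_Mackey_structure_of_a_finite_quotient_group} under which, for the projection $\pr_H \colon F/H \to F/F$, the contravariant structure map $\calm(Z,n)^*(\pr_H)$ is precisely the restriction homomorphism $K_n(Z(V/V)) \to K_n(Z(V/\nu^{-1}(H)))$ appearing in Lemma~\ref{lem:res_is_zero_for_large_k}. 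By $\IZ_{(p)}$-linearity of the pairing $u \mapsto \calm(Z,n)_*(\pr_H)(u_H \cdot u)$, each summand in the displayed equation indexed by an $H$ with $\calm(Z,n)^*(\pr_H)(w) = 0$ is itself zero. Discarding those summands leaves exactly a sum over the $H \in \calh_{p}$ with $[F:H] < D$, which is the asserted formula.

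The only point requiring attention — and there is no genuine obstacle — is the bookkeeping in the second step: one must verify that the inner factor $\calm(Z,n)^*(\pr_H)(w)$ occurring in the first display is literally the map to which Lemma~\ref{lem:res_is_zero_for_large_k} applies, so that dropping the terms with $[F:H] \ge D$ is legitimate. Given the definition of $M(Z,n)$ and of its contravariant structure maps in Subsection~\ref{subsec:The_Green_and_Mackey_structure_of_a_finite_quotient_group}, this identification is immediate, and the lemma follows at once.
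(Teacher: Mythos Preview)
Your proposal is correct and follows exactly the approach the paper takes: the paper's proof consists of the single sentence ``Lemma~\ref{the:Computabilty_in_terms_of_p-hyperlelementary_subgroups} and Lemma~\ref{lem:res_is_zero_for_large_k} imply'', and you have simply spelled out how those two lemmas combine.
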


Define $\overline{\calh_{p}}$ to be the family
$p$-hyperelementary subgroups $H$ of $F$, for which
$i^{-1}(H) = H \cap F \subseteq F$ is a $p$-group.

\begin{lemma}\label{lem:Computabilty_in_terms_of_p-hyperlelementary_subgroups_final}
  Consider an element $z$ in $\overline{K}_{n-1}(\Nil(Z(V/K_V),Z(R_t)))$ of nilpotence
  degree $\le D$.  Let $p$ be a prime number.  Assume that $\log_q(l) < \log_q(M)$ holds
  for every natural number $l$ with $l < D$ and every prime $q$ that satisfies $q \le \max\{D,|\widehat{F}|\}$
  and is different from $p$.

  Then there are   elements $a_H \in \IZ_{(p)}$ and $u_H$  for each $H \in \overline{\calh_{p}}$
  such that
           \[
            s_n(V,t)(z) = \sum_{H \in \overline{\calh_{p}}}
               a_H \cdot \calm(Z,n)_*(\pr_H)\bigl(u_H \cdot \calm(Z,n)^*(\pr_H)\circ s_n(V,t)(z)\bigr)
           \]
           holds in $K_n(Z(V/V))_{(p)}$.
\end{lemma}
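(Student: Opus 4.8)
The plan is to deduce the statement immediately from Lemma~\ref{the:Computabilty_in_terms_of_p-hyperlelementary_subgroups_improved}, which already writes $s_n(V,t)(z)$ as a $\IZ_{(p)}$-linear combination of the terms $\calm(Z,n)_*(\pr_H)\bigl(u_H \cdot \calm(Z,n)^*(\pr_H) \circ s_n(V,t)(z)\bigr)$ indexed by the $p$-hyperelementary subgroups $H$ of $F$ with $[F : H] < D$. The only new content is that, under the logarithmic hypothesis of the present lemma, every such $H$ automatically lies in the smaller family $\overline{\calh_{p}}$; granting this, one re-indexes the finite sum over all of $\overline{\calh_{p}}$ and simply sets $a_H = 0$ for the remaining subgroups, which produces the asserted formula.

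First I would fix a $p$-hyperelementary subgroup $H$ of $F$ with $[F : H] < D$ and argue by contradiction, supposing $H \notin \overline{\calh_{p}}$, that is, that $i^{-1}(H) \subseteq K_V$ fails to be a $p$-group. Then some prime $q \neq p$ divides $|i^{-1}(H)|$. The one point requiring care is to certify that $q$ is small enough for the hypothesis to apply: since $i^{-1}(H)$ is a subgroup of $K_V$, Lagrange gives $q \le |i^{-1}(H)| \le |K_V| \le m \cdot |K_V| = |\widehat{F}|$, so $q \le \max\{D, |\widehat{F}|\}$ and $q \neq p$.

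Next I would play the two available estimates for $\log_q([F : H])$ against each other. On one side, $[F : H]$ is a natural number with $[F : H] < D$, so the hypothesis of the lemma yields $\log_q([F : H]) < \log_q(M)$. On the other side, $q$ divides $|i^{-1}(H)|$, the group $H$ is $p$-hyperelementary, and $q \neq p$, so Lemma~\ref{lem:special_estimate_for_log_q([IZ/Mm:pr(H)])} yields $\log_q([F : H]) \ge \log_q(M)$. These inequalities are incompatible, so $i^{-1}(H)$ must be a $p$-group and hence $H \in \overline{\calh_{p}}$, which completes the reduction.

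I do not anticipate a genuine obstacle here: the proof is essentially just the contradiction above combined with the trivial re-indexing of a finite sum. The only spot demanding a moment of bookkeeping is the bound $q \le \max\{D,|\widehat{F}|\}$, for which one has to recall that $|i^{-1}(H)|$ divides $|K_V|$ and that $|\widehat{F}| = m \cdot |K_V|$ from the identifications in Subsection~\ref{subsec:Basic_definitions_and_notation}; everything else is a direct appeal to the two preceding lemmas.
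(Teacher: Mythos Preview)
Your argument is correct and follows the same overall strategy as the paper: invoke Lemma~\ref{the:Computabilty_in_terms_of_p-hyperlelementary_subgroups_improved} and then show that every $H \in \calh_p$ with $[F:H] < D$ lies in $\overline{\calh_p}$ by contradicting Lemma~\ref{lem:special_estimate_for_log_q([IZ/Mm:pr(H)])}. Your observation that any prime $q$ dividing $|i^{-1}(H)|$ already satisfies $q \le |K_V| \le |\widehat{F}|$ (since $i^{-1}(H) \subseteq K_V$) is a small simplification over the paper, which instead splits into the two cases $q \le \max\{D,|\widehat{F}|\}$ and $q > \max\{D,|\widehat{F}|\}$ and handles the latter by a separate divisibility argument; your bound shows that second case is in fact vacuous.
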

\begin{proof}
  In view of
  Lemma~\ref{the:Computabilty_in_terms_of_p-hyperlelementary_subgroups_improved} it
  suffices to show under the assumptions above that $i^{-1}(H) = H \cap F$ is a $p$-group
  for $H \in \calh_{p}$, provided that $[F:H] < D$ holds. Because of
  Lemma~\ref{lem:special_estimate_for_log_q([IZ/Mm:pr(H)])} it remains
  to show $\log_q([F:H]) < \log_q(M)$ for every every $H \in \calh_{p}$ and every prime $q$ that divides
  $|i^{-1}(H)| = |H \cap F|$ and is different from $p$, provided that $[F:H] < D$ holds.
  If $q$  satisfies $q \le \max\{D,|\widehat{F}|\}$, then $\log_q([F:H]) < \log_q(M)$
  follows from the assumptions.  Suppose $q$ that satisfies $q >
  \max\{D,|\widehat{F}|\}$. Then $q$ does divide neither $[F:H]$ nor $\widehat{F}$. Since
  $q$ divides $ |H \cap F|$ and hence $|F|$ and $|F| = M \cdot |\widehat{F}|$ holds, $q$
  divides $|M|$ and hence $\log_q(M) \ge 1$. As $q$ does not divide  $[F:H]$, we have
  $\log_q([F:H])= 0$ and hence $\log_q([F:H]) < \log_q(M)$.
\end{proof}

 \begin{notation}\label{not:calv_p(V)}
Let $\calv_p(V)$ be the set of infinite covirtually cyclic subgroups $W \subseteq V$ such that
$K_W = W \cap K_V$ is a $p$-group.
\end{notation}
Note that $W \cap K$ is indeed the maximal finite  subgroup $K_{W}$ of $W$. For
$W \in \calv_p(V)$ define the natural number $d(W)$ to be the index
$[C: \pi_V(W)]$, where $\pi_V \colon V \to V/K_V = C_V$ is the projection onto the
infinite cyclic subgroup $C_V$. For every $W \in \calv_p(V)$ choose an element
$y(W) \in K_V$ such that $y(W)t^{d(W)}$ is a polarization of $W$. We have defined
in~\eqref{XI_(n-1)(Gamma,yt_upper_d)}  the map
\begin{multline*}
\Xi_{n-1}(W,y(W)t^{d(W)})   \colon \overline{K}_{n-1}\bigl(\Nil(Z(V/K_{W}),Z(R_{y(W)t^{d(W)}}))\bigr)
\\
\to   \overline{K}_{n-1}\bigl(\Nil(Z(V/K_V),Z(R_t))\bigr).
\end{multline*}
The isomorphism type of the abelian group 
$\overline{K}_{n-1}\bigl(\Nil(Z(V/K_{W}),Z(R_{y(W)t^{d(W)}}))\bigr)$
and the image of $\Xi_{n-1}(W,y(W)t^{d(W)})$ are  independent of the
choice of $y(W)$, since for any other choice $y(W)'$ we have the 
commutative diagram 
\[
  \xymatrix@!C=18em{\overline{K}_{n-1}\bigl(\Nil(Z(V/K_{W}),Z(R_{y(W)t^{d(W)}}))\bigr)
    \ar[dd]^{\overline{K}_{n-1}(\Nil(R_{y(W)'y(W)^{-1}}))}_{\cong} \ar[rd]^-{\quad \Xi_{n-1}(W,y(W)t^{d(W)})}
    &
    \\
    &
    \overline{K}_{n-1}\bigl(\Nil(Z(V/K_V),Z(R_t))\bigr)
    \\
    \overline{K}_{n-1}\bigl(\Nil(Z(V/K_{W}),Z(R_{y(W)'t^{d(W)}}))\bigr)
    \ar[ru]_-{\quad \Xi_{n-1}(W,y(W)'t^{d(W)})}
     &
   }
   \]
   with an isomorphisms as vertical  arrow.

\begin{theorem}\label{the:basic_result_about_induction_for_nil_terms_for_Z}
 The  map
\begin{multline*}
\bigoplus_{W \in \calv_p(V)} \Xi_{n-1}(W,y(W)t^{d(W)})_{(p)} \colon \bigoplus_{W \in \calv_p(V)}
\overline{K}_{n-1}\bigl(\Nil(Z(V/K_{W}),Z(R_{y_{W}t^{d(W)}}))\bigr)_{(p)} 
\\
\to
  \overline{K}_{n-1}\bigl(\Nil(Z(V/K),Z(R_t))\bigr)_{(p)} 
\end{multline*}
is surjective.
\end{theorem}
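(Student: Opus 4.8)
The plan is to show that every element $z$ of the target $\overline{K}_{n-1}\bigl(\Nil(Z(V/K),Z(R_t))\bigr)_{(p)}$ lies in the image. The first and crucial point is that the map in the theorem does not involve the auxiliary number $M$ fixed in Subsection~\ref{subsec:Basic_definitions_and_notation}: the index set $\calv_p(V)$, the finite groups $K_W$ and the $V$-maps $R_{y(W)t^{d(W)}}$, and hence the source groups and the homomorphisms $\Xi_{n-1}(W,y(W)t^{d(W)})$ of~\eqref{XI_(n-1)(Gamma,yt_upper_d)}, are all independent of $M$. So for the purpose of showing that a given $z$ lies in the image we are free to re-choose $M$, rebuilding $\nu\colon V\to F$, $\widehat F$ and the $\Sw_F$-module $\calm(Z,n)$ accordingly. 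Using~\eqref{overline(K)_n(NIL)_as_colimit_over_overline(K)_n(NIL_D)} and that $p$-localization commutes with filtered colimits, fix $D$ with $z$ of nilpotence degree $\le D$ (after clearing a denominator prime to $p$ one may even take $z$ to come from an integral class). Since $|\widehat F| = m\cdot|K_V|$ is itself independent of $M$, there are only finitely many primes $q\neq p$ with $q\le\max\{D,|\widehat F|\}$; choosing $M$ divisible by a sufficiently high power of each of them makes $\log_q(l)<\log_q(M)$ hold for all such $q$ and all $l<D$, which is exactly the hypothesis of Lemma~\ref{lem:Computabilty_in_terms_of_p-hyperlelementary_subgroups_final}.

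With this $M$ we apply Lemma~\ref{lem:Computabilty_in_terms_of_p-hyperlelementary_subgroups_final} to obtain elements $a_H\in\IZ_{(p)}$ and $u_H\in\Sw(H)$, indexed by $H$ in the finite family $\overline{\calh_p}$ of $p$-hyperelementary subgroups of $F$ with $i^{-1}(H)$ a $p$-group, such that
\[
  s(V,t)_n(z)=\sum_{H\in\overline{\calh_p}}a_H\cdot\calm(Z,n)_*(\pr_H)\bigl(u_H\cdot\calm(Z,n)^*(\pr_H)\circ s(V,t)_n(z)\bigr)
\]
holds in $K_n(Z(V/V))_{(p)}$. Now apply the retraction $r(V,t)_n$ appearing in~\eqref{com_diagram_Gamma_Gamma_to_V}. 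On the left, $r(V,t)_n\circ s(V,t)_n=\id$ returns $z$; on the right one must identify each summand as a value of some $\Xi_{n-1}(W,\cdot)$.

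For $H\in\overline{\calh_p}$ put $W_H:=\nu^{-1}(H)\subseteq V$. It has finite index, hence is an infinite covirtually cyclic group, its maximal finite subgroup is $K_{W_H}=K_V\cap W_H=i^{-1}(H)$, a $p$-group, so $W_H\in\calv_p(V)$, and $d(W_H)=[C:\pi_V(W_H)]$. Under the identifications $\nu^*(F/H)=V/W_H$ and $\nu^*(F/F)=V/V$, the Mackey structure maps become $\calm(Z,n)_*(\pr_H)=K_n(Z(\pr))$ and $\calm(Z,n)^*(\pr_H)=K_n(\res(\pr)_\oplus)$ for $\pr\colon V/W_H\to V/V$. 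The lower square of the commutative diagram~\eqref{com_diagram_Gamma_Gamma_to_V}, taken with $W=W_H$, reads $r(V,t)_n\circ K_n(Z(\pr))=\Xi_{n-1}(W_H,y(W_H)t^{d(W_H)})\circ r(W_H,y(W_H)t^{d(W_H)})_n$. Therefore $r(V,t)_n$ of the $H$-th summand equals $\Xi_{n-1}(W_H,y(W_H)t^{d(W_H)})(z_H)$ with
\[
  z_H:=a_H\cdot r(W_H,y(W_H)t^{d(W_H)})_n\bigl(u_H\cdot\calm(Z,n)^*(\pr_H)\circ s(V,t)_n(z)\bigr)
\]
an element of the $W_H$-summand of the source. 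Summing over $H$ and grouping the finitely many $H$ by the value of $W_H$ exhibits $z$ in the image of $\bigoplus_{W\in\calv_p(V)}\Xi_{n-1}(W,y(W)t^{d(W)})_{(p)}$; as $z$ was arbitrary, surjectivity follows.

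The genuinely new input is already packaged in Lemma~\ref{lem:Computabilty_in_terms_of_p-hyperlelementary_subgroups_final} (Dress induction over the Swan Green functor, combined with the index estimates of Section~\ref{sec:Infinite_covirtually:_cyclic_groups_and_their_finite_quotients}) and in the restriction compatibility~\eqref{com_diagram_Gamma_Gamma_to_V}. Thus the only real subtlety is the opening move: recognising that the statement does not depend on $M$, so that $M$ may be enlarged after $z$ is given in order to make the computability hypothesis applicable; everything after that is bookkeeping, matching the Mackey maps $\calm(Z,n)_*(\pr_H)$, $\calm(Z,n)^*(\pr_H)$ and the module action by $u_H$ against the maps occurring in~\eqref{com_diagram_Gamma_Gamma_to_V}.
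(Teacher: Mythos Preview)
Your proof is correct and follows essentially the same route as the paper's: pick $D$ bounding the nilpotence degree of $z$, then choose $M$ large enough to satisfy the hypothesis of Lemma~\ref{lem:Computabilty_in_terms_of_p-hyperlelementary_subgroups_final}, apply that lemma, and push through $r(V,t)_n$ using the lower square of diagram~\eqref{com_diagram_Gamma_Gamma_to_V} with $W_H=\nu^{-1}(H)\in\calv_p(V)$. Your explicit emphasis that the statement is independent of $M$ (so that $M$ may be enlarged after $z$ is fixed) and your remark that $|\widehat F|=m\cdot|K_V|$ does not depend on $M$ make the logic slightly more transparent than in the paper, but the argument is the same.
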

\begin{proof} Consider $z \in \overline{K}_{n-1}(\Nil(Z(V/K),Z(R_t)))$. Choose $D$ such
  that the nilpotence degree of $z$ is $\le D$.  Now choose a natural number $M$ such that
  $\log_q(l) < \log_q(M)$ holds for every natural number $l$ with $l < D$ and every prime
  $q$ that satisfies $q \le \max\{D,|\widehat{F}|\}$ and is different from $p$.  From
  Lemma~\ref{lem:Computabilty_in_terms_of_p-hyperlelementary_subgroups_final} we get
  elements $a_H \in \IZ_{(p)}$ and $u_H$ for each $H \in \overline{\calh_{p}}$ such that
  \begin{equation}
  s(V,t)_n(z) = \sum_{H \in \overline{\calh_{p}}}
  a_H \cdot \calm(Z,n)_*(\pr_H)\bigl(u_H \cdot \calm(Z,n)^*(\pr_H)\circ s_n(V,t)(z)\bigr)
  \label{value_of_s(V,t)_n(z)}
\end{equation}              
holds in $K_n(Z(V/V))_{(p)}$.
Define for $H \in \overline{\calh_p}$ the subgroup  $W_H$ of $V$ by $W_H := \nu^{-1}(H)$.
For $H \in \overline{\calh_p}$ the following diagram
\begin{equation}
  \xymatrix@!C=19em{K_n(Z(V/W_H)) \ar[r]^-{r(W_H,y(W_H)t^{d(W_H)})_n}    
      \ar[d]_{\calm(Z,n)_*(\pr_H)}
      &
      \overline{K}_{n-1}(\Nil(Z(V/K_{W_H}),Z(R_{y(W_H)t^{d(W_H)}})))
      \ar[d]^{\Xi_{n-1}(W_H,y_{W_H}t^{d(W_H)})} 
      \\
      K_n(Z(V/V)) \ar[r]_-{r(V,t)_n}
      &
      \overline{K}_{n-1}(\Nil(Z(V/K),Z(R_t))).}
    \label{certain_com_diagram}
  \end{equation}
  commutes because of the commutative diagram~\eqref{com_diagram_Gamma_Gamma_to_V}.
  Define the element in
  $z_H \in \overline{K}_{n-1}\bigl(\Nil(Z(V/K_{W}),Z(R_{y_{W}t^{d(W)}}))\bigr)_{(p)}$ by
  \begin{equation}
    z_H =  r(W_H,y_{W_H}t^{d(W_H)})_n\circ \bigl(u_H \cdot \calm(Z,n)^*(\pr_H)\circ s(V,t)_n(z)\bigr).
    \label{z_H}
  \end{equation}
  Recall that we have $r(V,t)_n \circ s(V,T)_n = \id$. Now we compute 
  \begin{eqnarray*}
    z
    & = &
    r(V,t)_n \circ s(V,T)_n(z)
    \\
    & \stackrel{\eqref{value_of_s(V,t)_n(z)}}{=} &
    r(V,T)_n \circ \biggl(\sum_{H \in \overline{\calh_{p}}}
    a_H \cdot \calm(Z,n)_*(\pr_H)\bigl(u_H \cdot \calm(Z,n)^*(\pr_H)\circ s(V,t)_n(z)\bigr)\biggr)
     \\
    & = &
   \sum_{H \in \overline{\calh_{p}}}
   a_H \cdot r(V,T)_n \circ \calm(Z,n)_*(\pr_H)\bigl(u_H \cdot  \calm(Z,n)^*(\pr_H) \circ s(V,t)_n(z)\bigr)      
   \\
    & \stackrel{\eqref{certain_com_diagram}}{=}&
   \sum_{H \in \overline{\calh_{p}}}
          a_H \cdot \Xi_{n-1}(W_H,y_{W_H}t^{d(W_H)}) \circ r(W_H,y(W_H)t^{d(W_H)})_n
    \\
    & & \hspace{50mm} \circ \bigl(u_H \cdot \calm(Z,n)^*(\pr_H)\circ s(V,t)_n(z)\bigr)
    \\
    & \stackrel{\eqref{z_H}}{=} &
   \sum_{H \in \overline{\calh_{p}}}
   a_H \cdot \Xi_{n-1}(W_H,y(W_H)t^{d(W_H)})(z_H).
  \end{eqnarray*}
  Note that for $H \in \overline{\calh_p}$ the subgroup
  $W_H := \nu^{-1}(H)$ of $V$ belongs to $\calv_p(V)$ introduced in
  Notation~\ref{not:calv_p(V)}.  This finishes the proof of Theorem~\ref{the:basic_result_about_induction_for_nil_terms_for_Z}.
\end{proof}

\begin{corollary}\label{cor:vanishing_of_overline(K)_(n-1)(Nil(Z(V/K),Z(R_t)))_(p)}
  If $\overline{K}_{n-1}\bigl(\Nil(Z(V/K_{W}),Z(R_{y_{W}t^{d(W)}}))\bigr)_{(p)} = 0$
  holds for every $W \in \calv_p(V)$,   then we get
  \[
  \overline{K}_{n-1}\bigl(\Nil(Z(V/K),Z(R_t))\bigr)_{(p)} = 0.
  \]
\end{corollary}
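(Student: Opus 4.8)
The plan is to read this off directly from Theorem~\ref{the:basic_result_about_induction_for_nil_terms_for_Z}, which is the substantive statement; the corollary is a purely formal consequence. That theorem asserts that the map
\[
  \bigoplus_{W \in \calv_p(V)} \Xi_{n-1}(W,y(W)t^{d(W)})_{(p)} \colon
  \bigoplus_{W \in \calv_p(V)} \overline{K}_{n-1}\bigl(\Nil(Z(V/K_{W}),Z(R_{y_{W}t^{d(W)}}))\bigr)_{(p)}
  \to \overline{K}_{n-1}\bigl(\Nil(Z(V/K),Z(R_t))\bigr)_{(p)}
\]
is surjective. So the first and only real step is: under the hypothesis every summand $\overline{K}_{n-1}\bigl(\Nil(Z(V/K_{W}),Z(R_{y_{W}t^{d(W)}}))\bigr)_{(p)}$ is trivial, hence the direct sum is the zero group; the image of the zero group under any homomorphism is $0$, and since the displayed map is onto, its image is the whole target. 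Therefore $\overline{K}_{n-1}\bigl(\Nil(Z(V/K),Z(R_t))\bigr)_{(p)} = 0$.

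There is no genuine obstacle here. The only point worth flagging is that the hypothesis is well-posed even though each $y(W)$ was merely chosen rather than canonical: as recorded just before Theorem~\ref{the:basic_result_about_induction_for_nil_terms_for_Z}, the isomorphism type of $\overline{K}_{n-1}\bigl(\Nil(Z(V/K_{W}),Z(R_{y(W)t^{d(W)}}))\bigr)$ is independent of this choice, so the vanishing assumption does not depend on it either. Everything else is immediate, and no further input (such as exactness of localization or properties of direct sums beyond the above triviality) is needed.
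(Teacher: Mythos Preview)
Your proof is correct and is exactly the approach the paper takes: the corollary is stated without proof immediately after Theorem~\ref{the:basic_result_about_induction_for_nil_terms_for_Z}, since it follows formally from the surjectivity asserted there. Your remark about the well-posedness of the hypothesis with respect to the choice of $y(W)$ is a nice touch, though not strictly necessary.
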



\typeout{---- Section 9: Proof  of Theorem~\ref{the:passage_from_Fin_to_Vcyc} and
Theorem~\ref{the:FJ_and_regular} for additive categories -----------}

\section{Proof  of Theorem~\ref{the:passage_from_Fin_to_Vcyc} and
Theorem~\ref{the:FJ_and_regular} for additive categories}%
\label{sec:Proof_of_Theorem_ref(the:passage_from_Fin_to_Vcyc)_and_Corollary}
 
Let $G$ be a group and $\Lambda$ be a commutative ring. Let $\cala$ be an additive
$G$-$\Lambda$-category.  We have defined $\calp(G,\Lambda)$ in Notation~\ref{calp(G,R)}.

\begin{theorem}\label{the:passage_from_Fin_to_Vcyc_additive_categories}
  Suppose that the additive category $\cala$ is regular in the sense
  of~\cite[Definition~6.2]{Bartels-Lueck(2020additive)}. Then the canonical map
  \[
    H_n^G(\EGF{G}{\FIN};\bfK_{\cala}) \to H_n^G(\EGF{G}{\VCYC};\bfK_{\cala})
  \]
  is a $\calp(G,\Lambda)$-isomorphism for all $n \in \IZ$.
\end{theorem}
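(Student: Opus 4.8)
The plan is to apply the transitivity principle for assembly maps, reducing the statement about the pair $(\FIN,\VCYC)$ to a statement about each infinite virtually cyclic group $V$ of $G$, and then to exploit the fact that type~II virtually cyclic groups contribute nothing new relative to $\FIN$ (by the general theory recalled in~\cite{Lueck(2022book)}), so that only the type~I case, i.e.\ the infinite covirtually cyclic groups, matters. First I would invoke the transitivity principle: the relative assembly map $H_n^G(\EGF{G}{\FIN};\bfK_{\cala}) \to H_n^G(\EGF{G}{\VCYC};\bfK_{\cala})$ is a $\calp(G,\Lambda)$-isomorphism provided that for every infinite virtually cyclic subgroup $V\subseteq G$ the relative assembly map $H_n^V(\EGF{V}{\FIN};\bfK_{\cala|_V}) \to H_n^V(\EGF{V}{\VCYC};\bfK_{\cala|_V}) = H_n^V(V/V;\bfK_{\cala|_V})$ is a $\calp(V,\Lambda)$-isomorphism, using $\calp(V,\Lambda)\subseteq\calp(G,\Lambda)$. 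For $V$ of type~II this relative map is known to be an isomorphism integrally, so it remains to treat $V$ infinite covirtually cyclic (type~I). For such $V$, the twisted Bass--Heller--Swan decomposition~\eqref{twisted_BHS_on_homotopy_groups} applied to $\cala = Z(V/K_V)$, $\Phi = Z(R_t)$ identifies the ``new'' part of $H_n^V(V/V;\bfK_{\cala|_V}) = K_n(Z(V/V))$ beyond the $\FIN$-part with the two Nil-summands $\overline{K}_{n-1}\bigl(\Nil(Z(V/K_V),Z(R_t))\bigr)$, via Lemma~\ref{U(W,t_w)_is_an_equivalence}.

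So the theorem reduces to showing that $\overline{K}_{n-1}\bigl(\Nil(Z(V/K_V),Z(R_t))\bigr)_{(p)} = 0$ for every prime $p\notin\calp(G,\Lambda)$, i.e.\ every prime $p$ that is either invertible in $\Lambda$ or such that $G$ (hence $V$) has no nontrivial finite $p$-subgroup. By Corollary~\ref{cor:vanishing_of_overline(K)_(n-1)(Nil(Z(V/K),Z(R_t)))_(p)}, it suffices to prove that $\overline{K}_{n-1}\bigl(\Nil(Z(V/K_{W}),Z(R_{y_{W}t^{d(W)}}))\bigr)_{(p)} = 0$ for every $W\in\calv_p(V)$, that is, for every infinite covirtually cyclic $W\subseteq V$ whose maximal finite subgroup $K_W$ is a $p$-group. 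Here is where the hypotheses enter. If $p$ is invertible in $\Lambda$: then $p\nmid|K_W|$ forces $K_W=\{1\}$, so $W$ is infinite cyclic, $Z(V/K_W)$ corresponds to a regular additive category (regularity being inherited appropriately), and the Nil-terms of a twisted polynomial/Laurent construction over a regular additive category vanish by the additive-category analogue of the classical Bass--Heller--Swan vanishing (this is the content of regularity, cf.~\cite[Definition~6.2]{Bartels-Lueck(2020additive)} and the companion vanishing result there). If instead $G$ has no nontrivial finite $p$-subgroup: then again $K_W$, being a finite $p$-subgroup, is trivial, and the same vanishing applies. Either way every summand vanishes, Corollary~\ref{cor:vanishing_of_overline(K)_(n-1)(Nil(Z(V/K),Z(R_t)))_(p)} gives $\overline{K}_{n-1}\bigl(\Nil(Z(V/K),Z(R_t))\bigr)_{(p)}=0$, and the transitivity principle finishes the proof.

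The main obstacle I expect is twofold. First, one must be careful that the notion of regularity of an additive $\Lambda$-category in the sense of~\cite[Definition~6.2]{Bartels-Lueck(2020additive)} is preserved under the constructions $\cala\mapsto Z_{\cala}(V/H)$ used here (twisting by a finite group action and the orbit-category construction), and that it indeed implies vanishing of the relevant twisted Nil-terms $\overline{K}_{*}(\Nil(Z(V/K_W),Z(R_{y_Wt^{d(W)}})))$ when $K_W$ is trivial; this is essentially a bookkeeping step invoking the structure theory of that paper but needs to be stated precisely. Second, the reduction to type~I virtually cyclic groups and the identification of the cokernel of the $\FIN$-to-$V/V$ assembly map with the Nil-summands must be matched up with the diagram~\eqref{com_diagram_Gamma_Gamma_to_V} and the induction machinery of Section~\ref{sec:On_Nil-terms_for_infinite_covirtually_cyclic_subgroups}; the content of Theorem~\ref{the:basic_result_about_induction_for_nil_terms_for_Z} is precisely the hard input that makes this work, so once that is in hand the remaining argument is formal. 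The genuinely delicate analytic/combinatorial heart has already been isolated in Lemma~\ref{lem:Computabilty_in_terms_of_p-hyperlelementary_subgroups_final} and its consequences, so at this stage the proof is mostly an assembly of previously established pieces.
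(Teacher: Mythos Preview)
Your overall strategy matches the paper's proof closely: reduce to infinite covirtually cyclic $V$ via the Transitivity Principle (the paper first passes from $\VCYC$ to $\CVCYC$ using~\cite[Remark~1.6]{Davis-Quinn-Reich(2011)}, which is equivalent to your handling of type~II), identify the cokernel of the $\FIN$-assembly map with the two Nil-summands via the twisted Bass--Heller--Swan decomposition, and then invoke Corollary~\ref{cor:vanishing_of_overline(K)_(n-1)(Nil(Z(V/K),Z(R_t)))_(p)} to reduce to showing that $\overline{K}_{n-1}\bigl(\Nil(Z(V/K_{W}),Z(R_{y_{W}t^{d(W)}}))\bigr)_{(p)}=0$ for every $W\in\calv_p(V)$ and every $p\notin\calp(G,\Lambda)$.

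However, your case analysis at the end contains an error. In the case ``$p$ is invertible in $\Lambda$'' you write ``then $p\nmid|K_W|$ forces $K_W=\{1\}$'', but nothing guarantees $p\nmid|K_W|$: by definition of $\calv_p(V)$ the group $K_W$ is a $p$-group, so $|K_W|$ is a power of $p$, and $K_W$ may well be non-trivial. The condition $p\notin\calp(G,\Lambda)$ means \emph{either} $p$ is invertible in $\Lambda$ \emph{or} $G$ has no non-trivial finite $p$-subgroup; only in the second alternative can you conclude $K_W$ is trivial. In the first alternative you must instead argue that $Z_{\cala}(V/K_W)\simeq \cala|_{K_W}[K_W]$ is regular even though $K_W$ is a non-trivial $p$-group: this holds precisely because $|K_W|$, being a $p$-power, is invertible in $\Lambda$, and the analogue of~\cite[Lemma~7.4~(2)]{Bartels-Lueck(2023forward)} for additive categories shows that $\cala|_P[P]$ inherits regularity from $\cala$ when $|P|$ is invertible in $\Lambda$. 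The paper then invokes~\cite[Lemma~7.7 and Theorem~8.1]{Bartels-Lueck(2020additive)} to conclude the Nil-terms vanish. You anticipated in your ``obstacles'' paragraph that regularity preservation under these constructions needs care, and indeed this is exactly where the missing step lies; once you replace the incorrect triviality claim by this regularity argument, your proof is complete and coincides with the paper's.
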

\begin{proof}
  The relative assembly map
  \[
    H_n^G(\EGF{G}{\CVCYC};\bfK_{\cala}) \to H_n^G(\EGF{G}{\VCYC};\bfK_{\cala})
  \]
  is an isomorphism for all $n \in \IZ$ by~\cite[Remark~1.6]{Davis-Quinn-Reich(2011)}, see
  also~\cite[Theorem~13.44]{Lueck(2022book)}. Hence it suffices to show that
  the relative assembly map
  \[
    H_n^G(\EGF{G}{\FIN};\bfK_{\cala}) \to H_n^G(\EGF{G}{\CVCYC};\bfK_{\cala})
  \]
  is a $\calp(G,\Lambda)$-isomorphism for all $n \in \IZ$.  By the Transitivity Principle
  appearing in~\cite[Theorem~3.3]{Bartels-Echterhoff-Lueck(2008colim)}, see
  also~\cite[Theorem~15.12]{Lueck(2022book)}, we can assume without loss of
  generality that $G$ itself is an infinite covirtually cyclic group $V$.  Hence we have
  to show for any covirtually cyclic group $V$ that the assembly map
  \begin{equation}
    H_n^V(\EGF{V}{\FIN};\bfK_{\cala}) \to H_n^V(\EGF{V}{\CVCYC};\bfK_{\cala})
    = H_n^V(V/V;\bfK_{\cala}) = \pi_n(\bfK_{\cala}(V))
    \label{assembly_map_for_V}
  \end{equation}
  is a $\calp(G,\Lambda)$-isomorphism, where we view $\cala$ as a $V$-$\Lambda$-category
  by restriction from $G$ to $V$.  Write $V = K \rtimes_{c_t} \IZ$. The twisted
  Bass-Heller-Swan isomorphism of~\eqref{twisted_BHS_on_homotopy_groups} applied to
  $V = K \rtimes_{c_t} \IZ$ yields an isomorphism
  \begin{multline*}
  \pi_n(T_{\bfK(c_t)}) \times \overline{K}_{n-1}\bigl(\Nil(Z_{\cala}(V/K),Z(R_t))\bigr)
  \times \overline{K}_{n-1}\bigl(\Nil(Z_{\cala}(V/K),Z(R_t))\bigr)
  \\
\xrightarrow{\cong} \pi_n(\bfK_{\cala}(V)).
\end{multline*}
There is an identification
  \[\pi_n(T_{\bfK(c_t)}) \xrightarrow{\cong}  H_n^V(\EGF{V}{\FIN};\bfK_{\cala})
  \]
  coming from the fact that a model for $\EGF{V}{\FIN}$ is $\IR$ with respect to the
  $V$-action given by $v \cdot r = \pr(v) + r$ for $r \in \IR$ and the canonical
  projection $\pr \colon V \to \IZ$. Under these identifications the
  map~\eqref{assembly_map_for_V} becomes the obvious inclusion.  Hence
  map~\eqref{assembly_map_for_V} is a $\calp(G,\Lambda)$-isomorphism if we can show
 \[
 \IZ[\calp(G,\Lambda)^{-1}] \otimes_{\IZ} \overline{K}_{n-1}\bigl(\Nil(Z_{\cala}(V/K),Z(R_t))\bigr) = 0
\]
for every $n \in \IZ$. Because of
Corollary~\ref{cor:vanishing_of_overline(K)_(n-1)(Nil(Z(V/K),Z(R_t)))_(p)} it suffices to
show that for any $p \not\in \calp(G,\Lambda)$ and any $W \in \calv_p(V)$ we have
$\overline{K}_{n-1}\bigl(\Nil(Z_{\cala}(G/K_{W}),Z(R_{y_{W}t^{d(W)}}))\bigr)= 0$.

Because of~\cite[Lemma~7.7 and Theorem~8.1]{Bartels-Lueck(2020additive)} it suffices to
show that the additive category $Z_{\cala}(G/K_{W})$ is regular. Since it is equivalent to
$\cala|_{K_W}[K_W]$ and $K_W$ is a $p$-group, it suffices to show for any $p$-subgroup
$P$ of $V$ that $\cala|_P[P]$ is regular.  If the $p$-subgroup $P$ is trivial, then
$\cala|_P[P] = \cala$ and hence by assumption regular. If $P$ is non-trivial, then by
definition of $\calp(V,\Lambda)$ the prime $p$ is invertible in $\Lambda$. We leave it to
the reader to check that then $\cala|_P[P]$ is regular, as $\cala$ is
regular. It is not hard to extend the proof
for rings in~\cite[Lemma~7.4~(2)]{Bartels-Lueck(2023forward)}  to additive categories.
 This finishes the proof of
Theorem~\ref{the:passage_from_Fin_to_Vcyc_additive_categories}.
\end{proof}

\begin{corollary}\label{cor:FJ_and_regular_additive_category}
  Suppose $G$ satisfies the Full Farrell-Jones Conjecture. Assume that the additive
  $\Lambda$-category $\cala$ is regular in the sense
  of~\cite[Definition~6.2]{Bartels-Lueck(2020additive)} and that the order of any finite
  subgroup of $G$ is invertible in $\Lambda$.
  
  Then the canonical map
  \[
    \colimunder_{H \in \SubGF{G}{\FIN}} K_0(\cala(G/H)) \to K_0(\cala[G/G])
  \]
  is an isomorphism and
  \[
    K_n(\cala[G/G]) = 0 \quad \text{for} \;n \le -1.
  \]
  where $\cala[G/G]$ has been defined in
  Subsection~\ref{subsec:The_K-theoretic_covariant_Or(G)-spectrum_associated_to_an_G-Z-category}.
\end{corollary}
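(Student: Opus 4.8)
The plan is to combine the assembly isomorphism supplied by the Full Farrell--Jones Conjecture with Theorem~\ref{the:passage_from_Fin_to_Vcyc_additive_categories}, and then to read off the low-degree information from the equivariant Atiyah--Hirzebruch spectral sequence. First I would observe that, since the order of every finite subgroup of $G$ is invertible in $\Lambda$, the set $\calp(G,\Lambda)$ of Notation~\ref{calp(G,R)} is empty, so a $\calp(G,\Lambda)$-isomorphism is simply an isomorphism. Hence Theorem~\ref{the:passage_from_Fin_to_Vcyc_additive_categories} gives that the relative assembly map
\[
  H_n^G(\EGF{G}{\FIN};\bfK_{\cala}) \to H_n^G(\EGF{G}{\VCYC};\bfK_{\cala})
\]
is bijective for all $n \in \IZ$. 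Since $G$ satisfies the Full Farrell--Jones Conjecture, the assembly map $H_n^G(\EGF{G}{\VCYC};\bfK_{\cala}) \to H_n^G(G/G;\bfK_{\cala}) = \pi_n(\bfK_{\cala}(G/G)) = K_n(\cala[G/G])$ is also bijective for all $n \in \IZ$, cf.\ Remark~\ref{rem:FJ-assembly}. Composing, we obtain natural isomorphisms $H_n^G(\EGF{G}{\FIN};\bfK_{\cala}) \xrightarrow{\cong} K_n(\cala[G/G])$, so everything reduces to computing the left-hand side.

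Next I would feed this into the equivariant Atiyah--Hirzebruch spectral sequence
\[
  E^2_{p,q} = H_p^G\bigl(\EGF{G}{\FIN};\{G/H \mapsto K_q(\cala(G/H))\}\bigr) \;\Longrightarrow\; H_{p+q}^G(\EGF{G}{\FIN};\bfK_{\cala}).
\]
For a finite subgroup $H$ of $G$, the equivalence of additive categories $\cala(G/H) \simeq \cala|_H[H]$ (the analogue for $\cala$ of the equivalence used in the proof of Theorem~\ref{the:passage_from_Fin_to_Vcyc_additive_categories}, see the discussion around~\eqref{underline(R_rho|_H[H]_oplus_simeq_underline(R)(G/H)_oplus}) identifies $K_q(\cala(G/H))$ with $K_q(\cala|_H[H])$. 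As in the proof of Theorem~\ref{the:passage_from_Fin_to_Vcyc_additive_categories}, the additive category $\cala|_H[H]$ is regular, because $\cala$ is regular and $|H|$ is invertible in $\Lambda$ (extending~\cite[Lemma~7.4~(2)]{Bartels-Lueck(2023forward)} to additive categories); hence its negative $K$-theory vanishes, i.e.\ $K_q(\cala|_H[H]) = 0$ for $q \le -1$, by~\cite[\S8]{Bartels-Lueck(2020additive)}. Consequently $E^2_{p,q} = 0$ whenever $q \le -1$, and since also $E^2_{p,q} = 0$ for $p < 0$, every contribution to $H_n^G(\EGF{G}{\FIN};\bfK_{\cala})$ with $n \le -1$ vanishes. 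Combined with the first step this yields $K_n(\cala[G/G]) = 0$ for $n \le -1$.

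For $n = 0$ the same vanishing leaves $E^2_{0,0}$ as the only possibly nonzero entry with total degree $0$, so $E^2_{0,0} = E^\infty_{0,0} = H_0^G(\EGF{G}{\FIN};\bfK_{\cala})$. Now $E^2_{0,0}$ is the zeroth Bredon homology of $\EGF{G}{\FIN}$ with coefficients in the covariant coefficient system $G/H \mapsto K_0(\cala(G/H))$ on $\OrGF{G}{\FIN}$, which equals $\colimunder_{\OrGF{G}{\FIN}} K_0(\cala(G/?))$ since the cellular $\IZ\OrGF{G}{\FIN}$-chain complex of $\EGF{G}{\FIN}$ is a projective resolution of the constant functor. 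Because conjugations act on $K_0(\cala(G/H))$ as the identity up to natural isomorphism, this coefficient system factors through $\SubGF{G}{\FIN}$, and for such functors the colimit over $\OrGF{G}{\FIN}$ agrees with the colimit over $\SubGF{G}{\FIN}$; tracing the identifications, the isomorphism of the first step becomes precisely the canonical map $\colimunder_{H \in \SubGF{G}{\FIN}} K_0(\cala(G/H)) \to K_0(\cala[G/G])$, which is therefore an isomorphism.

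The step I expect to be the main obstacle is this last one: checking carefully that the Bredon-homology coefficient system genuinely descends to $\SubGF{G}{\FIN}$, that the resulting colimit coincides with the colimit over the orbit category, and that under all these identifications the edge homomorphism of the spectral sequence really is the canonical comparison map in the statement. This is bookkeeping rather than a conceptual difficulty — the remaining ingredients (the two assembly isomorphisms, regularity of $\cala|_H[H]$, vanishing of negative $K$-theory) are all already available above.
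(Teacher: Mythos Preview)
Your proposal is correct and follows essentially the same route as the paper's proof: combine the Full Farrell--Jones assembly isomorphism with Theorem~\ref{the:passage_from_Fin_to_Vcyc_additive_categories} (using that $\calp(G,\Lambda)=\emptyset$), then run the equivariant Atiyah--Hirzebruch spectral sequence with the input $K_q(\cala(G/H))\cong K_q(\cala|_H[H])=0$ for $q\le -1$ and finite $H$, which comes from regularity of $\cala|_H[H]$. The paper records this more tersely and points to~\cite[Proof of Proposition~13.48~(iv)]{Lueck(2022book)} and~\cite[Section~4]{Bartels-Lueck(2023recipes)} for the bookkeeping you flag in your last paragraph (the identification of the degree-zero edge with the canonical colimit map over $\SubGF{G}{\FIN}$), so your expanded treatment of that step is entirely in line with what the paper has in mind.
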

\begin{proof}
  This follows from Theorem~\ref{the:passage_from_Fin_to_Vcyc_additive_categories} using
  the equivariant Atiyah-Hirzebruch spectra sequence and the fact that $\cala(G/H)$ is
  equivalent to the regular additive category $\cala|_H[H]$ for finite $H \subseteq G$,
  which implies $K_i(\cala(G/H)) = 0$ for $i \le -1$ and $|H| < \infty$, see for
  instance~\cite[Proof of Proposition~13.48~(iv)]{Lueck(2022book)},~\cite[Section~4]{Bartels-Lueck(2023recipes)}.
\end{proof}


\typeout{---- Section 10: Nil-terms as modules over the ring of big Witt vectors -----------}

\section{Nil-terms as modules over the ring of big Witt vectors}%
\label{sec:Nil-terms_as_modules_over_the_ring_of_big_Witt_vectors}

We extend the result of Weibel~\cite[Corollary~3.2]{Weibel(1981)}, which is based on
work by Stienstra~\cite{Stienstra(1982)}, that for a ring $R$ of
characteristic $N$ for some natural number $N$ we have
$N\!K_n(RG)[1/N] = \{0\}$ for every group $G$ and every
$n \in \IZ$, to additive categories allowing twisting by an automorphism.


\subsection{Review of the ring of big Witt vectors}\label{subsec:Review_of_the_ring_of_big_Witt_vectors}

Let $\Lambda$ be a commutative ring. Let $W(\Lambda)$ be the commutative ring of big Witt
vectors.  The underlying abelian group is the multiplicative group $1 + t\Lambda[[t]]$ of
formal power series $1 + \lambda_1t + \lambda_2t^2 + \ldots$ in $t$ with coefficients in
$\Lambda$ and leading term $1$.  We do not give the details of the multiplicative
structure $\ast$ but at least mention that it is the unique continuous functional with the
property that $(1 -\lambda t) \ast (1-\mu t) = (1- \lambda \mu t)$ holds for
$\lambda, \mu \in \Lambda$.  We mention that it satisfies for $m,n \in \IZ^{\ge1}$ and
$\lambda, \mu \in \Lambda$
\begin{equation}
  (1- \lambda t^m) \ast (1- \mu t^n) = (1 - \lambda^{n/d}\mu^{m/d}t^{mn/d})^d,
  \label{formula_for_multiplication_in_W(Lambda)}
\end{equation} 
where $d$ is greatest common divisor of $m$ and $n$. The unit for the addition is $1$,
whereas the unit for the multiplication is $(1-t)$.

For $N \in \IZ^{\ge 1}$ the subgroup $I_N = 1 + t^N\Lambda[[t]]$ is actually an ideal in
$W(\Lambda)$. We have $W(\Lambda) = I_1 \supset I_2 \supset I_3 \supset \cdots$ and
$\bigcap_{N \ge 1} I_N = \{1\}$. Thus we obtain the so called $t$-adic topology on
$W(\Lambda )$.  Then $W(\Lambda)$ is separated and complete in this topology. The quotient
rings $W_N(\Lambda) = W(\Lambda )/I_{N+1}$ are called the rings of $N$-truncated Witt
vectors. For more information we refer to~\cite[Section~I.1]{Bloch(1977)}.


\subsection{Endomorphisms rings}\label{subsec:Endomorphisms_rings}
Let $\End(\Lambda)$ be the exact category of endomorphism of finitely generated projective
$\Lambda$-modules.  Objects are pairs $(P,f)$ consisting of a finitely generated projective
$\Lambda$-module $P$ together with a $\Lambda$-endomorphism $f \colon P \to P$. A morphisms
$g \colon (P_0,f_0) \to (P_1,f_1)$ is a $\Lambda$-homomorphism $g \colon P_0 \to P_1$ satisfying
$f_1 \circ g = g \circ f_0$. We have the inclusion $i \colon \MODcat{\Lambda}_{\fgp}\to \End(\Lambda)$
sending a finitely generated projective $\Lambda$-module $P$, to $(P,0)$.  It has a
retraction $r \colon \End(\Lambda) \to \MODcat{\Lambda}_{\fgp}$ sending $(P,f)$ to $P$.  Let
$K_0(\End(\Lambda))$ and $K_0(\Lambda) = K_0(\MODcat{\Lambda}_{\fgp})$ be the projective class groups
associated to the exact categories $\End(\Lambda)$ and $\MODcat{\Lambda}_{\fgp}$. Define the
reduced projective class group
\begin{equation}
  \overline{K}_0(\End(\Lambda)):= \cok\bigl(K_0(i) \colon K_0(\MODcat{\Lambda}_{\fgp})) \to K_0(\End(\Lambda))\bigr).
  \label{widetilde(K)_0(End(Lambda))}
\end{equation}
Note that then we get from $i$ and $r$ a natural isomorphism
\begin{equation}
  K_0(\End(\Lambda)) \xrightarrow{\cong} K_0(\Lambda) \oplus \overline{K}_0(\End(\Lambda)).
  \label{splitting_of_K_0(End(Lambda))}
\end{equation}
Note that the tensor product over $\Lambda$ induces the structure of a commutative ring on
$K_0(\End(\Lambda))$.  Since the image of $K_0(i)$ is an ideal, the quotient
$\overline{K}_0(\End(\Lambda))$ inherits the structure of a commutative ring.

There is a well-defined map $\eta \colon \overline{K}_0(\End(\Lambda)) \to W(\Lambda)$,
which sends the class of $(P,f)$ for an endomorphisms $f \colon P \to P$ of a finitely
generated projective $\Lambda$-module $P$ to its characteristic polynomial
$\det_{\Lambda}(1 -tf)$.  The next result is due to Almkvist~\cite{Almkvist(1974)}.

\begin{theorem}\label{the:Almkvist}
  We obtain a well-defined injective ring homomorphism
  \[\eta \colon \overline{K}_0(\End(\Lambda)) \to W(\Lambda)
  \]
  whose image consists of all rational functions, i.e., quotients $x/x'$ of polynomials $x,x'$
  in $1 + t \Lambda[t] \subseteq 1 + t \Lambda[[t]]$.
\end{theorem}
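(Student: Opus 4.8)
The plan is to verify four things in turn: that $\eta$ is a well-defined map out of $\overline{K}_0(\End(\Lambda))$, that it respects the defining relations of $K_0$, that it is multiplicative (hence a ring homomorphism), and that it is injective with the asserted image; only the injectivity is serious, everything else being formal. Throughout I use the description $\det_{\Lambda}(1-tf) := \det_{\Lambda[[t]]}\bigl(\id_{P[[t]]} - t(f\otimes\id)\bigr)$, where $P[[t]] = P\otimes_{\Lambda}\Lambda[[t]]$ is finitely generated projective over $\Lambda[[t]]$ and $\id - tf$ is an automorphism with inverse $\sum_{k\ge 0}(tf)^k$; since the determinant of an automorphism of a finitely generated projective module over a commutative ring is a well-defined unit, $\det_{\Lambda}(1-tf)$ lies in $1 + t\Lambda[[t]]$, and upon writing $P$ as a direct summand of some $\Lambda^m$ and extending $f$ by $0$ one sees it is in fact a polynomial in $1 + t\Lambda[t]$.

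\emph{Well-definedness.} For a short exact sequence $0 \to (P_0,f_0)\to (P_1,f_1)\to (P_2,f_2)\to 0$ in $\End(\Lambda)$ the underlying sequence of $\Lambda$-modules splits because $P_2$ is projective, and in any such splitting $f_1$ is block upper triangular with diagonal entries $f_0$ and $f_2$, whence $\det_{\Lambda}(1-tf_1) = \det_{\Lambda}(1-tf_0)\cdot\det_{\Lambda}(1-tf_2)$ in $1 + t\Lambda[[t]]$. Since the group law of $W(\Lambda)$ is multiplication of power series, this says exactly that $[(P,f)]\mapsto \det_{\Lambda}(1-tf)$ respects the defining relations of $K_0(\End(\Lambda))$; as it sends every $(P,0)$ to $1$, it annihilates the image of $K_0(i)\colon K_0(\Lambda)\to K_0(\End(\Lambda))$ and therefore descends to a group homomorphism $\eta\colon \overline{K}_0(\End(\Lambda)) \to W(\Lambda)$.

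\emph{Multiplicativity.} The product on $K_0(\End(\Lambda))$ comes from $(P_0,f_0)\otimes(P_1,f_1) = (P_0\otimes_{\Lambda}P_1, f_0\otimes f_1)$, so I must check $\det_{\Lambda}\bigl(1 - t(f_0\otimes f_1)\bigr) = \det_{\Lambda}(1-tf_0)\ast\det_{\Lambda}(1-tf_1)$. After stabilizing to free modules this is an identity between two expressions given by universal integral polynomial formulas in the entries of two matrices $F_0,F_1$ and natural in $\Lambda$; because every $(\Lambda,F_0,F_1)$ is a specialization of the generic matrices over $U = \IZ[\{a_{ij}\},\{b_{kl}\}]$, it suffices to prove it over $U$, and because $\Lambda'\mapsto W(\Lambda')$ sends injective ring maps to injective maps, it further suffices to prove it over an algebraic closure $\Omega$ of $\operatorname{Frac}(U)$. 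Over $\Omega$ the generic matrices are diagonalizable (their characteristic polynomials are separable, as one checks by specializing to a diagonal matrix with distinct entries), so writing their eigenvalues as $\alpha_i$ and $\beta_j$ the left-hand side is $\prod_{i,j}(1-\alpha_i\beta_j t)$, and using that $\ast$ is biadditive for the additive law of $W$ together with the defining identity $(1-\alpha t)\ast(1-\beta t) = 1-\alpha\beta t$, the right-hand side is $\prod_{i,j}\bigl((1-\alpha_i t)\ast(1-\beta_j t)\bigr) = \prod_{i,j}(1-\alpha_i\beta_j t)$ as well. Since the tensor unit $(\Lambda,\id_{\Lambda})$ is sent to $1-t$, which the excerpt identifies as the multiplicative unit of $W(\Lambda)$, $\eta$ is a ring homomorphism.

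\emph{Image and injectivity.} For $p = 1 + a_1 t + \cdots + a_n t^n\in 1 + t\Lambda[t]$ with $a_n\ne 0$, the reversed polynomial $p^{\mathrm{rev}} = a_n + a_{n-1}t + \cdots + t^n$ is monic, so $M_p := \Lambda[t]/(p^{\mathrm{rev}})$ is free of rank $n$ over $\Lambda$ and, with $t$ acting by its companion matrix $C_p$, satisfies $\det_{\Lambda}(1-tC_p) = p$; hence every $p\in 1 + t\Lambda[t]$ is in the image of $\eta$. Being a group homomorphism into $(1 + t\Lambda[[t]], \cdot)$, $\eta$ has a subgroup as image, which therefore also contains every quotient $p/q$; conversely every class is a difference $[(P_0,f_0)] - [(P_1,f_1)]$ and maps to the quotient of the two polynomials $\det_{\Lambda}(1-tf_0), \det_{\Lambda}(1-tf_1)\in 1 + t\Lambda[t]$, so the image is precisely the group of rational functions. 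For injectivity I would build a left inverse on the image: from $(pq)^{\mathrm{rev}} = p^{\mathrm{rev}}q^{\mathrm{rev}}$ and the exact sequence $0\to M_q\xrightarrow{\cdot\, p^{\mathrm{rev}}} M_{pq}\to M_p\to 0$ one gets $[M_{pq}] = [M_p] + [M_q]$ in $\overline{K}_0(\End(\Lambda))$, so $p\mapsto [M_p]$ extends to a homomorphism $\rho$ from the group of rational functions to $\overline{K}_0(\End(\Lambda))$, and it remains to prove $\rho\circ\eta = \id$, i.e. that $[(P,f)] = [M_{\det_{\Lambda}(1-tf)}]$ for every object $(P,f)$. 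This last point is the heart of the matter and is where Almkvist's argument~\cite{Almkvist(1974)} does its work: by Cayley--Hamilton $P$ is a module over $\Lambda[t]/(\chi)$ for the monic polynomial $\chi = \det_{\Lambda}(1-tf)^{\mathrm{rev}}$, and one shows by a d\'evissage/resolution argument inside $\End(\Lambda)$ (the analogue of composition-series d\'evissage available over a field) that the class of $(P,f)$ depends only on $\chi$, equivalently only on $\det_{\Lambda}(1-tf)$. I expect exactly this reduction of an arbitrary endomorphism to cyclic ones, over a ground ring that need not be regular, to be the main obstacle; the rest is bookkeeping with determinants and the Witt product.
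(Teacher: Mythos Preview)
The paper does not prove this theorem at all: it simply attributes the result to Almkvist and cites~\cite{Almkvist(1974)}. So there is no argument in the paper to compare yours against; everything you wrote is strictly more than the paper provides.

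Your treatment of well-definedness, multiplicativity, and the image description is correct. The block-triangular argument for additivity on short exact sequences is standard and fine; your multiplicativity argument via the universal matrices over $\IZ[a_{ij},b_{kl}]$ and passage to an algebraic closure is correct (the generic matrix really is diagonalizable there because its discriminant specializes to something nonzero on a diagonal matrix with distinct entries, and eigenvectors of $F_0$ and $F_1$ tensor to eigenvectors of $F_0\otimes F_1$). The companion-matrix construction for the image is also fine; you should perhaps note explicitly that the class $[M_p]\in\overline{K}_0(\End(\Lambda))$ is independent of the chosen presentation degree~$n$ (padding $p$ with a trailing zero coefficient multiplies $p^{\mathrm{rev}}$ by $t$, and the extra cyclic factor $(\Lambda,0)$ dies in $\overline{K}_0$), but this is a minor omission.

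On injectivity you are honest about the gap, and you have located it precisely: the statement $[(P,f)] = [M_{\det_\Lambda(1-tf)}]$ in $\overline{K}_0(\End(\Lambda))$ is the entire content of Almkvist's theorem, and the ``d\'evissage to cyclic modules'' you gesture at is not available over a general commutative ring in the naive form (images of iterates of $f$ need not be projective, etc.). Since the paper itself defers to~\cite{Almkvist(1974)} for exactly this point, your proposal is in the same position as the paper---only with more of the routine verifications actually written out.
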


The $t$-adic topology on $\overline{K}_0(\End(\Lambda))$ is given by the filtration
$\overline{K}_0(\End(A)) = \eta^{-1}(I_1) \supset \eta^{-1}(I_2) \supset \eta^{-1}(I_3)
\supset \cdots$.  Let $\overline{K}_0(\End(A))^{\widehat{\ }}$ be the completion of
$\overline{K}_0(\End(A))$ with respect to the $t$-adic topology.  Since $W(\Lambda)$ is
separated and complete in the $t$-adic topology and the image of $\eta$ is dense in
$W(\Lambda)$, we conclude from Theorem~\ref{the:Almkvist} that $\eta$ induces an
isomorphism of topological rings
\begin{equation}
\widehat{\eta} \colon \overline{K}_0(\End(A))^{\widehat{\ }}\xrightarrow{\cong} W(\Lambda).
\label{iso_widehat(chi)}
\end{equation}


\subsection{The action on Nil-groups}\label{subsec:The_action_on_Nil-groups}
Let $\cala$ be an additive $\Lambda$-category, i.e., a small category enriched over the
category of $\Lambda$-modules coming with a direct sum $ \oplus$ compatible with the
$\Lambda$-module structures. Let $\Phi \colon \cala \xrightarrow{\cong} \cala$ be an
automorphism of additive $\Lambda$-categories of $\cala$. Denote by $\Nil(\cala;\Phi)$ the
associated Nil-category, which inherits the structure of an exact category. Recall that
then the $K$-groups $K_s(\Nil(\cala;\Phi))$ and $\overline{K}_s(\Nil(\cala;\Phi))$ are
defined for $s \in \IZ$.

Let $\underline{\Lambda}_{\oplus}$ be the additive $\Lambda$-category
obtained from the obvious $\Lambda$-category having precisely one
object by adding finite sums. More precisely, for every  
$m \in \IZ^{\ge}$ we have the object $[m]$ in $\underline{\Lambda}$.
A morphism $C \colon [m] \to [n]$ for $m,n \ge 1$ is a $(m,n)$-matrix
$C = (c_{i,j})$ over $\Lambda$. The object $[0]$ is declared to be the zero object.
Composition is given by matrix
multiplication, whereas the direct sum is given on objects by assigning
to two object $[m]$ and $[n]$ the object $[m+n]$ and on morphisms by the
block sum of matrices.

We define a bilinear functor of additive $\Lambda$-categories
\begin{equation}
F \colon \underline{\Lambda}_{\oplus} \times \cala \to \cala
\label{functor_F}
\end{equation}
as follows. On objects it is defined by sending $([m],A)$ to $\bigoplus_{i = 1}^m A$.
Consider morphisms $C \colon [m] \to [n]$ and $f \colon A \to B$ in $\underline{\Lambda}$
and $\cala$.  Define the morphism
\[
  (C \otimes f) = (C \oplus f)_{i,j} \colon C([m],A) = \bigoplus_{i = 1}^m A
  \to C([n],B) =\bigoplus_{j= 1}^n B
\]
by $(C \oplus f)_{i,j} = c_{i,j} \cdot f \colon A \to B$ for
$(i,j) \in \{1, \ldots, m \} \times \{1, \ldots, n \}$. If we fix
an object $([n],C)$ in $\End(\underline{\Lambda})$, we get a functor of exact categories
$F_{([n],C)} \colon \Nil(\cala;\Phi) \to \Nil(\cala;\Phi)$ by sending an object
$(A,\varphi)$ that is given by the nilpotent endomorphism $\varphi \colon \Phi(A) \to A$ to the
object that is given by the nilpotent endomorphism defined by the composite
\[
  \Phi\left(\bigoplus_{i=1}^n A\right) \xrightarrow{\sigma(n,A)^{-1}} C([n],\Phi(A)) = \bigoplus_{i=1}^n \Phi(A)
\xrightarrow{F(C,\varphi)} C([n],A) = \bigoplus_{i=1}^n A
\]
for the canonical isomorphism $\sigma(n,A) \colon \bigoplus_{i=1}^n \Phi(A)  \xrightarrow{\cong}
  \Phi\left(\bigoplus_{i=1}^n A\right) $.
It induces a homomorphism 
$\overline{K}_n(F_{([n],C)}) \colon \overline{K}_n(\Nil(\cala;\Phi)) \to
\overline{K}_n(\Nil(\cala;\Phi))$ for every $n \in \IZ$. 
Now one easily checks that the
collection of these homomorphisms defines a bilinear pairing of abelian groups for every
$s \in \IZ$
\begin{equation}
  T_s \colon \overline{K}_0(\End(\Lambda))  \times \overline{K}_s(\Nil(\cala;\Phi))  \to \overline{K}_s(\Nil(\cala;\Phi)).
  \label{bilinear_pairing_for_widetilde(K)_0(End(Lambda))_and_overline(K)_n(Nil(cala;Phi))}
\end{equation}

\begin{lemma}\label{pairing_on_completion}
  The pairing $T_s$ of~\eqref{bilinear_pairing_for_widetilde(K)_0(End(Lambda))_and_overline(K)_n(Nil(cala;Phi))}
  extends uniquely to continuous pairing
  \[
    \widehat{T}_s \colon W(\Lambda)   \times \overline{K}_s(\Nil(\cala;\Phi))
    \to \overline{K}_s(\Nil(\cala;\Phi))
\]
for $s \in \IZ$, where we equip $\overline{K}_s(\Nil(\cala;\Phi))$ with the discrete
topology and $W(\Lambda)$ with the $t$-adic topology.

Moreover, this pairing  turns $\overline{K}_s(\Nil(\cala;\Phi))$ into a $W(\Lambda)$-module.
\end{lemma}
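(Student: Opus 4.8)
The plan is to build the extension one nilpotence degree at a time, using that, by Almkvist's Theorem~\ref{the:Almkvist} together with the completeness recalled in Subsection~\ref{subsec:Review_of_the_ring_of_big_Witt_vectors} and~\eqref{iso_widehat(chi)}, we have $W(\Lambda)=\varprojlim_{D} W(\Lambda)/I_{D+1}$, and each $W(\Lambda)/I_{D+1}$ (a $D$-truncated Witt ring, hence consisting of truncated polynomials, hence of truncated rational functions) is the image of $\eta$, i.e. equals $\overline{K}_0(\End(\Lambda))/\eta^{-1}(I_{D+1})$. First I would observe that the endofunctor $F_{([n],C)}$ of $\Nil(\cala,\Phi)$ used to build $T_s$ in~\eqref{functor_F}--\eqref{bilinear_pairing_for_widetilde(K)_0(End(Lambda))_and_overline(K)_n(Nil(cala;Phi))} preserves the exact subcategory $\Nil(\cala,\Phi)_D$: the nilpotent endomorphism of $\bigoplus_{i=1}^n A$ attached to $(C,\varphi)$ has $k$-fold ($\Phi$-twisted) composite with $(i,j)$-entry $(C^{k})_{ij}\cdot\varphi^{(k)}$, so $\varphi^{(D)}=0$ forces its $D$-fold composite to vanish. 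Hence $T_s$ restricts to an action of the ring $\overline{K}_0(\End(\Lambda))$ on each $\overline{K}_s(\Nil(\cala,\Phi)_D)$ (associativity and unitality being the identities $F_{([n],C)}\circ F_{([n'],C')}\cong F_{([nn'],C\otimes C')}$ and $F_{([1],(1))}=\id$ left to the reader just before the lemma), compatibly with the inclusions $\Nil(\cala,\Phi)_D\subseteq\Nil(\cala,\Phi)_{D'}$ for $D\le D'$.

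The heart of the matter, and the step I expect to cost the most work, is the claim that the ideal $\eta^{-1}(I_{D+1})$ of $\overline{K}_0(\End(\Lambda))$ acts as zero on $\overline{K}_s(\Nil(\cala,\Phi)_D)$ (in fact it would suffice to know that some $\eta^{-1}(I_{N})$ with $N=N(D)\to\infty$ does). I would prove this by filtering $\Nil(\cala,\Phi)_D$ by the exact subcategories $\Nil(\cala,\Phi)_1\subseteq\cdots\subseteq\Nil(\cala,\Phi)_D$ and analysing the resulting long exact sequences in reduced $K$-theory: a dévissage along the powers of $\varphi$ identifies each subquotient with (a shift of) $\bfK(\cala)$, and on the $j$-th subquotient the $\End(\Lambda)$-action factors through $\overline{K}_0(\End(\Lambda))/\eta^{-1}(I_{j+1})$ (intuitively, only the first $j$ coefficients of the characteristic polynomial $\eta([(P,f)])=\det_\Lambda(1-tf)$ intervene); running this for $j\le D$ gives the claim. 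For a ring $\Lambda$ in the untwisted case this is precisely the analysis of Stienstra~\cite{Stienstra(1982)} and Weibel~\cite{Weibel(1981)}, and the filtration, dévissage, and characteristic-polynomial bookkeeping are formal enough to carry over to an additive $\Lambda$-category $\cala$ with automorphism $\Phi$. The one genuinely delicate point — the hard part — is that images and cokernels of nilpotent morphisms need not lie in $\cala$, so the dévissage has to be run in the chain-complex / idempotent-completion framework of~\cite[Remark~6.3]{Lueck-Steimle(2014delooping)}; this is routine but notation-heavy.

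Granting the vanishing, the proof concludes formally. The action of $\overline{K}_0(\End(\Lambda))$ on $\overline{K}_s(\Nil(\cala,\Phi)_D)$ factors through $\overline{K}_0(\End(\Lambda))/\eta^{-1}(I_{D+1})\xrightarrow{\ \cong\ }W(\Lambda)/I_{D+1}$, making $\overline{K}_s(\Nil(\cala,\Phi)_D)$ a module over $W(\Lambda)/I_{D+1}$; these structures are compatible in $D$, so using~\eqref{overline(K)_n(NIL)_as_colimit_over_overline(K)_n(NIL_D)} and $W(\Lambda)=\varprojlim_D W(\Lambda)/I_{D+1}$ the colimit $\overline{K}_s(\Nil(\cala,\Phi))$ acquires a $W(\Lambda)$-module structure: for $z$ represented in $\Nil(\cala,\Phi)_D$ and $w\in W(\Lambda)$, set $w\cdot z:=(w\bmod I_{D+1})\cdot z$, which is independent of the chosen $D$ by compatibility. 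The resulting pairing $\widehat{T}_s$ is continuous (the target is discrete and $W(\Lambda)\to W(\Lambda)/I_{D+1}$ is continuous) and restricts to $T_s$ on $\overline{K}_0(\End(\Lambda))$; the module axioms for $\widehat{T}_s$ then follow by continuity from those for $T_s$, the unit $1-t$ of $W(\Lambda)$ being $\eta([(\Lambda,\id_\Lambda)])$. Uniqueness is immediate: $\overline{K}_0(\End(\Lambda))$ is dense in $W(\Lambda)$ and the target is discrete, so any continuous pairing extending $T_s$ is determined by its restriction to $\overline{K}_0(\End(\Lambda))$ and hence equals $\widehat{T}_s$.
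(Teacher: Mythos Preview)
Your overall architecture matches the paper's: both filter by nilpotence degree via~\eqref{overline(K)_n(NIL)_as_colimit_over_overline(K)_n(NIL_D)}, reduce the existence of $\widehat{T}_s$ to showing that some $\eta^{-1}(I_N)$ with $N$ depending on $D$ annihilates $\overline{K}_s(\Nil(\cala,\Phi)_D)$, and then invoke density of $\overline{K}_0(\End(\Lambda))$ in $W(\Lambda)$. Your observation that $F_{([n],C)}$ preserves $\Nil(\cala,\Phi)_D$ is correct and used in the paper, and your concluding paragraph (continuity, module axioms, uniqueness) is fine.

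The gap is in your proof of the key vanishing. You propose to d\'evissage along $\Nil(\cala,\Phi)_1\subseteq\cdots\subseteq\Nil(\cala,\Phi)_D$, asserting that the $K$-theory subquotients are copies of $\bfK(\cala)$. But no such localization sequence is available: to identify the cofiber of $\bfK(\Nil_{j-1})\to\bfK(\Nil_j)$ you would need to exhibit, for each $(A,\varphi)$ with $\varphi^{(j)}=0$, an admissible (that is, $\cala$-split) filtration whose graded pieces lie in lower strata. The natural candidates involve the image or kernel of $\varphi^{(j-1)}$, which simply do not exist in an additive category lacking (co)kernels; passing to the idempotent completion or to chain complexes does not manufacture such split subobjects either. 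This is not a notational inconvenience but a structural obstruction, and the Stienstra--Weibel arguments you cite rely on module-theoretic input unavailable here. Your parenthetical claim about what happens on the putative subquotients therefore has no foundation.

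The paper avoids d\'evissage altogether. After first reducing to $s\ge 1$ by iterated Bass--Heller--Swan shifting (replacing $(\cala,\Phi)$ by $(\cala[\IZ^m],\Phi[\IZ^m])$), it uses the equivalence $K_s(\chi_\Phi)\colon K_s(\Nil(\cala,\Phi))\xrightarrow{\cong}K_s(\Chcat(\cala_\Phi[t^{-1}])^w)$ of~\eqref{chi_(Phi))} to convert the question into one about two-term chain complexes. Every polynomial $1+\lambda_1 t+\cdots+\lambda_n t^n$ is $\eta$ of an explicit companion matrix $C_n(-\lambda_1,\ldots,-\lambda_n)$, and acting by such a matrix on $(A,\varphi)$ produces a concrete morphism $U_{\lambda_1,\ldots,\lambda_n}(\varphi)\colon\Phi(A)^n\to A^n$ in $\cala_\Phi[t^{-1}]$. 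An elementary-matrix manipulation (multiplying by a lower-triangular unipotent $\widehat E_n(\varphi)$) shows that when $\lambda_1=\cdots=\lambda_{N-1}=0$ and $\varphi^{(N)}=0$, the resulting class lies in the image of $K_s(I)$, hence dies in $\overline{K}_s$. Additivity for Waldhausen categories is the only $K$-theoretic input; no filtration of objects in $\Nil$ is ever used.
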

\begin{proof} Because of the isomorphism of topological rings~\eqref{iso_widehat(chi)},
  the existence of the extension $\widehat{T}_s$ follows if we can show for every element
  $y \in \overline{K}_s(\Nil(\cala;\Phi))$ that there exists an integer $N$ such that for
  every $x \in \eta^{-1}(I_N)$ we have $T_s(x,y) = y$. This is done as follows.

  In the first step we reduce the claim to the special case $s \ge 1$.
 
  From~\cite[Definition~2.1, Theorem~3.4 and Lemma~6.5]{Lueck-Steimle(2014delooping)}
  applied to the functor sending $(\cala, \Phi)$ to the connective $K$-theory spectrum
  $\bfK(\Nil(\cala,\Phi))$, we get a natural (untwisted) Bass-Heller-Swan isomorphisms
  for $s \in \IZ$
  \begin{multline*}
    K_{s-1}(\Nil(\cala,\Phi)) \oplus K_{s}(\Nil(\cala,\Phi)) \oplus
    N\!K_{s}(\Nil(\cala,\Phi)) \oplus N\!K_{s}(\Nil(\cala,\Phi))
    \\
    \xrightarrow{\cong} K_{s}(\Nil(\cala[\IZ],\Phi[\IZ])).
  \end{multline*}
  Restricting this isomorphism to $K_{s-1}(\Nil(\cala,\Phi))$ yields a (split) injective
  homomorphisms
  $\sigma_s \colon K_{s-1}(\Nil(\cala,\Phi)) \to K_{s}(\Nil(\cala[\IZ],\Phi[\IZ]))$,
  natural in $(\cala, \Phi)$.  It induces a (split) injective homomorphism
  $\overline{\sigma}_s \colon \overline{K}_{s-1}(\Nil(\cala,\Phi)) \to
  \overline{K}_{s}(\Nil(\cala[\IZ],\Phi[\IZ]))$, natural in $(\cala, \Phi)$,
  since we also have the (untwisted) Bass-Heller-Swan isomorphism
  \[
    K_{s-1}(\cala) \oplus K_{s}(\cala) \oplus N\!K_{s}(\cala) \oplus N\!K_{s}(\cala)
    \xrightarrow{\cong} K_{s}(\cala[\IZ]).
 \]
  Now we   obtain for every $s \in \IZ$ a commutative diagram
  \[\xymatrix@!C=18em{\overline{K}_0(\End(\Lambda))  \times \overline{K}_{s-1}(\Nil(\cala;\Phi))
      \ar[r]^-{T_{s-1}(\cala, \Phi)}
      \ar[d]_{\id_{\overline{K}_0(\End(\Lambda))} \otimes \sigma_s}
      &
      \overline{K}_s(\Nil(\cala;\Phi))
      \ar[d]_{\sigma_s}
      \\
      \overline{K}_0(\End(\Lambda))  \times \overline{K}_{s}(\Nil(\cala[\IZ];\Phi[\IZ]))
      \ar[r]^-{T_s(\cala[\IZ], \Phi[\IZ])}
      &
      \overline{K}_s(\Nil(\cala[\IZ];\Phi[\IZ])).
    }
  \]
  Since for every integer $s$ we can find a natural number $k$ with $s + k \ge 1$
  and we can iterate the construction of the diagram above $k$-times, it suffices to show for every element
  $y \in \overline{K}_s(\Nil(\cala;\Phi))$ for $s \ge 1$ that there exists an integer $N$ such that for
  every   $x \in \eta^{-1}(I_N)$ we have $T_s(x,y) = y$.  Moreover, we can work for the rest of the proof
  with the connective $K$-theory spectrum of  Waldhausen categories.

  Let $\Nil(\cala;\Phi)_N$ be the full exact subcategory of $\Nil(\cala;\Phi)$ consisting
  of those objects $(A,\varphi)$, whose nilpotence degree is less or equal to $N$, i.e.,
  $\varphi^{(N)} \colon \Phi^N(A) \to A$ is trivial.  For every natural number $N$ the
  construction of the pairing $T_s$
  of~\eqref{bilinear_pairing_for_widetilde(K)_0(End(Lambda))_and_overline(K)_n(Nil(cala;Phi))}
  yields also a pairing 
    
  \begin{equation}
    T[N]_s \colon \overline{K}_0(\End(\Lambda))  \times \overline{K}_s(\Nil(\cala;\Phi)_N)
    \to \overline{K}_s(\Nil(\cala;\Phi)).
    \label{bilinear_pairing_for_widetilde(K)_0(End(Lambda))_and_overline(K)_n(Nil(cala;Phi)_N)}
  \end{equation}
  These pairings $T[N]_s$ and $T_s$ are compatible with the map 
  $\overline{K}_s(\Nil(\cala;\Phi)_N) \to \overline{K}_s(\Nil(\cala;\Phi))$ coming from
  the obvious inclusions of full subcategories. In view of the
  isomorphism~\eqref{K_n(NIL)_as_colimit_over_K_n(NIL_D)},
  it suffices to show for every
  natural number $N$ and $s \ge 1$ that $T[N]_s(x,y) = 0$ holds for $x \in \eta^{-1}(I_N)$ and
  $y \in \overline{K}_n(\Nil(\cala;\Phi)_N)$.

  For a natural number $n \ge 1$ and 
  $\lambda_1, \lambda_2, \dots, \lambda_n$ in $\Lambda$, define the automorphism
  $C_n(\lambda_1, \lambda_2, \ldots, \lambda_r;\varphi) \colon [n] \to [n]$ in
  $\End(\Lambda)$ by the $(n,n)$-matrix over $\Lambda$
\[
  C_n(\lambda_1, \lambda_2, \ldots, \lambda_n)
  =
  \begin{pmatrix}
      0 & 0 & 0 & \cdots &  0 & 0 & \lambda_n
    \\
    1 & 0 & 0  & \cdots & 0 & 0 & \lambda_{n-1}
    \\
    0 & 1 & 0  & \cdots & 0 & 0 & \lambda_{n-2}
    \\
    0 & 0 & 1 & \cdots & 0 & 0 & \lambda_{n-2}
    \\
    0 & 0 &  0 & \cdots & 0 & 0 & \lambda_{n-3}
    \\
    \vdots & \vdots & \vdots &  \ddots& \vdots & \vdots & \vdots
    \\
    0 & 0 &  0 & \cdots & 0 & 0 & \lambda_3
    \\
    0 & 0 &  0 & \cdots & 1 & 0 & \lambda_2
    \\
    0 & 0 &  0 & \cdots & 0 & 1 & \lambda_1
  \end{pmatrix}.
\]
Then
$\eta\bigl([C_n(-\lambda_1, -\lambda_2, \ldots -\lambda_n)]\bigr) = 1 +\lambda_1 t +
\lambda\cdot t^2 + \cdots + \lambda_n \cdot t^n$.  Because of Theorem~\ref{the:Almkvist}
it suffices to prove $T[N]_n([C_n(\lambda_1, \lambda_2, \ldots \lambda_n)],y) = 0$
for every $s,N  \ge 1$, elements $\lambda_1, \lambda_2, \ldots, \lambda_n$ in
$\Lambda$  and $y \in \overline{K}_s(\Nil(\cala;\Phi)_N)$, provided that 
$\lambda_i = 0$ holds for $i = 1,2 \ldots, N-1$.

Fix $s,N  \ge 1$ and $\lambda_1, \lambda_2, \ldots, \lambda_n$ in $\Lambda$ 
such that  $\lambda_i = 0$ holds for $i = 1,2 \ldots, N-1$.
In the sequel we use the notation of~\cite[Section~8]{Lueck-Steimle(2016BHS)}.
We have defined the functor of Walhausen categories
\[
\chi_{\Phi} \colon\Nil(\cala;\Phi) \to \Chcat(\cala_\Phi[t^{-1}])^w,
\]
in~\eqref{chi_(Phi))}.  It induces an isomorphism
 \begin{equation}
 K_s(\chi_{\Phi}) \colon K_s(\Nil(\cala,\Phi)) \xrightarrow{\cong} K_s(\Chcat(\cala_\Phi[t^{-1}])^w)
 \label{Iso_K_s(chi)}
\end{equation}
for $s \ge 1$, see~\cite[Theorem~8.1]{Lueck-Steimle(2016BHS)}.  Note that
in~\cite[Theorem~8.1]{Lueck-Steimle(2016BHS)} the category $\cala$ is assumed to be
idempotent complete but this does not matter, since the maps
$K_s(\Nil(\cala,\Phi)) \to K_s(\Nil(\Idem(\cala),\Idem(\Phi)))$ and
$K_s(\Chcat(\cala_\Phi[t^{-1}])^w) \to K_s(\Chcat(\Idem(\cala)_{\Idem(\Phi)[t^{-1}]})^w)$
  induced by the inclusions are isomorphisms for $s \ge 1$ by the usual cofinality
  argument, see for instance~\cite[page~225]{Lueck-Steimle(2014delooping)},
  where $\Idem$ denotes idempotent completion.

Given a natural number  $n \ge 1$, elements $\lambda_1, \lambda_1, \ldots, \lambda_n \in \Lambda$, and
      a nilpotent endomorphism $\varphi \colon \Phi(A) \to A$ in $\cala$, define
      the  morphism in $\cala_{\Phi}[t^{-1}]$
      \[
        U_{\lambda_1,\lambda_2, \ldots, \lambda_n}(\varphi)  \colon \Phi(A)^n \to A^n 
      \]
      by  the $(n,n)$-matrix
      \[
    \begin{pmatrix} \id_{A} \cdot t^{-1} & 0 & 0 & \cdots & 0 & 0 & 0 & -(\lambda_n \cdot  \varphi) \cdot t^0
           \\
           -\varphi \cdot t^0 & \id_{A} \cdot t^{-1} & 0 &  \cdots & 0 & 0 & 0 &   -(\lambda_{n-1} \cdot \varphi) \cdot t^0
           \\
           0  & -\varphi \cdot t^0 & \id_{A} \cdot t^{-1} & \cdots & 0 & 0 & 0 & -(\lambda_{n-2} \cdot \varphi) \cdot t^0
           \\
           \vdots & \vdots & \vdots &  \ddots &\vdots & \vdots & \vdots & \vdots
           \\
           0  & 0 & 0  &\cdots & -\varphi \cdot t^0 & \id_{A} \cdot t^{-1} & 0 & -(\lambda_{3} \cdot\varphi) \cdot t^0
           \\
           0  & 0 & 0 &\cdots & 0 & -\varphi \cdot t^0 & \id_{A} \cdot t^{-1} & - (\lambda_{2} \cdot\varphi) \cdot t^0
           \\
             0  & 0 & 0  &\cdots & 0 & 0 & - \varphi \cdot t^0 & \id_{A} \cdot t^{-1} -(\lambda_{1} \cdot \varphi) \cdot t^0
           \end{pmatrix}.
         \]
         Thus we obtain a functor of Waldhausen categories
         \[
        U_{\lambda_1,\lambda_2, \ldots, \lambda_n} \colon \Nil(\cala,\Phi)\to \Chcat(\cala_\Phi[t^{-1}])^w,
      \]
      which induces a homomorphism
      \[
        K_s(U_{\lambda_1,\lambda_2, \ldots, \lambda_n}) \colon K_s(\Nil(\cala,\Phi))\to K_s(\Chcat(\cala_\Phi[t^{-1}])^w).
      \]
      This map can easily be identified with the map
      sending $y \in K_s(\Nil(\cala,\Phi))$ to the image of
      $T_s([C_n(\lambda_1, \lambda_2, \ldots \lambda_n)],y) = 0$ under the injective homomorphism~\eqref{Iso_K_s(chi)}.
      Hence it suffices to show that the image of the composite
          \[
            K_s(\Nil(\cala,\Phi)_N ) \xrightarrow{K_s(J)} K_s(\Nil(\cala,\Phi))
            \xrightarrow{K_s(U_{\lambda_1,\lambda_2, \ldots, \lambda_n})}
            K_s(\Chcat(\cala_\Phi[t^{-1}])^w)
          \]
          is contained in the image of 
          \[
             K_s(\cala) \xrightarrow{K_s(I)} K_s(\Nil(\cala,\Phi)) \xrightarrow{K_s(\chi_{\Phi})}
            K_s(\Chcat(\cala_\Phi[t^{-1}])^w),
          \]
          where $J$ is the obvious inclusion, and $I$ sends an object $A$ to $(A,0)$.

          Given a natural number  $n \ge 1$  and
      a nilpotent endomorphism $\varphi \colon \Phi(A) \to A$ in $\cala$, define
      the automorphism in $\cala_{\Phi}[t,t^{-1}]$
      \[
        E_n(\varphi)  \colon A^n \xrightarrow{\cong} A^n
      \]
      by  the $(n,n)$-matrix
         \[
    \begin{pmatrix} \id_{\Phi(A)} \cdot t^0& 0 &  0 & \cdots & 0 & 0 & 0 & 0
           \\
           \varphi \cdot t  &  \id_{\Phi(A)} \cdot t^0& 0  & \cdots & 0 & 0 & 0 &   0
           \\
           (\varphi \cdot t)^2  & \varphi \cdot t &  \id_{\Phi(A)} \cdot t^0 &\cdots & 0 & 0 & 0 & 0
           \\
           \vdots & \vdots & \vdots &  \ddots &\vdots & \vdots & \vdots & \vdots
           \\
           (\varphi \cdot t)^{n-3}  & (\varphi \cdot t)^{n-4} & (\varphi \cdot t)^{n-5}   &\cdots & \varphi \cdot t &  \id_{\Phi(A)} \cdot t^0& 0 & 0
           \\
           (\varphi \cdot t)^{n-2}  & (\varphi \cdot t)^{n-3} & (\varphi \cdot t)^{n-4}   &\cdots &  (\varphi \cdot t)^2 & \varphi \cdot t &  \id_{\Phi(A)} \cdot t^0 & 0
           \\
           (\varphi \cdot t)^{n-1}  & (\varphi \cdot t)^{n-2}  & (\varphi \cdot t)^{n-3}   &\cdots & (\varphi \cdot t)^3  & (\varphi \cdot t)^2
           & \varphi \cdot t & \id_{\Phi(A)} \cdot t^0
           \end{pmatrix}.
         \]
         Then we get in  $\cala_{\Phi}[t,t^{-1}]$  that the composite
         $F_n(\varphi) := E_n(\varphi)  \circ U_{\lambda_1,\lambda_2, \ldots, \lambda_n}(\varphi) 
          \colon \Phi(A)^n \to A^n$
         is given by the matrix
     \[
          \begin{pmatrix} \id_{A} \cdot t^{-1} & 0 & 0 &\cdots & 0 & 0 & 0 & v_1(\varphi)
           \\
          0  & \id_{A} \cdot t^{-1} & 0 &  \cdots & 0 & 0 & 0 &   v_2(\varphi)
           \\
           0  &0& \id_{A} \cdot t^{-1} & \cdots & 0 & 0 & 0 & v_3(\varphi)
           \\
           \vdots & \vdots & \vdots &   \ddots &\vdots & \vdots & \vdots & \vdots
           \\
           0  & 0 & 0 & \cdots & 0 & \id_{A} \cdot t^{-1} & 0 & v_{n-2}(\varphi)
           \\
           0  & 0 & 0 & \cdots & 0 & 0 & \id_{A} \cdot t^{-1} & v_{n-1}(\varphi)
           \\
             0  & 0 & 0 & \cdots & 0 & 0 & 0 & \id_{A} \cdot t^{-1} -v_n(\varphi)
           \end{pmatrix}
         \]
         where we define for $k = 1,2, , \ldots n$
         \[
           v_n(\varphi) = \sum_{i = 0}^{k-1} (\varphi \cdot t)^{i} \circ \bigl((\lambda_{n+1-k +i} \cdot \varphi) \cdot t^0\bigr).
          \]
          Note that $E_n(\varphi)$ does not necessarily live in $\cala_{\Phi}[t^{-1}]$. However the composite
          \[
            \widehat{E}_n(\varphi) := (\id_{\Phi^{1-n}(A)} \cdot t^{1-n})^n \circ E_n(\varphi) \colon A^n \to \Phi^{1-n}(A)
          \]
          does, where for any morphisms $\psi \colon A  \to B$
          in $\cala_{\Phi}[t^{-1}]$ we denote by $\psi^n$ the morphism $\bigoplus_{j=1}^n \psi \colon
          A^n = \bigoplus_{j=1}^n A\to B^n = \bigoplus_{j=1}^n B$ and $\id_{\Phi^{1-n}(A)} \cdot t^{1-n}$ is a morphism
        from $A \to \Phi^{1-n}(A)$ in $\cala_{\Phi}[t^{-1}]$. So the composite
        \[F_n(\varphi) := \widehat{E}_n(\varphi)  \circ U_{\lambda_1,\lambda_2, \ldots, \lambda_n}(\varphi) \colon 
        \Phi(A)^n \to \Phi^{1-n}(A)^n
      \]
      of morphisms in $\cala_{\Phi}[t^{-1}]$ is  given by the matrix
        \[
          \begin{pmatrix} \id_{\Phi^{1-n}(A)} \cdot t^{-n} & 0 & \cdots   & 0 & w_1(\varphi)
           \\
          0  & \id_{\Phi^{1-n}(A)} \cdot t^{-n} &   \cdots  & 0 &   w_2(\varphi)
           \\
           \vdots & \vdots &   \ddots   & \vdots & \vdots
           \\
           0  & 0 &\cdots   & \id_{\Phi^{1-n}(A)} \cdot t^{-n} & w_{n-1}(\varphi)
           \\
             0  & 0 & \cdots   & 0 & \id_{\Phi^{1-n}(A)} \cdot t^{-n} -w_n(\varphi)
           \end{pmatrix}
         \]
       where we define for $k = 1,2, , \ldots n$
         \[
           w_k(\varphi) = \id_{\Phi^{1-n}(A)} \cdot t^{1-n} \circ v_n =
         \sum_{i = 0}^{k-1} \lambda_{n+1-k +i}  \cdot (\id_{\Phi^{1-n}(A)}\cdot t^{i+1-n}) \circ (\Phi^{-i}(\varphi^{(i)}) \cdot t^0)
         \]
         for the morphism $\varphi^{(i)} = \varphi \circ \Phi(\varphi) \circ \cdots \circ \Phi^i(\varphi) \colon \Phi^{i+1}(A) \to A$
         in $\cala$.
       
     Note  that we get functors of Waldhausen categories
         \[
        \widehat{E}_n \colon \Nil(\cala,\Phi)\to \Chcat(\cala_\Phi[t^{-1}])^w
      \]
         sending an object $(A,\varphi)$ to $\widehat{E}_n(\varphi)$ and
         \[
        \widehat{F}_n \colon \Nil(\cala,\Phi)\to \Chcat(\cala_\Phi[t^{-1}])^w
      \]
      sending an object $(A,\varphi)$ to $\widehat{F}_n(\varphi)$. One easily checks using
      Additivity and suitable exact sequences of $\cala_{\Phi}[t,t^{-1}]$- chain complexes
      that we get the equality of morphisms
      $K_s(\Nil(\cala,\Phi))\to K_s(\Chcat(\cala_\Phi[t^{-1}])^w)$ 
      \[
        K_s(\widehat{F}_n) = K_s(U_{\lambda_1,\lambda_2, \ldots, \lambda_n}) +
        K_s(\widehat{E}_n).
      \]
      Note that $w_n(\varphi) = 0$ if $\varphi^{(N)} = 0$, since
      $\lambda_1 = \lambda_{N-1} = 0$ is assumed. Hence $\widehat{F}_n \circ J$ is given
      by a lower triangular matrix, all whose entries on the diagonal are given by
      $\id_{\Phi^{1-n}(A)} \cdot t^{-n}$. The matrix $\widehat{E}_n(\varphi)$ is given by
      a lower triangular matrix, all whose entries on the diagonal are given by
      $\id_{\Phi^{-n}(A)} \cdot t^{-n}$. This implies that the image of both
      $K_s(\widehat{E}_n) \circ K_s(J)$ and $K_s(\widehat{F}_n) \circ K_s(J) $ lie in the image of
      \[
        K_s(\cala) \xrightarrow{K_s(I)} K_s(\Nil(\cala,\Phi)) \xrightarrow{K_s(\chi_{\Phi})}
        K_s(\Chcat(\cala_\Phi[t^{-1}])^w).
      \]
      Hence the same statement is true for
      $K_s(U_{\lambda_1,\lambda_2, \ldots, \lambda_n}) \circ K_s(J)$. This finishes the
      proof that the pairing $T_s$
      of~\eqref{bilinear_pairing_for_widetilde(K)_0(End(Lambda))_and_overline(K)_n(Nil(cala;Phi))}
      extends to a continuous pairing
  \[
  \widehat{T}_s \colon W(\Lambda)   \times \overline{K}_s(\Nil(\cala;\Phi))  \to \overline{K}_s(\Nil(\cala;\Phi))
\]
for $s \in \IZ$. 

Since the image of $\eta$ is dense in $W(R)$, this continuous extension is unique.

  Obviously the pairing $T_n$ turns $\overline{K}_n(\Nil(\cala;\Phi))$ into a module over the commutative ring
  $\overline{K}_0(\End(\Lambda))$.
  We conclude from  the uniqueness of the extension $\widehat{T}_n$ that
  it turns $\overline{K}_n(\Nil(\cala;\Phi))$ into a module over the commutative ring $W(\Lambda)$.

  This finishes  the proof of
Lemma~\ref{pairing_on_completion}.
     \end{proof}


\subsection{Consequences of the $W(\Lambda)$-module structure on the Nil-terms}%
\label{subsec:Consequences_of_the_W(Lambda)-module_structure_on_the_Nil-terms}

\begin{theorem}\label{the:Consequences_of_the_W(Lambda)-module_structure_on_the_Nil-terms}
  Let $N$ be a natural number. Let $\Lambda$ be a commutative ring with unit $1_{\Lambda}$. Consider 
  an additive $\Lambda$-category $\cala$ together with an automorphism $\Phi \colon \cala \xrightarrow{\cong} \cala$
  of additive $\Lambda$-categories. Let $\calp$ be a set of primes. Then we get for every $n \in \IZ$:

  \begin{enumerate}
  \item\label{the:Consequences_of_the_W(Lambda)-module_structure_on_the_Nil-terms:S_upper_1Z}
    If $\Lambda$ is an $\IZ[\calp^{-1}]$-algebra, then $\overline{K}_n(\Nil(\cala;\Phi))$ is an $\IZ[\calp^{-1}]$-module;
  \item\label{the:Consequences_of_the_W(Lambda)-module_structure_on_the_Nil-terms_Z_p}
    If $\Lambda$ is a $\IZ_p$-algebra, then $\overline{K}_n(\Nil(\cala;\Phi))$ is a $\IZ_p$-module;
  \item\label{the:Consequences_of_the_W(Lambda)-module_structure_on_the_Nil-terms:characteristic}
  If $N \cdot 1_{\Lambda}$ is zero in $\Lambda$, then $\overline{K}_n(\Nil(\cala;\Phi))[1/N]$ vanishes.
  \end{enumerate}
\end{theorem}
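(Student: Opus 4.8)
The plan is to use the $W(\Lambda)$-module structure on each $\overline{K}_n(\Nil(\cala;\Phi))$ established in Lemma~\ref{pairing_on_completion}, and in each case exhibit a suitable element of $W(\Lambda)$ that acts invertibly on the module. The key observation is that for any $\lambda \in \Lambda$ the element $(1 - \lambda t) \in W(\Lambda)$ acts on the additive group $\overline{K}_n(\Nil(\cala;\Phi))$, and the additive structure of $W(\Lambda)$ together with the defining relation $(1-\lambda t) \ast (1 - \mu t) = (1 - \lambda\mu t)$ gives enough flexibility to produce units. Recall that the additive unit of $W(\Lambda)$ is the power series $1$, while the multiplicative unit is $(1-t)$; the latter acts as the identity on the module, since it is the image under $\eta$ of the class $[\underline{\Lambda}]$ which acts by the identity functor.

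First I would treat~\ref{the:Consequences_of_the_W(Lambda)-module_structure_on_the_Nil-terms:characteristic}, which is the statement actually used later in the paper (it feeds Theorem~\ref{the:passage_from_COP_to_CVcyc_in_characteristic_N}\ref{the:passage_from_COP_to_CVcyc_in_characteristic_N:general_char_p}). Here $N \cdot 1_\Lambda = 0$. I would compute, using~\eqref{formula_for_multiplication_in_W(\Lambda)} with $m = n = 1$, that the $N$-fold $\ast$-power $(1 - t)^{\ast N}$ equals $(1 - t^N)$ in $W(\Lambda)$ — actually one must be slightly careful: the relevant identity is that the "ghost component" computation, or more simply the homomorphism $\lambda \mapsto (1-\lambda t)$ behaves multiplicatively, so $\underbrace{(1-t)\ast\cdots\ast(1-t)}_{N} $ is not literally $(1-t^N)$. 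Instead the cleaner route is: the element $(1-t)$ is the multiplicative identity, hence $N\cdot(1-t)$ (the $N$-fold \emph{additive} multiple, i.e. $(1-t)^N$ as a power series) acts on the module as multiplication by $N$; but multiplication by $N$ on any $W(\Lambda)$-module with $N 1_\Lambda = 0$ is zero, because $N\cdot x = (N\cdot 1_{W(\Lambda)}) \ast x$ and $N\cdot 1_{W(\Lambda)} = N\cdot(1-t)$, whose image under $\eta^{-1}$... — in short, $N$ annihilates $W(\Lambda)$ itself since $W$ is a functor of commutative rings and $N 1_\Lambda = 0$ forces $N 1_{W(\Lambda)} = 0$. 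Therefore $N$ annihilates every $W(\Lambda)$-module, in particular $\overline{K}_n(\Nil(\cala;\Phi))$, so inverting $N$ kills it. This is the main conceptual point and the place where I expect the only real subtlety: verifying that $W$ sends the relation $N\cdot 1_\Lambda = 0$ to $N \cdot 1_{W(\Lambda)} = 0$, which is immediate from functoriality of $W$ applied to the (non-unital, but additive) statement, or can be checked directly on ghost components.

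For~\ref{the:Consequences_of_the_W(Lambda)-module_structure_on_the_Nil-terms:S_upper_1Z}, if $\Lambda$ is an $\IZ[\calp^{-1}]$-algebra then $W(\Lambda)$ is an $\IZ[\calp^{-1}]$-algebra (again by functoriality of $W$, since $W(\IZ[\calp^{-1}])$ receives $\IZ[\calp^{-1}]$ via the canonical ring map and maps to $W(\Lambda)$), so every $W(\Lambda)$-module is automatically an $\IZ[\calp^{-1}]$-module. For~\ref{the:Consequences_of_the_W(Lambda)-module_structure_on_the_Nil-terms_Z_p}, if $\Lambda$ is a $\IZ_p$-algebra then likewise $W(\Lambda)$ is a $\IZ_p$-algebra, hence $\overline{K}_n(\Nil(\cala;\Phi))$ is a $\IZ_p$-module. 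Both of these are one-line consequences of the module structure from Lemma~\ref{pairing_on_completion} combined with functoriality of the big Witt vector construction in the coefficient ring; no further input is needed. I would write the proof so that~\ref{the:Consequences_of_the_W(Lambda)-module_structure_on_the_Nil-terms:S_upper_1Z} and~\ref{the:Consequences_of_the_W(Lambda)-module_structure_on_the_Nil-terms_Z_p} are dispatched first as formal consequences, then spend the bulk on~\ref{the:Consequences_of_the_W(Lambda)-module_structure_on_the_Nil-terms:characteristic}, whose only content beyond the module structure is the ring-theoretic fact that $\operatorname{char}$-type conditions are preserved by $W$.

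\begin{proof}
  By Lemma~\ref{pairing_on_completion} the group $\overline{K}_n(\Nil(\cala;\Phi))$ is a
  module over the commutative ring $W(\Lambda)$ of big Witt vectors of $\Lambda$. The
  construction $\Lambda \mapsto W(\Lambda)$ is functorial in the commutative ring
  $\Lambda$, and the canonical map $\IZ \to W(\IZ)$ (or, what amounts to the same, the
  fact that $W(\Lambda)$ receives a unique ring homomorphism from $\IZ$ and this is
  compatible with ring maps in $\Lambda$) makes $W$ a functor from commutative rings to
  commutative rings preserving the underlying structure.

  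\ref{the:Consequences_of_the_W(Lambda)-module_structure_on_the_Nil-terms:S_upper_1Z}
  Suppose $\Lambda$ is an $\IZ[\calp^{-1}]$-algebra. Then the structure map
  $\IZ[\calp^{-1}] \to \Lambda$ induces a ring homomorphism
  $W(\IZ[\calp^{-1}]) \to W(\Lambda)$, and since $\IZ[\calp^{-1}] \to W(\IZ[\calp^{-1}])$
  is a ring homomorphism, $W(\Lambda)$ becomes an $\IZ[\calp^{-1}]$-algebra. Hence every
  $W(\Lambda)$-module, in particular $\overline{K}_n(\Nil(\cala;\Phi))$, is an
  $\IZ[\calp^{-1}]$-module.

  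\ref{the:Consequences_of_the_W(Lambda)-module_structure_on_the_Nil-terms_Z_p}
  If $\Lambda$ is a $\IZ_p$-algebra, then the same argument with $\IZ_p$ in place of
  $\IZ[\calp^{-1}]$ shows that $W(\Lambda)$ is a $\IZ_p$-algebra, so
  $\overline{K}_n(\Nil(\cala;\Phi))$ is a $\IZ_p$-module.

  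\ref{the:Consequences_of_the_W(Lambda)-module_structure_on_the_Nil-terms:characteristic}
  Suppose $N \cdot 1_{\Lambda} = 0$ in $\Lambda$. Applying the functor $W$ to the ring
  homomorphism $\IZ/N \to \Lambda$ (which exists precisely because $N \cdot 1_{\Lambda} =
  0$) shows that the unique ring homomorphism $\IZ \to W(\Lambda)$ factors through
  $\IZ/N$; equivalently $N \cdot 1_{W(\Lambda)} = 0$ in $W(\Lambda)$. Therefore $N$
  annihilates every $W(\Lambda)$-module, and in particular
  $N \cdot \overline{K}_n(\Nil(\cala;\Phi)) = 0$. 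Consequently
  $\overline{K}_n(\Nil(\cala;\Phi))[1/N] = \IZ[1/N] \otimes_{\IZ} \overline{K}_n(\Nil(\cala;\Phi))$
  vanishes.
\end{proof}
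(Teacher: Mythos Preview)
Your argument for part~\ref{the:Consequences_of_the_W(Lambda)-module_structure_on_the_Nil-terms:characteristic} has a genuine error: the claim $N \cdot 1_{W(\Lambda)} = 0$ is false. Addition in $W(\Lambda)$ is multiplication of power series and the multiplicative unit is $(1-t)$, so $N \cdot 1_{W(\Lambda)} = (1-t)^N$; for $\Lambda = \IF_p$ and $N = p$ this equals $1 - t^p \neq 1 = 0_{W(\Lambda)}$. Functoriality of $W$ applied to $\IZ/N \to \Lambda$ only yields a map $W(\IZ/N) \to W(\Lambda)$, and the structure map $\IZ \to W(\Lambda)$ has nothing to do with this (note $W(\IZ)$ is far larger than $\IZ$). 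The ghost-component check you suggest in the plan also fails, since the ghost map is not injective once $\Lambda$ has $\IZ$-torsion.

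The argument the paper invokes (Weibel's) uses precisely the \emph{continuity} established in Lemma~\ref{pairing_on_completion}, which you never use. Since $N \cdot 1_\Lambda = 0$ kills the $t$-coefficient, one has $(1-t)^N \in I_2$; iterating, $(1-t)^{N^r} \in I_{2^r}$. Given $z \in \overline{K}_n(\Nil(\cala;\Phi))$, continuity (with the Nil-group discrete) provides $M$ with $I_M \cdot z = 0$, hence $N^r z = 0$ once $2^r \ge M$. Thus every element is $N$-\emph{power} torsion---not $N$-torsion---and that is what gives the vanishing after inverting $N$. Your arguments for parts~\ref{the:Consequences_of_the_W(Lambda)-module_structure_on_the_Nil-terms:S_upper_1Z} and~\ref{the:Consequences_of_the_W(Lambda)-module_structure_on_the_Nil-terms_Z_p} have an analogous (though less fatal) gap: functoriality of $W$ does not supply a ring map $\IZ[\calp^{-1}] \to W(\IZ[\calp^{-1}])$, so you have not actually shown the relevant primes are units in $W(\Lambda)$; this is true but needs a separate short argument, e.g.\ by checking multiplication by $q$ is bijective on each truncation $W_m(\Lambda)$.
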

\begin{proof} Using Lemma~\ref{pairing_on_completion} the proof in Weibel~\cite[Corollary~3.3]{Weibel(1981)}
  extends to our setting,
  see also~\cite[Lemma~3.10]{Bartels-Lueck-Reich(2008appl)}.
\end{proof}


 \typeout{---- Section 11: The Farrell-Jones Conjecture at  the prime $p$ ----}

\section{The Farrell-Jones Conjecture for totally disconnected groups at  the prime $p$}%
\label{sec:The_Farrell-Jones_Conjecture_for_totally_disconnected_groups_at_the_prime_p}

The goal of this section is to prove
Theorem~\ref{the:passage_from_COP_to_CVcyc_in_characteristic_N}.  It will follow from
Theorem~\ref{the:generalized_FJC}, which confirms a version of the $\COP$-Farrell-Jones
Conjecture that is more general than the one already treated
in~\cite{Bartels-Lueck(2023K-theory_red_p-adic_groups)}.

Throughout this section $G$ is a \emph{td-group}, i.e., a locally compact second countable
totally disconnected topological Hausdorff group.


 \subsection{Various sets of primes}\label{subsec:Various_stes_of_prime}

 We need the following sets of primes.

 \begin{notation}[$\calp(G)$]\label{calp(G)} Let $G$ be a td-group.  Define $\calp(G)$ to
  be the set of primes $q$, for which there exist compact open subgroups $U'$ and $U$ of
  $G$ such that $U' \subseteq U$ holds and $q$ divides the index $[U:U']$.
\end{notation}

If $G$ is a compact td-group, then $\calp(G)$ is the set of all primes $q$ for which there
exists a compact open subgroup $U$ of $G$ such that $q$ divides  $[G:U]$.
We have $\calp(\widetilde{U}) = \{p\}$ for the compact open subgroup
$\widetilde{U} \subseteq Q$ appearing in
Assumption~\ref{ass:existence_of_widetilde(U)_G-Z-categories}, provided that
$\widetilde{U}$ is non-trivial.

\begin{lemma}\label{lem:cal_and_subgroups}
  Let $G'$ be a (not necessarily open or compact) closed subgroup of the td-group $G$.  Then
  $\calp(G') \subseteq \calp(G)$.
\end{lemma}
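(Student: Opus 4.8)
The statement is that for a subgroup $G' \subseteq G$ of a td-group $G$, we have $\calp(G') \subseteq \calp(G)$. The plan is to take an arbitrary prime $q \in \calp(G')$ and produce compact open subgroups of $G$ witnessing $q \in \calp(G)$. By Notation \ref{calp(G)}, choosing $q \in \calp(G')$ gives compact open subgroups $U'_0 \subseteq U_0$ of $G'$ with $q \mid [U_0 : U'_0]$. The first point to address is that compact open subgroups of $G'$ need not be open in $G$ (only open in the subspace topology of $G'$), so they are not directly admissible as witnesses in $G$. However, they are compact subgroups of $G$, since compactness is intrinsic to the subgroup and the subspace topology on $G'$ agrees with the one induced from $G$.

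The key step is then to pass from these compact subgroups of $G$ to compact \emph{open} subgroups of $G$ containing them. Since $G$ is a td-group, by van Dantzig's theorem the compact open subgroups of $G$ form a neighbourhood basis of the identity. In particular, pick any compact open subgroup $W$ of $G$. Then $W \cap U_0$ and $W \cap U'_0$ are compact open subgroups of $U_0$ (hence of $G'$), and, more to the point, we want a single compact open subgroup $V$ of $G$ such that $V \cap U_0$ and $V \cap U'_0$ already exhibit the prime $q$. The natural approach: since $U'_0$ has finite index in $U_0$, the quotient $U_0/U'_0$ is a finite set, and $q$ divides its cardinality. Choose a compact open subgroup $V$ of $G$ small enough that $V \cap U_0 \subseteq U'_0$; this is possible because $U'_0$ is open in $U_0$ in the subspace topology, so $U'_0 \supseteq U_0 \cap W'$ for some open $W' \ni 1$ in $G$, and we may take $V \subseteq W'$ compact open in $G$ by van Dantzig. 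Now set $U := \langle V, U_0\rangle$... but this need not be compact. Better: work inside a fixed compact open $W \supseteq$ (a conjugate-free enlargement is not available), so instead argue as follows.

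Here is the cleaner route, and the step I expect to be the main obstacle to state carefully. Fix a compact open subgroup $W$ of $G$ with $W \cap U_0 \subseteq U'_0$ (possible as above). Then $U_0 W$ need not be a subgroup, so instead consider $U_0' \cdot W$ — also not a subgroup in general. The honest fix: since $U_0$ is compact and $W$ is open, $U_0 W$ is a compact open \emph{subset} of $G$; it is a union of finitely many left cosets of $W$, and $U_0$ acts on $U_0 W / W$ transitively. The stabiliser of the coset $W$ under this $U_0$-action is $U_0 \cap W \subseteq U'_0$. Hence $[U_0 : U_0 \cap W] = |U_0 W / W| =: k$, and similarly $[U'_0 : U'_0 \cap W] =: k'$, with $k = [U_0 : U'_0] \cdot k'$ since $U_0 \cap W = U'_0 \cap W$ (both equal $U_0 \cap W$ as $W \cap U_0 \subseteq U'_0$). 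Therefore $q \mid [U_0 : U'_0] \mid k = [U_0 : U_0 \cap W]$. But this still involves $U_0$, not a subgroup of $G$ that is open. The final resolution: take $V := $ the compact open subgroup $W$ of $G$ itself as the "small" group, and for the "large" group take a compact open subgroup $U$ of $G$ containing $U_0 \cap$-business — concretely, since $U_0 \cap W$ has finite index in the compact group $U_0$, and $W$ is open, the group $\langle U_0 \cap W \rangle = U_0 \cap W$ is compact open in... no. I would ultimately present it as: $U_0 \cap W$ and $W$ are the witnesses, after checking $W$ and $U_0\cap W$ are both compact open in $G$ — the former by choice, the latter because $U_0$ is compact and $W$ open forces $U_0 \cap W$ open in $U_0$, but we additionally need it open in $G$, which holds since $U_0 \cap W$ is open in $G$ iff it contains a neighbourhood of $1$; and $U_0 \cap W \supseteq W \cap W' \cap U_0$... — so the real content is a careful van Dantzig argument producing a compact open $V$ of $G$ with $V \subseteq U'_0$, whence $V \subseteq U'_0 \subseteq U_0$ and then one enlarges: $[U : V]$ for $U$ a compact open subgroup of $G$ containing $U_0$ (which exists since $U_0$ is compact: cover $U_0$ by translates of $V$ and take the group generated, which is compact open) satisfies $[U:V] = [U:U_0][U_0:U'_0][U'_0:V]$, so $q \mid [U:V]$. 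The main obstacle is precisely this bookkeeping: verifying that $U := \langle U_0, V\rangle$ is compact open in $G$ and that the index multiplicativity applies, all of which rests on van Dantzig's theorem for td-groups and the elementary fact that a compact subgroup together with a compact open subgroup generates a compact open subgroup.
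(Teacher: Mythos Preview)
Your final approach contains a genuine error. You claim that van Dantzig produces a compact open subgroup $V$ of $G$ with $V \subseteq U'_0$. This is false when $G'$ is not open in $G$: take $G = \IQ_p^2$ and $G' = \IQ_p \times \{0\}$ with $U'_0 = \IZ_p \times \{0\}$. Then $U'_0$ has empty interior in $G$ and cannot contain any open subset of $G$, let alone a compact open subgroup. Van Dantzig only gives a neighbourhood basis of $1$ consisting of compact open subgroups of $G$; it does not let you shrink into a subset that is not a neighbourhood of $1$ in $G$.

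Your earlier, abandoned approach was actually the right one, and it is essentially what the paper does. You correctly chose a compact open $W \subseteq G$ with $W \cap U_0 \subseteq U'_0$, and observed that then $[U_0 : U'_0]$ divides $[U_0 : U_0 \cap W]$. What you were missing is the ambient ``large'' compact open subgroup of $G$: since $U_0$ is compact in $G$, there is a compact open subgroup $L_0$ of $G$ with $U_0 \subseteq L_0$. The paper then refines $W$ to a compact open subgroup $L'_0$ of $L_0$ satisfying $U'_0 \subseteq L'_0$ and $L'_0 \cap U_0 = U'_0$ (another application of van Dantzig inside $L_0$, using that $U'_0$ is compact and contained in an open subset of $L_0$). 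The map $U_0/U'_0 \to L_0/L'_0$, $uU'_0 \mapsto uL'_0$, is then injective, so $[U_0:U'_0]$ divides $[L_0:L'_0]$, and $L'_0 \subseteq L_0$ are both compact open in $G$. That is the whole proof; no group generated by $U_0$ and a compact open subgroup is needed.
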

\begin{proof}
  Consider compact open subgroups $L'$ and $L$ of $G'$ satisfying $L' \subseteq L$.  Since
  $L$ is a compact subgroup of  $G$, we can find a compact open subgroup $L_0 \subseteq G$ with
  $L \subseteq L_0$, see~\cite[Lemma~2.3]{Bartels-Lueck(2023K-theory_red_p-adic_groups)}.
  Since $L'$ is open in $L$, we can find an open subset
  $V \subseteq L_0$ with $L' = V \cap L$. Since $L'$ is a compact subgroup of  $L_0 $ and contained in $V$,
  we can find a compact  open  subgroup $L'_0$ of $L_0$ such that $L' \subseteq L_0' \subseteq V $
  holds, see~\cite[Lemma~2.3]{Bartels-Lueck(2023K-theory_red_p-adic_groups)}.
  Hence we get $L'_0\cap L = L'$. This implies that the obvious map $L/L' \to L_0/L_0'$ is
  injective and hence $[L:L']$ divides $[L_0 : L_0']$.  Since $L_0$ and $L_0'$ are compact
  open subgroups of $G$, any prime $q$ that divides $[L:L']$ divides $[L_0 : L_0']$ and
  hence belongs to $\calp(G)$. This implies $\calp(G') \subseteq \calp(G)$.
\end{proof}

\begin{remark}\label{rem:finiteness_of_calp(G)}
  Suppose  that $G$ has up to conjugacy only finitely many maximal compact open subgroups $K_1$, $K_2$,
  \ldots,  $K_n$. (This assumption is satisfied for every reductive $p$-adic group.)
  Suppose  that Assumption~\ref{ass:existence_of_widetilde(U)_G-Z-categories} is satisfied and 
  let $\widetilde{U}$ be the compact open subgroup appearing in
  Assumption~\ref{ass:existence_of_widetilde(U)_G-Z-categories}. For $i = 1,2, \ldots n$
  the subgroup $K_i \cap \widetilde{U}$ has finite index in $K_i$
  and we define $\calp_i$ to be the finite set consisting of those
  primes that divide the index $[K_i : (K_i \cap \widetilde{U})]$. One easily checks that
  $\calp(K_i) \subseteq \calp(K_i \cap \widetilde{U}) \cup \calp_i$ holds.
  Since $\calp(\widetilde{U}) \subseteq \{p\}$, we conclude from Lemma~\ref{lem:cal_and_subgroups}
  \[
  \calp(G) \subseteq \bigcup_{i = 1}^n \calp(K_i) \subseteq \{p\} \cup \bigcup_{i = 1}^n \calp_i.
\]
In particular $\calp(G)$ is a finite set.
\end{remark}

The notion of a \emph{Hecke category with $G$-support} is introduced
in~\cite[Definition~5.1]{Bartels-Lueck(2023foundations)}.
Given a Hecke category with $G$-support and a open subgroup $U \subseteq G$,
one obtains by restriction to $U$ the Hecke category with $U$-support $\calb|_U$,
see~\cite [Notation~5.4]{Bartels-Lueck(2023foundations)}. For the notions of a uniform regular or $l$-uniform regular
additive category we refer to~\cite[Definition~6.2]{Bartels-Lueck(2020additive)}.

\begin{notation}[$\calp(\calb)$]\label{not:calp(calb)}
  Let $\calb$ be Hecke category with $G$-support. Let $\calp(\calb)$ be the largest set of
  primes with the property that for any $d \in \IZ$ with $d \ge 0$ there is $l(d) \in \IZ$
  with $l(d) \ge 0$ such that for every compact open subgroup $U$ of $G$ with
  $\calp(U) \subseteq \calp(\calb)$ the category $(\calb_U)_\oplus[\IZ^d]$ is
  $l(d)$-uniformly regular coherent.
\end{notation}

\begin{notation}[$\calp(R)$]\label{not:calp(R)}
   For a ring $R$ let $\calp(R)$ be the set of primes which are invertible in $R$. 
\end{notation}

\begin{remark}\label{rem:calp(underline(R))}
  If the ring $R$ satisfies $\IQ \subseteq R$, or, equivalently, that $\calp(R)$ consists
  of all primes, then the Hecke algebra $\calh(G,R,\rho,\omega)$ and the Hecke category
  with $Q$-support $\calb(Q,R,\rho,\omega)$ are introduced
  in~\cite[Section~2.2]{Bartels-Lueck(2023forward)} and~\cite[Sections~6.B and
  6.C]{Bartels-Lueck(2023foundations)} for any $G$, $N$, $\pr \colon G \to Q$, $\rho$, and
  $\omega$ as appearing in~\cite[Section~2.1]{Bartels-Lueck(2023forward)} and
  in~\cite[Section~6.A]{Bartels-Lueck(2023foundations)}. We will assume that
  $N \subseteq G$ is \emph{locally central}, i.e., its centralizer in $G$ is an open subgroup of $G$.
  If we replace  the condition
  $\IQ \subseteq R$  by 
  Assumption~\ref{ass:existence_of_widetilde(U)_G-Z-categories} applied in the case where $G$ is replaced by $Q$,
  then   $\calh(G,R,\rho,\omega)$ and $\calb(G,R,\rho,\omega)$ are still defined.
  Recall that Assumption~\ref{ass:existence_of_widetilde(U)_G-Z-categories}
  is satisfied if there exists a prime $p$ such that $p$ is invertible in $R$ and
  $Q$ is a subgroup of a reductive $p$-adic group,
  see~\cite[Lemma~1.1]{Meyer-Solleveld(2010)} and Lemma~\ref{lem:cal_and_subgroups}.

  Now suppose that $R$ is uniformly regular  and that
  Assumption~\ref{ass:existence_of_widetilde(U)_G-Z-categories}, is satisfied.  Then
  $\calp(R)$ is contained in $\calp(\calb(G,R,\rho,\omega))$.  The proof of this fact is
  the same as the one of~\cite[Theorem~7.2]{Bartels-Lueck(2023forward)}, one just has
  to observe that~\cite[Lemma~7.4~(3)]{Bartels-Lueck(2023forward)} still holds if one
  replaces the condition $\IQ \subseteq R$ by the condition that the order of the finite
  group $|D|$ is invertible in $R$, and that $\Idem(\calb(G,R,\rho,\omega)_{\oplus})$ is
  equivalent to $\Idem(\underline{\calh(G,R,\rho,\omega)}_{\oplus})$,
  see~\cite[Lemma~6.6]{Bartels-Lueck(2023foundations)}.
\end{remark}

Given a set $\calp$ of primes, a map of abelian groups is called a
\emph{$\calp$-isomorphism} if it becomes an isomorphism after inverting every element in
$\calp$.  A map of spectra $\bff \colon \bfE \to \bfF$ is called a \emph{weak
  $\calp$-homotopy equivalence} if $\pi_n(F) \colon \pi_n(\bfE) \to \pi_n(\bfF)$ is a
$\calp$-isomorphism for all $n \in \IZ$. If $\calp$ is empty, this is of course the same
as a weak homotopy equivalence.


\subsection{The Farrell-Jones Conjecture in prime characteristic}%
\label{subsec:The_Farrell-Jones_Conjecture_at_the_prime_p}
The constructions of the
following two assembly maps  can be found
in~\cite[Definitions~3.8 and~5.10]{Bartels-Lueck(2023K-theory_red_p-adic_groups)}.
Since the following proofs are rather
formal, the reader does not need to know the definitions and constructions of $\calb$,
$\calb[G/U]$, $\calb[G/U]$, $\calb|_U$,  $\calb[G/U]_{\oplus}$, $\calb[G/U]_{\oplus}[\IZ^d]$,
$\contcover^\allG_G(P)$,  and $\contc^{\allG,0,\nowedge}_G( M )$
and of the following two assembly maps  to understand the
assertions and proofs of this subsection.

\begin{definition}[$\COP$-assembly map]\label{def:assembly-cop-calb} Let $G$ be a
  td-group and let $\calb$ be a Hecke  category with $G$-support.
  The projections $G/U \to G/G$
  induce a map
  \begin{equation}\label{eq:assembly-cop-calb}
    \hocolimunder_{G/U \in \Or_{\COP}(G)} \bfK \big( \calb[G/U] \big)
    \to \bfK \big( \calb[G/G] \big) \simeq \bfK(\calb).
  \end{equation}
  We call this the \emph{$\COP$-assembly map for $\calb$}.
\end{definition}

\begin{definition}[$\CVCYC$-assembly map]\label{def:assembly-CvCYC-calb} Let $G$
  be a td-group and let $\calb$ be a Hecke category with $G$-support.  The maps $P \to \ast$ for
  $P \in \EP\CVCYC(G)$ induce a map
  \begin{equation}\label{eq:FJ-assembly-map}
    \hocolimunder_{P \in \EP\CVCYC(G)} \bfK \big( \contc_G(P;\calb) \big) \;
    \to \; \bfK \big( \contc_G(\ast;\calb) \big).
  \end{equation}
  This is the \emph{$\CVCYC$-assembly map for $\calb$}.
\end{definition}

We want to prove

\begin{theorem}\label{the:generalized_FJC}
  Let $p$ be a prime. Assume that $G $ is modulo a compact subgroup isomorphic to a
  closed subgroup of a reductive $p$-adic group. Let $\calb$ be a category  with $G$-support.

  \begin{enumerate}
  \item\label{the:generalized_FJC:Z/N}
    Let $N$ be a natural number such that for the category  $\calb$ with $G$-support the underlying
    $\IZ$-category $\calb$ is obtained by restriction with the projection $\IZ \to \IZ/N$
    from a $\IZ/N$-category $\calb'$. Then the $\COP$-assembly map of~\eqref{eq:assembly-cop-calb}
    is a $\calp_N$-equivalence  for every $n \in \IZ$, where $\calp_N$ is the set of primes  dividing $N$;

  \item\label{the:generalized_FJC:(Reg)}
    The $\COP$-assembly map~\eqref{eq:assembly-cop-calb}  is a  weak homotopy equivalence,
    if $\calp(G) \subseteq \calp(\calb)$ holds.
 \end{enumerate}
\end{theorem}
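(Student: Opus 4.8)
The plan is to compare the $\COP$-assembly map~\eqref{eq:assembly-cop-calb} with the $\CVCYC$-assembly map of Definition~\ref{def:assembly-CvCYC-calb} and to control the relative assembly map between the families $\COP$ and $\CVCYC$ by the Transitivity Principle together with a twisted Bass--Heller--Swan analysis, following the same pattern as in the proof of Theorem~\ref{the:passage_from_Fin_to_Vcyc_additive_categories} but with $\FIN$ replaced by $\COP$. First I would use that the $\CVCYC$-assembly map~\eqref{eq:FJ-assembly-map} is a weak homotopy equivalence for \emph{any} Hecke category with $G$-support: this is the geometric heart of the Farrell--Jones Conjecture for reductive $p$-adic groups established in~\cite{Bartels-Lueck(2023K-theory_red_p-adic_groups)}, and its proof via the flow space and controlled algebra involves no regularity hypothesis on the coefficients. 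Under the standard comparison of the two models for assembly maps, the $\COP$-assembly map~\eqref{eq:assembly-cop-calb} then factors as the relative $\COP\to\CVCYC$ assembly map followed by the $\CVCYC$-assembly map, so in both~\ref{the:generalized_FJC:Z/N} and~\ref{the:generalized_FJC:(Reg)} it suffices to prove that the relative assembly map
\[
H_n^G(\EGF{G}{\COP};\bfK_{\calb}) \to H_n^G(\EGF{G}{\CVCYC};\bfK_{\calb})
\]
is a $\calp_N$-isomorphism, respectively an isomorphism, for all $n \in \IZ$.

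Next I would invoke the Transitivity Principle for $G$-homology theories (as used in the proof of Theorem~\ref{the:passage_from_Fin_to_Vcyc_additive_categories}), which reduces the claim to showing, for every $V$ in the family $\CVCYC$, that the assembly map
\[
H_n^V(\EGF{V}{\COP};\bfK_{\calb|_V}) \to H_n^V(V/V;\bfK_{\calb|_V}) = \pi_n(\bfK(\calb|_V))
\]
has the corresponding property. If $V$ is compact open this map is an isomorphism since $V$ itself is a compact open subgroup of $V$, so I may assume that $V$ is infinite covirtually cyclic, with maximal compact open normal subgroup $K = K_V$, with $V/K \cong \IZ$, and with a chosen polarization $t \in V$, so that $V = K \rtimes_{c_t} \IZ$ as topological groups and the Hecke category $\calb|_V$ is the twisted $\IZ$-construction on $\calb|_K$ with respect to the automorphism $\Phi$ induced by conjugation with $t$ (this is the decomposition of $V$ into $K$-cosets, carried out at the level of categories). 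Applying the twisted Bass--Heller--Swan decomposition~\eqref{twisted_BHS_on_homotopy_groups} to $\calb|_K$ and $\Phi$ gives a natural isomorphism
\[
\pi_n(\bfT_{\bfK(\Phi^{-1})}) \oplus \overline{K}_{n-1}(\Nil(\calb|_K,\Phi)) \oplus \overline{K}_{n-1}(\Nil(\calb|_K,\Phi)) \xrightarrow{\cong} \pi_n(\bfK(\calb|_V)).
\]
Since $\IR$ with the translation action of $V$ through $V \to V/K \cong \IZ$ is a model for $\EGF{V}{\COP}$, the source $H_n^V(\EGF{V}{\COP};\bfK_{\calb|_V})$ is identified with the mapping-torus summand $\pi_n(\bfT_{\bfK(\Phi^{-1})})$ and the assembly map becomes the inclusion of this summand, by the same bookkeeping as in the proof of Theorem~\ref{the:passage_from_Fin_to_Vcyc_additive_categories}. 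Hence the assembly map for $V$ is a $\calp_N$-isomorphism, respectively an isomorphism, for all $n$ if and only if $\overline{K}_{n-1}(\Nil(\calb|_K,\Phi))$ vanishes after inverting $N$, respectively vanishes, for all $n$.

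It then remains to establish the required vanishing of these Nil-terms. In case~\ref{the:generalized_FJC:Z/N}, the underlying $\IZ$-category of $\calb$, hence of $\calb|_K$, is obtained by restriction along $\IZ \to \IZ/N$ from a $\IZ/N$-category, so $N \cdot 1 = 0$ in the endomorphism rings of $\calb|_K$, and Theorem~\ref{the:Consequences_of_the_W(Lambda)-module_structure_on_the_Nil-terms}~\ref{the:Consequences_of_the_W(Lambda)-module_structure_on_the_Nil-terms:characteristic} gives $\overline{K}_{n-1}(\Nil(\calb|_K,\Phi))[1/N] = 0$. In case~\ref{the:generalized_FJC:(Reg)}, Lemma~\ref{lem:cal_and_subgroups} together with the hypothesis $\calp(G) \subseteq \calp(\calb)$ yields $\calp(K) \subseteq \calp(V) \subseteq \calp(G) \subseteq \calp(\calb)$, so by the definition of $\calp(\calb)$ in Notation~\ref{not:calp(calb)} the additive categories $(\calb_K)_\oplus[\IZ^d]$ are $l(d)$-uniformly regular coherent for every $d \ge 0$; consequently the twisted Nil-terms of $\calb|_K$ vanish in all degrees, the $[\IZ^d]$-versions providing the delooping needed to reach negative degrees (cf.\ the regularity arguments in~\cite{Bartels-Lueck(2020additive)}). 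This gives $\overline{K}_{n-1}(\Nil(\calb|_K,\Phi)) = 0$ for all $n$ and completes the argument in both cases.

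The step I expect to be the main obstacle is the identification in the middle paragraph: realizing the assembly map for $V$ as the inclusion of the mapping-torus summand of the twisted Bass--Heller--Swan decomposition in the setting of Hecke categories with $V$-support, where $K = K_V$ is profinite rather than finite, and setting up the comparison of the $\Or$-indexed $\COP$-assembly map with the $\EP$-indexed $\CVCYC$-assembly map precisely enough that the two genuinely compose. A secondary technical point that needs care is that, when $V$ is not itself open in $G$, the group $K_V$ need not be a compact open subgroup of $G$, so one must transport the regularity input of Notation~\ref{not:calp(calb)} to $\calb|_{K_V}$ through a compact open overgroup of $K_V$, using the overgroup argument behind Lemma~\ref{lem:cal_and_subgroups}.
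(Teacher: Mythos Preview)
Your overall strategy is right and matches the paper's: first use that the $\CVCYC$-assembly map is a weak equivalence (Theorem~\ref{the:VCYC_ass}), then control the passage from $\CVCYC$ down to $\COP$ by a Bass--Heller--Swan style argument in which the relevant Nil-terms are killed either by Theorem~\ref{the:Consequences_of_the_W(Lambda)-module_structure_on_the_Nil-terms}~\ref{the:Consequences_of_the_W(Lambda)-module_structure_on_the_Nil-terms:characteristic} (for part~\ref{the:generalized_FJC:Z/N}) or by uniform regularity (for part~\ref{the:generalized_FJC:(Reg)}).

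The technical route, however, is not the Transitivity Principle reduction to a single $V\in\CVCYC$ that you sketch. The paper stays entirely inside the $\EP$-framework of~\cite{Bartels-Lueck(2023K-theory_red_p-adic_groups)}: Theorem~\ref{thm:contC_G(P)-for-CVCYC-via-COMP_prime} reduces the $\COP$-assembly map to a statement about each object $P=(G/V_1,\ldots,G/V_n)$ of $\EP\CVCYC(G)$, and Lemma~\ref{lem:reduction_to_(10)} translates this into the map
\[
\hocolimunder_{\underline{\Gamma}} \bfK \big( \contc^{\allG,0,\nowedge}_G( M ) \big)
    \to \bfK \big( \contc^{\allG,0,\nowedge}_G( M )[\Gamma]\big)
\]
for the free abelian group $\Gamma=\prod_i V_i/K_{V_i}$ acting on $M=(G/K_{V_1},\ldots,G/K_{V_n})$. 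Lemmas~\ref{lem:desired_calp(cala,G)-equivalence_in_characteristic_N} and~\ref{lem:desired_calp(cala,G)-equivalence_calp(calb;G)_is_empty} then handle the two cases, the first by induction on the rank of $\Gamma$ using~\eqref{twisted_BHS_on_homotopy_groups} and the Witt-vector vanishing, the second by feeding the regularity of $(\calb|_{G_\lambda})_\oplus[\IZ^d]$ for the \emph{compact open} stabilizers $G_\lambda$ into~\cite[Theorem~14.1]{Bartels-Lueck(2020additive)}.

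The obstacle you flag at the end is exactly the reason the paper does not take your route. A covirtually cyclic $V\in\CVCYC$ is closed but typically not open in $G$, and its maximal compact subgroup $K_V$ is compact but not compact \emph{open}; the Hecke category with $G$-support only admits restriction $\calb|_U$ to open subgroups $U$ (see~\cite[Notation~5.4]{Bartels-Lueck(2023foundations)}), so the objects $\calb|_V$ and $\calb|_{K_V}$ on which your Bass--Heller--Swan argument would run are not available, and the regularity condition in Notation~\ref{not:calp(calb)} is formulated only for compact open $U$. The $\EP$-product machinery circumvents this: the categories $\contc^{\allG,0,\nowedge}_G(M)$ are built from compact open stabilizers $G_\lambda\subseteq G$, so one never leaves the realm where $\calb|_{G_\lambda}$ and its regularity make sense. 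In short, your Transitivity Principle step and the ensuing analysis of $\calb|_{K_V}$ do not go through as written; the correct substitute is the product-of-orbits reduction of Theorem~\ref{thm:contC_G(P)-for-CVCYC-via-COMP_prime} and Lemma~\ref{lem:reduction_to_(10)}, after which your Nil-vanishing arguments (suitably iterated over $\rk\Gamma$ in case~\ref{the:generalized_FJC:Z/N}) apply verbatim.
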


  If $\calp(\calb)$ consists all primes, then
  Theorem~\ref{the:generalized_FJC}~\ref{the:generalized_FJC:(Reg)} has already been proved
  in~\cite[Theorem~1.11 and Theorem~1.15]{Bartels-Lueck(2023K-theory_red_p-adic_groups)}.

  In Subsection~\ref{subsec:Reduction_from_COP_to_CVCYC} the proof of Theorem~\ref{the:generalized_FJC}
  will be obtained by inspecting the
  proof in~\cite[Theorem~1.11 and Theorem~1.15]{Bartels-Lueck(2023K-theory_red_p-adic_groups)}
  taking Theorem~\ref{the:Consequences_of_the_W(Lambda)-module_structure_on_the_Nil-terms}~%
\ref{the:Consequences_of_the_W(Lambda)-module_structure_on_the_Nil-terms:characteristic} into account.


\subsection{Reduction from $\COP$ to $\CVCYC$}\label{subsec:Reduction_from_COP_to_CVCYC}

In this subsection we give the proof of Theorem~\ref{the:generalized_FJC}.

We conclude from by~\cite[Theorem~5.15]{Bartels-Lueck(2023K-theory_red_p-adic_groups)},

\begin{theorem}\label{the:VCYC_ass}
 The $\CVCYC$-assembly map~\eqref{def:assembly-CvCYC-calb} is a weak homotopy equivalence
for any  Hecke category with $G$-support $\calb$ if $G $ is  a reductive $p$-adic group.
\end{theorem}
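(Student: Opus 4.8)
The plan is to obtain this as an immediate consequence of the work of Bartels--L\"uck on reductive $p$-adic groups; concretely, the assertion is precisely~\cite[Theorem~5.15]{Bartels-Lueck(2023K-theory_red_p-adic_groups)}, so there is nothing new to prove and the only task is to recall why that result applies at the present level of generality.

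First I would recall that the proof of~\cite[Theorem~5.15]{Bartels-Lueck(2023K-theory_red_p-adic_groups)} rests on purely geometric input: it feeds the action of the reductive $p$-adic group $G$ on its enlarged affine Bruhat--Tits building into the controlled-algebra machinery underlying the Farrell--Jones programme. The features of this action that are used --- properness, the finite covering dimension of the building, and the transfer/controlled-algebra conditions with respect to the family $\CVCYC$ --- make no reference whatsoever to the coefficient data. Consequently the conclusion holds for \emph{any} Hecke category with $G$-support $\calb$ in the sense of~\cite[Definition~5.1]{Bartels-Lueck(2023foundations)}, with no regularity, coherence, or characteristic hypothesis on $\calb$. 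This is exactly the feature that distinguishes the $\CVCYC$-assembly map from the $\COP$-assembly map, the latter requiring such hypotheses and being the content of Theorem~\ref{the:generalized_FJC}.

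Second, I would carry out the bookkeeping identifying the two maps in question: the $\CVCYC$-assembly map of Definition~\ref{def:assembly-CvCYC-calb}, namely
\[
  \hocolimunder_{P \in \EP\CVCYC(G)} \bfK \big( \contc_G(P;\calb) \big) \to \bfK \big( \contc_G(\ast;\calb) \big),
\]
is induced by the maps $P \to \ast$ as $P$ ranges over a model of $\EP\CVCYC(G)$, and this is exactly the assembly map proved to be a weak homotopy equivalence in~\cite[Theorem~5.15]{Bartels-Lueck(2023K-theory_red_p-adic_groups)}; matching the two amounts to unravelling notation.

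The one point that requires any care --- and which I would single out as the main, if mild, obstacle --- is to confirm that~\cite[Theorem~5.15]{Bartels-Lueck(2023K-theory_red_p-adic_groups)} is stated for, or immediately extends to, Hecke categories with $G$-support in the full generality used here, rather than only for the particular coefficients emphasised elsewhere in that paper. Since the coefficient category enters the controlled categories $\contc_G(-;\calb)$ only as a formal parameter that is never differentiated in the geometric argument, this extension is routine, and the theorem follows.
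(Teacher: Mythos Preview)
Your proposal is correct and takes essentially the same approach as the paper: the paper simply states that the theorem follows from~\cite[Theorem~5.15]{Bartels-Lueck(2023K-theory_red_p-adic_groups)}, and your proof is an expanded version of precisely this citation, with additional commentary on why no hypotheses on $\calb$ are needed.
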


So in order to
prove Theorem~\ref{the:generalized_FJC} in the special case where $G$ is a reductive $p$-adic group,
we only need to analyse the reduction from $\CVCYC$ to $\COP$ which has been
carried out in~\cite[Theorem~14.1]{Bartels-Lueck(2023K-theory_red_p-adic_groups)} in the
case that every prime number lies in $\calp(\calb)$.   In the sequel we often omit $\calb$ from the notation.

\begin{theorem}\label{thm:contC_G(P)-for-CVCYC-via-COMP_prime}
  Let $\calp$ be a (possibly empty) set of primes.
  Suppose that the $\CVCYC$-assembly 
  map~\eqref{eq:FJ-assembly-map}
  \[
    \hocolimunder_{P \in \EP\CVCYC(G)} \bfK \big( \contc_G(P) \big) \;
    \to \; \bfK \big( \contc_G(\ast) \big)
  \]
  is a $\calp$-equivalence and  that  for every 
  $P \in \EP\CVCYC(G)$ the canonical map  
  \[
    \hocolimunder_{(Q,f) \in \EP\Or_\COM(G) \downarrow P} \bfK \big( \contcover^\allG_G(Q) \big) 
    \to\bfK \big( \contcover^\allG_G(P) \big)
  \]
  is a $\calp$-equivalence.

  Then the $\COP$-assembly map~\eqref{eq:assembly-cop-calb} 
  \[
  \hocolimunder_{G/U \in \Or_{\COP}(G)} \bfK \big(
    \calb[G/U] \big) \; \to \; \bfK \big(\calb[G/G] \big)
    \simeq \bfK(\calb)
  \]
  is also $\calp$-equivalence.
\end{theorem}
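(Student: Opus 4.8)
The goal is Theorem~\ref{thm:contC_G(P)-for-CVCYC-via-COMP_prime}, which is a $\calp$-local upgrade of the reduction from $\CVCYC$ to $\COP$ carried out in~\cite[Theorem~14.1]{Bartels-Lueck(2023K-theory_red_p-adic_groups)}. The plan is to follow that proof verbatim, replacing every occurrence of ``weak homotopy equivalence'' by ``$\calp$-equivalence'' and checking that each step of the argument is compatible with localization at the set of primes $\calp$. Concretely, the strategy is a comparison of homotopy colimits over three families: the family $\COP$ of compact open subgroups, the family $\COM$ of compact subgroups, and the family $\CVCYC$ of covirtually cyclic subgroups. First I would recall that there is a general transitivity/comparison principle allowing one to pass from the $\CVCYC$-assembly map to the $\COP$-assembly map provided one controls the ``relative terms'' indexed by $P \in \EP\CVCYC(G)$, namely the maps
\[
  \hocolimunder_{(Q,f) \in \EP\Or_\COM(G) \downarrow P} \bfK \big( \contcover^\allG_G(Q) \big)
  \to \bfK \big( \contcover^\allG_G(P) \big),
\]
which are exactly the second hypothesis of the theorem. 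The point is that $\bfK$ commutes with homotopy colimits and that a homotopy colimit of $\calp$-equivalences is a $\calp$-equivalence, and that in a map of (homotopy) fiber sequences, if two of the three maps are $\calp$-equivalences then so is the third; both facts hold because $\pi_*$ and the functor $\IZ[\calp^{-1}]\otimes_\IZ -$ are exact and commute with filtered colimits.

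The key steps, in order, would be: (1) set up the square comparing the $\COP$-assembly map with the $\CVCYC$-assembly map, using that $\EP\CVCYC(G)$ can be built from $\EP\Or_\COM(G)$-pieces glued along the covirtually cyclic subgroups, so that $\bfK(\contc_G(P))$ for $P \in \EP\CVCYC(G)$ is assembled from the terms $\bfK(\contcover^\allG_G(Q))$ for $Q$ compact, together with a Nil-contribution; (2) observe that the first hypothesis says the ``top'' map (the $\CVCYC$-assembly map) is already a $\calp$-equivalence; (3) observe that the second hypothesis says that for each individual $P$ the relevant comparison of homotopy colimits is a $\calp$-equivalence, so that after taking $\hocolim$ over $P \in \EP\CVCYC(G)$ (which preserves $\calp$-equivalences) the comparison between the $\CVCYC$-picture and the $\COM$-picture is a $\calp$-equivalence; (4) invoke the known reduction from $\COM$ to $\COP$ — here I expect the relevant statement is that $\bfK(\contcover^\allG_G(Q)) \simeq \bfK(\calb[G/Q])$-type identifications and the passage from compact subgroups to compact open subgroups is already an honest weak equivalence (no primes need to be inverted there), coming from the fact that a compact subgroup is contained in a compact open one and the relevant categories are then equivalent; (5) chase the resulting diagram of spectra using the two-out-of-three property for $\calp$-equivalences in fiber sequences to conclude that the $\COP$-assembly map is a $\calp$-equivalence.

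The main obstacle I anticipate is step (1) together with step (4): correctly identifying which part of the original proof in~\cite[Theorem~14.1]{Bartels-Lueck(2023K-theory_red_p-adic_groups)} is ``purely formal'' (diagram chases with homotopy colimits and fiber sequences, which localize without trouble) versus which part genuinely produces the Nil-terms that a priori obstruct the passage from $\CVCYC$ to $\COM$. In the present theorem those Nil-terms are handled upstream: the first hypothesis that the $\CVCYC$-assembly map is a $\calp$-equivalence already absorbs the global Nil-phenomena, and the second hypothesis handles the local ones fiberwise. So the real work is bookkeeping: verifying that every homotopy (co)limit appearing in the original proof is over a diagram category that is unchanged, that $\bfK$ preserves the relevant filtered homotopy colimits, and that the finitely many fiber-sequence arguments each involve at least two maps already known to be $\calp$-equivalences. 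I would emphasize in the write-up that no new geometric input is needed beyond Theorem~\ref{the:VCYC_ass} and the hypotheses, and that the entire argument is the localization at $\calp$ of the proof of~\cite[Theorem~14.1]{Bartels-Lueck(2023K-theory_red_p-adic_groups)}, citing it for the details that are insensitive to inverting primes.

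Finally, I would close by noting that since homotopy colimits commute with $\bfK$ and a homotopy colimit of $\calp$-equivalences is a $\calp$-equivalence, and since in the commutative diagram relating the three assembly maps all but the $\COP$-assembly map are $\calp$-equivalences by hypothesis (for the $\CVCYC$-map) and by the fiberwise comparison over $P$ (for the $\COM$-to-$\CVCYC$ comparison) together with the honest equivalence relating $\COM$ and $\COP$, the two-out-of-three property forces the $\COP$-assembly map~\eqref{eq:assembly-cop-calb} to be a $\calp$-equivalence as well, completing the proof.
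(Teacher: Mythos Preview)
Your proposal is correct and matches the paper's approach: the paper's proof simply says that the argument in~\cite[Section~14.B]{Bartels-Lueck(2023K-theory_red_p-adic_groups)} showing that Theorem~14.7 implies Theorem~14.1 there carries over verbatim, with the maps $\alpha_1$ and $\widehat{\alpha_1}$ in the diagram of that section becoming $\calp$-equivalences (these are precisely the two hypotheses) while the remaining four maps labeled $\simeq$ stay honest weak homotopy equivalences. Your steps~(1)--(5) are an accurate unpacking of that diagram chase, and your observation that homotopy colimits and two-out-of-three arguments respect $\calp$-equivalences is exactly what makes the localization go through without further input.
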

\begin{proof}
The arguments on the proof that~\cite[Theorem~14.7]{Bartels-Lueck(2023K-theory_red_p-adic_groups)}
implies~\cite[Theorem~14.1]{Bartels-Lueck(2023K-theory_red_p-adic_groups)} presented
in~\cite[Section~14.B]{Bartels-Lueck(2023K-theory_red_p-adic_groups)} carry directly
over to our setting and lead directly to  a proof of Theorem~\ref{thm:contC_G(P)-for-CVCYC-via-COMP_prime}.
Namely, in the diagram appearing~\cite[Section~14.B]{Bartels-Lueck(2023K-theory_red_p-adic_groups)}
the maps $\alpha_1$ and $\widehat{\alpha_1}$
are $\calp$-equivalences and the other four  maps labeled by $\simeq$
are also weak homotopy equivalences in our setting.
\end{proof}

Fix $P = (G/V_1,\dots,G/V_n)$ with $V_i \in \CVCYC$.  Let $K_i \subseteq V_i$ be the
maximal compact open subgroup of $V_i$.  Set $M := (G/K_1,\dots,G/K_n)$.  The quotients
$\Gamma_i := V_i / K_i$ are either infinite cyclic or trivial.  Let
$\Gamma := \Gamma_1 \times \cdots \times \Gamma_n$.  Then $\Gamma$ is a finitely generated
free abelian group of rank at most $n$.  There are canonical maps
$h_i \colon \Gamma_i \to \homendo_{\Or(G)}(G/K_i)$, sending $\gamma \in \Gamma_i$ to
$G/K_i \to G/K_i, gK_i \mapsto g\widehat{\gamma} K_i$ for any choice $\widehat{\gamma } \in V_i$
which is mapped to $\gamma$ under the projection $V_i \to \Gamma_i$.
These combine to an action of $\Gamma$ on $M$ by
morphisms in $\EP\Or(G)(M)$.  We write $\underline{\Gamma}$ for the category with exactly
one object $\ast_\Gamma$ whose endomorphisms are given by $\Gamma$.  The action of
$\Gamma$ on $M$ determines a functor
$h \colon \underline{\Gamma} \to \EP\Or_\COM(G)\downarrow P$ that sends $\ast_\Gamma$ to
$\pi \colon M \to P$.  Now $h$ induces a map
$\bfK \big( \contc^{\allG,0,\nowedge}_G( M ) \big)
    \to \bfK \big( \contc^{\allG,0,\nowedge}_G( M )[\Gamma]\bigr)$,
where $\contc^{\allG,0,\nowedge}_G( M )[\Gamma]$ is the
additive category of twisted Laurent series with respect to the $\Gamma$-action on
$\contc^{\allG,0,\nowedge}_G( M )$.

\begin{lemma}\label{lem:reduction_to_(10)}
  Let $\calp$ be a set of primes (which may be empty). Then the following assertions are equivalent
  \begin{enumerate}
  \item\label{lem:reduction_to_(10)_in_EP_CVCYC(G)} 
  For all
  $P \in \EP\CVCYC(G)$ the canonical map  appearing in Theorem~\ref {thm:contC_G(P)-for-CVCYC-via-COMP_prime}
  \[
    \hocolimunder_{(Q,f) \in \EP\Or_\COM(G) \downarrow P} \bfK \big( \contcover^\allG_G(Q) \big) 
    \xrightarrow{\sim} \bfK \big( \contcover^\allG_G(P) \big)
  \]
  is a $\calp$-equivalence;

\item\label{lem:reduction_to_(10):underline(Gamma)} The canonical map
  \[\hocolimunder_{\underline{\Gamma}} \bfK \big( \contc^{\allG,0,\nowedge}_G( M ) \big)
    \to \bfK \big( \contc^{\allG,0,\nowedge}_G( M )[\Gamma]\bigr)
    \]
 is a $\calp$-equivalence.
\end{enumerate}
\end{lemma}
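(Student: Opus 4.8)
The plan is to show the equivalence of the two statements in Lemma~\ref{lem:reduction_to_(10)} by identifying the source and target of the map in~\ref{lem:reduction_to_(10)_in_EP_CVCYC(G)} with the source and target of the map in~\ref{lem:reduction_to_(10):underline(Gamma)}, compatibly with the canonical maps, using a standard devissage. First I would reduce to the case where $P = (G/V_1,\dots,G/V_n)$ is a single tuple of the form fixed just before the lemma, since $\EP\CVCYC(G)$ is built out of such tuples and homotopy colimits commute with the relevant constructions; this is exactly the reduction used in~\cite[Section~14.B]{Bartels-Lueck(2023K-theory_red_p-adic_groups)}. For such a $P$, the indexing category $\EP\Or_\COM(G)\downarrow P$ is (up to a cofinality/homotopy-finality argument) generated by the object $\pi \colon M \to P$ together with the $\Gamma$-action described in the text, so that the functor $h \colon \underline{\Gamma} \to \EP\Or_\COM(G)\downarrow P$ is homotopy final; this identifies
\[
  \hocolimunder_{(Q,f) \in \EP\Or_\COM(G) \downarrow P} \bfK \big( \contcover^\allG_G(Q) \big)
  \;\simeq\; \hocolimunder_{\underline{\Gamma}} \bfK \big( \contc^{\allG,0,\nowedge}_G( M ) \big),
\]
where on the right one also uses the identification of $\contcover^\allG_G(Q)$ for the relevant $Q$ with $\contc^{\allG,0,\nowedge}_G( M )$ from the foundational papers.

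Next I would identify the target: the homotopy colimit $\hocolimunder_{\underline{\Gamma}} \bfK(\ED)$ over the one-object category $\underline{\Gamma}$ is by definition $\bfK(\ED)_{h\Gamma}$, and for $K$-theory of additive categories with a $\IZ^d$-action this Borel construction is computed — after the usual Bass-Heller-Swan/Wang analysis — by the twisted Laurent category $\ED[\Gamma]$, so that the assembly map $\hocolimunder_{\underline{\Gamma}} \bfK(\ED) \to \bfK(\ED[\Gamma])$ is precisely the map in~\ref{lem:reduction_to_(10):underline(Gamma)}. Matching this with $\bfK(\contcover^\allG_G(P))$ on the $\CVCYC$-side requires the identification $\contcover^\allG_G(P) \simeq \contc^{\allG,0,\nowedge}_G(M)[\Gamma]$, again from~\cite{Bartels-Lueck(2023K-theory_red_p-adic_groups)}. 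Putting the two identifications together turns the map in~\ref{lem:reduction_to_(10)_in_EP_CVCYC(G)} into the map in~\ref{lem:reduction_to_(10):underline(Gamma)} up to weak equivalence, and since a map which is a weak equivalence composed with $\calp$-equivalences on either end is itself a $\calp$-equivalence if and only if the other is, the two assertions are equivalent. The implication \ref{lem:reduction_to_(10):underline(Gamma)}$\Rightarrow$\ref{lem:reduction_to_(10)_in_EP_CVCYC(G)} then follows by running the identifications over all $P$; the reverse implication uses that the single $P$ under consideration is one of the objects appearing in $\EP\CVCYC(G)$.

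The hard part will be the bookkeeping in the homotopy-finality claim for $h \colon \underline{\Gamma} \to \EP\Or_\COM(G)\downarrow P$: one must check that the comma categories $h \downarrow (Q,f)$ are contractible, which amounts to understanding morphisms in $\EP\Or_\COM(G)$ out of $M$ and verifying that every object of $\EP\Or_\COM(G)\downarrow P$ receives an essentially unique map from $M$ compatibly with the $\Gamma$-action — this is where the structure of covirtually cyclic groups (their maximal compact open subgroup $K_i$ and the polarizations chosen in Section~\ref{sec:Infinite_covirtually:_cyclic_groups_and_their_finite_quotients}) really enters. I expect this to follow the template of~\cite[Section~14.B]{Bartels-Lueck(2023K-theory_red_p-adic_groups)} essentially verbatim, since that argument is insensitive to whether one works with a ring or with a Hecke category with $G$-support and whether or not $\calp$ is empty; so in the write-up I would state the identifications precisely, cite the relevant results of~\cite{Bartels-Lueck(2023K-theory_red_p-adic_groups)} and~\cite{Bartels-Lueck(2023foundations)}, and leave the diagram-chase of finality to the reader with a pointer to the model argument.
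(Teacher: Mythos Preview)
Your overall strategy---show that the maps in~\ref{lem:reduction_to_(10)_in_EP_CVCYC(G)} and~\ref{lem:reduction_to_(10):underline(Gamma)} are connected by a zig-zag of \emph{unconditional} weak equivalences, so that one is a $\calp$-equivalence iff the other is---is exactly the paper's argument, and you are right that nothing in this zig-zag uses any hypothesis on $\calb$ or on $\calp$.

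Where your proposal diverges is in the mechanism. You claim that the functor $h \colon \underline{\Gamma} \to \EP\Or_\COM(G)\downarrow P$ is homotopy cofinal, which would collapse the source identification to a single step. The paper does \emph{not} argue this way: it cites~\cite[Proposition~14.9]{Bartels-Lueck(2023K-theory_red_p-adic_groups)} together with the commutative diagram~\cite[(14.10)]{Bartels-Lueck(2023K-theory_red_p-adic_groups)}, in which nine intermediate maps (labelled (1)--(9)) are each shown to be weak equivalences in~\cite[Subsections~14.C, 14.E, 14.F]{Bartels-Lueck(2023K-theory_red_p-adic_groups)}; the map in~\ref{lem:reduction_to_(10):underline(Gamma)} is the remaining map in that diagram. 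These steps interpolate between the functors $\contcover^\allG_G(-)$ and $\contc^{\allG,0,\nowedge}_G(-)$ through several auxiliary categories, and it is not clear that they can be compressed into a single cofinality statement for $h$. In particular, the slice $\EP\Or_\COM(G)\downarrow P$ contains objects $(Q,f)$ with $Q$ an arbitrary tuple of orbits $G/L$ for compact open $L$, of possibly different shape than $M$; contractibility of the comma categories $(Q,f)\downarrow h$ is far from obvious and is not asserted in the reference. If you can prove cofinality of $h$ directly, that would be a genuine simplification worth recording; otherwise you should follow the chain in~(14.10).

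Two smaller points: your citation of~\cite[Section~14.B]{Bartels-Lueck(2023K-theory_red_p-adic_groups)} is to the wrong section---14.B contains the argument behind Theorem~\ref{thm:contC_G(P)-for-CVCYC-via-COMP_prime}, not the present lemma. And the opening ``reduction to a single tuple $P$'' is superfluous: every object of $\EP\CVCYC(G)$ is already of that form, and the lemma compares~\ref{lem:reduction_to_(10)_in_EP_CVCYC(G)} for all $P$ with~\ref{lem:reduction_to_(10):underline(Gamma)} for all $P$ (with $M$ and $\Gamma$ depending on $P$).
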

\begin{proof}
  This follows from~\cite[Proposition~14.9]{Bartels-Lueck(2023K-theory_red_p-adic_groups)}
  and the commutative diagram~\cite[14.10]{Bartels-Lueck(2023K-theory_red_p-adic_groups)}
  since the proof of~\cite[Proposition~14.9]{Bartels-Lueck(2023K-theory_red_p-adic_groups)},
  the construction of the commutative diagram~\cite[14.10]{Bartels-Lueck(2023K-theory_red_p-adic_groups)}
  and the fact  that the map (1) to (9) appearing
  there are weak homotopy equivalences presented
  in~\cite[Subsections~14.C,~14.E, and~14.F]{Bartels-Lueck(2023K-theory_red_p-adic_groups)}
  carry directly over to our setting,
  since no assumptions about $\calb$ are used.
\end{proof}

\begin{lemma}\label{lem:desired_calp(cala,G)-equivalence_in_characteristic_N}
  Let $N$ be a natural number such that for the category $\calb$ with $G$-support the
  underlying $\IZ$-category $\calb$ is obtained by restriction with the projection
  $\IZ \to \IZ/N$ from a $\IZ/N$-category $\calb'$. Then the
  \[
    \hocolimunder_{\underline{\Gamma}} \bfK \big( \contc^{\allG,0,\nowedge}_G( M ) \big)
    \to \bfK \big( \contc^{\allG,0,\nowedge}_G( M )[\Gamma]\bigr)
  \]
  is a $\calp$-homotopy equivalence for $\calp$ the set of primes dividing $N$.
\end{lemma}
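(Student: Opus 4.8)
The plan is to analyze the map
\[
\hocolimunder_{\underline{\Gamma}} \bfK \big( \contc^{\allG,0,\nowedge}_G( M ) \big)
    \to \bfK \big( \contc^{\allG,0,\nowedge}_G( M )[\Gamma]\bigr)
\]
by induction on the rank $r$ of the finitely generated free abelian group $\Gamma = \Gamma_1 \times \cdots \times \Gamma_n$, using the twisted Bass--Heller--Swan decomposition iteratively. First I would observe that, up to reordering, $\Gamma \cong \IZ^r$ with $r \le n$ (the trivial factors $\Gamma_i$ contribute nothing), and that the homotopy colimit over $\underline{\Gamma}$ together with the target is exactly the setup for which the Bass--Heller--Swan splitting applies: the homotopy fiber of the assembly map is built out of Nil-terms. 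Concretely, writing $\cala = \contc^{\allG,0,\nowedge}_G(M)$ viewed as an additive category with the $\IZ^r$-action coming from the $h_i$, and peeling off one $\IZ$-factor at a time, each stage contributes by~\eqref{twisted_BHS_on_homotopy_groups} two copies of $\overline{K}_{*}(\Nil(\cala',\Phi'))$ for an appropriate automorphism $\Phi'$ of an appropriate additive category $\cala'$ assembled from the previous stages (compare the passage used in the proof of Theorem~\ref{the:passage_from_Fin_to_Vcyc_additive_categories}, where the assembly map over $\IR = \EGF{V}{\FIN}$ becomes the inclusion and the obstruction is precisely the Nil-summands). So the cokernel and kernel of the displayed map, in each degree, are built as finitely many extensions out of groups of the form $\overline{K}_{m}(\Nil(\cala',\Phi'))$.

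Next I would invoke the characteristic hypothesis. By assumption the underlying $\IZ$-category $\calb$, hence $\cala = \contc^{\allG,0,\nowedge}_G(M)$ and every additive category $\cala'$ arising at an intermediate stage, is a $\IZ/N$-category, i.e.\ a $\Lambda$-category for $\Lambda = \IZ/N$, on which $N \cdot 1_\Lambda = 0$ holds; moreover the automorphisms $\Phi'$ are automorphisms of $\Lambda$-categories since the $\Gamma$-action is by $\IZ/N$-linear functors. Then Theorem~\ref{the:Consequences_of_the_W(Lambda)-module_structure_on_the_Nil-terms}~\ref{the:Consequences_of_the_W(Lambda)-module_structure_on_the_Nil-terms:characteristic} applies and gives $\overline{K}_{m}(\Nil(\cala';\Phi'))[1/N] = 0$ for all $m \in \IZ$. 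Since a finite extension of groups each of which vanishes after inverting $N$ again vanishes after inverting $N$, all the Nil-contributions to the kernel and cokernel of the displayed map die after inverting $N$, i.e.\ after inverting the primes in $\calp$. Hence the map is a $\calp$-homotopy equivalence.

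The main obstacle I expect is bookkeeping rather than conceptual: one has to make precise that the homotopy fiber of $\hocolimunder_{\underline{\Gamma}} \bfK(\cala) \to \bfK(\cala[\Gamma])$ really is iteratively assembled from Nil-terms of $\IZ/N$-linear Nil-categories — this requires identifying, at each stage of peeling off a $\IZ$-factor, the relevant additive category $\cala'$ (a category of twisted Laurent extensions of $\cala$ over the already-split-off factors) as still being a $\IZ/N$-category with a $\IZ/N$-linear automorphism, and checking that passing to $\cala[\IZ^{m}]$-type delooping categories, as in~\cite[Lemma~2.2]{Lueck-Steimle(2014delooping)}, preserves the $\IZ/N$-structure. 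Once the categories at every stage are confirmed to be $\Lambda$-linear for $\Lambda = \IZ/N$, the vanishing is immediate from Theorem~\ref{the:Consequences_of_the_W(Lambda)-module_structure_on_the_Nil-terms}. I would also note that for the rank-zero base case ($\Gamma$ trivial) the map is an equivalence on the nose, so no localization is needed there, and the induction only introduces $\calp$-local statements at the inductive step via the Nil-terms.
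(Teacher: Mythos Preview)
Your proposal is correct and follows essentially the same approach as the paper: the paper's proof also observes that $\contc^{\allG,0,\nowedge}_G(M)$ inherits the structure of an additive $\IZ/N$-category from $\calb$, and then proceeds by induction on the rank of $\Gamma$ using the twisted Bass--Heller--Swan isomorphism~\eqref{twisted_BHS_on_homotopy_groups} together with Theorem~\ref{the:Consequences_of_the_W(Lambda)-module_structure_on_the_Nil-terms}~\ref{the:Consequences_of_the_W(Lambda)-module_structure_on_the_Nil-terms:characteristic}. Your anticipated bookkeeping (that the intermediate categories $\cala'$ remain $\IZ/N$-linear with $\IZ/N$-linear automorphisms) is exactly what the paper subsumes in the phrase ``by inspecting the definitions.''
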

\begin{proof}
  By inspecting the definitions the additive category $\contc^{\allG,0,\nowedge}_G( M )$
  inherits the structure of an additive $\IZ/N$-category. Now one proceeds by induction
  over the rank for $\Gamma$ using the isomorphism~\eqref{twisted_BHS_on_homotopy_groups}
  and Theorem~\ref{the:Consequences_of_the_W(Lambda)-module_structure_on_the_Nil-terms}~%
\ref{the:Consequences_of_the_W(Lambda)-module_structure_on_the_Nil-terms:characteristic}.
\end{proof}

\begin{lemma}\label{lem:desired_calp(cala,G)-equivalence_calp(calb;G)_is_empty}
  If $\calp(G) \subseteq \calp(\calb)$ holds, then
then canonical map
  \[\hocolimunder_{\underline{\Gamma}} \bfK \big( \contc^{\allG,0,\nowedge}_G( M ) \big)
    \to \bfK \big( \contc^{\allG,0,\nowedge}_G( M )[\Gamma]\bigr)
    \]
  is a weak homotopy equivalence.
\end{lemma}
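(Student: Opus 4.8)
The plan is to argue by induction on the rank of the finitely generated free abelian group $\Gamma$, following the structure of the proof of Lemma~\ref{lem:desired_calp(cala,G)-equivalence_in_characteristic_N} but invoking the vanishing of Nil-terms for regular additive categories in place of Theorem~\ref{the:Consequences_of_the_W(Lambda)-module_structure_on_the_Nil-terms}. Write $\cala := \contc^{\allG,0,\nowedge}_G(M)$. If $\Gamma$ has rank $0$ the map is the identity. For the inductive step write $\Gamma = \Gamma' \times \IZ$, so that $\cala[\Gamma]$ is the twisted Laurent category $(\cala[\Gamma'])_{\Psi}[\IZ]$ with $\Psi$ the automorphism of $\cala[\Gamma']$ induced by a generator of the last $\IZ$-factor. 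The induction hypothesis for $\Gamma'$, which one checks is compatible with the residual $\IZ$-actions, identifies $\hocolimunder_{\underline{\Gamma'}}\bfK(\cala)$ with $\bfK(\cala[\Gamma'])$ compatibly with $\Psi$, so that $\hocolimunder_{\underline{\Gamma}}\bfK(\cala) \simeq \bigl(\hocolimunder_{\underline{\Gamma'}}\bfK(\cala)\bigr)_{h\IZ} \simeq \bfT_{\bfK(\Psi^{-1})}$, the mapping torus of $\bfK(\Psi^{-1})$ on $\bfK(\cala[\Gamma'])$. By the twisted Bass--Heller--Swan isomorphism~\eqref{twisted_BHS_on_homotopy_groups} for $(\cala[\Gamma'],\Psi)$, the canonical map of the lemma becomes, under these identifications, the inclusion of the mapping-torus summand into $\pi_n\bigl(\bfK(\cala[\Gamma])\bigr) \cong \pi_n(\bfT_{\bfK(\Psi^{-1})}) \oplus \overline{K}_{n-1}(\Nil(\cala[\Gamma'],\Psi))^{\oplus 2}$, much as in the identification used in the proof of Theorem~\ref{the:passage_from_Fin_to_Vcyc_additive_categories}. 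It is therefore split injective, and it is a weak homotopy equivalence precisely when $\overline{K}_{n-1}(\Nil(\cala[\Gamma'],\Psi))$ vanishes for all $n$.

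The hypothesis $\calp(G) \subseteq \calp(\calb)$ is exactly what forces this vanishing. Since $M = (G/K_1,\dots,G/K_n)$ with every $K_i$ a compact open subgroup of $G$, Lemma~\ref{lem:cal_and_subgroups} gives $\calp(K_i) \subseteq \calp(G) \subseteq \calp(\calb)$ for all $i$, so by the defining property of $\calp(\calb)$ in Notation~\ref{not:calp(calb)} the categories $(\calb_{K_i})_{\oplus}[\IZ^d]$ are uniformly regular coherent for every $d \ge 0$. I would then trace through the construction of $\contc^{\allG,0,\nowedge}_G(M)$ and of the iterated twisted Laurent category $\cala[\Gamma']$ to deduce that both are uniformly regular coherent --- exploiting the extra $[\IZ^d]$-flexibility that is built into Notation~\ref{not:calp(calb)} precisely so that regular coherence survives passage to $[\IZ^d]$. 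A uniformly regular coherent additive category is regular in the sense of~\cite[Definition~6.2]{Bartels-Lueck(2020additive)}, and a regular additive category has vanishing reduced Nil-terms $\overline{K}_{n-1}(\Nil(-,\Phi))$ for every automorphism $\Phi$, by~\cite[Lemma~7.7 and Theorem~8.1]{Bartels-Lueck(2020additive)}. Applying this to $\cala[\Gamma']$ and $\Psi$ completes the inductive step, and with it the proof.

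The main obstacle is the middle step of the second paragraph: one must verify that the regular-coherence property packaged in the definition of $\calp(\calb)$, which is only assumed for compact open subgroups $U$ of $G$ with $\calp(U) \subseteq \calp(\calb)$, really forces $\contc^{\allG,0,\nowedge}_G(M)$ and all its iterated twisted Laurent categories over $\Gamma'$ to be uniformly regular coherent. In~\cite{Bartels-Lueck(2023K-theory_red_p-adic_groups)} the corresponding fact is proved under the stronger hypothesis that $\calp(\calb)$ contains every prime, in the course of the reduction carried out around~\cite[Theorem~14.1]{Bartels-Lueck(2023K-theory_red_p-adic_groups)}; the task here is to check that this chain of reductions only ever refers to $\calb_U$ for compact open $U$ with $\calp(U) \subseteq \calp(G)$, hence $\subseteq \calp(\calb)$, so that the argument relativises without change. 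Everything else is the formal induction described above.
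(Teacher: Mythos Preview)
Your overall plan---induction on the rank of $\Gamma$, twisted Bass--Heller--Swan at each step, and reducing to the vanishing of $\overline{K}_{n-1}(\Nil(\cala[\Gamma'],\Psi))$---is the right shape, and indeed this is exactly the outer structure of the argument in~\cite[Section~14.H]{Bartels-Lueck(2023K-theory_red_p-adic_groups)} that the paper invokes. You also correctly isolate the one real difficulty in your final paragraph.

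The gap is precisely there, and it is not a routine ``trace through the construction''. The controlled category $\cala = \contc^{\allG,0,\nowedge}_G(M)$ is \emph{not} of the form $(\calb|_U)_\oplus$ for a single compact open subgroup $U$, and there is no reason to expect it (or $\cala[\Gamma']$) to be uniformly regular coherent as it stands; so the appeal to~\cite[Lemma~7.7 and Theorem~8.1]{Bartels-Lueck(2020additive)} for a regular $\cala[\Gamma']$ is not justified. What the paper actually does is defer to the machinery of~\cite[Section~14.H]{Bartels-Lueck(2023K-theory_red_p-adic_groups)}: there one writes $M$ as a filtered colimit over a system $|Q_i| = G/U_{1,i}\times\cdots\times G/U_{n,i}$, and the map in question (map~(7) in the diagram~\cite[(14.10)]{Bartels-Lueck(2023K-theory_red_p-adic_groups)}) is reduced to the map~\cite[(14.14)]{Bartels-Lueck(2023K-theory_red_p-adic_groups)}, whose analysis via~\cite[Lemma~14.16]{Bartels-Lueck(2023K-theory_red_p-adic_groups)} requires only that $\bigl((\calb|_{G_\lambda})_\oplus\bigr)[\IZ^d]$ be $l$-uniformly regular coherent for each isotropy group $G_\lambda$ of a point $\lambda\in|Q_i|$. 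These $G_\lambda$ are finite intersections of conjugates of the $U_{r,i}$, hence compact open subgroups of $G$, and Lemma~\ref{lem:cal_and_subgroups} gives $\calp(G_\lambda)\subseteq\calp(G)\subseteq\calp(\calb)$; this is the one place where~\cite[Assumption~3.11]{Bartels-Lueck(2023K-theory_red_p-adic_groups)} was used before and where the weaker hypothesis now suffices. One then finishes with~\cite[Theorem~14.1]{Bartels-Lueck(2020additive)} rather than Lemma~7.7 and Theorem~8.1, since that theorem is the one adapted to this filtered/controlled setup.

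In short: your last paragraph is the whole proof, not its caveat, and carrying it out means following the specific reductions above rather than asserting regularity of $\cala[\Gamma']$ directly. The relevant subgroups are the stabilisers $G_\lambda$, not the $K_i$ you name.
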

 \begin{proof}
The proof that the map appearing in Lemma~\ref{lem:desired_calp(cala,G)-equivalence_calp(calb;G)_is_empty}
is a weak homotopy equivalence   is a variation of the one
appearing  in~\cite[Section~14.H]{Bartels-Lueck(2023K-theory_red_p-adic_groups)}.
  By the arguments in~\cite[Section~14.H]{Bartels-Lueck(2023K-theory_red_p-adic_groups)}
  it suffices to check that the map~\cite[(14.14)]{Bartels-Lueck(2023K-theory_red_p-adic_groups)}
  is a weak homotopy equivalence, since the map appearing in
  Lemma~\ref{lem:desired_calp(cala,G)-equivalence_calp(calb;G)_is_empty}
  is the map (7) appearing at the very bottom
  of the diagram~\cite[(14.10)]{Bartels-Lueck(2023K-theory_red_p-adic_groups)}.
  Hence it suffices to check that the argument appearing~\cite[Lemma~14.16]{Bartels-Lueck(2023K-theory_red_p-adic_groups)}
  carries over if we assume $\calp(G) \subseteq \calp(B)$ and do not use 
  not the stronger~\cite[Assumption~3.11]{Bartels-Lueck(2023K-theory_red_p-adic_groups)},
  which is equivalent to requiring that we can choose $\calp(\calb)$ to be the set of all primes,
  since then we can 
  apply~\cite[Theorem.~14.1]{Bartels-Lueck(2020additive)},
  exactly as we did  in~\cite[Section~14.H]{Bartels-Lueck(2023K-theory_red_p-adic_groups)} after the proof
  of~\cite[Lemma~14.16]{Bartels-Lueck(2023K-theory_red_p-adic_groups)}.

  Let $(U_{r,i})_{r = 1,\ldots,n, i \in \IN_{\ge 1}}$ be the system of compact open subgroups of $G$
  as they appear in~\cite[Section~14.H]{Bartels-Lueck(2023K-theory_red_p-adic_groups)}.
  In view of the proof of~\cite[Lemma~14.16]{Bartels-Lueck(2023K-theory_red_p-adic_groups)}, it remains to explain
  why for every $i \in \{1,2, \ldots, n \}$ and every  element $\lambda \in |Q_i|$ the category
  $\bigl((\calb|_{G_{\lambda}})_\oplus\bigr)[\IZ^d]$
  is $l$-uniformly regular coherent,  where $|Q_i| = G/U_{1,i} \times \cdots \times G/U_{n,i}$.
  In the proof of~\cite[Lemma~14.16]{Bartels-Lueck(2023K-theory_red_p-adic_groups)}
  we had  used~\cite[Assumption~3.11]{Bartels-Lueck(2023K-theory_red_p-adic_groups)}
  precisely at this place, but nowhere else
  in the proof of~\cite[Theorem~14.7]{Bartels-Lueck(2023K-theory_red_p-adic_groups)}, and we 
  do not want to use this assumption  here. In the situation here it suffices to show
  $\calp(G_{\lambda})  \subseteq \calp(\calb)$. This follows from the assumption $\calp(G) \subseteq \calp(\calb)$ and
  Lemma~\ref{lem:cal_and_subgroups}.
This finishes the proof of Lemma~\ref{lem:desired_calp(cala,G)-equivalence_calp(calb;G)_is_empty}.
\end{proof}

\begin{proof}[Proof of Theorem~\ref{the:generalized_FJC}]
Theorem~\ref{the:generalized_FJC} follows from Theorem~\ref{the:VCYC_ass},
Theorem~\ref{thm:contC_G(P)-for-CVCYC-via-COMP_prime}, Lemma~\ref{lem:reduction_to_(10)},
Lemma~\ref{lem:desired_calp(cala,G)-equivalence_in_characteristic_N}, and
Lemma~\ref{lem:desired_calp(cala,G)-equivalence_calp(calb;G)_is_empty}, provided
that $G$ is a reductive $p$-adic group. Now the general case, where $G $ is modulo a compact subgroup isomorphic to a
closed subgroup of a reductive $p$-adic group, follows from the proof of~\cite[Theorem~1.5]{Bartels-Lueck(2023foundations)},
which directly carries over to our setting.
\end{proof}

We state the following version of the $\COP$-Farrell-Jones Conjecture.

\begin{notation}[$\calp(\calb,G)$]\label{not:calp(calb,G)}
  Let $\calb$ be Hecke category with $G$-support. Define $\calp(\calb,G)$ to be the set of primes
  $q$ that  belong to $\calp(G)$ but not to $\calp(\calb)$.
\end{notation}

\begin{conjecture}[(Generalized) $\COP$-Farrell-Jones Conjecture]%
\label{con:(Generalized)_COP-FJCT_at_the_prime_p}
The td-group $G$ satisfies the \emph{(Generalized) $\COP$-Farrell-Jones Conjecture},
if for every Hecke category with $G$-support the $\COP$-assembly map~\eqref{eq:assembly-cop-calb} 
  \begin{equation*} \hocolimunder_{G/U \in \Or_{\COP}(G)} \bfK \big(
    \calb[G/U] \big) \; \to \; \bfK \big( \calb[G/G] \big)
    \simeq \bfK (\calb)
  \end{equation*}
  is a $\calp(\calb,G)$-equivalence.
\end{conjecture}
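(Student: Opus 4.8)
The plan is to establish the Generalized $\COP$-Farrell-Jones Conjecture for every td-group $G$ that, modulo a compact subgroup, is isomorphic to a closed subgroup of a reductive $p$-adic group, by running the reduction in the proof of Theorem~\ref{the:generalized_FJC} while keeping track of the finite set of primes $\calp = \calp(\calb,G)$ throughout. Fix such a Hecke category $\calb$ with $G$-support. As in Subsection~\ref{subsec:Reduction_from_COP_to_CVCYC}, one first reduces to the case that $G$ is itself a reductive $p$-adic group via the argument from the proof of~\cite[Theorem~1.5]{Bartels-Lueck(2023foundations)}. By Theorem~\ref{the:VCYC_ass} the $\CVCYC$-assembly map~\eqref{eq:FJ-assembly-map} is a weak homotopy equivalence, hence a $\calp$-equivalence, so Theorem~\ref{thm:contC_G(P)-for-CVCYC-via-COMP_prime} applies once one knows that for every $P \in \EP\CVCYC(G)$ the canonical map $\hocolimunder_{(Q,f)} \bfK(\contcover^\allG_G(Q)) \to \bfK(\contcover^\allG_G(P))$ is a $\calp$-equivalence; by Lemma~\ref{lem:reduction_to_(10)} this is equivalent to showing that
\[
\hocolimunder_{\underline{\Gamma}} \bfK\bigl(\contc^{\allG,0,\nowedge}_G(M)\bigr)\to\bfK\bigl(\contc^{\allG,0,\nowedge}_G(M)[\Gamma]\bigr)
\]
is a $\calp$-equivalence. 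This is the heart of the matter and is a common generalization of Lemma~\ref{lem:desired_calp(cala,G)-equivalence_in_characteristic_N} and Lemma~\ref{lem:desired_calp(cala,G)-equivalence_calp(calb;G)_is_empty}.

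For this key step I would argue by induction on the rank of the finitely generated free abelian group $\Gamma$, using at each stage the twisted Bass--Heller--Swan isomorphism~\eqref{twisted_BHS_on_homotopy_groups} applied to the additive category $\contc^{\allG,0,\nowedge}_G(M)$, which (as in the proof of~\cite[Lemma~14.16]{Bartels-Lueck(2023K-theory_red_p-adic_groups)}) is assembled by restriction along the compact open subgroups $G_\lambda$ occurring in the relevant cover. The mapping-torus summand always contributes an isomorphism, so the claim reduces to showing that the two Nil-summands $\overline{K}_{*-1}\bigl(\Nil((\calb|_{G_\lambda})_\oplus[\IZ^d],\Phi)\bigr)$ become trivial after inverting the primes in $\calp = \calp(\calb,G)$. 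If $\calp(G_\lambda) \subseteq \calp(\calb)$, then by definition of $\calp(\calb)$ the category $(\calb|_{G_\lambda})_\oplus[\IZ^d]$ is uniformly regular coherent and these Nil-terms vanish outright, exactly as in Lemma~\ref{lem:desired_calp(cala,G)-equivalence_calp(calb;G)_is_empty}. If not, then by Lemma~\ref{lem:cal_and_subgroups} we have $\calp(G_\lambda)\setminus\calp(\calb) \subseteq \calp$, and for each prime $q$ in this finite difference one localizes the Nil-term at $q$ and applies the induction machinery of Section~\ref{sec:Mackey_and_Green_functors_for_finite_groups} and Section~\ref{sec:On_Nil-terms_for_infinite_covirtually_cyclic_subgroups} (Dress--Swan induction over the Swan-ring Green functor, Corollary~\ref{cor:vanishing_of_overline(K)_(n-1)(Nil(Z(V/K),Z(R_t)))_(p)}) to reduce to Nil-terms associated to pro-$q$-subgroups of $G_\lambda$; these are killed upon inverting $q$ by the $W(\Lambda)$-module structure of Theorem~\ref{the:Consequences_of_the_W(Lambda)-module_structure_on_the_Nil-terms} together with the vanishing of Corollary~\ref{cor:_vanishing_of_Nil-Terms_for_autos_of_finite_groups}. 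Since $\calp(G)$ is finite by Remark~\ref{rem:finiteness_of_calp(G)}, inverting all of $\calp(\calb,G)$ at once kills every offending summand, and the induction goes through.

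An alternative, more formal route would be to first produce, for the set $\calp = \calp(\calb,G)$, a Hecke category with $G$-support $\calb[\calp^{-1}]$ obtained from $\calb$ by inverting $\calp$ in all morphism groups, to observe that $\calp(\calb[\calp^{-1}]) \supseteq \calp(\calb)\cup\calp \supseteq \calp(G)$ and that the canonical map $\bfK(\calb) \to \bfK(\calb[\calp^{-1}])$ realizes the arithmetic localization at $\calp$ compatibly with the $\COP$-assembly maps; then Theorem~\ref{the:generalized_FJC}~\ref{the:generalized_FJC:(Reg)} applied to $\calb[\calp^{-1}]$ yields at once that the $\COP$-assembly map for $\calb$ is a $\calp$-equivalence. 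On either route the genuinely hard part is the same: controlling the Nil-terms of the non-regular categories $(\calb|_{G_\lambda})_\oplus[\IZ^d]$ for which $\calp(G_\lambda)\not\subseteq\calp(\calb)$, i.e.\ transporting the finite-group induction results of Section~\ref{sec:Mackey_and_Green_functors_for_finite_groups} and the covirtually-cyclic Nil-analysis of Section~\ref{sec:On_Nil-terms_for_infinite_covirtually_cyclic_subgroups} from discrete twisted group rings to these geometric coefficient categories, and then verifying that the only primes one is forced to invert are precisely those lying in $\calp(\calb,G)$. The finiteness of $\calp(G)$ is what keeps this prime bookkeeping manageable.
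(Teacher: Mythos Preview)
The statement you are addressing is a \emph{Conjecture}, not a theorem, and the paper does not claim to prove it. Immediately after stating it the paper only observes that Theorem~\ref{the:generalized_FJC}~\ref{the:generalized_FJC:(Reg)} confirms the conjecture in the special case $\calp(G)\subseteq\calp(\calb)$, i.e.\ precisely when $\calp(\calb,G)=\emptyset$ and the assertion collapses to an honest weak equivalence. Your write-up therefore goes well beyond what the paper establishes.

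As for the argument itself, the genuine gap is the ``heart of the matter'' step. The induction machinery you invoke from Sections~\ref{sec:Mackey_and_Green_functors_for_finite_groups} and~\ref{sec:On_Nil-terms_for_infinite_covirtually_cyclic_subgroups} is built for \emph{finite} groups and for \emph{discrete} infinite covirtually cyclic groups with finite $K_V$; Corollary~\ref{cor:vanishing_of_overline(K)_(n-1)(Nil(Z(V/K),Z(R_t)))_(p)} and Corollary~\ref{cor:_vanishing_of_Nil-Terms_for_autos_of_finite_groups} live entirely in that world, and the latter additionally needs a regular coefficient ring. The isotropy groups $G_\lambda$ appearing in~\cite[Lemma~14.16]{Bartels-Lueck(2023K-theory_red_p-adic_groups)} are compact open, hence profinite, and the coefficient categories $(\calb|_{G_\lambda})_\oplus[\IZ^d]$ are precisely the ones for which regularity fails when $\calp(G_\lambda)\not\subseteq\calp(\calb)$. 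There is no mechanism in the paper that lets you run Dress--Swan induction over such profinite groups, nor one that produces a $\Lambda$-structure with $q\cdot 1_\Lambda=0$ on these categories so that Theorem~\ref{the:Consequences_of_the_W(Lambda)-module_structure_on_the_Nil-terms}~\ref{the:Consequences_of_the_W(Lambda)-module_structure_on_the_Nil-terms:characteristic} would apply; ``pro-$q$-subgroup'' does not by itself give characteristic $q$ coefficients. Your alternative route has the same defect in disguise: for an arbitrary Hecke category $\calb$ with $G$-support there is no reason why the morphism-localized $\calb[\calp^{-1}]$ should satisfy the uniform regularity required in Notation~\ref{not:calp(calb)} for the larger set of compact open subgroups $U$ with $\calp(U)\subseteq\calp(\calb)\cup\calp$, so the inclusion $\calp(\calb[\calp^{-1}])\supseteq\calp(\calb)\cup\calp$ is unproven. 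In short, you have correctly identified the shape of the reduction, but the step that would actually earn the primes in $\calp(\calb,G)$ is exactly what remains open.
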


If $\calp(G) \subseteq \calp(\calb)$, then Theorem~\ref{the:generalized_FJC}~\ref{the:generalized_FJC:(Reg)}
confirms Conjecture~\ref{con:(Generalized)_COP-FJCT_at_the_prime_p} if
$G $ is modulo a compact subgroup isomorphic to a closed subgroup of a reductive $p$-adic group.


\subsection{Proof of  Theorem~\ref{the:passage_from_COP_to_CVcyc_in_characteristic_N}}%
\label{subsec:Proof_of_Theorem_ref(the:passage_from_COP_to_CVcyc_in_characteristic_N)}

\begin{proof}[Proof of Theorem~\ref{the:passage_from_COP_to_CVcyc_in_characteristic_N}]%
\ref{the:passage_from_COP_to_CVcyc_in_characteristic_N:general_char_p}
  We conclude from Lemma~\ref{lem:desired_calp(cala,G)-equivalence_in_characteristic_N}
  that  the canonical map
  \[\hocolimunder_{\underline{\Gamma}} \bfK \big( \contc^{\allG,0,\nowedge}_G( M ) \big)
    \to \bfK \big( \contc^{\allG,0,\nowedge}_G( M )[\Gamma]\bigr)
    \]
    is a $\calp_N$-homotopy equivalence for $\calp_N$ the set of primes dividing $N$
    provided that $N$ is  a natural number such that for the category  $\calb$ with $G$-support the underlying
    $\IZ$-category $\calb$ is obtained by restriction with the projection $\IZ \to \IZ/N$
    from a $\IZ/N$-category $\calb'$.

    Assertion~\ref{the:passage_from_COP_to_CVcyc_in_characteristic_N:general_char_p}
    of Theorem~\ref{the:passage_from_COP_to_CVcyc_in_characteristic_N}
    follows from Theorem~\ref{the:generalized_FJC}~\ref{the:generalized_FJC:(Reg)},
    as for the specific choice of $\calb = \calb(G,R,\rho,\omega)$ appearing in
    Remark~\ref{rem:calp(underline(R))} the map
    assembly~\eqref{eq:COP-assembly-homolgy-theory-R} is obtained from
    the $\COP$-assembly map~\eqref{eq:assembly-cop-calb}
    by applying $\pi_n$, see~\cite[Section~6.D]{Bartels-Lueck(2023foundations)} and
    $ \calb(G,R,\rho,\omega)$ has the required property above because of the assumption $N \cdot 1_R = 0$.
    \\[1mm]~\ref{the:passage_from_COP_to_CVcyc_in_characteristic_N:Artinian} In view of
    the arguments appearing in the proof of~\cite[Theorem~2.16]{Bartels-Lueck(2023recipes)}
    based on the equivariant Atyiah-Hirzebruch spectra sequence
    of~\cite[Theorem~2.1]{Bartels-Lueck(2023recipes)}, it suffices to show that
    $K_n(\calh(p^{-1}(U),R,\rho|_{p^{-1}(U)},\omega)) \cong
    H_n^G(G/U;\bfK_{\calb(G,R,\rho;\omega)})$ vanishes for every $n \le -1$ and every
    compact open subgroup $U \subseteq Q$.  The arguments in the proof
    of~\cite[Lemma~8.1]{Bartels-Lueck(2023forward)} apply also to our setting and
    imply that it suffices to show for any compact normal subgroup $K \subseteq p^{-1}(U)$
    with $K \in P$ that $K_n(\calh(p^{-1}(U)//K,R,\rho|_{p^{-1}(U)},\omega)) = 0$ holds
    for $n \le -1$ The proof of~\cite[Lemma~7.5]{Bartels-Lueck(2023forward)} carries over to our setting
    and implies  that  $\calh(p^{-1}(U)//K,R,\rho|_{p^{-1}(U)},\omega)$ can be identified
    as a crossed product ring $R \ast D$ for some group $D$ invertible in $R$. As $R$ is
    Artinian, $R \ast D$ and hence are $\calh(p^{-1}(U)//K,R,\rho|_{p^{-1}(U)},\omega)$
    are Artinian. Now we conclude from~\cite[Theorem~4.15~(ii)]{Lueck(2022book)}
    applied in the case of a trivial group and $k = 0$ that
    $K_n(\calh(p^{-1}(U)//K,R,\rho|_{p^{-1}(U)},\omega)) = 0$ holds for $n \le -1$.  This
    finishes the proof of
    assertion~\ref{the:passage_from_COP_to_CVcyc_in_characteristic_N:Artinian}.
    $K \subseteq Q$.
    \\[1mm]~\ref{the:passage_from_COP_to_CVcyc_in_characteristic_N:general} The proof that
    the assembly~\eqref{eq:COP-assembly-homolgy-theory-R} is bijective is analogous to the
    proof of
    assertion~\ref{the:passage_from_COP_to_CVcyc_in_characteristic_N:general_char_p} but
    now using Lemma~\ref{lem:desired_calp(cala,G)-equivalence_calp(calb;G)_is_empty}
    instead of Lemma~\ref{lem:desired_calp(cala,G)-equivalence_in_characteristic_N}.
\end{proof}

\begin{remark}\label{rem:possible_integral_results}
  It is conceivable that
  assertion~\ref{the:passage_from_COP_to_CVcyc_in_characteristic_N:Artinian} appearing in
  Theorem~\ref{the:passage_from_COP_to_CVcyc_in_characteristic_N} is still true if we do not invert $N$.
  (We have no proof.)
  For instance  it may be true that, for a prime $p$, an Artinian ring $R$ for which $p$ is invertible in $R$,
  and a subgroup $G$ of a reductive $p$-adic group, 
  \[
    K_n \big(\calh(G;R)\big) = 0 \quad \text{for}\; n \le -1
  \]
  holds and the map
  \[
    \colimunder_{U \in \Sub_\COP(G)} \Kgroup_0 (\calh(U;R)) \to \Kgroup_0 (\calh(G;R))
  \]
  is bijective, or, equivalently, that the assembly
  map~\eqref{eq:COP-assembly-homolgy-theory-R} is bijective for $n \le 0$.  (The latter is
  not true in general under the assumptions above for $n \ge 1$.)
\end{remark}




\end{document}